\documentclass[12pt]{article}
\usepackage{fullpage}
\usepackage{amsmath,amssymb,amsfonts,amsthm}
\usepackage[all,arc,knot,poly]{xy}
\usepackage{enumerate}
\usepackage{mathrsfs}
\usepackage{amsxtra}
\usepackage{needspace}
\usepackage{graphicx}

\DeclareMathOperator{\sgn}{\operatorname{sgn}}
\DeclareMathOperator{\Hom}{\operatorname{Hom}}
\DeclareMathOperator{\cross}{\operatorname{cross}}
\DeclareMathOperator{\Fun}{\operatorname{Fun}}

\makeatletter
\def\Ddots{\mathinner{\mkern1mu\raise\p@
\vbox{\kern7\p@\hbox{.}}\mkern2mu
\raise4\p@\hbox{.}\mkern2mu\raise7\p@\hbox{.}\mkern1mu}}
\makeatother

\newenvironment{step}[1][]{
\medskip
\emph{Step #1.}
}
{\medskip}

\theoremstyle{definition}
\newtheorem{theorem}{Theorem}[section]
\newtheorem{definition}[theorem]{Definition}
\newtheorem{lemma}[theorem]{Lemma}
\newtheorem{proposition}[theorem]{Proposition}

\newtheorem{example}[theorem]{Example}

\title{Laminations from the symplectic double}

\author{Dylan G.L. Allegretti}

\date{}

\begin{document}

\maketitle

\begin{abstract}
Let $S$ be a compact oriented surface with boundary together with finitely many marked points on the boundary, and let $S^\circ$ be the same surface equipped with the opposite orientation. We consider the \emph{double} $S_\mathcal{D}$ obtained by gluing the surfaces $S$ and~$S^\circ$ along corresponding boundary components. We define a notion of lamination on the double and construct coordinates on the space of all such laminations. We show that this space of laminations is a tropical version of the symplectic double introduced by Fock and Goncharov. There is a canonical pairing between our laminations and the positive real points of the symplectic double. We derive an explicit formula for this pairing using the $F$-polynomials of Fomin and Zelevinsky.
\end{abstract}

\tableofcontents

\section{Introduction}

\subsection{Previous work on laminations}

As part of their investigation of higher Teichm\"uller theory, Fock and Goncharov studied two versions of the Teichm\"uller space of a surface $S$ with boundary~\cite{IHES,dual}. These are called the Teichm\"uller $\mathcal{A}$- and $\mathcal{X}$-spaces and are denoted $\mathcal{A}^+(S)$ and $\mathcal{X}^+(S)$, respectively. The Teichm\"uller $\mathcal{A}$-space, also known as the decorated Teichm\"uller space, was originally studied by Penner~\cite{P}. It arises as the set of positive real points of a certain algebraic variety constructed in~\cite{IHES}. The Teichm\"uller $\mathcal{X}$-space, sometimes called the enhanced Teichm\"uller space, is a slight variant of the classical Teichm\"uller space of a punctured surface. It also arises as the set of positive real points of an algebraic variety~\cite{IHES}.

In addition to the Teichm\"uller $\mathcal{A}$- and $\mathcal{X}$-spaces, Fock and Goncharov studied two types of laminations on a surface~\cite{IHES,dual}. The two types of laminations are called $\mathcal{A}$- and $\mathcal{X}$-laminations, and the spaces of these laminations are denoted $\mathcal{A}_L(S)$ and $\mathcal{X}_L(S)$, respectively. These laminations are closely related to Thurston's transversally measured laminations~\cite{Th}. A rational lamination is defined in~\cite{IHES,dual} as a collection of finitely many simple nonintersecting curves on~$S$ with rational weights, subject to certain conditions and equivalence relations. The lamination spaces arise as the sets of rational tropical points of the two algebraic varieties constructed in~\cite{IHES}.

In~\cite{IHES,dual}, Fock and Goncharov defined special coordinates $A_i$ ($i\in I$) and $X_i$ ($i\in I$) on the Teichm\"uller spaces $\mathcal{A}^+(S)$ and $\mathcal{X}^+(S)$, respectively. The $A_i$ are the same as Penner's lambda lengths~\cite{P}, while the $X_i$ are related to shear coordinates. Here $I$ is the set of edges of an ideal triangulation of the surface $S$, and there are certain rules for transforming between the coordinates associated with different triangulations. It was shown in~\cite{GSV,IHES} that in the case of the $\mathcal{A}$-space these transformation rules are examples of the cluster mutation formulas from the theory of cluster algebras pioneered by Fomin and Zelevinsky~\cite{FZI}.

Similarly, there are coordinates $a_i$ ($i\in I$) and $x_i$ ($i\in I$) on $\mathcal{A}_L(S)$ and $\mathcal{X}_L(S)$~\cite{IHES,dual}. As in the case of the Teichm\"uller spaces, these coordinates transform according to certain rules when we change the triangulation of the surface. These rules are obtained from the cluster transformation rules by a tropicalization procedure that will be made precise later.

In addition to defining special coordinates on the Teichm\"uller and lamination spaces, Fock and Goncharov showed that the Teichm\"uller and lamination spaces are dual in a certain sense~\cite{IHES}. They defined natural subsets $\mathcal{A}_L(S,\mathbb{Z})\subseteq\mathcal{A}_L(S)$ and $\mathcal{X}_L(S,\mathbb{Z})\subseteq\mathcal{X}_L(S)$ and canonical maps 
\begin{align*}
\mathbb{I}&:\mathcal{A}_L(S,\mathbb{Z})\times\mathcal{X}^+(S)\rightarrow\mathbb{R}_{>0}, \\
\mathbb{I}&:\mathcal{X}_L(S,\mathbb{Z})\times\mathcal{A}^+(S)\rightarrow\mathbb{R}_{>0}, 
\end{align*}
which we denote by the same symbol $\mathbb{I}$. We usually think of these maps as operations which assign to a lamination $l$ the function on Teichm\"uller space given by $\mathbb{I}(l)(m)=\mathbb{I}(l,m)$.

The functions $\mathbb{I}(l)$ obtained in this way are interesting for algebraic reasons. As shown in~\cite{IHES}, the function $\mathbb{I}(l)$ can always be written as a Laurent polynomial with positive integral coefficients in the variables $X_i^{1/2}$ or $A_i$. Moreover, if we consider a lamination $l$ that has \emph{integral coordinates}, then $\mathbb{I}(l)$ can in fact be written as a Laurent polynomial with positive integral coefficients in the coordinates $X_i$ or $A_i$.

In subsequent work, Fock and Goncharov studied an algebraic variety called the symplectic double~\cite{dilog} and found that its properties are related to the geometry of a doubled surface~\cite{double}. To define the double $S_\mathcal{D}$ of a compact oriented surface~$S$, we let $S^\circ$ be the same surface equipped with the opposite orientation. Then $S_\mathcal{D}$ is defined to be the surface obtained by gluing $S$ and~$S^\circ$ along corresponding boundary components. In~\cite{double}, Fock and Goncharov considered a space $\mathcal{D}^+(S)$ which we will call the Teichm\"uller $\mathcal{D}$-space. It is similar to the Teichm\"uller space of~$S_\mathcal{D}$, and it arises as the set of positive real points of the symplectic double.

To prove this fact, Fock and Goncharov showed in~\cite{double} that $\mathcal{D}^+(S)$ has a system of coordinates $B_i$,~$X_i$ ($i\in I$) associated to an ideal triangulation of $S$. There are certain formulas describing how the coordinates change when one changes the triangulation, and these are the same formulas used in~\cite{dilog} to define the symplectic double.

\subsection{The present work}

The goal of the present paper is to define and study a new class of objects called \emph{$\mathcal{D}$-laminations}. A $\mathcal{D}$-lamination is typically represented by finitely many curves on the double~$S_\mathcal{D}$. In the case where there are no marked points on $S$, one has the following definition.

\begin{definition}
A \emph{rational $\mathcal{D}$-lamination} on $S_\mathcal{D}$ is the homotopy class of a collection of finitely many simple, nonintersecting, and noncontractible closed curves with positive rational weights and a choice of orientation for each component of $\partial S$ which meets or is homotopic to a curve. A lamination containing homotopic curves of weights $a$ and $b$ is equivalent to the lamination with one curve removed and the weight $a+b$ on the other.
\end{definition}

One of the main results of this paper is the construction of a system of coordinates $b_i,x_i$ ($i\in I$) on the space $\mathcal{D}_L(S)$ of all rational $\mathcal{D}$-laminations. As in the case of $\mathcal{A}$- and $\mathcal{X}$-laminations, the coordinates transform by certain rules when we change the triangulation of the surface, and these rules can be obtained by tropicalizing the formulas that define the symplectic double.

In addition to defining special coordinates on the lamination space $\mathcal{D}_L(S)$, one of the goals of this paper is to describe a duality between $\mathcal{D}$-laminations and the Teichm\"uller $\mathcal{D}$-space. We describe a subset $\mathcal{D}_L(S,\mathbb{Z})\subseteq\mathcal{D}_L(S)$ and a canonical map 
\[
\mathbb{I}:\mathcal{D}_L(S,\mathbb{Z})\times\mathcal{D}^+(S)\rightarrow\mathbb{R}_{>0}.
\]
This pairing is similar to the pairings between $\mathcal{A}$- and $\mathcal{X}$-laminations and the points of the Teichm\"uller $\mathcal{X}$- and $\mathcal{A}$-spaces. We usually view the above pairing as a map from $\mathcal{D}$-laminations to functions on $\mathcal{D}^+(S)$ given by $\mathbb{I}_\mathcal{D}(l)(m)=\mathbb{I}(l,m)$. 

We show that the function $\mathbb{I}_\mathcal{D}(l)$ can be expressed in a particularly nice way as an algebraic function in the variables $B_i$ and $X_i$ for any $l\in\mathcal{D}_L(S,\mathbb{Z})$. Moreover, if we consider a lamination $l$ that has \emph{integral coordinates} then $\mathbb{I}_\mathcal{D}(l)$ can in fact be written as a rational function in the coordinates $B_i$ and $X_i$.

To make these statements more precise, let us introduce some notation and terminology. Label the elements of $I$ by the numbers $\{1,\dots,n\}$ so that a point of $\mathcal{D}^+(S)$ has coordinates $B_1,\dots,B_n,X_1,\dots,X_n$. A curve on $S_\mathcal{D}$ will be called an \emph{intersecting curve} if every curve in its homotopy class intersects the image of the boundary~$\partial S$. Any $\mathcal{D}$-lamination $l\in\mathcal{D}_L(S,\mathbb{Z})$ can be represented by a collection of curves of weight~1. If this collection contains homotopic curves of weights~$a$ and~$b$ which are not intersecting curves, let us replace these by a single curve of weight $a+b$. In this way, we obtain a new collection of curves representing $l$. If we now cut the surface along the image of $\partial S$, we obtain a collection $\mathcal{C}$ of curves on~$S$ and a collection $\mathcal{C}^\circ$ of curves on $S^\circ$.

The illustration below shows an example of an intersecting curve on the double~$S_{\mathcal{D}}$ of a genus-2 surface $S$ with two holes. In this case, the set $\mathcal{C}$ consists of the curves labeled $c_1$ and~$c_3$, while the set $\mathcal{C}^\circ$ consists of the curves labeled $c_2$ and~$c_4$. In Section~\ref{sec:TheCanonicalPairing}, we will see how to associate, to any element of $\mathcal{C}$ or $\mathcal{C}^\circ$, a polynomial $F_c$. For curves such as the ones below, which come from an intersecting curve, these $F_c$ are in fact the $F$-polynomials of Fomin and Zelevinsky~\cite{FZIV}. We will prove the following theorem, which expresses the function $\mathbb{I}_\mathcal{D}(l)$ in terms of these polynomials.
\[
\xy 0;/r.50pc/: 
(-6,-4.5)*\ellipse(3,1){.}; 
(-6,-4.5)*\ellipse(3,1)__,=:a(-180){-}; 
(6,-4.5)*\ellipse(3,1){.}; 
(6,-4.5)*\ellipse(3,1)__,=:a(-180){-}; 
(-13.5,-10)*{>};
(13.5,-9.8)*{>};
(-9,0)*{}="1";
(-3,0)*{}="2";
(3,0)*{}="3";
(9,0)*{}="4";
(-15,0)*{}="A2";
(15,0)*{}="B2";
"1";"2" **\crv{(-9,-5) & (-3,-5)};
"1";"2" **\crv{(-9,5) & (-3,5)};
"3";"4" **\crv{(3,-5) & (9,-5)};
"3";"4" **\crv{(3,5) & (9,5)};
"A2";"B2" **\crv{(-15,12) & (15,12)};
(-9,-9)*{}="A";
(9,-9)*{}="B";
"A";"B" **\crv{(-8,-5) & (8,-5)}; 
(-15,-9)*{}="A1";
(15,-9)*{}="B1";
"B2";"B1" **\dir{-}; 
"A2";"A1" **\dir{-};
"A";"B" **\crv{(-8,-13) & (8,-13)}; 
(-15,-18)*{}="A0";
(15,-18)*{}="B0";
"B1";"B0" **\dir{-}; 
"A1";"A0" **\dir{-};
"A0";"B0" **\crv{(-15,-30) & (15,-30)};
(-9,-18)*{}="5";
(-3,-18)*{}="6";
"5";"6" **\crv{(-9,-23) & (-3,-23)};
"5";"6" **\crv{(-9,-13) & (-3,-13)};
(3,-18)*{}="7";
(9,-18)*{}="8";
"7";"8" **\crv{(3,-23) & (9,-23)};
"7";"8" **\crv{(3,-13) & (9,-13)};
(-11,-9)*{}="V1";
(13,-9)*{}="V2";
(-13,-9)*{}="V3";
(11,-9)*{}="V4";
(6,3.8)*{}="V5";
(8,-3)*{}="V6";
"V1";"V2" **\crv{~*=<2pt>{.} (-8,-15) & (10,-15)};
"V3";"V4" **\crv{(-10,-15) & (8,-15)};
"V3";"V5" **\crv{(-18,13) & (7,7)};
"V1";"V5" **\crv{~*=<2pt>{.} (-16,12) & (4,3)};
"V2";"V6" **\crv{~*=<2pt>{.} (14,-6) & (10,0)};
"V4";"V6" **\crv{(12,-6) & (8,-4)};
(13,7)*{S}; 
(13,-25)*{S^\circ}; 
(13,-4)*{c_1}; 
(11,-13)*{c_2}; 
(-10.5,-4)*{c_3}; 
(-11,-13)*{c_4}; 
\endxy
\]

\begin{theorem}
\label{thm:intromain}
Let $l$ be a point of $\mathcal{D}_L(S,\mathbb{Z})$ and suppose that the orientation of any component of $\partial S$ that meets a curve of~$l$ agrees with the orientation of~$S$. Then 
\[
\mathbb{I}_\mathcal{D}(l)=\frac{\prod_{c\in\mathcal{C}^\circ}F_c(\widehat{X}_1,\dots,\widehat{X}_n)}{\prod_{c\in\mathcal{C}}F_c(X_1,\dots,X_n)}B_1^{g_{l,1}}\dots B_n^{g_{l,n}}X_1^{h_{l,1}}\dots X_n^{h_{l,n}}
\]
where the $F_c$ are polynomials, the $g_{l,i}$ are integers, the $h_{l,i}$ are half integers, and the $\widehat{X}_i$ are given by 
\[
\widehat{X}_i=X_i\prod_j B_j^{\varepsilon_{ij}}.
\]
\end{theorem}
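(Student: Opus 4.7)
My plan is to reduce the claim to statements about individual components of the lamination, and then invoke the separation-of-additions formula of Fomin and Zelevinsky for $F$-polynomials. The pairing $\mathbb{I}_\mathcal{D}$ should be multiplicative over disjoint weighted components, so the decomposition of $l$ along the image of $\partial S$ into the collections $\mathcal{C}$ and $\mathcal{C}^\circ$ reduces the problem to two tasks: computing the contribution of each individual non-intersecting closed curve $c\in\mathcal{C}\cup\mathcal{C}^\circ$, and computing the contribution of each intersecting curve by tracking the matched pair of arcs it produces on $S$ and $S^\circ$ after cutting.

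For a non-intersecting closed curve $c\in\mathcal{C}$ lying in $S$, I would choose a triangulation $T$ of $S$ adapted to $c$ and compute the contribution of $c$ via the $\mathcal{X}$-lamination pairing on the $S$-half of the double. The resulting expression should be recognised as $F_c(X_1,\dots,X_n)$ in the denominator, via the snake-graph / $T$-path expansion of Musiker--Schiffler--Williams for arcs on a triangulated surface. The analogous computation on $S^\circ$ produces $F_c(\widehat X_1,\dots,\widehat X_n)$ in the numerator for $c\in\mathcal{C}^\circ$: the passage from $X_i$ to $\widehat X_i$ reflects the opposite orientation of $S^\circ$ combined with the fact that curves on $S^\circ$ interact with the gluing boundary, which is precisely where the $B_j$ coordinates live and enter through the exchange-matrix entries $\varepsilon_{ij}$. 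The remaining monomial $B_1^{g_{l,1}}\cdots B_n^{g_{l,n}}X_1^{h_{l,1}}\cdots X_n^{h_{l,n}}$ then collects the endpoint contributions of intersecting arcs at $\partial S$ and the tropical $g$-vector of the cluster variable attached to each closed curve; the half-integer exponents $h_{l,i}$ arise from square roots of $X_i$ that appear when a curve meets an edge of $T$ an odd number of times, and the integrality of the $g_{l,i}$ reflects the fact that endpoints of intersecting arcs come in pairs.

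The main obstacle is showing that the right-hand side is independent of the triangulation used to compute it. I expect to handle this by verifying that both sides transform identically under a single flip $\mu_k$: the $F$-polynomial factors obey the Fomin--Zelevinsky cluster exchange relation in the numerator and the denominator (this is essentially the content of the separation-of-additions formula), while the monomial exponents $g_{l,i}$ and $h_{l,i}$ transform according to the tropicalised symplectic-double mutations, which are by construction the coordinate transformations on $\mathcal{D}_L(S)$ established earlier in the paper. Matching these two transformations cleanly, and in particular reconciling the half-integer exponents with the sign choices coming from the chosen orientations of the boundary components, is the delicate part; once done, a single convenient choice of triangulation suffices to finish the computation.
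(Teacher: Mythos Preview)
Your multiplicativity reduction and the plan to invoke Fomin--Zelevinsky's separation formula for each arc are on the right track and match the paper's strategy. But there is a genuine gap in the treatment of intersecting curves, and it is precisely the step the paper works hardest on.

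After cutting an intersecting curve along $\partial S$ into arcs $c_1,c_2,\dots$ and applying the separation formula to each arc, you get
\[
\mathbb{I}_\mathcal{D}(l)=\frac{\prod_{i\text{ even}}F_{c_i}(\widehat X)}{\prod_{i\text{ odd}}F_{c_i}(X)}\cdot\frac{(A^\circ)^{\mathbf{g}_{\text{even}}}}{A^{\mathbf{g}_{\text{odd}}}}
=\frac{\prod F_{c_i}(\widehat X)}{\prod F_{c_i}(X)}\cdot B^{\mathbf{g}_{\text{even}}}\cdot A^{\mathbf{g}_{\text{even}}-\mathbf{g}_{\text{odd}}}.
\]
Here $A_j$ and $A_j^\circ$ are lambda-lengths on the lifted polygon, which depend on auxiliary horocycle choices and are \emph{not} coordinates on $\mathcal{D}^+$. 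Your proposal asserts that the residual monomial ``collects endpoint contributions'' and has the claimed $B^{\,\text{int}}X^{\,\text{half-int}}$ form, but you give no argument for why the $A$-variables disappear. The fact that endpoints come in pairs is not enough: a priori $\mathbf{g}_{\text{even}}-\mathbf{g}_{\text{odd}}$ is an arbitrary integer vector in the $A$-lattice, and a monomial $A^{\mathbf{s}}$ is expressible in the $X_i$ alone only when $\mathbf{s}$ lies in the sublattice spanned by the columns of $\varepsilon$. Proving this requires the explicit Musiker--Schiffler--Williams description of each $\mathbf{g}_{c_i}$ as an alternating sum along a path in the snake graph, and then a telescoping argument showing that when you concatenate the arcs $c_i$ around the closed intersecting curve the boundary terms cancel and only cross-ratio combinations survive. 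This is Lemma~\ref{lem:factorization} in the paper and is the heart of the proof; your outline skips it.

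Two smaller points. First, your explanation of why $\widehat X_i$ appears for curves on $S^\circ$ is muddled: it is not about orientation reversal or ``interaction with the boundary,'' but simply the algebraic identity $\widehat X_k=\prod_j (A_j^\circ)^{\varepsilon_{kj}}$, so the $S^\circ$ cross-ratios are literally the $\widehat X_k$. Second, your proposed detour through mutation invariance is unnecessary: $\mathbb{I}_\mathcal{D}(l)$ is defined geometrically (via signed lengths and monodromy traces), hence is already coordinate-free; the theorem is a computation in one fixed triangulation, and the paper does exactly that via direct monodromy matrix factorizations for closed loops (not snake graphs, which would require band-graph versions) and the argument above for intersecting curves.
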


All of the notation appearing in this theorem will be defined precisely below. As we will see, the above formula is in a sense a generalization of Corollary~6.3 in~\cite{FZIV}. If $c$ is a closed loop belonging to~$\mathcal{C}$ or~$\mathcal{C}^\circ$, then $F_c$ does not arise as one of Fomin and Zelevinsky's $F$-polynomials, but Theorem~\ref{thm:intromain} suggests that the $F_c$ should be viewed as ``generalized'' $F$-polynomials. Such generalized $F$-polynomials have appeared previously in the work of Musiker, Schiffler, and Williams~\cite{MW,MSW2} on cluster algebras associated to surfaces.

Because of the factors of $X_i^{1/2}$ in the above result, the function $\mathbb{I}_{\mathcal{D}}(l)$ is in general not expressible as a rational function in the $B_i$ and $X_i$. In addition to proving the above result, we will show that the canonical mapping provides rational functions for a certain class of $\mathcal{D}$-laminations.

\begin{theorem}
Let $l$ be a $\mathcal{D}$-lamination. Then $\mathbb{I}_{\mathcal{D}}(l)$ is a rational function in the variables $B_i$ and $X_i$ if and only if $l$ has integral coordinates.
\end{theorem}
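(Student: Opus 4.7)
The plan is to read the result off the formula in Theorem~\ref{thm:intromain}. Each $\widehat{X}_i=X_i\prod_j B_j^{\varepsilon_{ij}}$ is a Laurent monomial in the $B_j$ and $X_i$, so both products $\prod_{c\in\mathcal{C}^\circ}F_c(\widehat{X}_1,\dots,\widehat{X}_n)$ and $\prod_{c\in\mathcal{C}}F_c(X_1,\dots,X_n)$ are rational functions in these variables. Since the exponents $g_{l,i}$ are integers, the rationality of $\mathbb{I}_{\mathcal{D}}(l)$ in the $B_j,X_i$ is equivalent to every $h_{l,i}$ lying in~$\mathbb{Z}$. The theorem therefore reduces to showing that $h_{l,i}\in\mathbb{Z}$ for all $i$ if and only if $l$ has integral coordinates.

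For the ``if'' direction, I would decompose each $h_{l,i}$ as a sum of local contributions indexed by the curves in $\mathcal{C}\cup\mathcal{C}^\circ$ near the edge~$i$, as implicit in the proof of Theorem~\ref{thm:intromain}. Half-integer contributions arise only from intersecting curves, which cross the image of $\partial S$; the contribution at edge $i$ is determined, modulo~$\mathbb{Z}$, by the parity of the number of arc endpoints of $l$ on the boundary interval associated with~$i$. The coordinates $x_i$ on $\mathcal{D}_L(S)$ were defined by a tropicalization of the symplectic double mutation rules, and the same parity governs whether $x_i(l)\in\mathbb{Z}$ or only $\frac{1}{2}\mathbb{Z}$. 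Thus when $l$ has integral coordinates the parity is even and every $h_{l,i}$ is an integer. For the ``only if'' direction, each $F_c$ is a Laurent polynomial in the $B_j,X_i$ with integer exponents, so if some $h_{l,i_0}$ is a genuine half-integer then every term in the Laurent expansion of $\mathbb{I}_{\mathcal{D}}(l)$ has $X_{i_0}$-exponent congruent to $h_{l,i_0}\pmod{\mathbb{Z}}$; no rational function in the $B_j,X_i$ admits such an expansion, and by the parity correspondence this forces $x_{i_0}(l)\notin\mathbb{Z}$.

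The main obstacle is the combinatorial identity used in the second paragraph: pinning down the precise congruence between the fractional part of $h_{l,i}$ and the parity of $x_i(l)$. Establishing it will likely require a local analysis of how an intersecting curve on $S_{\mathcal{D}}$ is cut by $\partial S$ into two arcs meeting at a marked point, together with the tropical mutation formulas defining the $\mathcal{D}$-lamination coordinates. Once this matching is in hand, both directions of the biconditional follow formally from Theorem~\ref{thm:intromain}.
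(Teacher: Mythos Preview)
Your overall strategy---reducing the question to the integrality of the exponents $h_{l,i}$ and then linking these to the integrality of the lamination coordinates---has the right shape, and your first paragraph is correct. But the second paragraph contains two concrete errors that make the argument break down.

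First, you misidentify which coordinate is at stake. For $l\in\mathcal{D}_L(S,\mathbb{Z})$, the $x_i$-coordinates are \emph{always} integers: the portion of $l$ lying on $S$ is an $\mathcal{X}$-lamination with integral weights, and the paper records that $\mathcal{X}_L(S,\mathbb{Z})=\mathcal{X}(\mathbb{Z}^t)$. The nontrivial content of ``$l$ has integral coordinates'' is that the $b_i$ are integers, and it is these---not the $x_i$---that must be related to the fractional parts of the $h_{l,i}$. Your proposed ``parity correspondence'' with $x_i(l)$ is therefore vacuous in one direction and wrong in the other.

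Second, it is not true that half-integer contributions to $h_{l,i}$ arise only from intersecting curves. A closed loop of odd weight lying entirely in $S$ or $S^\circ$ contributes a factor $X_i^{\pm k/2}$ for each crossing with edge $i$ (this is explicit in the paper's treatment of the non-intersecting cases underlying Theorem~\ref{thm:intromain}), and these are genuinely half-integral when the number of crossings is odd.

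Even after correcting these points, the hoped-for edge-by-edge congruence between $h_{l,i}$ and $b_i$ modulo $\mathbb{Z}$ does not obviously hold; the half-integer $h_{l,i}$'s come from a cycle built out of $\mathbf{g}$-vectors (Lemma~\ref{lem:factorization}), not directly from the curves used to compute the $b_i$. The paper's proof avoids any such local matching by passing through a common homological invariant: it builds a $\mathbb{Z}/2\mathbb{Z}$-cycle $\sigma_l$ on $S$ from the curves of $l$ and shows that each of ``all $b_i\in\mathbb{Z}$'' and ``all $h_{l,i}\in\mathbb{Z}$'' is equivalent to $[\sigma_l]=0$ in $H_1(S,\mathbb{Z}/2\mathbb{Z})$. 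The key global step is that $\sigma_l$ meets every edge of the triangulation evenly if and only if $[\sigma_l]=0$, which is proved by an inductive argument on the triangles. This homological intermediary is the missing idea in your sketch.
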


In fact, we will prove a more detailed statement which also gives a homological condition for the coordinates $b_i$ and $x_i$ of the lamination~$l$ to be integers.

In the final part of this paper, we discuss a map 
\[
\mathcal{I}_{\mathcal{D}}:\mathcal{D}_L(S,\mathbb{Z})\times\mathcal{D}_L(S,\mathbb{Z})\rightarrow\mathbb{Q}
\]
which we call the \emph{intersection pairing}. Roughly speaking, if $l$ and $m$ are $\mathcal{D}$-laminations, we let $k$ be half the minimal number of intersections between the portions of $l$ and $m$ that lie on~$S$, we let $k^\circ$ be half the minimal number of intersections between the portions that lie on~$S^\circ$, and we define $\mathcal{I}_{\mathcal{D}}(l,m)=k^\circ-k$. In the final part of this paper, we will define this pairing more precisely and prove that it is the tropicalization of $\mathbb{I}_{\mathcal{D}}$ in an appropriate sense.

\subsection{Organization}

In Section~\ref{sec:BackgroundOnClusterVarieties}, we review the results that we will need from~\cite{IHES,ensembles,dual,P}, modifying the presentation to suit our purposes. We begin with a discussion of Teichm\"uller and lamination spaces and construct coordinates on these spaces. We explain how the properties of these coordinates lead to the definition of cluster varieties, and we explain the sense in which the Teichm\"uller and lamination spaces are dual. 

In Section~\ref{sec:TheSymplecticDouble}, we review the results of~\cite{dilog,double} and define the space~$\mathcal{D}^+(S)$. We construct coordinates on this space and derive formulas for transforming between different coordinate systems. We then introduce the symplectic double and describe the problem we wish to solve.

In Section~\ref{sec:D-laminations}, we define $\mathcal{D}$-laminations on the double of a surface. We construct coordinates on the space of all $\mathcal{D}$-laminations and derive formulas for transforming between different coordinate systems. We will see that the space of $\mathcal{D}$-laminations is a tropical version of the symplectic double. We introduce the space of real $\mathcal{D}$-laminations and describe a boundary for the Teichm\"uller $\mathcal{D}$-space using results of~\cite{infinity}.

In Section~\ref{sec:ClusterAlgebrasAndFPolynomials}, we review some results on cluster algebras and $F$-polynomials that play a role in our discussion of the canonical pairing. The references for this section are~\cite{FZIV,FST,MSW1}. In addition, we introduce a particular cluster algebra that we call the \emph{cluster $\mathcal{D}$-algebra}.

Finally, in Section~\ref{sec:TheCanonicalPairing}, we define a canonical pairing of $\mathcal{D}$-laminations with points of the Teichm\"uller $\mathcal{D}$-space. Our main result in this section is a formula expressing this pairing in terms of $F$-polynomials. We show that any lamination with integral coordinates provides a rational function on~$\mathcal{D}^+(S)$. We conclude by defining a map $\mathcal{I}_{\mathcal{D}}:\mathcal{D}_L(S,\mathbb{Z})\times\mathcal{D}_L(S,\mathbb{Z})\rightarrow\mathbb{Q}$ which we call the \emph{intersection pairing}. We show that it is the tropicalization of the canonical pairing~$\mathbb{I}_{\mathcal{D}}$ in an appropriate sense.

\section{Background on cluster varieties}
\label{sec:BackgroundOnClusterVarieties}

\subsection{Teichm\"uller spaces}

\subsubsection{Preliminaries on surfaces}

In this section, we review the basic theory of cluster varieties~\cite{IHES,ensembles,dual}. We begin by reviewing some geometric constructions on surfaces.

A \emph{decorated surface} is a smooth oriented surface of genus $g\geq0$ with $r\geq0$ punctures and $s\geq0$ smooth boundary components, where each smooth boundary component has finitely many marked points.

\begin{definition}
Let $S$ be a decorated surface. An \emph{ideal triangulation} $T$ of $S$ is a triangulation whose vertices are the marked points and the punctures.
\end{definition}

From now on, we will consider only decorated surfaces $S$ that admit an ideal triangulation. Note that in general the sides of a triangle in an ideal triangulation may not be distinct. In this case, the triangle is said to be \emph{self-folded}.

An edge of an ideal triangulation $T$ is called \emph{external} if it lies along the boundary of $S$, connecting two marked points, and is called \emph{internal} otherwise. We will write $J$ for the set of internal edges of the triangulation~$T$ and $I$ for the set of all edges. For a triangulation without self-folded triangles, we define a skew-symmetric matrix $\varepsilon_{ij}$ ($i$, $j\in I$) by the formula 
\begin{align*}
\varepsilon_{ij} &= \sum_{t\in T}\langle i, t, j\rangle
\end{align*}
where $\langle i, t, j\rangle$ equals $+1$ (respectively, $-1$) if $i$ and~$j$ are sides of the triangle $t$ and $i$ lies in the counterclockwise (respectively, clockwise) direction from $j$ with respect to their common vertex. Otherwise, we set $\langle i, t, j\rangle=0$.

If $k$ is an internal edge of the ideal triangulation $T$, then a \emph{flip} at $k$ is the transformation of $T$ that removes the edge $k$ and replaces it by the unique different edge that, together with the remaining edges, forms a new ideal triangulation:
\[
\xy /l1.5pc/:
{\xypolygon4"A"{~:{(2,2):}}},
{"A1"\PATH~={**@{-}}'"A3"},
\endxy
\quad
\longleftrightarrow
\quad
\xy /l1.5pc/:
{\xypolygon4"A"{~:{(2,2):}}},
{"A2"\PATH~={**@{-}}'"A4"}
\endxy
\]
A flip will be called \emph{regular} if none of the triangles above is self-folded. It is a fact that any two isotopy classes of ideal triangulations on a surface are related by a sequence of flips. Note that there is a natural bijection between the edges of an ideal triangulation and the edges of the triangulation obtained by a flip at some edge. If we use this bijection to identify edges of the flipped triangulation with the set $I$, then it is straightforward to show that a flip at an edge~$k$ of an ideal triangulation changes the matrix~$\varepsilon_{ij}$ to the matrix
\[
\varepsilon_{ij}'=
\begin{cases}
-\varepsilon_{ij} & \mbox{if } k\in\{i,j\} \\
\varepsilon_{ij}+\frac{|\varepsilon_{ik}|\varepsilon_{kj}+\varepsilon_{ik}|\varepsilon_{kj}|}{2} & \mbox{if } k\not\in\{i,j\}.
\end{cases}
\]

\subsubsection{Teichm\"uller $\mathcal{A}$- and $\mathcal{X}$-spaces}

Recall that the classical \emph{Teichm\"uller space} $\mathcal{T}(S)$ of a punctured surface $S$ without marked points can be viewed as the quotient 
\[
\Hom'(\pi_1(S),PSL(2,\mathbb{R}))/PSL(2,\mathbb{R})
\]
where $\Hom'(\pi_1(S),PSL(2,\mathbb{R}))$ is the set of all discrete and faithful representations of $\pi_1(S)$ into $PSL(2,\mathbb{R})$ such that the image of a loop surrounding a puncture is parabolic. The group~$PSL(2,\mathbb{R})$ acts on this set by conjugation.

Let $\rho:\pi_1(S)\rightarrow PSL(2,\mathbb{R})$ be an element of the set described above. Then we can represent~$S$ as a quotient 
\[
S=\mathbb{H}/\Delta
\]
where $\mathbb{H}$ is the upper half plane and $\Delta=\rho(\pi_1(S))$ is a discrete subgroup of $PSL(2,\mathbb{R})$. By definition, the map $\rho$ takes any loop surrounding a puncture to a parabolic transformation. If $x\in\partial\mathbb{H}$ is the fixed point of the parabolic transformation corresponding to a puncture~$p$ in~$S$, then a horocycle in $\mathbb{H}$ centered at $x$ projects to a curve on $S$ which we also call a \emph{horocycle} at $p$.

\begin{definition}
If $S$ is a decorated surface with no marked points, then we define the \emph{Teichm\"uller $\mathcal{A}$-space} $\mathcal{A}^+(S)$ to be the space that parametrizes pairs $(\rho,\mathcal{S})$ where $\rho$ is a point of $\mathcal{T}(S)$ and $\mathcal{S}$ is a set of horocycles, one at each puncture.
\end{definition}

More generally, suppose that $S$ is any decorated surface. Delete the marked points on the boundary of $S$ and double the resulting surface along its boundary arcs. This produces a punctured surface $S'$ where each marked point in the original surface gives rise to a puncture in~$S'$. The doubled surface $S'$ comes equipped with a natural involution $\iota:S'\rightarrow S'$.

\begin{definition}
The \emph{Teichm\"uller $\mathcal{A}$-space} $\mathcal{A}^+(S)$ is defined as the $\iota$-invariant subspace of $\mathcal{A}^+(S')$.
\end{definition}

Note that this space can be identified with the one defined previously in the special case where there are no marked points on $S$. We write $\mathcal{A}_0^+(S)$ for the set of points in $\mathcal{A}^+(S)$ such that if $e$ is the segment of $\partial S$ between two marked points, then the horocycles at the ends of~$e$ are tangent. When there is no possibility of confusion, we will simply write $\mathcal{A}^+$ and~$\mathcal{A}_0^+$.

The other version of Teichm\"uller space that we will consider parametrizes more general surface group representations. More precisely, we consider the set 
\[
\Hom''(\pi_1(S),PSL(2,\mathbb{R}))/PSL(2,\mathbb{R})
\]
where $\Hom''(\pi_1(S),PSL(2,\mathbb{R}))$ is the set of all discrete and faithful representations of $\pi_1(S)$ into $PSL(2,\mathbb{R})$ such that the image of a loop surrounding a puncture is either parabolic or hyperbolic. The group~$PSL(2,\mathbb{R})$ again acts by conjugation.

Suppose we are given a representation $\rho:\pi_1(S)\rightarrow PSL(2,\mathbb{R})$ in the set described above. A puncture $p$ in the surface $S$ will be called a \emph{hole} if $\rho$ maps the homotopy class of a loop surrounding $p$ to a hyperbolic transformation.

\begin{definition}
If $S$ is a decorated surface with no marked points, then we define the \emph{Teichm\"uller $\mathcal{X}$-space} $\mathcal{X}^+(S)$ to be the space that parametrizes pairs $(\rho,\mathcal{S})$ where $\rho$ is an element of the above quotient and $\mathcal{S}$ is a set of orientations, one for each hole.
\end{definition}

More generally, if $S$ is any decorated surface, then by doubling $S$, we can construct the punctured surface $S'$ with the natural involution $\iota:S'\rightarrow S'$ as before. One then has the following definition.

\begin{definition}
The \emph{Teichm\"uller $\mathcal{X}$-space} $\mathcal{X}^+(S)$ of $S$ is the $\iota$-invariant subspace of $\mathcal{X}^+(S')$.
\end{definition}

This space can be identified with the one defined previously in the special case where there are no marked points on $S$. When there is no possibility of confusion, we will simply write~$\mathcal{X}^+$.

\subsubsection{Construction of coordinates}

To construct coordinates on the Teichm\"uller $\mathcal{A}$-space, fix a point $m\in\mathcal{A}^+$ and let $i$ be an edge of an ideal triangulation. The point $m$ allows us to write $S'$ as a quotient $S'=\mathbb{H}/\Delta$ where $\Delta$ is a discrete subgroup of $PSL(2,\mathbb{R})$. We can then deform $i$ into a geodesic and lift this geodesic to the upper half plane $\mathbb{H}$. By definition of the Teichm\"uller $\mathcal{A}$-space, we have horocycles at the ends of the resulting geodesic in $\mathbb{H}$. We define $A_i$ as the exponentiated half length (respectively, negative half length) of the segment of the lifted curve between the intersection points with the horocycles if these horocycles do not intersect (respectively, if they do intersect).
\[
\xy 0;/r.40pc/: 
(0,-6)*{}="1"; 
(12,-6)*{}="2"; 
"1";"2" **\crv{(0,3) & (12,3)}; 
(-8,-6)*{}="X"; 
(20,-6)*{}="Y"; 
"X";"Y" **\dir{-}; 
(0,-4)*\xycircle(2,2){-};
(12,0)*\xycircle(6,6){-};
(6,-10)*{A_i=e^{l/2}}; 
(3,2)*{l}; 
\endxy
\quad
\xy 0;/r.40pc/: 
(0,-6)*{}="1"; 
(12,-6)*{}="2"; 
"1";"2" **\crv{(0,3) & (12,3)}; 
(-8,-6)*{}="X"; 
(20,-6)*{}="Y"; 
"X";"Y" **\dir{-}; 
(0,3)*\xycircle(9,9){-};
(12,1)*\xycircle(7,7){-};
(7.3,2)*{l}; 
(6,-10)*{A_i=e^{-l/2}}; 
\endxy
\]
Doing this for every edge of an ideal triangulation of $S$, we get a collection of numbers $A_i$ ($i\in I$) corresponding to the point $m$.

\begin{proposition}
The numbers $A_i$ ($i\in I$) provide a bijection 
\[
\mathcal{A}^+(S)\rightarrow\mathbb{R}_{>0}^{|I|}.
\]
\end{proposition}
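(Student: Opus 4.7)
My plan follows the classical approach of Penner, who essentially introduced these coordinates in \cite{P}. The proof naturally splits into constructing an inverse: given positive reals $(A_i)_{i\in I}$, we build a unique point $m\in\mathcal{A}^+(S)$ whose lambda lengths are the given numbers. To avoid the subtleties of marked points on the boundary, I first treat the case when $S$ has no marked points, so $\mathcal{A}^+(S)$ is parametrized by discrete faithful representations $\rho$ with parabolic peripheral holonomy together with horocycles at each puncture; the general case then follows by applying the doubling construction from the definition and observing that all data produced will be $\iota$-invariant.

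The first step is a local model. I would prove that for any single ideal triangle with edges carrying positive values $A_i$, $A_j$, $A_k$, there is a unique (up to the diagonal $PSL(2,\mathbb{R})$ action) configuration of three distinct points on $\partial\mathbb{H}$ together with horocycles centered at those points realizing exactly the given lambda lengths. Since $PSL(2,\mathbb{R})$ acts simply transitively on triples of ordered points on $\partial\mathbb{H}$, one can normalize the three points to $0$, $1$, $\infty$, and then solving for the horocycle radii reduces to a triangular system of three positive equations in three unknowns with a unique solution. This is a direct computation with the explicit formula for lambda length in coordinates and provides both the local building block for surjectivity and the local rigidity needed for injectivity.

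Next I would build the global structure by developing. Lift the ideal triangulation $T$ of $S$ to its universal cover, obtaining an ideal triangulation $\widetilde{T}$ of a simply connected model. Pick one triangle $t_0$ of $\widetilde{T}$, realize it in $\mathbb{H}$ using the local model, and extend across each edge to neighboring triangles. At each shared edge the lambda length is specified by the common $A_i$, so the local model forces a unique extension. One obtains a developing map from the universal cover, together with horocycles at all lifts of punctures, and a holonomy representation $\rho:\pi_1(S)\to PSL(2,\mathbb{R})$. Covariance of the construction guarantees that $\rho$ maps the loop around a puncture $p$ to a transformation preserving the horocycle at the corresponding ideal vertex; such a transformation is necessarily parabolic (or the identity, which is ruled out by discreteness of the triangulation). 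Thus the holonomy is of the required type and carries a canonical system of horocycles, yielding a well-defined point of $\mathcal{A}^+(S)$. Discreteness and faithfulness follow because the developing map is a local homeomorphism onto $\mathbb{H}$ minus the cusps and respects the combinatorics of $\widetilde{T}$.

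Finally I would verify that the two constructions are mutually inverse. In one direction, starting from $m\in\mathcal{A}^+(S)$, forming the lambda lengths $A_i$ and then redeveloping gives back $m$ because uniqueness in the local model identifies the redeveloped triangles with the original lifted triangles. In the other direction, the $A_i$ read off from the constructed hyperbolic structure agree with the input by construction. The main obstacle I anticipate is not the generic surjectivity argument, where the local model provides rigidity triangle by triangle, but the handling of self-folded triangles (allowed in the hypothesis), where two sides of a triangle coincide and the naive local count of parameters fails; I would deal with these by perturbing to a triangulation without self-folded triangles (which always exists for surfaces admitting any ideal triangulation of the type under consideration, after possibly passing to the doubled surface $S'$ where the $\iota$-invariance condition cuts out exactly $\mathcal{A}^+(S)$) and then transporting the bijection through the relation between $\mathcal{A}^+(S)$ and $\mathcal{A}^+(S')$.
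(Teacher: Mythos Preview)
Your approach is essentially the same as the paper's: both construct an inverse by developing the triangulation in the universal cover, using the local existence and uniqueness for a single decorated ideal triangle (the paper cites this as \cite{P}, Chapter~1, Corollary~4.8 and Lemma~4.14, while you sketch the direct computation). Your treatment is in fact more detailed than the paper's on the points of discreteness, faithfulness, and parabolic peripheral holonomy, which the paper leaves to the reader.

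One remark: your worry about self-folded triangles is unnecessary, and the perturbation workaround you propose is more complicated than what is needed. Once you pass to the universal cover $\tilde S$, every lift of a self-folded triangle is an honest ideal triangle with three distinct sides, so the local model and the edge-by-edge extension argument apply without modification. The paper's proof exploits this implicitly by working in $\tilde S$ from the start.
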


\begin{proof}
We will construct an inverse to this map. Let us suppose that we are given a positive number $A_i$ for each edge $i\in I$. Let $\tilde{S}$ denote the topological universal cover of $S$. Then we can lift the ideal triangulation of $S$ to a triangulation of $\tilde{S}$, and we can associate to each edge of this triangulation the number associated to its projection.

Let $t_0$ be any triangle in the triangulation of~$\tilde{S}$. By~\cite{P}, Chapter~1, Corollary~4.8, there exists an ideal triangle $u_0$ in $\mathbb{H}$ and horocycles around the endpoints of $u_0$ realizing the $A$-coordinates associated to the edges of $t_0$. Next, consider a triangle $t$ adjacent to $t_0$ in the triangulation of~$\tilde{S}$. The common edge $t\cap t_0$ corresponds to an edge in~$\mathbb{H}$ with horocycles around its endpoints. By~\cite{P}, Chapter~1, Lemma~4.14, there is a unique ideal triangle $u$ in~$\mathbb{H}$ adjacent to $u_0$ with horocycles around its endpoints so that these horocycles agree with the ones already constructed and realize the $A$-coordinates associated to the edges of $t$.

Continuing in this way, we obtain a collection of ideal triangles in~$\mathbb{H}$ where each triangle corresponds to a triangle in~$\tilde{S}$. Now any element $\gamma\in\pi_1(S)$ corresponds to a deck transformation of $\tilde{S}$, and there is a unique element of $PSL(2,\mathbb{R})$ that realizes this deck transformation as an isometry of $\mathbb{H}$ preserving the triangulation. In this way, we obtain a representation $\rho:\pi_1(S)\rightarrow PLS(2,\mathbb{R})$. One can check that this construction provides a two-sided inverse of the map $\mathcal{A}^+(S)\rightarrow\mathbb{R}_{>0}^{|I|}$.
\end{proof}

Now consider a point $m\in\mathcal{X}^+$ and an ideal triangulation $T$. Modify the neighborhood of each hole in $S$ to get a new surface~$S'$ with geodesic boundary. The edges of $T$ correspond to arcs on this surface $S'$, and there is a canonical way to deform these arcs. If an arc ends on a hole, we wind its endpoint around the hole infinitely many times in the direction prescribed by the orientation so that the arcs spiral into the holes in~$S'$. More precisely, each geodesic boundary component of~$S'$ lifts to a geodesic in~$\mathbb{H}$ and we deform the preimage of an edge by dragging its endpoints along these geodesics until they coincide with points of~$\partial\mathbb{H}$.

Once we have deformed the edges of our triangulation in this way, we can lift the triangulation to the upper half plane to get a collection of ideal triangles. Let $k$ be any internal edge and consider the two ideal triangles that share this edge. Together they form an ideal quadrilateral, and we number the vertices of this ideal quadrilateral in counterclockwise order as shown below so that $k$ joins vertices 1 and~3.
\[
\xy 0;/r.40pc/: 
(-6,-8)*{1}; 
(0,-8)*{2}; 
(12,-8)*{3}; 
(24,-8)*{4}; 
(-6,-6)*{}="1"; 
(0,-6)*{}="2"; 
(12,-6)*{}="3"; 
(24,-6)*{}="4"; 
"1";"2" **\crv{(-6,-1) & (0,-1)}; 
"2";"3" **\crv{(0,3) & (12,3)}; 
"3";"4" **\crv{(12,3) & (24,3)}; 
"1";"4" **\crv{(-6,15) & (24,15)}; 
"1";"3" **\crv{(-6,8) & (12,8)}; 
(-6,-4)*\xycircle(2,2){-};
(0,-5)*\xycircle(1,1){-};
(12,-4)*\xycircle(2,2){-};
(24,-3)*\xycircle(3,3){-};
(-8,-6)*{}="X"; 
(26,-6)*{}="Y"; 
"X";"Y" **\dir{-}; 
\endxy
\]

Choose a horocycle at each vertex, and put $A_{ij}=e^{l_{ij}/2}$ where $l_{ij}$ is the signed length of the segment between the horocycles at $i$ and $j$. We then define the \emph{cross ratio} 
\[
X_k=\frac{A_{12}A_{34}}{A_{14}A_{23}}.
\]
It is easy to see that there are two ways of numbering the vertices, and both give the same value for the cross ratio. One can also show that the numbers $X_k$ ($k\in J$) are independent of the chosen horocycles. They are the numbers that we associate to the point~$m$. (Note that our quantities $X_k$ are inverse to the coordinates defined in some sources, for example~\cite{P}, Chapter~2, Definition~4.1.)

\begin{proposition}
The numbers $X_j$ ($j\in J$) provide a bijection 
\[
\mathcal{X}^+(S)\rightarrow\mathbb{R}_{>0}^{|J|}.
\]
\end{proposition}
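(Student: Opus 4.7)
The plan is to construct an explicit inverse to the coordinate map, following the strategy used for the $\mathcal{A}$-space. Suppose we are given a positive number $X_k$ for each internal edge $k\in J$. Lift the ideal triangulation of $S$ to a triangulation of the universal cover $\tilde{S}$, and choose a base triangle $t_0$ in this lift. Place an arbitrary ideal triangle $u_0$ in $\mathbb{H}$; this choice is unique up to the (transitive) action of $PSL(2,\mathbb{R})$ on ideal triangles. For any triangle $t$ adjacent to $t_0$ across an internal edge $k$, three of the four vertices of the resulting ideal quadrilateral in $\mathbb{H}$ are already determined. Because the cross-ratio $X_k=A_{12}A_{34}/(A_{14}A_{23})$ is independent of the choice of horocycles, the value $X_k$ determines the position of the fourth vertex on $\partial\mathbb{H}$ uniquely.

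Iterating this construction over the lifted triangulation, we obtain a tiling of $\mathbb{H}$ by ideal triangles indexed by the triangles of the lift. For each deck transformation $\gamma\in\pi_1(S)$, there is a unique element of $PSL(2,\mathbb{R})$ realizing $\gamma$ as an isometry of $\mathbb{H}$ that matches triangles as prescribed by the tiling, and this assignment yields a representation $\rho:\pi_1(S)\to PSL(2,\mathbb{R})$. Discreteness and faithfulness follow from the local finiteness of the tiling. To finish specifying a point of $\mathcal{X}^+(S)$, we must check that the holonomy around any puncture is parabolic or hyperbolic, and in the hyperbolic case choose an orientation. The holonomy around a puncture $p$ can be expressed as a product of elementary shearing transformations along the edges incident to $p$, where each shearing factor is determined by the corresponding $X_k$. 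A direct computation then shows that this product is either parabolic or loxodromic with real eigenvalues, i.e.\ hyperbolic. In the hyperbolic case, the axis of the holonomy element meets $\partial\mathbb{H}$ in two points, and the limit of the spiraling edges identifies one of these as the attracting fixed point, providing the required orientation at the hole.

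It remains to check that the two maps are mutually inverse. In one direction, the cross-ratio is a projective invariant, so reading off the $X_k$'s from the constructed representation recovers the input numbers. In the other direction, starting with a point $m\in\mathcal{X}^+(S)$ and computing the $X_k$'s, the above reconstruction yields back the original representation (up to conjugation) together with the chosen orientations at holes, because at each step the construction is forced by the same cross-ratio data. Independence from the initial choice of $t_0$ is absorbed into the $PSL(2,\mathbb{R})$-conjugation ambiguity that defines $\mathcal{X}^+$.

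The main technical obstacle is the verification at the punctures: showing that the accumulated holonomy around a puncture lies in the parabolic$\cup$hyperbolic locus for every choice of positive $X_k$'s, and that the spiraling direction of edges correctly recovers the orientation of a hole. This requires unpacking the product of shearing matrices in terms of the coordinates around the puncture and carefully checking the sign conventions for the orientation, which is an exercise in hyperbolic geometry rather than a conceptual difficulty.
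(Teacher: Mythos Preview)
Your proposal is correct and follows essentially the same reconstruction strategy as the paper: lift the triangulation to the universal cover, develop ideal triangles in $\mathbb{H}$ inductively using the cross ratio as the unique gluing invariant, and read off the representation from deck transformations. The one notable difference is that the paper recovers the orientation at a hole by the explicit criterion that $\sum_j \log X_j$ (summed over edges incident to the hole) is negative or positive, whereas you extract it from the attracting fixed point of the holonomy; both are valid, but the paper's formula is more direct and avoids the shearing-matrix computation you flag as the main technical check.
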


\begin{proof}
Suppose we are given a positive number $X_j$ for each edge $j\in J$. To recover the orientation of a hole~$h$, we compute the number $\sum\log X_j$, where the sum is over all edges~$j$ incident to~$h$. This sum is negative (respectively, positive) when the orientation is induced from the orientation of $S$ (respectively, the opposite of this orientation).

Let $\tilde{S}$ denote the topological universal cover of the surface $S$. We can lift the ideal triangulation of $S$ to an ideal triangulation of $\tilde{S}$, and we can associate to each edge of this triangulation the number associated to its projection.

Let $t_0$ be any triangle in the triangulation of~$\tilde{S}$, and choose a corresponding ideal triangle $u_0$ in $\mathbb{H}$. Next, consider a triangle $t$ adjacent to $t_0$ in the triangulation of~$\tilde{S}$. There is an $X$-coordinate associated to the common edge $t\cap t_0$. Since the cross ratio is a complete invariant of four points on~$\partial\mathbb{H}$, there is a unique ideal triangle $u$ in~$\mathbb{H}$ adjacent to $u_0$ so that these triangles realize the $X$-coordinate associated to the common edge.

Continuing in this way, we obtain a triangulation of a region in~$\mathbb{H}$ by geodesic triangles where each triangle corresponds to a triangle in~$\tilde{S}$. Now any element $\gamma\in\pi_1(S)$ determines a deck transformation of $\tilde{S}$, and there is a unique element of $PSL_2(\mathbb{R})$ that realizes this deck transformation as an isometry of $\mathbb{H}$ preserving the triangulation. In this way, we obtain a representation $\rho:\pi_1(S)\rightarrow PSL_2(\mathbb{R})$. One can check that this construction provides a two-sided inverse of the map $\mathcal{X}^+(S)\rightarrow\mathbb{R}_{>0}^{|J|}$.
\end{proof}

One can show that a regular flip at an edge $k$ of the ideal triangulation changes the coordinates~$A_i$ and~$X_i$ to new coordinates~$A_i'$ and~$X_i'$ given by the formulas 
\[
A_i' =
\begin{cases}
A_k^{-1}\biggr(\prod_{j|\varepsilon_{kj>0}}A_j^{\varepsilon_{kj}} + \prod_{j|\varepsilon_{kj<0}}A_j^{-\varepsilon_{kj}}\biggr) & \mbox{if } i=k \\
A_i & \mbox{if } i\neq k
\end{cases}
\]
and
\[
X_i'=
\begin{cases}
X_k^{-1} & \mbox{if } i=k \\
X_i{(1+X_k^{-\sgn(\varepsilon_{ik})})}^{-\varepsilon_{ik}} & \mbox{if } i\neq k.
\end{cases}
\]

The canonical map $p:\mathcal{A}^+\rightarrow\mathcal{X}^+$ which forgets the horocycles is given in terms of coordinates and the matrix $\varepsilon_{ij}$ by the formula $p^*(X_i)=\prod_{j\in I}A_j^{\varepsilon_{ij}}$.

\subsection{Lamination spaces}

\subsubsection{$\mathcal{A}$- and $\mathcal{X}$-laminations}

Let $S$ be a decorated surface. By a \emph{curve} on $S$, we mean an embedding $C\rightarrow S$ of a compact, connected, one-dimensional manifold $C$ with (possibly empty) boundary into~$S$. We require that any endpoints of $C$ map to punctures or points on the boundary of~$S$ disjoint from the marked points. When we talk about homotopies, we mean homotopies within the class of such curves. A curve is called \emph{special} if it is retractable to a puncture or to an interval on~$\partial S$ containing exactly one marked point. A curve is \emph{contractible} if it can be retracted to a point within this class of curves.

\begin{definition}
A \emph{rational $\mathcal{A}$-lamination} on $S$ is the homotopy class of a collection of finitely many nonintersecting noncontractible curves on $S$, either closed or ending on a segment of the boundary bounded by adjacent marked points, with rational weights and subject to the following conditions and equivalence relations:
\begin{enumerate}
\item The weight of a curve is nonnegative unless the curve is special.
\item A lamination containing a curve of weight zero is equivalent to the lamination with this curve removed.
\item A lamination containing homotopic curves of weights $a$ and $b$ is equivalent to the lamination with one curve removed and the weight $a+b$ on the other.
\end{enumerate}
\end{definition}

The set of all rational $\mathcal{A}$-laminations on $S$ is denoted $\mathcal{A}_L(S,\mathbb{Q})$. We will write $\mathcal{A}_L(S,\mathbb{Z})$ for the set of all $\mathcal{A}$-laminations on $S$ that can be represented by a collection of curves with integral weights. We write $\mathcal{A}_L^0(S,\mathbb{Z})$ for the subset of all laminations such that if $e$ is the segment of $\partial S$ between two marked points, then the total weight of the curves ending on $e$ vanishes. When there is no possibility of confusion, we will simply write $\mathcal{A}_L$ and $\mathcal{A}_L^0$.

\begin{definition}
A \emph{rational $\mathcal{X}$-lamination} on $S$ is the homotopy class of a collection of finitely many nonintersecting noncontractible and non-special curves on~$S$ with positive rational weights and a choice of orientation for each puncture in $S$ that meets a curve. A lamination containing homotopic curves of weights $a$ and $b$ is equivalent to the lamination with one curve removed and the weight $a+b$ on the other.
\end{definition}

The set of all rational $\mathcal{X}$-laminations on $S$ is denoted $\mathcal{X}_L(S,\mathbb{Q})$. We will write $\mathcal{X}_L(S,\mathbb{Z})$ for the set of all $\mathcal{X}$-laminations on $S$ that can be represented by a collection of curves with integral weights. When there is no possibility of confusion, we will simply write $\mathcal{X}_L$.

\subsubsection{Construction of coordinates}

To construct coordinates on $\mathcal{A}_L(S,\mathbb{Q})$, fix an $\mathcal{A}$-lamination $l$ and an ideal triangulation of~$S$. Deform the curves of $l$ so that each curve intersects each edge of the triangulation in the minimal number of points. Then we define $a_i$ to be half the total weight of curves that intersect the edge $i$.

\begin{proposition}
The numbers $a_i$ ($i\in I$) provide a bijection 
\[
\mathcal{A}_L(S,\mathbb{Q})\rightarrow\mathbb{Q}^{|I|}.
\]
\end{proposition}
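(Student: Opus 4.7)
The plan is to construct an explicit inverse to the coordinate map by reassembling a lamination from the data $(a_i)_{i\in I}\in\mathbb{Q}^{|I|}$ triangle-by-triangle, in the spirit of Thurston's train-track description of measured laminations. Well-definedness is quick: the weighted minimal intersection number of a lamination with a fixed edge is a standard homotopy invariant, and the defining equivalence relations (combining homotopic curves, deleting curves of weight zero) manifestly preserve each $a_i$.

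For the main construction, fix a triangle $t$ of $T$ with sides $i,j,k$. Up to homotopy rel $\partial t$, the portion of a lamination lying in $t$ is determined by three rational ``flux'' numbers $x^t_{ij}, x^t_{jk}, x^t_{ik}$ recording the total weight of arcs in $t$ joining the indicated pairs of sides, together with possible arcs cutting off a vertex. Imposing the matching condition $x^t_{ij}+x^t_{ik}=2a_i$ on each side of $t$ gives the unique solution
\[
x^t_{ij}=a_i+a_j-a_k, \qquad x^t_{jk}=-a_i+a_j+a_k, \qquad x^t_{ik}=a_i-a_j+a_k.
\]
When all three quantities are nonnegative, I draw parallel families of arcs in $t$ with rational weights summing to the prescribed values. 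When one of them, say $x^t_{jk}$, is negative, I instead draw $x^t_{ij}+|x^t_{jk}|$ arcs between $i$ and $j$, $x^t_{ik}+|x^t_{jk}|$ arcs between $i$ and $k$, and $|x^t_{jk}|$ arcs encircling the vertex opposite side $i$, assigning these last arcs the negative weight $x^t_{jk}$ as permitted for special curves. A direct bookkeeping check shows that in all cases the weighted flux through each side equals $2a_i$. Gluing the pieces across each internal edge, where the weights automatically agree, produces a rational $\mathcal{A}$-lamination with coordinates $a_i$, giving surjectivity. Injectivity follows from the same formulas: the $x^t_{ij}$ are determined by the $a_i$, so two laminations with equal coordinates restrict homotopically to identical diagrams in every triangle and hence agree globally.

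The main obstacle I expect is the negative-weight case: one must verify that the special arcs introduced in different triangles meeting at a common puncture or boundary marked point amalgamate into a single well-defined special curve of the correct total weight, and one must also check that the above prescription adapts correctly to self-folded triangles, where the formulas require modification to account for two sides of the triangle coinciding. These subtleties account for the bulk of the technical work, but the skeleton of the argument is the triangle-by-triangle reconstruction described above.
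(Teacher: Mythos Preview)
Your overall strategy---reconstruct the lamination triangle-by-triangle from the corner weights $x^t_{ij}=a_i+a_j-a_k$---is exactly the paper's strategy, and the injectivity argument is fine. But your handling of the case where some $x^t_{jk}$ is negative contains an arithmetic error. With your prescription (arcs of total weight $x^t_{ij}+|x^t_{jk}|$ between $i,j$, total weight $x^t_{ik}+|x^t_{jk}|$ between $i,k$, and a corner arc of weight $x^t_{jk}$ around the vertex opposite $i$, which crosses $j$ and $k$ but not $i$), the weighted flux through side $i$ is
\[
(x^t_{ij}+|x^t_{jk}|)+(x^t_{ik}+|x^t_{jk}|)=2a_i+2|x^t_{jk}|,
\]
not $2a_i$; the fluxes through $j$ and $k$ are similarly off. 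The fix is simply to drop the extra $|x^t_{jk}|$ terms: draw arcs of weight $x^t_{ij}$ between $i,j$, weight $x^t_{ik}$ between $i,k$, and a corner arc of weight $x^t_{jk}$. More seriously, you have only treated the case where \emph{one} of the three corner weights is negative, but for arbitrary $(a_i)\in\mathbb{Q}^{|I|}$ two or all three can be negative (take $a_i=-10$, $a_j=a_k=1$), and then your branching prescription is undefined. One must also verify that arcs glue consistently across edges when the signs of contributions from the two sides differ.

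The paper sidesteps all of these sign and gluing issues in one stroke: it observes that it suffices to construct a lamination with coordinates $\tilde a_i = p a_i + q$ for some rationals $p,q$, since adding a special curve of weight $-q$ around every vertex and then dividing all weights by $p$ recovers the original coordinates. Choosing $q$ large makes all the $x^t_{ij}$ nonnegative, and choosing $p$ clears denominators, so the triangle-by-triangle construction becomes completely elementary. The paper also works on the universal cover and then quotients, which automatically handles self-folded triangles---the other obstacle you flagged---without any special case analysis.
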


\begin{proof}
Suppose we are given a rational number $a_i$ for each edge $i\in I$. It is enough to construct an $\mathcal{A}$-lamination with coordinates 
\[
\tilde{a_i}=pa_i+q
\]
for some rational numbers $p$ and~$q$. Indeed, if we construct a lamination with these coordinates, then we can add a special curve of weight $-q$ around each puncture and each marked point and then divide the weight of each curve by $p$ to get a lamination with coordinates~$a_i$.

Let $\tilde{S}$ denote the universal cover of $S$. Then we can lift the triangulation of $S$ to a triangulation of $\tilde{S}$, and we can associate to each edge of this triangulation the number $\tilde{a_i}$ associated to its projection.

Let $t$ be any triangle in the triangulation of $\tilde{S}$. By an argument in~\cite{dual}, we can fix $p$ and $q$ so that there exists a corresponding topological triangle $u$ with finitely many nonintersecting curves joining each pair of adjacent edges where the edge corresponding to $i$ intersects the curves exactly $2\tilde{a_i}$ times. By gluing the triangles $u$ and matching the curves at each edge, we recover the universal cover of $S$ together with a collection of curves. Quotienting the resulting space by the group of deck transformations, we obtain the desired $\mathcal{A}$-lamination on the surface~$S$.
\end{proof}

Now consider an $\mathcal{X}$-lamination $l$ and an ideal triangulation $T$. Remove a small neighborhood of each puncture in $S$ that meets a curve to get a new surface $S'$ with boundary. The edges of $T$ correspond to arcs on this surface $S'$. If an arc ends on a hole, we wind its endpoint around the hole infinitely many times in the direction prescribed by the orientation so that the arcs spiral into the holes in~$S'$.

Let $k$ be an internal edge of the resulting triangulation of $S'$, and consider the quadrilateral on the surface with diagonal $k$. There will be finitely many curves that connect opposite sides of the quadrilateral and possibly infinitely many curves that join adjacent sides. Number the vertices of this quadrilateral in counterclockwise order as shown below so that the edge $k$ joins vertices~1 and~3.
\[
\xy /l1.5pc/:
{\xypolygon4"A"{~:{(2,2):}}};
{\xypolygon4"B"{~:{(2.5,0):}~>{}}};
{\xypolygon4"C"{~:{(0.8,0.8):}~>{}}};
{"A2"\PATH~={**@{-}}'"A4"};
(4.5,0)*{1};
(1,3.5)*{2};
(-2.5,0)*{3}; 
(1,-3.5)*{4};
(1.8,2)*{}="A0";
(0.4,-2.2)*{}="A1";
(1.7,-2.2)*{}="A2";
(0.6,-2.4)*{}="A3";
(1.5,-2.4)*{}="A4";
(0.8,-2.6)*{}="A5";
(1.3,-2.6)*{}="A6";
(0.15,-2)*{}="B0";
(0.4,2.2)*{}="B1";
(1.7,2.2)*{}="B2";
(0.6,2.4)*{}="B3";
(1.5,2.4)*{}="B4";
(0.8,2.6)*{}="B5";
(1.3,2.6)*{}="B6";
"A0";"B0" **\crv{(1.3,1.9) & (0.65,-1.9)};
"A1";"A2" **\crv{(0.5,-1.9) & (1.6,-1.9)};
"A3";"A4" **\crv{(0.7,-2.2) & (1.4,-2.2)};
"A5";"A6" **\crv{(0.9,-2.5) & (1.2,-2.5)};
"B1";"B2" **\crv{(0.5,1.9) & (1.6,1.9)};
"B3";"B4" **\crv{(0.7,2.2) & (1.4,2.2)};
"B5";"B6" **\crv{(0.9,2.5) & (1.2,2.5)};
\endxy
\]

Let $p$ be a vertex of this quadrilateral. If there are infinitely many curves connecting the edges that meet at $p$, then we can choose one such curve $\alpha_p$ and delete all of the curves between $\alpha_p$ and the point $p$. By doing this for each vertex, we remove all but finitely many curves from the quadrilateral and get an $\mathcal{A}$-lamination on a disk with four marked points. Let $a_{ij}$ be the coordinate of this $\mathcal{A}$-lamination corresponding to the edge connecting vertices $i$ and $j$. Define 
\[
x_k=a_{12}+a_{34}-a_{14}-a_{23}.
\]
It is easy to see that this number is independent of all choices made in the construction. It is the number associated to the edge $k$.

\begin{proposition}
The numbers $x_j$ ($j\in J$) provide a bijection 
\[
\mathcal{X}_L(S,\mathbb{Q})\rightarrow\mathbb{Q}^{|J|}.
\]
\end{proposition}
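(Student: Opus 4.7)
My plan is to construct a two-sided inverse to the coordinate map, mirroring the proof for $\mathcal{A}$-laminations given above. Given rational numbers $x_j$ for each internal edge $j\in J$, I will build an $\mathcal{X}$-lamination on $S$ whose coordinates are the $x_j$. The key local data are \emph{corner coordinates}: in each ideal triangle of $T$, each of the three corners carries a nonnegative rational number representing the weight of a bundle of parallel arcs joining the two edges adjacent to that corner. For the quadrilateral surrounding an internal edge $k$ with vertices labeled $1,2,3,4$ as in the construction of $x_k$, a direct computation of $a_{12}+a_{34}-a_{14}-a_{23}$ in terms of the four corner coordinates at $1,2,3,4$ shows that $x_k$ equals a fixed signed combination of these four numbers, independent of the auxiliary curves $\alpha_p$ used in the forward construction.

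To invert, I would first solve the resulting linear system: there are $|J|$ linear constraints on roughly three times as many corner variables, and by shifting all corner coordinates uniformly by a sufficiently large rational constant one can achieve a nonnegative solution. Next, as in the proof for $\mathcal{A}$-laminations, lift the triangulation to the universal cover $\tilde{S}$ and realize the corner coordinates in each lifted triangle by a finite collection of parallel arcs at each corner. These arcs glue consistently across each shared edge because the total number of endpoints on either side of the edge depends only on the sum of corner coordinates at its two endpoints, and by construction this sum agrees from both sides. Quotienting by the action of $\pi_1(S)$ produces a collection of curves on $S$. The orientation at each puncture that meets the lamination is recovered from the sign of a local combination of the $x_j$ at edges incident to the puncture: the uniform shift introduced above corresponds, after reversing the shift, to infinitely many arcs spiraling into the puncture, and the direction of spiraling is fixed by this sign and becomes the chosen orientation.

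The main obstacle will be the bookkeeping around the shift and the spiraling arcs. One must verify that the shift does not alter any $x_k$, which follows because $x_k$ is a \emph{signed} combination of four corner coordinates and is therefore shift-invariant; that the configuration of finitely many honest arcs together with infinite spirals truncated by the auxiliary curves $\alpha_p$ represents a well-defined element of $\mathcal{X}_L(S,\mathbb{Q})$ modulo the stated equivalence relations; and that computing $x_j$ from the resulting lamination recovers the given numbers, which again follows from shift-invariance. Negative values of $x_k$ are realized by solutions in which the spiraling is concentrated at one pair of opposite corners of the quadrilateral adjacent to $k$; surjectivity of the coordinate map thus reduces to a linear algebra statement about the shift-invariant signed combination as the corner coordinates range over the nonnegative rationals.
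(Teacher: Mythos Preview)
Your approach via corner coordinates is natural but has a genuine gap that makes it fail for punctured surfaces without boundary. The heart of the issue is this: once you impose gluing consistency across each internal edge (which you must, and which is \emph{not} automatic --- the sum of corner numbers on the two sides of an edge involves different corners of different triangles and need not agree a priori), your corner data become equivalent to a choice of half-intersection numbers $a_i$ for every edge $i\in I$, and the map $(a_i)\mapsto(x_j)$ is exactly the linear map $p^*$ given by the exchange matrix $\varepsilon$. For a surface with no boundary this matrix is square and skew-symmetric, hence singular whenever $|J|$ is odd; already on the once-punctured torus the image of your construction is the hyperplane $x_1+x_2+x_3=0$, so you cannot hit a general point of $\mathbb{Q}^{|J|}$. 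The uniform shift you propose corresponds geometrically to adding peripheral loops around each puncture; these are special curves, they leave every $x_k$ unchanged, and ``reversing'' the shift simply removes them --- it does not create spiraling and cannot move you off that hyperplane.

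The paper's proof takes a genuinely different route that sidesteps this obstruction. It first recovers the orientation at each puncture from the sign of $\sum x_j$ over incident edges, and then works in the hyperbolic plane: each ideal triangle is equipped with \emph{infinitely many} horocyclic arcs near its vertices, and the given $x_j$ are used as cross-ratio gluing parameters (via $e^{x_j}$) to assemble adjacent triangles so that these arcs match up, invoking a lemma of Penner. Quotienting by $\pi_1(S)$ then yields the lamination directly, with the spiraling already encoded by the hyperbolic gluing rather than added afterward. The key conceptual point is that the inverse map is not a finite linear-algebra problem in corner data; the orientation at each puncture and the attendant infinite spiraling are essential to reaching all of $\mathbb{Q}^{|J|}$, and the paper builds both in from the start through the hyperbolic construction.
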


\begin{proof}
Suppose we are given a rational number $x_j$ for each edge $j\in J$. To recover the orientation of a puncture $h$, we compute the number $\sum x_i$, where the sum is over all edges $j$ incident to~$h$. This sum is negative (respectively, positive) when the orientation is induced from the orientation of~$S$ (respectively, the opposite of this orientation).

Let $\tilde{S}$ denote the universal cover of the surface $S$. We can lift the ideal triangulation of~$S$ to an ideal triangulation of~$\tilde{S}$, and we can associate to each edge of this triangulation the number associated to its projection.

Let $t_0$ be any triangle in the triangulation of $\tilde{S}$, and choose a corresponding ideal triangle $u_0$ in the hyperbolic plane $\mathbb{H}$. There is a unique triple of horocycles about its endpoints that are pairwise tangent. The points of tangency provide three canonical points on the edges of~$u_0$. Parametrize the edges of this triangle by $\mathbb{R}$, respecting the orientation induced by the orientation of the triangle, so that the point with parameter $s\in\mathbb{R}$ lies at distance $|s|$ from the distinguished point. Connect points with parameter $s\in\frac{1}{2}+\mathbb{Z}_{\geq0}$ on one side to points with parameter~$-s$ on the next side in the clockwise direction by a horocyclic arc. In this way, we obtain the triangle $u_0$ with infinitely many arcs connecting adjacent sides.
\[
\xy 0;/r.50pc/: 
(6,14)*{\vdots}; 
(0,-6)*{}="2"; 
(12,-6)*{}="3"; 
"2";"3" **\crv{(0,3) & (12,3)}; 
(0,12)*{}="B"; 
(12,12)*{}="C";
(0,-6)*{}="B1"; 
(12,-6)*{}="C1";
(-8,-6)*{}="X"; 
(20,-6)*{}="Y"; 
"B";"B1" **\dir{-}; 
"C";"C1" **\dir{-}; 
"X";"Y" **\dir{-}; 
(-6,6)*{}="X1"; 
(18,6)*{}="Y1"; 
"X1";"Y1" **\dir{-}; 
(-6,9.892)*{}="X2"; 
(18,9.892)*{}="Y2"; 
"X2";"Y2" **\dir{-}; 
(0,0)*\xycircle(6,6){-};
(0,-2.361)*\xycircle(3.639,3.639){-};
(0,-4.661)*\xycircle(1.339,1.339){-};
(0,-5.507)*\xycircle(0.493,0.493){-};
(12,0)*\xycircle(6,6){-};
(12,-2.361)*\xycircle(3.639,3.639){-};
(12,-4.661)*\xycircle(1.339,1.339){-};
(12,-5.507)*\xycircle(0.493,0.493){-};
\endxy
\]

Next, consider a triangle $t$ adjacent to $t_0$ in the ideal triangulation of $\tilde{S}$. There is a number $x_j$ associated to the common edge $j$, and we can find an ideal triangle $u$ adjacent to~$u_0$ so that the cross ratio of the resulting quadrilateral is $e^{x_j}$. We can draw infinitely many horocyclic arcs on $u$ as we did for $u_0$. By~\cite{P}, Chapter~1, Corollary~4.16, these arcs connect to the ones already drawn on $u_0$.

Continuing in this way, we obtain a triangulation of a region in $\mathbb{H}$ by ideal triangles where each triangle corresponds to a triangle in $\tilde{S}$. Quotienting this region by the action of the fundamental group, we obtain a surface homeomorphic to $S$ with curves drawn on it. One can check that this construction provides a two-sided inverse of the map $\mathcal{X}_L(S)\rightarrow\mathbb{Q}^{|J|}$.
\end{proof}

One can show that a regular flip at an edge $k$ of the ideal triangulation changes the coordinates~$a_i$ and~$x_i$ to new coordinates~$a_i'$ and~$x_i'$ given by the formulas 
\[
a_i' =
\begin{cases}
\max\biggr(\sum_{j|\varepsilon_{kj}>0}\varepsilon_{kj}a_j,-\sum_{j|\varepsilon_{kj}<0}\varepsilon_{kj}a_j\biggr)-a_k & \mbox{if } i=k \\
a_i & \mbox{if } i\neq k 
\end{cases}
\]
and
\[
x_i' =
\begin{cases}
-x_k & \mbox{if } i=k \\ 
x_i+\varepsilon_{ki}\max\left(0,\sgn(\varepsilon_{ki})x_k\right) & \mbox{if } i\neq k.
\end{cases}
\]

Since these transformation rules are continuous with respect to the standard topologies on $\mathbb{Q}^{|I|}$ and $\mathbb{Q}^{|J|}$, the coordinates define natural topologies on the spaces of $\mathcal{A}$- and $\mathcal{X}$-laminations. We define the space of \emph{real $\mathcal{A}$-laminations} or \emph{real $\mathcal{X}$-laminations} as the metric space completion of the corresponding space of rational laminations.

The canonical map $p:\mathcal{A}_L\rightarrow\mathcal{X}_L$ which forgets the special curves is given in terms of coordinates and the matrix $\varepsilon_{ij}$ by the formula $p^*(x_i)=\sum_{j\in I}\varepsilon_{ij}a_j$.

\subsection{Cluster varieties}

\subsubsection{$\mathcal{A}$- and $\mathcal{X}$-varieties}

We will now define the cluster $\mathcal{A}$- and $\mathcal{X}$-varieties. These are algebro-geometric objects defined using the transformation formulas given above.

\begin{definition}
A \emph{seed} $\mathbf{i}=(I,J,\varepsilon_{ij},d_i)$ consists of a finite set $I$, a subset $J\subseteq I$, a $\mathbb{Q}$-valued function $\varepsilon_{ij}$ on $I\times I$ such that $\varepsilon_{ij}\in\mathbb{Z}$ if $i\in J$ or $j\in J$, and positive rational numbers $d_i$~($i\in I$) such that $\widehat{\varepsilon}_{ij}=\varepsilon_{ij}d_j^{-1}$ is skew-symmetric. The function $\varepsilon_{ij}$ is called the \emph{exchange function}, and the set $I-J$ is called the set of \emph{frozen elements} of $I$.
\end{definition}

Given a seed $\mathbf{i}=(I,J,\varepsilon_{ij},d_i)$, we get two split algebraic tori $\mathcal{X}_\mathbf{i} = (\mathbb{G}_m)^{|J|}$ and $\mathcal{A}_\mathbf{i} = (\mathbb{G}_m)^{|I|}$ called the \emph{seed $\mathcal{X}$-torus} and \emph{seed $\mathcal{A}$-torus}, respectively. Let $\{X_j\}$ be the natural coordinates on the seed $\mathcal{X}$-torus and $\{A_i\}$ the natural coordinates on the seed $\mathcal{A}$-torus.

\begin{definition}
Let $\mathbf{i}=(I,J,\varepsilon_{ij},d_i)$ be a seed and $k\in J$ a non-frozen element. Then we define a new seed $\mu_k(\mathbf{i})=\mathbf{i}'=(I',J',\varepsilon_{ij}',d_i')$, called the seed obtained by \emph{mutation} in the direction $k$ by setting $I'=I$, $J'=J$, $d_i'=d_i$, and 
\[
\varepsilon_{ij}'=
\begin{cases}
-\varepsilon_{ij} & \mbox{if } k\in\{i,j\} \\
\varepsilon_{ij}+\frac{|\varepsilon_{ik}|\varepsilon_{kj}+\varepsilon_{ik}|\varepsilon_{kj}|}{2} & \mbox{if } k\not\in\{i,j\}.
\end{cases}
\]
\end{definition}

A seed mutation also induces birational maps on the seed $\mathcal{A}$- and $\mathcal{X}$-tori defined by the formulas 
\[
\mu_k^*A_i' =
\begin{cases}
A_k^{-1}\biggr(\prod_{j|\varepsilon_{kj>0}}A_j^{\varepsilon_{kj}} + \prod_{j|\varepsilon_{kj<0}}A_j^{-\varepsilon_{kj}}\biggr) & \mbox{if } i=k \\
A_i & \mbox{if } i\neq k
\end{cases}
\]
and
\[
\mu_k^*X_i'=
\begin{cases}
X_k^{-1} & \mbox{if } i=k \\
X_i{(1+X_k^{-\sgn(\varepsilon_{ik})})}^{-\varepsilon_{ik}} & \mbox{if } i\neq k
\end{cases}
\]
where $A_i'$ and $X_i'$ are the coordinates on $\mathcal{A}_{\mathbf{i}'}$ and $\mathcal{X}_{\mathbf{i}'}$.

Two seeds will be called \emph{mutation equivalent} if they are related by a sequence of mutations. We will denote the mutation equivalence class of a seed $\mathbf{i}$ by $|\mathbf{i}|$. A  transformation of the $\mathcal{A}$- or $\mathcal{X}$-tori obtained by composing the above birational maps is called a \emph{cluster transformation}.

\begin{definition}
The \emph{cluster $\mathcal{A}$-variety} $\mathcal{A}=\mathcal{A}_{|\mathbf{i}|}$ is a scheme over $\mathbb{Z}$ obtained by gluing all seed $\mathcal{A}$-tori for seeds mutation equivalent to the seed $\mathbf{i}$ using the above birational maps. The \emph{cluster $\mathcal{X}$-variety} $\mathcal{X}=\mathcal{X}_{|\mathbf{i}|}$ is obtained by gluing all seed $\mathcal{X}$-tori for seeds mutation equivalent to the seed $\mathbf{i}$ using the above birational maps.
\end{definition}

There is a canonical mapping $p:\mathcal{A}\rightarrow\mathcal{X}$ given in any cluster coordinate system by $p^*(X_i)=\prod_{j\in I}A_j^{\varepsilon_{ij}}$.

\subsubsection{Positive real and tropical points}

We can now explain how the Teichm\"uller and lamination spaces arise as the positive real and tropical points of cluster varieties.

\begin{definition}
A \emph{semifield} $\mathbb{P}$ is a set equipped with binary operations $+$ and $\cdot$ such that $+$ is commutative and associative, $\mathbb{P}$ is an abelian group under $\cdot$, and the usual distributive law holds: $(a+b)\cdot c=a\cdot c+b\cdot c$ for all~$a$,~$b$,~$c\in\mathbb{P}$.
\end{definition}

\begin{example}
The following examples of a semifields will play an important role in our discussion.
\begin{enumerate}
\item Let $\mathbb{P}=\mathbb{Q}_{>0}$ or $\mathbb{P}=\mathbb{R}_{>0}$, the set of positive rational or real numbers. Then $\mathbb{P}$ is a semifield under the usual operations of addition and multiplication.
\item Let $\mathbb{P}=\mathbb{Z}$, $\mathbb{Q}$, or~$\mathbb{R}$. Then $\mathbb{P}$ is a semifield with the operations $\oplus$ and $\otimes$ given by 
\[
a\oplus b=\max(a,b), \quad a\otimes b=a+b
\]
for all $a$, $b\in\mathbb{P}$. The semifields defined in this way are called \emph{tropical semifields} in the works of Fock and~Goncharov. They are denoted $\mathbb{Z}^t$, $\mathbb{Q}^t$, and~$\mathbb{R}^t$.
\item Let $\mathbb{P}=\mathbb{Q}_{\mathrm{sf}}(u_1,\dots,u_n)$ be the set of subtraction-free rational functions in the variables $u_1,\dots,u_n$. This set consists of all rational functions in $u_1,\dots,u_n$ that are expressible as a ratio of two polynomials with positive integral coefficients. It is a semifield whose operations are ordinary addition and multiplication of rational functions.
\item Let $\mathbb{P}=\mathrm{Trop}(y_1,\dots,y_n)$ be the free multiplicative abelian group generated by $y_1,\dots,y_n$ with the auxiliary addition defined by 
\[
\prod_{i=1}^n y_i^{a_i}\oplus \prod_{i=1}^n y_i^{b_i}=\prod_{i=1}^n y_i^{\min(a_i,b_i)}.
\]
This operation makes $\mathbb{P}$ into a semifield which Fomin and Zelevinsky call the \emph{tropical semifield} generated by $y_1,\dots,y_n$.
\end{enumerate}
\end{example}

Given a semifield $\mathbb{P}$ and a split algebraic torus $H$, we can form the set $H(\mathbb{P})=X_*(H)\otimes_\mathbb{Z}\mathbb{P}$. Here $X_*(H)$ is the group of cocharacters of $H$ and we are using the abelian group structure of $\mathbb{P}$. For example, we have the set $\mathcal{A}_\mathbf{i}(\mathbb{P})$ obtained from the seed $\mathcal{A}$-torus $\mathcal{A}_\mathbf{i}$. Now the maps $\psi_{\mathbf{i},\mathbf{i}'}:\mathcal{A}_\mathbf{i}\rightarrow\mathcal{A}_{\mathbf{i}'}$ that we used to glue the seed $\mathcal{A}$-tori induce maps $\psi_{\mathbf{i},\mathbf{i}'*}:\mathcal{A}_\mathbf{i}(\mathbb{P})\rightarrow\mathcal{A}_{\mathbf{i}'}(\mathbb{P})$, so we can define the quotient 
\[
\mathcal{A}(\mathbb{P})=\coprod \mathcal{A}_\mathbf{i}(\mathbb{P})/(\text{identifications }\psi_{\mathbf{i},\mathbf{i}'*}).
\]
The set of $\mathbb{P}$-points $\mathcal{X}(\mathbb{P})$ of the cluster $\mathcal{X}$-variety is defined similarly.

Since the coordinates of the Teichm\"uller $\mathcal{A}$- and $\mathcal{X}$-spaces are positive by construction, we see that the sets $\mathcal{A}(\mathbb{R}_{>0})$ and $\mathcal{X}(\mathbb{R}_{>0})$ are identified with the Teichm\"uller $\mathcal{A}$- and $\mathcal{X}$-spaces. By examining how the coordinates transform under mutation, we see also that the sets $\mathcal{A}(\mathbb{Q}^t)$ and $\mathcal{X}(\mathbb{Q}^t)$ are identified with the $\mathcal{A}$- and $\mathcal{X}$-lamination spaces.

One can show that $l\in\mathcal{X}_L(S,\mathbb{Z})$ if and only if $l$ has integral coordinates, and hence 
\[
\mathcal{X}_L(S,\mathbb{Z})=\mathcal{X}(\mathbb{Z}^t).
\]
On the other hand, one can show that $l\in\mathcal{A}_L(S,\mathbb{Z})$ if and only if $l$ has half integral coordinates and for any triangle the sum of the coordinates associated to its edges is an integer. Thus we have inclusions 
\[
\mathcal{A}(\mathbb{Z}^t)\subseteq\mathcal{A}_L(S,\mathbb{Z})\subseteq\mathcal{A}\left(\frac{1}{2}\mathbb{Z}^t\right).
\]

\subsection{Canonical pairings}

\subsubsection{Multiplicative canonical pairings}

As explained in \cite{IHES} and \cite{dual}, the Teichm\"uller and lamination spaces defined above are dual in the sense that there exists a multiplicative canonical pairing of either lamination space with the Teichm\"uller space of the opposite type. More precisely, we have a pair of maps
\begin{align*}
\mathbb{I}&:\mathcal{X}_L(S,\mathbb{Z})\times\mathcal{A}^+_0(S)\rightarrow\mathbb{R}_{>0}, \\
\mathbb{I}&:\mathcal{A}_L^0(S,\mathbb{Z})\times\mathcal{X}^+(S) \rightarrow\mathbb{R}_{>0},
\end{align*}
which we denote by the same symbol $\mathbb{I}$. We usually think of these maps as operations which assign to a lamination $l$ the function on Teichm\"uller space given by $\mathbb{I}(l)(m)=\mathbb{I}(l,m)$.

Before giving the definition of the maps $\mathbb{I}$, let us describe two constructions involving Teichm\"uller spaces and laminations:

\begin{enumerate}
\item Let $m$ be a point of $\mathcal{A}_0^+$ and $l$ a curve on $S$ connecting punctures or boundary segments on $S$. If an endpoint of $l$ lies on a boundary segment, drag this endpoint in the counterclockwise direction until it hits a marked point. Take a geodesic homotopic to the resulting curve, and define $I(l,m)$ to be half the length between the horocycles at the endpoints of $l$.
\item Let $m$ be a point of $\mathcal{X}^+$ and $l$ a collection of curves with rational weights connecting boundary segments of $S$ so that for any boundary segment, the total weight of the curves hitting it vanishes. If an endpoint lies on a boundary segment, drag this endpoint in the counterclockwise direction until it hits a marked point. Finally, take geodesics homotopic to the resulting curves. Choose a horocycle around every marked point. Then $I(l,m)$ is the weighted sum of the signed half lengths of the curve between the horocycles. One can check that this definition is independent of the choice of horocycles.
\end{enumerate}

One now has the following definition of the canonical pairings.

\Needspace*{2\baselineskip}
\begin{definition}
\label{def:canonicalAX} \mbox{}
\begin{enumerate}
\item Let $l\in\mathcal{X}_L(S,\mathbb{Z})$ be a lamination consisting of a single closed curve of weight~$k$, and let $m\in\mathcal{A}_0^+$. Then $\mathbb{I}(l,m)$ is the absolute value of the trace of the $k$th power of the monodromy around $l$.
\item Let $m\in\mathcal{X}^+$ and let $l\in\mathcal{A}_L^0(S,\mathbb{Z})$ be a lamination consisting of a single closed curve of weight $k$ which is not retractable to a hole. Then $\mathbb{I}(l,m)$ is the absolute value of the trace of the $k$th power of the monodromy around $l$.
\item Let $m\in\mathcal{X}^+$ and let $l\in\mathcal{A}_L^0(S,\mathbb{Z})$ be a lamination consisting of a single closed curve of weight $k$ which is retractable to a hole with positive (respectively, negative) orientation. Then $\mathbb{I}(l,m)$ is the absolute value of the smallest (respectively, largest) eigenvalue of the $k$th power of the monodromy around $l$.
\item Let $l\in\mathcal{X}_L(S,\mathbb{Z})$ be a lamination consisting of a single curve of weight $k$ connecting punctures or points on the boundary of $S$, and let $m\in\mathcal{A}_0^+$. Assume that the orientation of any hole at one end of $l$ is induced by the orientation of the surface. Then $\mathbb{I}(l,m)=\exp I(l,m)$.
\item Let $l\in\mathcal{A}_L^0(S,\mathbb{Z})$ be a lamination consisting of a collection of open curves of integral weights on $S$, and let $m\in\mathcal{X}^+$. Then $\mathbb{I}(l,m)=\exp I(l,m)$.
\item Let $l_1$ and~$l_2$ be laminations such that no curve from $l_1$ intersects or is homotopic to a curve from $l_2$. Then $\mathbb{I}(l_1+l_2,m)=\mathbb{I}(l_1,m)\mathbb{I}(l_2,m)$.
\end{enumerate}
\end{definition}

This defines the canonical pairing in the special case where the orientations of the holes in an $\mathcal{X}$-lamination are induced from the orientation of the surface. If there is a hole $p$ in an $\mathcal{X}$-lamination $l$ whose orientation disagrees with the orientation of~$S$, then we can define $\mathbb{I}(l)$ by modifying slightly the above definition. To do this, we first note that there is a natural invariant associated to $p$. Indeed, fix an ideal triangulation $T$ of~$S$ and consider the triangles having $p$ as a vertex. We can label the edges that meet $p$ by $\eta_1,\dots,\eta_N$ in counterclockwise order and we can write $\zeta_i$ for the third edge of the triangle having edges $\eta_i$ and $\eta_{i+1}$, where we count indices modulo~$N$.
\[
\xy /l2.5pc/:
{\xypolygon6"A"{~:{(2,0):}~>{}}};
{"A1"\PATH~={**@{-}}'"A2"};
{"A2"\PATH~={**@{-}}'"A3"};
{"A3"\PATH~={**@{-}}'"A4"};
{"A4"\PATH~={**@{-}}'"A5"};
{"A5"\PATH~={**@{-}}'"A6"};
{(1,0)\PATH~={**@{-}}'"A1"};
{(1,0)\PATH~={**@{-}}'"A2"};
{(1,0)\PATH~={**@{-}}'"A3"};
{(1,0)\PATH~={**@{-}}'"A4"};
{(1,0)\PATH~={**@{-}}'"A5"};
{(1,0)\PATH~={**@{-}}'"A6"};
(2,-0.5)*{\Ddots};
(2,0.25)*{\eta_1};
(1.25,1)*{\eta_2};
(0.25,0.9)*{\eta_3};
(0,-0.25)*{\eta_4};
(0.75,-1)*{\eta_5};
(2.75,1)*{\zeta_1};
(1,2)*{\zeta_2};
(-0.75,1)*{\zeta_3};
(-0.75,-1)*{\zeta_4};
(1,-2)*{\zeta_5};
\endxy
\]
Given a point $m\in\mathcal{A}_0^+(S)$, we get a number $A_i$ associated to each edge $i$ of~$T$. We define 
\[
\alpha(p)=\sum_{i=1}^N\frac{A_{\zeta_i}}{A_{\eta_i}A_{\eta_{i+1}}}.
\]
One can check that $\alpha(p)$ is independent of the choice of ideal triangulation.

If $r$ is the number of holes in the surface, then there is a natural action of the group ${(\mathbb{Z}/2\mathbb{Z})}^r$ on the Teichm\"uller $\mathcal{A}$-space~\cite{IHES}. The generator of the $i$th factor of $\mathbb{Z}/2\mathbb{Z}$ acts by multiplying $A_j$ by $\alpha(p)$ whenever the edge $j$ meets $p$. There is also an action of ${(\mathbb{Z}/2\mathbb{Z})}^r$ on the space of $\mathcal{X}$-laminations where the generator of the $i$th factor of $\mathbb{Z}/2\mathbb{Z}$ acts by changing the orientation of the $i$th hole. We can therefore extend the canonical mapping $\mathbb{I}$ to all $\mathcal{X}$-laminations by requiring that it be equivariant with respect to these group actions.

\subsubsection{Laurent polynomials from laminations}
\label{subsec:LaurentPolynomialsFromLaminations}

In~\cite{IHES} and~\cite{dual}, Fock and Goncharov describe a method for computing the functions $\mathbb{I}(l)$ in terms of the coordinates on the Teichm\"uller $\mathcal{A}$- or $\mathcal{X}$-space. Here we review the method in the case of the $\mathcal{X}$-space and discuss some of its implications.

To begin, let $T$ be an ideal triangulation of the surface $S$. Draw a small edge transverse to every edge of the triangulation $T$ and connect the endpoints of these edges pairwise within each triangle of $T$. In this way, we associate a graph $\Gamma$ to the triangulation as illustrated below. (The original triangulation is indicated by the dotted lines in this picture.)
\[
\xy /l1.5pc/:
{\xypolygon4"A"{~:{(3,3):}~>{.}}},
{"A1"\PATH~={**@{.}}'"A3"},
(0.5,0)*{}="X1"; 
(1.5,0)*{}="Y1"; 
(-1.5,0.75)*{}="X2"; 
(3.5,0.75)*{}="Y2";
(-1.5,-0.75)*{}="X3"; 
(3.5,-0.75)*{}="Y3"; 
(-2.5,1.75)*{}="X4";  
(4.5,1.75)*{}="Y4"; 
(-2.5,-1.75)*{}="X5";  
(4.5,-1.75)*{}="Y5"; 
"X1";"Y1" **\dir{-}; 
"X2";"X4" **\dir{-}; 
"Y2";"Y4" **\dir{-}; 
"X3";"X5" **\dir{-}; 
"Y3";"Y5" **\dir{-}; 
{\ar"X2";"X1"},
{\ar"Y1";"Y2"},
{\ar"X1";"X3"},
{\ar"Y3";"Y1"},
{\ar"X3";"X2"},
{\ar"Y2";"Y3"},
\endxy
\]
This construction produces a small triangle inside every triangle of $T$, and we orient the edges of these small triangles in the clockwise direction.

Suppose we are given an oriented closed loop $\gamma$ based at a vertex of the graph. Then $\gamma$ is homotopic to a closed path in $\Gamma$. Let $e_1,\dots,e_n$ be the edges of this path in order, and let us associate a matrix $M(e_i)$ to each edge $e_i$ as follows. If $e_i$ intersects an edge~$k$ of~$T$, then the matrix that we assign to this edge is 
\[
M(e_i)=
\left( \begin{array}{cc}
0 & X_k^{1/2} \\
-X_k^{-1/2} & 0 \end{array} \right).
\]
This matrix satisfies $M(e_i)^2=-1$ in $SL(2,\mathbb{R})$ and hence $M(e_i)=M(e_i)^{-1}$ in $PSL(2,\mathbb{R})$. On the other hand, if $e_i$ does not intersect an edge of $T$ and its orientation agrees with the orientation coming from $\gamma$, then we define 
\[
M(e_i)=
\left( \begin{array}{cc}
1 & 1 \\
-1 & 0 \end{array} \right).
\]
Finally, if $e_i$ does not intersect an edge of $T$ and its orientation disagrees with the orientation of $\gamma$, then $M(e_i)$ is the inverse of this last matrix. We can multiply the matrices $M(e_i)$ defined in this way to get the monodromy 
\[
\rho(\gamma)=M(e_n)\dots M(e_1)\in PSL(2,\mathbb{R}).
\]
One can check that this element $\rho(\gamma)$ is well defined.

There is a similar construction that allows us to express the monodromy of a closed loop in terms of $A$-coordinates. In \cite{IHES}, Fock and Goncharov use these constructions to compute $\mathbb{I}(l)$ when $l$ is an $\mathcal{A}$- or $\mathcal{X}$-lamination and to show that this function can be written as a Laurent polynomial with positive integral coefficients in the variables $X_j^{1/2}$ or $A_j$ ($j\in J$), respectively. Moreover, they show that when $l$ has integral coordinates (that is, when $l$ comes from $\mathcal{A}(\mathbb{Z}^t)$ or $\mathcal{X}(\mathbb{Z}^t)$), the function $\mathbb{I}(l)$ can in fact be written as a Laurent polynomial with positive integral coefficients in the variables $X_j$ or $A_j$ ($j\in J$). In Section~\ref{sec:TheCanonicalPairing}, we will revisit the above construction and use it to prove a similar result for $\mathcal{D}$-laminations.

\subsubsection{The intersection pairing}

In addition to the multiplicative canonical pairings that we defined above, we have the following canonical map, which should be viewed as a degeneration of these pairings.

\begin{definition}
Let $S$ be a punctured surface and choose $l\in\mathcal{A}_L(S,\mathbb{Z})$ and $m\in\mathcal{X}_L(S,\mathbb{Z})$. Assume that $m$ provides the negative orientation for each hole. Then we define $\mathcal{I}(l,m)$ to be half the minimal number of intersections between~$l$ and~$m$. Here we take into account the weights of the curves so that if a curve of weight $k_1$ intersects a curve of weight $k_2$, then this intersection contributes the term $k_1k_2/2$ to $\mathcal{I}(l,m)$.
\end{definition}

This defines $\mathcal{I}(l,m)$ in the special case where $m$ provides the negative orientation for each hole. Note that there are natural actions of the group $(\mathbb{Z}/2\mathbb{Z})^r$ on $\mathcal{A}_L(S,\mathbb{Z})$ and $\mathcal{X}_L(S,\mathbb{Z})$ where $r$ is the number of punctures in $S$. The generator of the $i$th factor of $\mathbb{Z}/2\mathbb{Z}$ acts on $\mathcal{A}_L(S,\mathbb{Z})$ by changing the sign of the weight of curves surrounding the $i$th hole. It acts on $\mathcal{X}_L(S,\mathbb{Z})$ by changing the orientation of the $i$th hole. We extend $\mathcal{I}$ to a map
\[
\mathcal{I}:\mathcal{A}_L(S,\mathbb{Z})\times\mathcal{X}_L(S,\mathbb{Z})\rightarrow\mathbb{Q}
\]
called the \emph{intersection pairing} by requiring that it be equivariant with respect to these group actions.

To understand the relationship between the intersection pairing and the multiplicative canonical pairings defined above, we recall the notion of the tropicalization of a subtraction-free rational function~\cite{dual}. Let $F(u_1,\dots,u_n)$ be such a function. We define its \emph{tropicalization} $F^t(u_1,\dots,u_n)$ by the formula 
\[
F^t(u_1,\dots,u_n)=\lim_{C\rightarrow\infty}\frac{\log F(e^{Cu_1},\dots,e^{Cu_n})}{C}.
\]
It is easy to see that 
\[
\lim_{C\rightarrow\infty}\frac{\log(e^{Cv_1}+\dots+e^{Cv_n})}{C}=\max(v_1,\dots,v_n),
\]
so tropicalization takes the operations~$+$ and~$\cdot$ to~$\max$ and~$+$, respectively.

For any laminations $l\in\mathcal{A}_L^0(S,\mathbb{Z})$ and $m\in\mathcal{X}_L(S,\mathbb{Z})$, one has 
\[
\mathcal{I}(l,m)=(\mathbb{I}(l))^t(m)
\]
and 
\[
\mathcal{I}(l,m)=(\mathbb{I}(m))^t(l)
\]
so that the intersection pairing is the tropicalization of the multiplicative canonical pairings. In Section~\ref{sec:TheCanonicalPairing}, we will give a similar characterization of the tropicalization of the canonical mapping $\mathbb{I}_{\mathcal{D}}$ on $\mathcal{D}$-laminations.

\section{The symplectic double}
\label{sec:TheSymplecticDouble}

\subsection{Teichm\"uller $\mathcal{D}$-space}

We now describe the moduli space from \cite{double}. Let $\Sigma$ be an oriented punctured surface.

\begin{definition}
A \emph{simple lamination} on $\Sigma$ is a finite collection $\gamma=\{\gamma_i\}$ of simple nontrivial disjoint nonhomotopic closed curves on $\Sigma$ which do not retract to the punctures, considered up to homotopy.
\end{definition}

Let $\Sigma$ be an oriented surface equipped with a simple lamination $\gamma=\{\gamma_i\}$. For any subset $\{\gamma_1,\dots,\gamma_k\}$ of the set of loops $\gamma_i$, we let $\Sigma_{p_1,\dots,p_k}$ denote the singular surface obtained by pinching these loops to get the nodes $p_1,\dots,p_k$. This singular surface is equipped with a simple lamination $\gamma_{p_1,\dots,p_k}$ given by the image of $\gamma-\{\gamma_1,\dots,\gamma_k\}$. Note that if we cut the surface $\Sigma_{p_1,\dots,p_k}$ at the nodes $p_1,\dots,p_k$, we get a punctured surface $\Sigma_{p_1,\dots,p_k}'$. When we talk about a point in the Teichm\"uller space of $\Sigma_{p_1,\dots,p_k}$, we really mean a point in the Teichm\"uller space of the surface $\Sigma_{p_1,\dots,p_k}'$. Similarly, when we talk about a horocycle at a node of $\Sigma_{p_1,\dots,p_k}$, we really mean a horocycle at one of the corresponding punctures in $\Sigma_{p_1,\dots,p_k}'$.

\begin{definition}[\cite{double}]
The set $\mathcal{X}_{\Sigma;\gamma;p_1,\dots,p_k}^+$ parametrizes points $\rho\in\mathcal{T}(\Sigma_{p_1,\dots,p_k})$ of the Teichm\"uller space of $\Sigma_{p_1,\dots,p_k}$, together with the following data:
\begin{enumerate}
\item An orientation for each loop of the simple lamination $\gamma_{p_1,\dots,p_k}$.

\item For every node $p_i$, a pair of horocycles $(c_{-,i},c_{+,i})$ centered at $p_i$, located on opposite sides of the node, and defined up to simultaneous shift by any real number.
\end{enumerate}
We write $\mathcal{X}_{\Sigma;\gamma}^+$ for the union of these sets $\mathcal{X}_{\Sigma;\gamma;p_1,\dots,p_k}^+$.
\end{definition}

Now let $S$ be a decorated surface. If $S$ has punctures, then we can compactify~$S$ by gluing a boundary circle to each puncture. This produces a compact oriented surface with boundary which we will also denote by $S$. Let $S^\circ$ be the same surface with the opposite orientation.

The \emph{double} $S_\mathcal{D}$ of $S$ is the punctured surface obtained by gluing $S$ and~$S^\circ$ along corresponding boundary components and deleting the image of each marked point in the resulting surface. The surface $\Sigma=S_\mathcal{D}$ carries a natural simple lamination~$\gamma$ given by the image of the boundary loops of $S$.

\begin{definition}
The \emph{Teichm\"uller $\mathcal{D}$-space} $\mathcal{D}^+(S)$ is the space $\mathcal{X}_{\Sigma;\gamma}^+$ with $\Sigma=S_\mathcal{D}$.
\end{definition}

When there is no possibility of confusion, we will denote the Teichm\"uller $\mathcal{D}$-space by $\mathcal{D}^+$. Note that so far this space is a union of strata with no specified topology. Below we will define coordinates on $\mathcal{D}^+$ giving it a natural topology.

\subsection{Construction of coordinates}

\subsubsection{Preliminaries}

Let $S$ be a surface as above and $S_\mathcal{D}$ its double. The next two results will be used to construct coordinates on $\mathcal{D}^+(S)$.

\begin{lemma}
If $\rho\in\mathcal{T}(S_\mathcal{D})$ is a point in the Teichm\"uller space of $S_\mathcal{D}$, then there exists a hyperbolic structure representing $\rho$ such that $\partial S\subseteq S_\mathcal{D}$ is geodesic.
\end{lemma}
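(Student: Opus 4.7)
The plan is to exploit the standard fact that on a hyperbolic surface, every free homotopy class of essential simple closed curves contains a unique geodesic representative, and that a disjoint simple multicurve is isotopic to its geodesic realization.

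First, I would start with an arbitrary hyperbolic structure $h$ in the Teichm\"uller class $\rho$. The boundary circles $\partial S \subset S_\mathcal{D}$ form a finite collection of simple, disjoint, essential (non-peripheral, non-contractible) loops $\{\gamma_i\}$, which are pairwise non-isotopic as components of the lamination $\gamma$. None of them is homotopic to a puncture, because after pinching they become the separating nodes between $S$ and $S^\circ$, and in particular the monodromy around them is hyperbolic (or the loops persist as curves bounding subsurfaces of strictly negative Euler characteristic on both sides). Consequently each $\gamma_i$ has a well-defined closed geodesic representative $\gamma_i^*$ for the structure $h$.

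Next, I would invoke the classical result (see e.g.\ Farb--Margalit or Casson--Bleiler) that if $\{\gamma_i\}$ is a collection of disjoint, essential, pairwise non-isotopic simple closed curves on a hyperbolic surface, then their geodesic representatives $\{\gamma_i^*\}$ are also simple, pairwise disjoint, and each $\gamma_i^*$ is isotopic to $\gamma_i$. Moreover, there exists a diffeomorphism $\varphi: S_\mathcal{D} \to S_\mathcal{D}$, isotopic to the identity, such that $\varphi(\gamma_i) = \gamma_i^*$ for every $i$ simultaneously; this is the ``multicurve version'' of the standard isotopy result, proved by an innermost-disk/bigon argument or by successive applications of the single-curve case to an ordering of the components of a tubular neighborhood.

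Finally, I would pull back the hyperbolic structure: define $h' = \varphi^* h$. Because $\varphi$ is isotopic to the identity, $h'$ represents the same point $\rho \in \mathcal{T}(S_\mathcal{D})$. By construction, $\varphi$ sends $\partial S = \bigsqcup \gamma_i$ to $\bigsqcup \gamma_i^*$, which is $h$-geodesic; equivalently, $\partial S$ is $h'$-geodesic. This yields the required representative of $\rho$.

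The main technical obstacle is the simultaneous realization in Step 2 — ensuring that a single ambient isotopy straightens all components of $\partial S$ at once rather than only one at a time. The standard way around this is to take disjoint annular neighborhoods of the $\gamma_i^*$ (which are disjoint by the geodesic intersection bound), observe that each $\gamma_i$ is isotopic to $\gamma_i^*$ through an isotopy that can be supported in such a neighborhood once the $\gamma_i$ have been isotoped so as to lie in it, and then assemble these local isotopies into a global ambient isotopy. Granting this, the claim follows immediately.
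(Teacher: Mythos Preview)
Your proposal is correct and follows essentially the same approach as the paper: both arguments start from an arbitrary representative of $\rho$, produce a map isotopic (equivalently, homotopic) to the identity carrying $\partial S$ onto its geodesic realization, and then pull back the hyperbolic structure along that map. The paper phrases this in the marked-surface language $(X,\phi)\sim(X,\phi')$ and simply asserts that $\phi$ can be deformed so that $\phi'(\partial S)$ is geodesic, whereas you supply the underlying justification via the standard multicurve straightening result; this extra detail is fine and does not change the argument.
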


\begin{proof}
A point $\rho$ of the Teichm\"uller space can be viewed as a marked hyperbolic surface, that is, a hyperbolic surface $X$ together with a diffeomorphism $\phi:S_\mathcal{D}\rightarrow X$. The image of $\partial S\subseteq S_\mathcal{D}$ under $\phi$ may not be geodesic in $X$, but we can deform~$\phi$ to a homotopic map $\phi'$ such that $\phi'(\partial S)$ is geodesic. This is the same as saying that the diagram 
\[
\xymatrix{ 
& S_\mathcal{D} \ar[rd]^-{\phi'} \ar[ld]_{\phi} \\
X \ar[rr]_-{1_X} & & X 
}
\]
commutes up to homotopy. Thus $(X,\phi)$ and $(X,\phi')$ represent the same point of Teichm\"uller space. Pulling back the hyperbolic structure of $X$ along $\phi'$, we obtain a hyperbolic structure on $S_\mathcal{D}$ representing the point $\rho$.
\end{proof}

Let $\rho\in\mathcal{T}(S_\mathcal{D})$. By the above lemma, we can view $\rho$ as a hyperbolic structure on~$S_\mathcal{D}$ such that $\partial S\subseteq S_\mathcal{D}$ is geodesic. By cutting along $\partial S$, we recover the surfaces~$S$ and~$S^\circ$ equipped with hyperbolic structures with geodesic boundary. Then the universal cover $\tilde{S}$ of $S$ can be identified with a subset of $\mathbb{H}$ obtained by removing countably many geodesic half disks. Similarly, the universal cover $\tilde{S^\circ}$ of~$S^\circ$ can be identified with a subset of $\mathbb{H}$.

\begin{lemma}
\label{lem:correspondence}
Let $\iota$ be the natural map $S\rightarrow S^\circ$. Then there exists a map $\tilde{\iota}:\tilde{S}\rightarrow\tilde{S^\circ}$, unique up to the action of $\pi_1(S)$ by deck transformations, such that the diagram 
\[
\xymatrix{ 
\tilde{S} \ar[r]^{\tilde{\iota}} \ar[d] & \tilde{S^\circ} \ar[d] \\
S \ar_{\iota}[r] & S^\circ
}
\]
commutes. This map $\tilde{\iota}$ restricts to an isometry on the geodesic boundary of $\tilde{S}$.
\end{lemma}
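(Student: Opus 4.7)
The plan is to reduce the existence and uniqueness of $\tilde{\iota}$ to the standard lifting criterion from covering space theory, and then to deduce the isometry statement from the preceding lemma on geodesic representatives.

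For existence, I would consider the composite $\iota \circ p_S : \tilde{S} \to S^\circ$, where $p_S : \tilde{S} \to S$ is the universal covering. Since $\tilde{S}$ is simply connected, the subgroup $(\iota \circ p_S)_*(\pi_1(\tilde{S}))$ is trivial, which is automatically contained in $(p_{S^\circ})_*(\pi_1(\tilde{S^\circ}))$. The lifting criterion therefore yields a continuous map $\tilde{\iota} : \tilde{S} \to \tilde{S^\circ}$ making the diagram commute. For uniqueness, after fixing a basepoint $\tilde{x}_0 \in \tilde{S}$, any two such lifts agree as soon as they agree on $\tilde{x}_0$; the image $\tilde{\iota}(\tilde{x}_0)$ can be any point in the fiber $p_{S^\circ}^{-1}(\iota(p_S(\tilde{x}_0)))$, and this fiber is a torsor over the deck group of $\tilde{S^\circ} \to S^\circ$. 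Since $\iota$ canonically identifies $\pi_1(S)$ with $\pi_1(S^\circ) = \mathrm{Deck}(\tilde{S^\circ}/S^\circ)$, any two lifts differ by the action of a unique element of $\pi_1(S)$ by deck transformations on $\tilde{S^\circ}$.

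For the isometry statement, I would invoke the preceding lemma to represent $\rho\in\mathcal{T}(S_\mathcal{D})$ by a hyperbolic metric on $S_\mathcal{D}$ in which $\partial S \subseteq S_\mathcal{D}$ is geodesic. Cutting $S_\mathcal{D}$ along $\partial S$ yields hyperbolic surfaces $S$ and $S^\circ$ with geodesic boundary, and the restrictions of the hyperbolic metric of $S_\mathcal{D}$ to $\partial S \subseteq S$ and to $\partial S^\circ \subseteq S^\circ$ coincide, because they are two views of the same submanifold inside $S_\mathcal{D}$. Hence $\iota : S \to S^\circ$ is an isometry on $\partial S$. Passing to universal covers, $p_S$ and $p_{S^\circ}$ are local isometries for the pulled-back hyperbolic metrics, so the commutativity of the diagram forces $\tilde{\iota}$ to be a local isometry at every boundary point of $\tilde{S}$; combined with the fact that each boundary component of $\tilde{S}$ and of $\tilde{S^\circ}$ embeds as a complete geodesic in $\mathbb{H}$, local isometry on the boundary upgrades to a global isometry on $\partial \tilde{S}$.

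The main obstacle is psychological rather than technical: it is making sure that the two universal covers are being compared via their embeddings in $\mathbb{H}$ coming from the single hyperbolic structure on $S_\mathcal{D}$ (so that the notion of isometry on the boundary is unambiguous), rather than via independent choices of hyperbolic structures on $S$ and $S^\circ$. Once one fixes the metric on $S_\mathcal{D}$ as in the previous lemma and restricts to the geodesic $\partial S$, the isometry claim is essentially forced and the argument becomes a routine application of the covering homotopy theorem.
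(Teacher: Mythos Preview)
Your proposal is correct and follows essentially the same approach as the paper: existence and uniqueness via the lifting criterion for covering spaces, and the isometry claim by observing that the other three maps in the square restrict to isometries on the boundary geodesics. You spell out more detail than the paper does (in particular the torsor description of the fiber and the local-isometry argument on the boundary), but the underlying argument is the same.
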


\begin{proof}
Since $\tilde{S}$ is simply connected, the composition $\tilde{S}\rightarrow S\stackrel{\iota}{\rightarrow} S^\circ$ induces the zero map on fundamental groups. Hence, by the lifting criterion, there exists a map $\tilde{\iota}:\tilde{S}\rightarrow\tilde{S^\circ}$ making the above diagram commutative. If $\tilde{\iota}':\tilde{S}\rightarrow\tilde{S^\circ}$ is any other map with this property, then $\tilde{\iota}$ and $\tilde{\iota}'$ are both lifts of the composition $\tilde{S}\rightarrow S\stackrel{\iota}{\rightarrow} S^\circ$. It follows that they coincide up to a deck transformation.

We know that the map $\tilde{\iota}$ restricts to an isometry on the geodesic boundary of~$\tilde{S}$ because the other three maps of the diagram restrict to isometries on boundary geodesics.
\end{proof}

\subsubsection{Definition of $B_i$ and $X_i$}

To construct coordinates on $\mathcal{D}^+(S)$, fix a point $m\in\mathcal{D}^+(S)$ and let $i\in J$ be an edge of the ideal triangulation $T$. The point $m$ determines a point of the Teichm\"uller space of $\Sigma_{p_1,\dots,p_k}$ where $\Sigma=S_{\mathcal{D}}$, and we can represent this by a hyperbolic structure such that any curve in the image of~$\partial S$ is geodesic. By cutting along the image of~$\partial S$, we recover the surfaces $S$ and $S^\circ$ equipped with hyperbolic structures with geodesic boundary.

Suppose the edge $i$ corresponds to an arc connecting two holes in $S$. Choose a pair of geodesics $g_1$ and~$g_2$ in the upper half plane $\mathbb{H}$ which project to these boundary components. Deform the arc connecting the holes by winding its endpoints around the holes infinitely many times in the direction prescribed by the orientation. This corresponds to deforming the preimage $\tilde{i}$ in $\mathbb{H}$ so that its endpoints coincide with endpoints of~$g_1$ and~$g_2$. If we let $i^\circ$ be the image of the arc $i$ under the tautological map $S\rightarrow S^\circ$, then we can apply the same procedure to $i^\circ$ to get an arc $\tilde{i}^\circ$ in $\mathbb{H}$.

Choose horocycles $c_1$ and~$c_2$ around the endpoints of $\tilde{i}$. The horocycle $c_k$ intersects $g_k$ in a unique point. By Lemma~\ref{lem:correspondence}, there is a corresponding point on $g_k^\circ$ and thus a corresponding horocycle $c_k^\circ$. The map constructed in Lemma~\ref{lem:correspondence} restricts to an isometry on $g_k$, so if we shift $c_k$ by some amount, then the corresponding horocycle $c_k^\circ$ will shift by the same amount.
\[
\xy 0;/r.40pc/: 
(-18,-6)*{}="0"; 
(-12,-6)*{}="1"; 
(12,-6)*{}="2"; 
(18,-6)*{}="3"; 
"0";"1" **\crv{(-18,-1) & (-12,-1)}; 
"1";"2" **\crv{(-12,10) & (12,10)}; 
"2";"3" **\crv{(12,-1) & (18,-1)}; 
(-20,-6)*{}="X"; 
(20,-6)*{}="Y"; 
"X";"Y" **\dir{-}; 
(-12,-4)*\xycircle(2,2){-};
(12,-3)*\xycircle(3,3){-};
(0,7.5)*{\tilde{i}}; 
(-12,0)*{c_1}; 
(12,1)*{c_2}; 
(-17,-1)*{g_1}; 
(17,-1)*{g_2}; 
(-20,-8)*{}="Z"; 
(20,-8)*{}="W"; 
"Z";"W" **\dir{-}; 
(-18,-8)*{}="0"; 
(-12,-8)*{}="1"; 
(12,-8)*{}="2"; 
(18,-8)*{}="3"; 
"0";"1" **\crv{(-18,-13) & (-12,-13)}; 
"1";"2" **\crv{(-12,-24) & (12,-24)}; 
"2";"3" **\crv{(12,-13) & (18,-13)}; 
(12,-13)*\xycircle(5,5){-};
(-12,-12)*\xycircle(4,4){-};
(0,-22)*{\tilde{i}^\circ};
(-12,-18)*{c_1^\circ}; 
(12,-19.5)*{c_2^\circ}; 
(-18,-13)*{g_1^\circ}; 
(19,-12)*{g_2^\circ}; 
\endxy
\]

Denote by $A_i$ the exponentiated signed half distance between~$c_1$ and~$c_2$, and denote by $A_i^\circ$ the exponentiated signed half distance between~$c_1^\circ$ and~$c_2^\circ$. We can then define a number associated to the edge $i$ by 
\[
B_i=\frac{A_i^\circ}{A_i}.
\]
This defines $B_i$ when $i$ corresponds to an arc connecting two holes. If one or both of the endpoints of $i$ are punctures, then the data of the point $m\in\mathcal{D}^+(S)$ provides a horocycle~$c_k$ at any endpoint of $i$ which is a puncture, as well as a horocycle $c_k^\circ$ at the corresponding endpoint of $i^\circ$. Thus we can associate numbers $A_i$ and $A_i^\circ$ to these arc as before, and we can define $B_i$ by the above formula. One can show that the $|J|$ numbers obtained in this way are independent of all choices made in the construction.

In addition to the $B_i$, there are $|J|$ numbers $X_i$ associated to a point in the space $\mathcal{D}^+(S)$. Given a point of $\mathcal{D}^+(S)$, these are simply defined as the $X$-coordinates of the point of the Teichm\"uller $\mathcal{X}$-space of the surface $S$ obtained by cutting $S_\mathcal{D}$ along the image of $\partial S$.

\subsubsection{Reconstruction}

We will now show that the $B_i$ and $X_i$ are indeed coordinates on $\mathcal{D}^+(S)$.

\begin{proposition}
The numbers $B_i$ and $X_i$ provide a bijection 
\[
\mathcal{D}^+(S)\rightarrow\mathbb{R}_{>0}^{2|J|}.
\]
\end{proposition}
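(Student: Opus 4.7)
The plan is to construct an explicit inverse map $\mathbb{R}_{>0}^{2|J|}\to\mathcal{D}^+(S)$, closely paralleling the reconstruction arguments already carried out for $\mathcal{X}^+(S)$ and $\mathcal{A}^+(S)$. By definition the $X_i$ are the $X$-coordinates of the point of $\mathcal{X}^+(S)$ obtained by cutting $S_\mathcal{D}$ along the image of $\partial S$, so the earlier proposition on $\mathcal{X}^+(S)$ already recovers the hyperbolic structure on $S$ with geodesic boundary together with an orientation for each boundary loop. Lifting to the universal cover realizes $\tilde S$ as a region in $\mathbb{H}$ bounded by lifts of the boundary geodesics.

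Given this, I would use the $B_i$'s to propagate across these geodesics and build the mirror side. Fix an ideal triangle $t_0$ of the triangulation of $\tilde S$ with lift $u_0\subseteq\mathbb{H}$, and at each boundary-incident edge $i$ of $t_0$ fix an auxiliary horocycle $c$ at an endpoint of the lifted edge $\tilde i$. Invoking Lemma~\ref{lem:correspondence}, there is an isometry identifying the boundary geodesic containing $\tilde i$ with the corresponding geodesic on the $S^\circ$-side; the value of $B_i$ then prescribes where the corresponding horocycle $c^\circ$ should lie, namely at signed hyperbolic distance $2\log B_i$ from the isometric image of $c$. This in turn fixes a reflected ideal triangle $u_0^\circ$ in $\mathbb{H}$ attached to $u_0$ across the boundary geodesic. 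Propagating outward one triangle at a time, using the induced $X$-coordinates $\widehat{X}_i = X_i\prod_j B_j^{\varepsilon_{ij}}$ on the $S^\circ$-side (which follow from $B_i=A_i^\circ/A_i$ together with the cross-ratio definition of $X_k$) exactly as in the $\mathcal{X}^+(S)$ reconstruction, I would build up a full developing image of $\tilde{S^\circ}$ in $\mathbb{H}$ glued isometrically to $\tilde S$ along every lift of $\partial S$. Quotienting by deck transformations produces a hyperbolic structure on $S_\mathcal{D}$, and the orientations of the boundary loops carry over from the $X_i$-data, giving a point of $\mathcal{D}^+(S)$ whose coordinates are the prescribed $(B_i,X_i)$.

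I expect the main obstacle to be the consistency of the gluing along each boundary component of $S$. The $B_i$-determined position of $c^\circ$ must be compatible as one moves around a boundary loop from edge to edge, which amounts to showing that the twist parameter of the gluing is well-defined and, equivalently, that the boundary length computed from the $X_i$-side agrees with the length computed from the $\widehat{X}_i$-side. This should reduce to a linear relation among the $\log B_j$ indexed by edges incident to the given loop, and must be verified directly from the definitions of $B_j$ and $\varepsilon_{ij}$. Once this compatibility is in hand, the independence of the construction from the choice of initial triangle, equivariance under deck transformations, and the verification that the construction is a two-sided inverse follow along the same lines as in the $\mathcal{X}^+(S)$ and $\mathcal{A}^+(S)$ cases treated earlier.
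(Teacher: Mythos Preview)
Your outline is in the right spirit but diverges from the paper's argument at a crucial point, and the divergence is exactly where you flag your ``main obstacle''.

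The paper does not build $\tilde{S}^\circ$ from the $\widehat{X}_i$ and then try to glue it to $\tilde{S}$ with a separately-determined twist. Instead, after reconstructing $\tilde{S}$ from the $X_i$, it fixes horocycles at \emph{all} vertices of the triangulation of $\tilde{S}$, reads off the resulting $A_i$, sets $A_i^\circ=B_iA_i$, and then builds $\tilde{S}^\circ$ triangle by triangle from the $A_i^\circ$ using Penner's lemma (Chapter~1, Lemma~4.14 of~\cite{P}), exactly as in the $\mathcal{A}^+$ reconstruction. The point is that the $A$-coordinate data determines not just the shape of $\tilde{S}^\circ$ but its precise position relative to $\tilde{S}$: one glues $\tilde{S}^\circ$ into each removed half-disk so that the horocycles at identified vertices coincide. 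Iterating this alternately with copies of $\tilde{S}$ and $\tilde{S}^\circ$ tiles all of $\mathbb{H}$, and the representation of $\pi_1(S_\mathcal{D})$ is read off from the resulting deck transformations. No separate twist-consistency check is needed.

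By contrast, your route --- build $\tilde{S}^\circ$ from the $\widehat{X}_i$ and then determine the gluing from the $B_i$ --- leaves exactly the problem you identify unresolved. Note also that you invoke Lemma~\ref{lem:correspondence} before $\tilde{S}^\circ$ has been constructed, which is circular: that lemma presupposes hyperbolic structures on both sides. And your description of $B_i$ as prescribing $c^\circ$ ``at signed hyperbolic distance $2\log B_i$ from the isometric image of $c$'' is not what $B_i$ encodes; $B_i=A_i^\circ/A_i$ is a ratio of lambda-lengths along the edge $i$, involving horocycles at both ends, not a single horocycle shift along the boundary geodesic. The paper's passage through $A$-coordinates is precisely what converts the $B_i$ into usable placement data and makes the whole construction go through without the consistency obstacle.
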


\begin{proof}
Call this map $\phi$. We will construct a map $\psi:\mathbb{R}_{>0}^{2|J|}\rightarrow\mathcal{D}^+(S)$ and prove that $\phi$ and $\psi$ are inverses.

\Needspace*{2\baselineskip}
\begin{step}[1]
Definition of $\psi:\mathbb{R}_{>0}^{2|J|}\rightarrow\mathcal{D}^+(S)$.
\end{step}

Suppose we are given positive numbers $B_j$ and $X_j$ for every edge $j\in J$. By the reconstruction procedure for the Teichm\"uller $\mathcal{X}$-space, we can glue together ideal triangles in $\mathbb{H}$, using the $X_i$ as gluing parameters, to get a tiling of a region $\tilde{S}\subseteq\mathbb{H}$ with geodesic boundary.

Choose a horocycle around each vertex of this triangulation. (We do not require these horocycles to be covariant under the action of the deck transformation group.) Then for every edge~$i$ of this triangulation, there is a number $A_i$ defined as the exponentiated signed half length between the horocycles at the ends of~$i$. If $B_i$ is the $B$-coordinate corresponding to the lifted edge~$i$, define 
\[
A_i^\circ=B_iA_i.
\]
We will use these numbers $A_i^\circ$ to construct another region $\tilde{S^\circ}$ with geodesic boundary in a copy of~$\mathbb{H}$.

To construct $\tilde{S^\circ}$, first choose an edge $i$ in the triangulation of $\tilde{S}$. Let $i^\circ$ be any geodesic in the second copy of $\mathbb{H}$. There are horocycles around the endpoints of~$i$ with corresponding exponentiated half length $A_i$, and we can choose horocycles around the endpoints of $i^\circ$ so that the resulting exponentiated half length equals~$A_i^\circ$. Consider one of the triangles adjacent to~$i$. There are numbers~$A_j$ and~$A_k$ corresponding to its other sides. By \cite{P}, Chapter~1, Lemma~4.14, there is a unique point in the second copy of $\partial\mathbb{H}$ and a horocycle about this point so that the exponentiated half lengths of the decorated arcs incident to this point are $A_j^\circ$ and $A_k^\circ$, and the cyclic order of the vertices of the resulting triangle agrees with the original cyclic order of the vertices.

Repeating this process for every edge in the triangulation of $\tilde{S}$, we obtain a region $\tilde{S^\circ}\subseteq\mathbb{H}$ with geodesic boundary. To construct this region, we had to choose horocycles around the endpoints of the triangulation of $\tilde{S}$, but the construction is independent of these choices. We also had to choose edges $i$ and $i^\circ$ and horocycles around the endpoints of $i^\circ$. If we had made a different choice we would get a different region obtained from $\tilde{S^\circ}$ by a transformation in $PSL(2,\mathbb{R})$. Thus $\tilde{S^\circ}$ is well defined up to isometry.

Now the region $\tilde{S}$ is obtained from $\mathbb{H}$ by removing infinitely many geodesic half disks. We can extend $\tilde{S}$ to a larger region by gluing a copy of $\tilde{S^\circ}$ in each of these half disks in such a way that the horocycles around identified vertices coincide. The resulting region is again obtained from $\mathbb{H}$ by removing infinitely many geodesic half disks, and we can enlarge it by gluing a copy of $\tilde{S}$ in each of these half disks. Continuing this process ad infinitum, we partition the entire hyperbolic plane into ideal triangles. The illustration below shows an example of how the spaces~$\tilde{S}$ and~$\tilde{S}^\circ$ may be glued together in the disk model of the hyperbolic plane.
\[
\xy 0;/r.40pc/: 
(0,0)*\xycircle(16,16){-};
(0,16)*{}="X"; 
(0,-16)*{}="Y"; 
(-3.12,15.69)*{}="A1"; 
(-5.8,14.9)*{}="A2"; 
(-6.6,14.5)*{}="A3"; 
(-11,11.5)*{}="A4"; 
(-11.5,11)*{}="A5"; 
(-15.9,0.5)*{}="A6"; 
(-15.9,-0.5)*{}="A7"; 
(-11.5,-11)*{}="A8"; 
(-11,-11.5)*{}="A9"; 
(-6.6,-14.5)*{}="A10"; 
(-5.8,-14.9)*{}="A11"; 
(-3.12,-15.69)*{}="A12"; 
(3.12,15.69)*{}="B1"; 
(5.8,14.9)*{}="B2"; 
(6.6,14.5)*{}="B3"; 
(11,11.5)*{}="B4"; 
(11.5,11)*{}="B5"; 
(15.9,0.5)*{}="B6"; 
(15.9,-0.5)*{}="B7"; 
(11.5,-11)*{}="B8"; 
(11,-11.5)*{}="B9"; 
(6.6,-14.5)*{}="B10"; 
(5.8,-14.9)*{}="B11"; 
(3.12,-15.69)*{}="B12"; 
"X";"Y" **\dir{-}; 
"A1";"A2" **\crv{(-2.9,14.4) & (-5.1,13.4)}; 
"A3";"A4" **\crv{(-5,12) & (-9,9)}; 
"A5";"A6" **\crv{(-6.6,6.5) & (-11,0.4)}; 
"A7";"A8" **\crv{(-11,-0.4) & (-6.6,-6.5)}; 
"A9";"A10" **\crv{(-9,-9) & (-5,-12)}; 
"A11";"A12" **\crv{(-5.1,-13.4) & (-2.9,-14.4)}; 
"B1";"B2" **\crv{(2.9,14.4) & (5.1,13.4)}; 
"B3";"B4" **\crv{(5,12) & (9,9)}; 
"B5";"B6" **\crv{(6.6,6.5) & (11,0.4)}; 
"B7";"B8" **\crv{(11,-0.4) & (6.6,-6.5)}; 
"B9";"B10" **\crv{(9,-9) & (5,-12)}; 
"B11";"B12" **\crv{(5.1,-13.4) & (2.9,-14.4)}; 
(-5,0)*{\tilde{S}}; 
(5,0)*{\tilde{S}^\circ}; 
(-12.5,5)*{\tilde{S}^\circ}; 
(12.5,5)*{\tilde{S}}; 
(-12.5,-5)*{\tilde{S}^\circ}; 
(12.5,-5)*{\tilde{S}}; 
(-8,12.1)*{\tilde{S}^\circ}; 
(8.3,12.1)*{\tilde{S}}; 
(-8,-12.1)*{\tilde{S}^\circ}; 
(8.3,-12.1)*{\tilde{S}}; 
\endxy
\]

Any element of $\pi_1(S_\mathcal{D})$ corresponds to a deck transformation and is thus represented as a unique element of $PSL(2,\mathbb{R})$. In this way we recover a discrete and faithful representation $\rho:\pi_1(S_\mathcal{D})\rightarrow PSL(2,\mathbb{R})$ representing a point in $\mathcal{T}(S_\mathcal{D})$.

\Needspace*{2\baselineskip}
\begin{step}[2]
Proof of $\phi\circ\psi=1_{\mathbb{R}_{>0}^{2|J|}}$.
\end{step}

Let $B_j$ and $X_j$ ($j\in J$) be given. By the reconstruction procedure described above, we construct triangulated regions $\tilde{S}\subseteq\mathbb{H}$ and $\tilde{S}^\circ\subseteq\mathbb{H}$ which we then glue together to get a triangulation of the hyperbolic plane. From this triangulation, we get a point $m\in\mathcal{D}^+(S)$. We want to show that the coordinates of $m$ are the numbers $B_j$ and $X_j$.

This point $m$ determines a hyperbolic metric on $S_{\mathcal{D}}$, and the universal cover of the resulting hyperbolic surface is isometric to the triangulated surface $\tilde{S}_{\mathcal{D}}$ that we got by gluing together copies of $\tilde{S}$ and $\tilde{S}^\circ$. Consider a copy of $\tilde{S}$ in $\tilde{S}_{\mathcal{D}}$. To find the coordinates of $m$, we choose horocycles around the vertices of the triangulation of $\tilde{S}$, and we can take these to be exactly the horocycles used in the construction of $m$ from the~$B_j$ and~$X_j$. Now consider a copy of $\tilde{S}^\circ$ adjacent to $\tilde{S}$ in $\tilde{S}_{\mathcal{D}}$. Choose a basepoint $x$ on the geodesic separating these regions. If $g$ is any boundary geodesic of $\tilde{S}$, then there is a horocycle $h$ around one of the endpoints of $g$, and this horocycle intersects $g$ in a unique point $p$. Draw a curve $\tilde{\alpha}$ in~$\tilde{S}$ from the point~$p$ to the point~$x$.
\[
\xy 0;/r.40pc/: 
(0,0)*\xycircle(16,16){-};
(-13,0.7)*\xycircle(3,3){-};
(13,0.7)*\xycircle(3,3){-};
(-10.7,2.75)*{}="p"; 
(0,-3.5)*{}="x"; 
(10.7,2.75)*{}="q"; 
(0,16)*{}="X"; 
(0,-16)*{}="Y"; 
(-3.12,15.69)*{}="A1"; 
(-5.8,14.9)*{}="A2"; 
(-6.6,14.5)*{}="A3"; 
(-11,11.5)*{}="A4"; 
(-11.5,11)*{}="A5"; 
(-15.9,0.5)*{}="A6"; 
(-15.9,-0.5)*{}="A7"; 
(-11.5,-11)*{}="A8"; 
(-11,-11.5)*{}="A9"; 
(-6.6,-14.5)*{}="A10"; 
(-5.8,-14.9)*{}="A11"; 
(-3.12,-15.69)*{}="A12"; 
(3.12,15.69)*{}="B1"; 
(5.8,14.9)*{}="B2"; 
(6.6,14.5)*{}="B3"; 
(11,11.5)*{}="B4"; 
(11.5,11)*{}="B5"; 
(15.9,0.5)*{}="B6"; 
(15.9,-0.5)*{}="B7"; 
(11.5,-11)*{}="B8"; 
(11,-11.5)*{}="B9"; 
(6.6,-14.5)*{}="B10"; 
(5.8,-14.9)*{}="B11"; 
(3.12,-15.69)*{}="B12"; 
"p";"x" **\crv{(-7,5) & (-3,-4)}; 
"x";"q" **\crv{(3,-4) & (7,5)}; 
"X";"Y" **\dir{-}; 
"A1";"A2" **\crv{(-2.9,14.4) & (-5.1,13.4)}; 
"A3";"A4" **\crv{(-5,12) & (-9,9)}; 
"A5";"A6" **\crv{(-6.6,6.5) & (-11,0.4)}; 
"A7";"A8" **\crv{(-11,-0.4) & (-6.6,-6.5)}; 
"A9";"A10" **\crv{(-9,-9) & (-5,-12)}; 
"A11";"A12" **\crv{(-5.1,-13.4) & (-2.9,-14.4)}; 
"B1";"B2" **\crv{(2.9,14.4) & (5.1,13.4)}; 
"B3";"B4" **\crv{(5,12) & (9,9)}; 
"B5";"B6" **\crv{(6.6,6.5) & (11,0.4)}; 
"B7";"B8" **\crv{(11,-0.4) & (6.6,-6.5)}; 
"B9";"B10" **\crv{(9,-9) & (5,-12)}; 
"B11";"B12" **\crv{(5.1,-13.4) & (2.9,-14.4)}; 
(-1,-4.5)*{x}; 
(-5,2)*{\tilde{\alpha}}; 
(5,2)*{\tilde{\alpha}^\circ}; 
(-10.8,4)*{p}; 
(10.8,4)*{q}; 
(-9.5,-1)*{h}; 
(9,-1)*{h^\circ}; 
(-8.5,8)*{g}; 
(8.5,8)*{g^\circ}; 
\endxy
\]
This curve $\tilde{\alpha}$ projects to a curve~$\alpha$ on the surface $S$, and we can apply the map~$\iota$ to get a curve $\alpha^\circ$ on $S^\circ$. Finally, we lift $\alpha^\circ$  to a curve $\tilde{\alpha}^\circ$ in~$S^\circ$ that starts at~$x$. This curve $\tilde{\alpha}^\circ$ ends at some point $q=\tilde{\iota}(p)$, and there is a horocycle $h^\circ$ that intersects~$g^\circ$ at~$q$. In this way, we draw horocycles around all of the vertices of $\tilde{S}^\circ$ and can define the coordinates of $m$.

On the other hand, consider the monodromy along the curve in $S_{\mathcal{D}}$ obtained by concatenating $\alpha$ and $\alpha^\circ$. It maps the region~$\tilde{S}$ isometrically into the region bounded by~$g^\circ$. The horocycle used in the construction of~$m$ from the~$B_j$ and~$X_j$ is obtained by applying this transformation to $h$. Thus the horocycles used to construct $m$ agree with the ones obtained using the map $\tilde{\iota}$.

It follows that the $B$-coordinates of $m$ are simply the numbers $B_j$ that we started with. It is easy to see that the $X$-coordinates of $m$ are the numbers $X_j$. This completes Step~2 of the proof.

\Needspace*{2\baselineskip}
\begin{step}[3]
Proof of $\psi\circ\phi=1_{\mathcal{D}^+}$.
\end{step}

Let $m\in\mathcal{D}^+(S)$ be given. To calculate the coordinates of~$m$, let us pass to the universal cover~$\tilde{S}_{\mathcal{D}}$ of~$S_{\mathcal{D}}$ and choose a copy of $\tilde{S}$ in~$\tilde{S}_{\mathcal{D}}$. Choose horocycles around the vertices of the triangulation of this copy of $\tilde{S}$. Using the construction described above involving lifts of the map $\iota:S\rightarrow S^\circ$, we can get horocycles around all vertices of the triangulation of $S_{\mathcal{D}}$. We can use these horocycles to calculate the numbers $A_j$ and~$A_j^\circ$, and thus the coordinates $B_j$ and~$X_j$.

We can now use the $X_i$ to reconstruct the region~$\tilde{S}$. We must choose horocycles around the vertices of the triangulation of~$\tilde{S}$, and we can assume that these are exactly the ones used above to define the~$B_j$ and~$X_j$. Then we can use the $B_j$ to reconstruct an adjacent region $\tilde{S}^\circ$ with exactly the horocycles used above. By gluing together copies of $\tilde{S}$ and $\tilde{S}^\circ$, we recover $\tilde{S}_{\mathcal{D}}$ together with all of the horocycles used to compute the coordinates. Quotienting this universal cover by the action of $\pi_1(S_{\mathcal{D}})$, we recover the point $m\in\mathcal{D}^+(S)$. This completes Step~3 of the proof.
\end{proof}

\subsubsection{Transformation rule}

We have defined the $B_i$ and $X_i$ coordinates in terms of a fixed ideal triangulation of $S$. The next result says how these coordinates vary when we change the triangulation.

\begin{proposition}
A regular flip at an edge $k$ of the triangulation changes the coordinates~$X_i$ and~$B_i$ to new coordinates~$X_i'$ and~$B_i'$ given by the formulas 
\begin{align*}
X_i' &=
\begin{cases}
X_k^{-1} & \mbox{if } i=k \\
X_i{(1+X_k^{-\sgn(\varepsilon_{ik})})}^{-\varepsilon_{ik}} & \mbox{if } i\neq k,
\end{cases} \\
B_i' &=
\begin{cases}
\frac{X_k\prod_{j|\varepsilon_{kj>0}}B_j^{\varepsilon_{kj}} + \prod_{j|\varepsilon_{kj<0}}B_j^{-\varepsilon_{kj}}}{(1+X_k)B_k} & \mbox{if } i=k \\
B_i & \mbox{if } i\neq k.
\end{cases}
\end{align*}
\end{proposition}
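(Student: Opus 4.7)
My plan is to prove the two transformation formulas separately. The formula for the $X_i$ is essentially automatic: by definition, the $X$-coordinates on $\mathcal{D}^+(S)$ are the $\mathcal{X}$-coordinates of the hyperbolic structure on $S$ obtained by cutting $S_\mathcal{D}$ along the image of $\partial S$, and a regular flip of the triangulation is the same operation in this $\mathcal{X}$-setting. Hence the stated formula is just the $\mathcal{X}$-mutation rule for the Teichm\"uller $\mathcal{X}$-space that was derived earlier in the paper.

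For the $B_i$, I would introduce auxiliary $A$-type parameters as follows. Fix a horocycle around each vertex of the lifted triangulation of $\tilde S\subseteq\mathbb H$, and for every edge $i$ let $A_i$ be the exponentiated signed half length of the lifted arc between these horocycles. Transporting these horocycles to $\tilde{S^\circ}$ via the map $\tilde\iota$ of Lemma~\ref{lem:correspondence} and doing the same thing there gives analogous numbers $A_i^\circ$, and by construction $B_i = A_i^\circ/A_i$. Any simultaneous shift of a horocycle at a vertex rescales the $A$- and $A^\circ$-factors incident to it by equal amounts, so this ratio is independent of the horocycle choices. For $i\neq k$ the quantities $A_i$ and $A_i^\circ$ are triangulation-independent, hence $B_i' = B_i$.

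For the edge being flipped, both pairs $(A_k,A_k')$ and $(A_k^\circ,A_k^{\circ\prime})$ are the two diagonals of an ideal hyperbolic quadrilateral decorated with horocycles, so the Ptolemy identity gives
\[
A_k' = A_k^{-1}\Bigl(\prod_{j|\varepsilon_{kj}>0} A_j^{\varepsilon_{kj}} + \prod_{j|\varepsilon_{kj}<0} A_j^{-\varepsilon_{kj}}\Bigr)
\]
and the same identity with every $A$ replaced by $A^\circ$. Substituting $A_j^\circ = B_jA_j$ into the second relation and dividing by $A_k'$ yields
\[
B_k' \;=\; \frac{A_k^{\circ\prime}}{A_k'} \;=\; \frac{1}{B_k}\cdot\frac{P_B\,P + Q_B\,Q}{P+Q},
\]
where $P = \prod_{j|\varepsilon_{kj}>0} A_j^{\varepsilon_{kj}}$, $Q = \prod_{j|\varepsilon_{kj}<0} A_j^{-\varepsilon_{kj}}$, and $P_B$, $Q_B$ are the obvious analogues for the $B$'s. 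Dividing numerator and denominator by $Q$ and recognizing $P/Q = \prod_j A_j^{\varepsilon_{kj}} = X_k$ as the cross ratio delivers the stated formula for $B_k'$. The delicate point I would check most carefully is the compatibility of the Ptolemy application on $\tilde S$ and $\tilde{S^\circ}$ with horocycles corresponding under $\tilde\iota$; once that bookkeeping is in place, the rest is a short algebraic manipulation.
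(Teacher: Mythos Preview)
Your proposal is correct and follows essentially the same approach as the paper: for the $X_i$ you appeal to the already-established $\mathcal{X}$-mutation rule, and for $B_i$ you introduce auxiliary horocycle-dependent lengths $A_i$, $A_i^\circ$, apply the Ptolemy relation to each, form the ratio, and simplify using $X_k=\prod_j A_j^{\varepsilon_{kj}}$. The only cosmetic difference is that the paper labels the quadrilateral vertices $1,2,3,4$ and works with $A_{ij}$, $A_{ij}^\circ$ explicitly, whereas you phrase the same Ptolemy computation in terms of the exchange matrix $\varepsilon_{kj}$.
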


\begin{proof}
By definition, the coordinates $X_i$ on $\mathcal{D}^+(S)$ coincide with the coordinates of the corresponding point of the Teichm\"uller $\mathcal{X}$-space of $S$. Therefore the transformation rule for the $X_i$ is just the usual transformation rule for the coordinates on the $\mathcal{X}$-space.

To prove the second rule, deform the arcs corresponding to the edges of the ideal triangulation by winding their endpoints around the curves of the simple lamination infinitely many times. Lift these deformed curves to the universal cover of $S$, and consider the ideal quadrilateral formed by the two triangles adjacent to the geodesic arc $\tilde{k}$ corresponding to~$k$.

Number the vertices of this quadrilateral in counterclockwise order so that the edge $\tilde{k}$ joins vertices~1 and~3. Choose a horocycle around each vertex of this ideal quadrilateral and let $A_{ij}$ denote the exponentiated signed half length between the horocycles at $i$ and $j$. There is a corresponding ideal quadrilateral in the universal cover of $S^\circ$ and a number $A_{ij}^\circ$ corresponding to the edge connecting vertices~$i$ and~$j$ of this ideal quadrilateral.

If we flip at the edge $k$, then the ideal quadrilateral in the universal cover of~$S$ is replaced by the same ideal quadrilateral triangulated by the arc from~2 to~4, and the number associated to this new arc is 
\[
A_{24}=\frac{A_{12}A_{34}+A_{14}A_{23}}{A_{13}}.
\]
Similarly, the number $A_{13}^\circ$ transforms to 
\[
A_{24}^\circ=\frac{A_{12}^\circ A_{34}^\circ+A_{14}^\circ A_{23}^\circ}{A_{13}^\circ}.
\]
Applying these rules to the quotient $B_{13}=A_{13}^\circ/A_{13}$, we obtain 
\begin{align*}
B_{24} &= \frac{A_{12}^\circ A_{34}^\circ+A_{14}^\circ A_{23}^\circ}{A_{12}A_{34}+A_{14}A_{23}}\frac{A_{13}}{A_{13}^\circ} \\
&= \frac{\frac{A_{12}^\circ A_{34}^\circ}{A_{14}A_{23}}+\frac{A_{14}^\circ A_{23}^\circ}{A_{14}A_{23}}}{1+\frac{A_{12}A_{34}}{A_{14}A_{23}}}\frac{A_{13}}{A_{13}^\circ} \\
&= \frac{X_{13}B_{12}B_{34}+B_{14}B_{23}}{(1+X_{13})B_{13}}
\end{align*}
where we have used the relation $X_{13}=\frac{A_{12}A_{34}}{A_{14}A_{23}}$. This proves the transformation rule for the~$B_i$.
\end{proof}

\subsection{Definition of the symplectic double}

We will now use the transformation rules stated above to define the symplectic double. For any seed $\mathbf{i}=(I,J,\varepsilon_{ij},d_i)$ with $I=J$, we define a split algebraic torus $\mathcal{D}_\mathbf{i} = (\mathbb{G}_m)^{2|J|}$ with natural coordinates $\{B_i, X_i\}_{i\in J}$. A seed mutation induces a map on tori defined by 
\[
\mu_k^*X_i'=
\begin{cases}
X_k^{-1} & \mbox{if } i=k \\
X_i{(1+X_k^{-\sgn(\varepsilon_{ik})})}^{-\varepsilon_{ik}} & \mbox{if } i\neq k
\end{cases}
\]
and 
\[
\mu_k^*B_i' =
\begin{cases}
\frac{X_k\prod_{j|\varepsilon_{kj>0}}B_j^{\varepsilon_{kj}} + \prod_{j|\varepsilon_{kj<0}}B_j^{-\varepsilon_{kj}}}{(1+X_k)B_k} & \mbox{if } i=k \\
B_i & \mbox{if } i\neq k
\end{cases}
\]
where $X_i'$ and $B_i'$ are the coordinates on $\mathcal{D}_{\mathbf{i}'}$.

\begin{definition}
The \emph{symplectic double} $\mathcal{D}=\mathcal{D}_{|\mathbf{i}|}$ is a scheme over $\mathbb{Z}$ obtained by gluing the $\mathcal{D}$-tori for all seeds mutation equivalent to the seed $\mathbf{i}$ using the above birational maps.
\end{definition}

Our goal in the next section is to describe a space of laminations which is related to the symplectic double in the same way that the spaces of $\mathcal{A}$- and $\mathcal{X}$-laminations are related to the cluster $\mathcal{A}$- and $\mathcal{X}$-varieties. These new laminations will be called \emph{$\mathcal{D}$-laminations}.

The space $\mathcal{D}_L(S,\mathbb{Q})$ of rational $\mathcal{D}$-laminations should be a tropical version of the symplectic double in the sense that it is identified with $\mathcal{D}(\mathbb{Q}^t)$. More concretely, this means that there are coordinates~$\{b_i,x_i\}_{i\in J}$ associated to the edges of an ideal triangulation of $S$, and if we flip the triangulation at an edge $k$, we get new coordinates 
\[
x_i' =
\begin{cases}
-x_k & \mbox{if } i=k \\ 
x_i+\varepsilon_{ki}\max\left(0,\sgn(\varepsilon_{ki})x_k\right) & \mbox{if } i\neq k
\end{cases}
\]
and 
\[
b_i' =
\begin{cases}
\max\biggr(x_k+\sum_{j|\varepsilon_{kj}>0}\varepsilon_{kj}b_j,-\sum_{j|\varepsilon_{kj}<0}\varepsilon_{kj}b_j\biggr)-\max(0,x_k)-b_k & \mbox{if } i=k \\
b_i & \mbox{if } i\neq k.
\end{cases} 
\]

\section{$\mathcal{D}$-laminations}
\label{sec:D-laminations}

\subsection{The notion of a $\mathcal{D}$-lamination}

Let $S$ be a compact oriented surface with finitely many marked points on its boundary, and let $S^\circ$ be the same surface with the opposite orientation. Recall that the \emph{double} $S_\mathcal{D}$ of $S$ is defined as the surface obtained by gluing $S$ and $S^\circ$ along corresponding boundary components and deleting the image of each marked point in the resulting surface.

By a (closed) \emph{curve} on $S_{\mathcal{D}}$, we mean an embedding of a circle into~$S_{\mathcal{D}}$. When we talk about homotopies, we mean homotopies within the class of such curves. A curve is called \emph{contractible} if it can be retracted to a point within this class of curves. It is \emph{special} if it is retractible to a puncture.

As before, a \emph{simple lamination} on such a surface is a finite collection $\gamma=\{\gamma_i\}$ of simple noncontractible nonspecial disjoint nonhomotopic closed curves considered up to homotopy. The surface~$S_\mathcal{D}$ comes equipped with a simple lamination~$\gamma$ given by the image of the boundary loops of~$S$ in~$S_\mathcal{D}$.

\begin{definition}
A \emph{rational $\mathcal{D}$-lamination} on $S_\mathcal{D}$ is the homotopy class of a collection of finitely many nonintersecting, noncontractible, and non-special closed curves with positive rational weights and a choice of orientation for each component of $\gamma$ which meets or is homotopic to a curve. A lamination containing homotopic curves of weights $a$ and $b$ is equivalent to the lamination with one curve removed and the weight $a+b$ on the other.
\end{definition}

The set of all rational $\mathcal{D}$-laminations on $S_\mathcal{D}$ will be denoted $\mathcal{D}_L(S,\mathbb{Q})$. We will write $\mathcal{D}_L(S,\mathbb{Z})$ for the set of all $\mathcal{D}$-laminations on $S_\mathcal{D}$ that can be represented by a collection of curves with integral weights. When there is no possibility of confusion, we will simply write~$\mathcal{D}_L$.

\subsection{Construction of coordinates}

\subsubsection{Preliminaries}

Let $l\in\mathcal{D}_L(S,\mathbb{Q})$. Then $l$ can be represented by a collection of nonintersecting simple closed curves on $S_\mathcal{D}$ such that all curves have the same rational weight. If there exists a curve homotopic to a component $\gamma_i$ of the simple lamination, then the orientation of $\gamma_i$ agrees with one of the surfaces $S$ or $S^\circ$, and we assume that the curve lies entirely on this surface. By cutting along $\partial S$, we recover the surfaces~$S$ and $S^\circ$ with curves drawn on them.

Let $\tilde{S}$ and $\tilde{S^\circ}$ denote the universal covers of $S$ and $S^\circ$, respectively. It is useful when drawing pictures to choose hyperbolic structures on $S$ and $S^\circ$. We can choose these hyperbolic structures so that if $\gamma$ is a component of $\partial S$ or $\partial S^\circ$ that does not meet a curve of the lamination, then the monodromy around $\gamma$ is parabolic. Then the universal covers can be obtained from $\mathbb{H}$ by removing countably many geodesic half disks. Below we will assume that these hyperbolic structures have been specified. None of our constructions will depend on the choice of hyperbolic structures.

\begin{lemma}
\label{lem:correspondence2}
There exists a map $\tilde{\iota}:\tilde{S}\rightarrow\tilde{S^\circ}$, unique up to the action of $\pi_1(S)$ by deck transformations, such that the diagram 
\[
\xymatrix{ 
\tilde{S} \ar[r]^{\tilde{\iota}} \ar[d] & \tilde{S^\circ} \ar[d] \\
S \ar_{\iota}[r] & S^\circ
}
\]
commutes. Points on $\partial\tilde{S}$ that project to points on the curves of the lamination are mapped bijectively to points on $\partial\tilde{S^\circ}$ that project to points on the curves, and this bijection preserves the natural order of these points.
\end{lemma}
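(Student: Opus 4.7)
The plan is to mirror the structure of Lemma~\ref{lem:correspondence} for the existence and uniqueness of $\tilde{\iota}$, and then to exploit the fact that $\iota$ is a homeomorphism—in fact the identity on the underlying topological surface—to upgrade the statement to the claim about boundary points. For the existence and uniqueness I would argue exactly as in Lemma~\ref{lem:correspondence}: since $\tilde{S}$ is simply connected, the composition $\tilde{S}\to S\stackrel{\iota}{\to}S^\circ$ induces the zero map on fundamental groups, so the lifting criterion of covering space theory produces a map $\tilde{\iota}:\tilde{S}\to\tilde{S^\circ}$ making the diagram commute. Any other lift is a lift of the same continuous map, so it differs from $\tilde{\iota}$ by the action of a deck transformation on $\tilde{S^\circ}$.

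For the bijection on boundary points, the key structural input is that $\iota$ is a homeomorphism of underlying topological surfaces and carries the curves of the lamination on $S$ bijectively onto those on $S^\circ$, while identifying each component of $\partial S$ with the corresponding component of $\partial S^\circ$. A lift of a homeomorphism between covering spaces is itself a homeomorphism, so $\tilde{\iota}$ is a homeomorphism, and it restricts to a homeomorphism $\partial\tilde{S}\to\partial\tilde{S^\circ}$. Picking a component $\ell\subseteq\partial\tilde{S}$ projecting to a boundary circle $\gamma\subseteq\partial S$ that meets the lamination, I would note that the points on $\ell$ projecting to lamination crossings are exactly the preimage under the covering map of the finite set $\gamma\cap l$, hence form a discrete subset of $\ell$. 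Since $\iota$ identifies $\gamma\cap l$ with $\iota(\gamma)\cap\iota(l)$, the restriction $\tilde{\iota}|_\ell$ carries this discrete subset bijectively onto the corresponding discrete subset of $\tilde{\iota}(\ell)$.

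For the order claim, I would observe that $\iota$ restricts to the identity on each boundary circle—so in particular it preserves the cyclic order of points on $\gamma\cap l$—and therefore any lift of $\iota|_\gamma$ to the universal cover is an order-preserving homeomorphism of lines. The main subtlety I expect, and the step where care is needed, is ensuring that this order preservation survives the ambiguity in the choice of $\tilde{\iota}$: different lifts differ by a deck transformation of $\pi_1(S)$, but since deck transformations act as orientation-preserving homeomorphisms on each component of $\partial\tilde{S^\circ}$, the order-preservation property is invariant under this ambiguity. Compatibility across different components of $\partial\tilde{S}$ then follows automatically from $\tilde{\iota}$ being a single global homeomorphism together with the fact that a single deck transformation is applied uniformly on all of $\tilde{S^\circ}$.
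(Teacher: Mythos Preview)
Your proposal is correct and follows the same approach as the paper: both invoke the lifting criterion exactly as in Lemma~\ref{lem:correspondence}. The paper's own proof is a two-sentence sketch that cites the lifting criterion and then declares the remaining properties easy to check, so your version is in fact more detailed than the original on the bijection and order-preservation claims; one small slip is that the ambiguity in the lift is by a deck transformation of $\tilde{S^\circ}$, i.e.\ by $\pi_1(S^\circ)$, though of course $\iota$ identifies this with $\pi_1(S)$.
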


\begin{proof}
The lifting criterion ensures that there is a map $\tilde{\iota}:\tilde{S}\rightarrow\tilde{S^\circ}$ lifting the composition $\tilde{S}\rightarrow S\rightarrow S^\circ$. It is easy to check that this map has the required properties.
\end{proof}

\subsubsection{Definition of $b_i$ and $x_i$}

To construct coordinates on $\mathcal{D}_L(S,\mathbb{Q})$, fix a point $l\in\mathcal{D}_L(S,\mathbb{Q})$ and let $i\in J$ be an edge of the ideal triangulation $T$. Realize the lamination $l$ by a collection of nonintersecting simple closed curves on $S_\mathcal{D}$ such that all curves have the same rational weight. By cutting along the image of $\partial S$, we recover the surfaces~$S$ and~$S^\circ$ with curves drawn on them.

Suppose the edge $i$ corresponds to an arc connecting two holes $\gamma_1$ and~$\gamma_2$ in~$S$ which meet the curves of $l$. Choose a pair of geodesics $g_1$ and $g_2$ in $\partial\tilde{S}$ which project to these boundary components. Deform the curve connecting the holes by winding its endpoint around the holes infinitely many times in the direction prescribed by the orientation. This corresponds to deforming the preimage $\tilde{i}$ in~$\tilde{S}$ so that its endpoints coincide with endpoints of $g_1$ and $g_2$. If we let $i^\circ$ denote the image of $i$ under the tautological map $S\rightarrow S^\circ$, then we can apply the same procedure to $i^\circ$ to get an arc $\tilde{i^\circ}$ in $\tilde{S^\circ}$.
\[
\xy 0;/r.50pc/: 
(-12,-6)*{}="1"; 
(12,-6)*{}="2"; 
"1";"2" **\crv{(-12,10) & (12,10)}; 
(-16,-6)*{}="X"; 
(16,-6)*{}="Y"; 
"X";"Y" **\dir{-}; 
(18,-1)*{g_k}; 
(-12,0)*{}="P1";
(-11,-0.5)*{}="Q1"; 
(-12,2.5)*{}="P2";
(-10,1)*{}="Q2"; 
(-12,5.5)*{}="P3";
(-9,2.5)*{}="Q3"; 
(-12,11)*{}="P4";
(-7,4)*{}="Q4";
(-8,12)*{}="P5";
(-5,5.2)*{}="Q5"; 
(-4.5,12)*{}="P6";
(-3,5.5)*{}="Q6"; 
(0,12)*{}="P7";
(0,6)*{}="Q7"; 
(4.5,12)*{}="P8";
(3,5.5)*{}="Q8"; 
(8,12)*{}="P9";
(5,5.2)*{}="Q9"; 
(12,11)*{}="P10";
(7,4)*{}="Q10"; 
(12,5.5)*{}="P11";
(9,2.5)*{}="Q11"; 
(12,2.5)*{}="P12";
(10,1)*{}="Q12"; 
(12,0)*{}="P13";
(11,-0.5)*{}="Q13"; 
"P1";"Q1" **\crv{(-12,0) & (-11,-0.5)}; 
"P2";"Q2" **\crv{(-12,2.5) & (-10,1)}; 
"P3";"Q3" **\crv{(-12,5.5) & (-9,2.5)}; 
"P4";"Q4" **\crv{(-12,11) & (-7,4)}; 
"P5";"Q5" **\crv{(-8,12) & (-5,5.2)}; 
"P6";"Q6" **\crv{(-4.5,12) & (-3,5.5)}; 
"P7";"Q7" **\crv{(0,12) & (0,6)}; 
"P8";"Q8" **\crv{(4.5,12) & (3,5.5)}; 
"P9";"Q9" **\crv{(8,12) & (5,5.2)}; 
"P10";"Q10" **\crv{(12,11) & (7,4)}; 
"P11";"Q11" **\crv{(12,5.5) & (9,2.5)}; 
"P12";"Q12" **\crv{(12,2.5) & (10,1)}; 
"P13";"Q13" **\crv{(12,0) & (11,-0.5)}; 
(-3,4)*{v_{-1}};
(0,4.3)*{v_0};
(3,4)*{v_1};
(-6,3)*{\Ddots};
(6,3.5)*{\ddots};
(13,-2)*{\vdots};
(-13,-2)*{\vdots};
(10,12)*{}="B";
"1";"B" **\crv{(-12,9) & (0,12)}; 
(9,11)*{\tilde{i}};
(-16,-8)*{}="Z"; 
(16,-8)*{}="W"; 
"Z";"W" **\dir{-}; 
(-12,-8)*{}="1"; 
(12,-8)*{}="2"; 
"1";"2" **\crv{(-12,-24) & (12,-24)}; 
(18,-13)*{g_k^\circ};
(-12,-14)*{}="P1";
(-11,-13.5)*{}="Q1"; 
(-12,-16.5)*{}="P2";
(-10,-15)*{}="Q2"; 
(-12,-19.5)*{}="P3";
(-9,-16.5)*{}="Q3"; 
(-12,-25)*{}="P4";
(-7,-18)*{}="Q4"; 
(-8,-26)*{}="P5";
(-5,-19.2)*{}="Q5"; 
(-4.5,-26)*{}="P6";
(-3,-19.5)*{}="Q6"; 
(0,-26)*{}="P7";
(0,-20)*{}="Q7"; 
(4.5,-26)*{}="P8";
(3,-19.5)*{}="Q8"; 
(8,-26)*{}="P9";
(5,-19.2)*{}="Q9"; 
(12,-25)*{}="P10";
(7,-18)*{}="Q10"; 
(12,-19.5)*{}="P11";
(9,-16.5)*{}="Q11"; 
(12,-16.5)*{}="P12";
(10,-15)*{}="Q12"; 
(12,-14)*{}="P13";
(11,-13.5)*{}="Q13"; 
"P1";"Q1" **\crv{(-12,-14) & (-11,-13.5)}; 
"P2";"Q2" **\crv{(-12,-16.5) & (-10,-15)}; 
"P3";"Q3" **\crv{(-12,-19.5) & (-9,-16.5)}; 
"P4";"Q4" **\crv{(-12,-25) & (-12,-25)}; 
"P5";"Q5" **\crv{(-8,-26) & (-5,-19.2)}; 
"P6";"Q6" **\crv{(-4.5,-26) & (-3,-19.5)}; 
"P7";"Q7" **\crv{(0,-26) & (0,-20)}; 
"P8";"Q8" **\crv{(4.5,-26) & (3,-19.5)}; 
"P9";"Q9" **\crv{(8,-26) & (5,-19.2)}; 
"P10";"Q10" **\crv{(12,-25) & (7,-18)}; 
"P11";"Q11" **\crv{(12,-19.5) & (9,-16.5)}; 
"P12";"Q12" **\crv{(12,-16.5) & (10,-15)}; 
"P13";"Q13" **\crv{(12,-14) & (11,-13.5)}; 
(6,-17)*{\Ddots};
(-6,-16.5)*{\ddots};
(13,-11)*{\vdots};
(-13,-11)*{\vdots};
(-3,-18)*{v_{-1}^\circ};
(0,-18.3)*{v_0^\circ};
(3,-18)*{v_1^\circ};
(10,-26)*{}="B";
"1";"B" **\crv{(-12,-23) & (0,-26)}; 
(9,-25)*{\tilde{i^\circ}};
\endxy
\]

Observe that the curves of the lamination that end on~$\gamma_k$ can be lifted to infinitely many curves in~$\tilde{S}$ that end on~$g_k$. Similarly, if $g_k^\circ$ is the geodesic in~$\tilde{S^\circ}$ that projects to $\gamma_k$, then the curves in~$S^\circ$ that end on~$\gamma_k$ can be lifted to infinitely many curves in~$\tilde{S^\circ}$ that end on~$g_k^\circ$. Label the endpoints of curves on $g_k$ by the symbols~$v_\alpha$ ($\alpha\in\mathbb{Z}$), and label the endpoints of curves on $g_k^\circ$ by~$v_\alpha^\circ$ ($\alpha\in\mathbb{Z}$). By Lemma~\ref{lem:correspondence2}, there is a map $\tilde{\iota}:\tilde{S}\rightarrow\tilde{S^\circ}$ that projects to the natural map $\iota:S\rightarrow S^\circ$ and is unique up to the action of $\pi_1(S)$ by deck transformations. This provides a bijection 
\[
f:\{\text{vertices }v_\alpha\}\rightarrow\{\text{vertices }v_\alpha^\circ\}
\]
which preserves the order of the vertices. Choose a vertex $v_{\alpha(k)}$ on $g_k$ and let $v_{\beta(k)}^\circ$ be the corresponding vertex given by $v_{\beta(k)}^\circ=f(v_{\alpha(k)})$. We can choose $v_{\alpha(k)}$ in such a way that the curve ending at $v_{\alpha(k)}$ intersects $\tilde{i}$ and the curve ending at $v_{\beta(k)}^\circ$ intersects $\tilde{i^\circ}$. Notice that if we choose a different vertex $v_{\alpha(k)}$, then the vertex $v_{\beta(k)}^\circ$ will change by a corresponding amount.

By construction, the curve that ends at $v_{\alpha(k)}$ intersects $\tilde{i}$ at some point $p(k)$. Denote by~$a_i$ half the number of intersections between the lifted curves of $l$ and the lifted edge $\tilde{i}$ between the points~$p(1)$ and~$p(2)$. Similarly, the curve that ends at~$v_{\beta(k)}^\circ$ intersects $\tilde{i^\circ}$ at some point $p^\circ(k)$. Let $a_i^\circ$ be half the number of intersections between the lifted curves of $l$ and the lifted edge $\tilde{i^\circ}$ between the points~$p^\circ(1)$ and~$p^\circ(2)$. We can then define a coordinate associated to the edge $i$ by 
\[
b_i=a_i^\circ-a_i.
\]
This defines $b_i$ when $i$ corresponds to an arc connecting two holes. If one or both of the endpoints of $i$ are punctures, then $i$ intersects only finitely many curves near these punctures, and $i^\circ$ intersects only finitely many curves near the corresponding punctures in~$S^\circ$. Thus we can associate the half intersection numbers $a_i$ and $a_i^\circ$ to these arcs as before, and we can define $b_i$ by the above formula. One can show that the $|J|$ numbers obtained in this way are independent of all choices made in the construction.

In addition to the $b_i$, there are $|J|$ numbers $x_i$ associated to a point in the space $\mathcal{D}_L(S,\mathbb{Q})$. Given a point of $\mathcal{D}_L(S,\mathbb{Q})$, these are simply defined as the $X$-coordinates of the $\mathcal{X}$-lamination on the surface $S$ obtained by cutting $S_\mathcal{D}$ along the image of $\partial S$.

\subsubsection{Reconstruction}

We will now show that the $b_i$ and $x_i$ are indeed coordinates on $\mathcal{D}^+(S)$.

\begin{proposition}
The numbers $b_i$ and $x_i$ provide a bijection 
\[
\mathcal{D}_L(S,\mathbb{Q})\rightarrow\mathbb{Q}^{2|J|}.
\]
\end{proposition}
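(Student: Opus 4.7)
The strategy is to construct an inverse $\psi:\mathbb{Q}^{2|J|}\to\mathcal{D}_L(S,\mathbb{Q})$ and show that it is a two-sided inverse to the coordinate map $\phi$, modeling the argument on the proof for $\mathcal{D}^+(S)$ earlier in the paper, but replacing exponentiated half lengths with half intersection numbers (and multiplication/division with addition/subtraction).

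\emph{Step 1: Reconstruct the $S$ side.}
Given rational numbers $x_j$ and $b_j$ ($j\in J$), first apply the already-proven reconstruction for $\mathcal{X}$-laminations to the $x_j$ to obtain a collection of curves on $S$, together with an orientation on each component of $\partial S$ that meets a curve (determined, as in the $\mathcal{X}$-lamination case, by the sign of $\sum x_j$ over edges incident to the component). Lift to $\tilde S\subseteq\mathbb{H}$, obtaining curves ending on the boundary geodesics $g_k$.

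\emph{Step 2: Reconstruct the $S^\circ$ side using the $b_j$.}
Fix an edge $i_0\in J$; on a separate copy of $\mathbb{H}$ pick a geodesic $\tilde{i_0}^\circ$ and a choice of starting endpoint to play the role of $p^\circ(1)$. The data of the $a_i$ (read off from Step~1 via the same procedure used to define the direct map) together with the prescribed $b_i$ determine $a_i^\circ:=b_i+a_i$ for every edge. Using the same triangle-by-triangle extension as in the proof of the $\mathcal{X}$-space reconstruction, together with the numerical data $a_i^\circ$ as ``half intersection'' parameters, assemble the region $\tilde{S^\circ}\subseteq\mathbb{H}$ triangulated by ideal triangles, with points on each boundary geodesic $g_k^\circ$ marking where curves terminate, so that the half intersection of the lifted curves with each segment of $\tilde{i}^\circ$ (measured between the analogs of $p^\circ(1)$ and $p^\circ(2)$) equals $a_i^\circ$. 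This construction depends on the initial choice of reference endpoint only up to a deck transformation, exactly as in the horocycle case in the proof for $\mathcal{D}^+(S)$.

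\emph{Step 3: Glue and descend.}
Glue copies of $\tilde S$ and $\tilde{S^\circ}$ along matching pairs of boundary geodesics $(g_k,g_k^\circ)$, identifying the vertices $v_\alpha$ with $v_\alpha^\circ$ via the bijection $f$ supplied by Lemma~\ref{lem:correspondence2}. Iterating yields a tessellation of a region of~$\mathbb{H}$ on which $\pi_1(S_\mathcal{D})$ acts by deck transformations, and the lifted curves glue into a collection of $\pi_1(S_\mathcal{D})$-invariant arcs. Quotienting, we obtain a collection of nonintersecting simple closed curves on $S_\mathcal{D}$, i.e.\ a rational $\mathcal{D}$-lamination $\psi(b,x)$, equipped with the orientations on $\gamma$ supplied in Step~1.

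\emph{Step 4: Verify that $\phi$ and $\psi$ are mutual inverses.}
For $\phi\circ\psi=\mathrm{id}$: starting from prescribed $(b_j,x_j)$, Step~1 ensures the reconstructed $\mathcal{X}$-lamination on $S$ has coordinates $x_j$, while Step~2 forces the half intersection numbers on $S^\circ$ to satisfy $a_i^\circ-a_i=b_i$ by construction, so the coordinates of $\psi(b,x)$ are $(b_j,x_j)$. For $\psi\circ\phi=\mathrm{id}$: given $l\in\mathcal{D}_L(S,\mathbb{Q})$, lift to $\tilde{S}_\mathcal{D}$; the $\mathcal{X}$-lamination reconstruction on $S$ from the $x_j$ of $l$ recovers $\tilde S$ with its lifted curves, and since $a_i^\circ$ is determined by $a_i$ and $b_i$, the Step~2 construction recovers $\tilde{S^\circ}$ with its lifted curves matching those of $l$; gluing and projecting recovers $l$.

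The main obstacle is Step~2: one must verify that successive applications of the triangle-by-triangle extension produce a globally consistent configuration of curves on $\tilde{S^\circ}$, i.e.\ that the intersection counts prescribed by the $a_i^\circ$ around every vertex and across every face are compatible and produce nonintersecting arcs. This relies on the fact that the triangle-combinatorics on $S^\circ$ is identical to that on $S$ (the $\varepsilon_{ij}$ matrix is the same), so the combinatorial compatibility already verified in Step~1 automatically transfers via the additive shift $a_i^\circ = a_i+b_i$; any potential defect of integrality in $a_i^\circ$ is absorbed, as in the $\mathcal{A}$-lamination reconstruction, by replacing the $b_i$ with $pb_i+q$ and adding special curves of appropriate weight at the end.
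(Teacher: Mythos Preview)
Your proposal follows essentially the same route as the paper: tropicalize the proof for $\mathcal{D}^+(S)$, using the $\mathcal{X}$-lamination reconstruction on $S$ and the relation $a_i^\circ = a_i + b_i$ to build $\tilde{S}^\circ$ triangle by triangle, then glue and quotient. Two points deserve correction or emphasis.

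First, in Step~3 you claim to obtain ``a collection of nonintersecting simple closed curves on $S_\mathcal{D}$'', but this is not immediate. The $\mathcal{X}$-lamination reconstruction fills each ideal triangle with \emph{infinitely many} horocyclic arcs near every vertex, so after gluing $\tilde{S}$ and $\tilde{S}^\circ$ and quotienting you will typically get infinitely many closed curves homotopic to components of~$\gamma$. The paper deletes a maximal collection of such curves lying between the distinguished curves; the orientation on that component of $\gamma$ is then dictated by which side ($S$ or $S^\circ$) the remaining boundary-parallel curves lie on, not solely by the $\mathcal{X}$-lamination data as you suggest in Step~1.

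Second, your ``main obstacle'' paragraph proposes handling the realizability of the $a_i^\circ$ by the $\mathcal{A}$-lamination trick of rescaling ($pb_i + q$) and adding special curves. This does not fit here: $\mathcal{D}$-laminations carry no special curves with negative weight to cancel against. The correct maneuver (and the one the paper uses) is simply to push the distinguished curves in $\tilde{S}$ closer to the vertices, which makes each $a_i$ arbitrarily large and hence $a_i^\circ = a_i + b_i$ realizable as a nonnegative half-intersection count in the corresponding triangle of $\tilde{S}^\circ$. With these two adjustments, your argument coincides with the paper's; your verification in Step~4 is also somewhat glib compared to the paper's explicit check that the distinguished curves produced by the construction agree with those singled out by the lift~$\tilde{\iota}$ of Lemma~\ref{lem:correspondence2}, but the idea is right.
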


\begin{proof}
Call this map $\phi$. We will construct a map $\psi:\mathbb{Q}^{2|J|}\rightarrow\mathcal{D}_L(S,\mathbb{Q})$ and prove that $\phi$ and $\psi$ are inverses.

\Needspace*{2\baselineskip}
\begin{step}[1]
Definition of $\psi:\mathbb{Q}^{2|J|}\rightarrow\mathcal{D}_L(S,\mathbb{Q})$.
\end{step}

Suppose we are given rational numbers $b_j$ and $x_j$ for every $j\in J$. By the reconstruction procedure for $\mathcal{X}$-laminations, we can use the $x_j$ to glue together ideal triangles to get a region $\tilde{S}\subseteq\mathbb{H}$ with geodesic boundary together with infinitely many curves.

Let $t_0$ be a triangle in $\tilde{S}$. There are infinitely many curves connecting each pair of adjacent sides of this triangle. For each vertex $p$ of $t_0$, choose a curve connecting the two sides of this triangle that meet at $p$. Then for every edge $i$, there is a number $a_i$ defined as half the total weight of the curves that intersect $i$ between the distinguished curves. Define 
\[
a_i^\circ=b_i+a_i.
\]
We will use these numbers $a_i^\circ$ to construct another region $\tilde{S^\circ}$ with geodesic boundary in a copy of $\mathbb{H}$.

To construct $\tilde{S^\circ}$, let $u_0$ be an ideal triangle with infinitely many curves connecting each pair of adjacent sides as before. If we choose the distinguished curves on $t_0$ sufficiently close to the vertices, then we can choose a triple of distinguished curves near the vertices of $u_0$ so that $a_i^\circ$ is half the total weight of the curves that intersect an edge of $u_0$ between distinguished curves.

Now suppose $t$ is a triangle adjacent to $t_0$ in the triangulation of $\tilde{S}$. As before we can choose a distinguished curve near each vertex of $t$ to get a triple of numbers~$a_i$. Then we can draw an ideal triangle $u$ with infinitely many curves connecting each pair of adjacent sides, and there are distinguished curves near each vertex of $u$ realizing the numbers~$a_i^\circ$. We can choose the distinguished curves on $t_0$ and $t$ so that they coincide at the common edge $t_0\cap t$, and then we can glue $u_0$ and $u$ so that their distinguished curves coincide. Continuing this process inductively, we obtain the desired space $\tilde{S^\circ}$.

Now the region $\tilde{S}$ is obtained from $\mathbb{H}$ by removing infinitely many geodesic half disks. We can extend $\tilde{S}$ to a larger region by gluing a copy of $\tilde{S^\circ}$ in each of these half disks in such a way that corresponding curves are identified. The resulting region is again obtained from $\mathbb{H}$ by removing infinitely many geodesic half disks, and we can enlarge it by gluing a copy of $\tilde{S}$ in each of these half disks. Continuing this process ad infinitum, we partition the hyperbolic plane into ideal triangles together with infinitely many curves. Quotienting this space by the group of deck transformations, we recover the surface $S_{\mathcal{D}}$ with a collection of curves. There may be infinitely many curves homotopic to some component $\gamma_i$ of $\gamma$. In this case we delete a maximal collection of such curves between the distinguished curves. If there are any remaining curves homotopic to $\gamma_i$, then they all lie on one of the surfaces $S$ or $S^\circ$, and we choose the orientation of $\gamma_i$ to agree with the orientation of this surface.

\Needspace*{2\baselineskip}
\begin{step}[2]
Proof of $\phi\circ\psi=1_{\mathbb{Q}^{2|J|}}$.
\end{step}

Let $b_j$ and $x_j$ ($j\in J$) be given. By the reconstruction procedure described above, we construct triangulated regions $\tilde{S}\subseteq\mathbb{H}$ and $\tilde{S}^\circ\subseteq{H}$ which we then glue together to get a triangulation of the hyperbolic plane together with a collection of curves. From these data, we get a $\mathcal{D}$-lamination $l$ on $S_{\mathcal{D}}$. We want to show that the coordinates of $l$ are the numbers~$b_j$ and~$x_j$.

We can lift $l$ to its universal cover, which is exactly the triangulated surface~$\tilde{S}_{\mathcal{D}}$ that we got by gluing together copies of $\tilde{S}$ and $\tilde{S}^\circ$. Consider a copy of $\tilde{S}$ in $\tilde{S}_{\mathcal{D}}$, and let $\tilde{i}$ be an edge of the triangulation of $\tilde{S}$. To find the coordinate of $l$ corresponding to the edge $\tilde{i}$, we must choose near each endpoint of $\tilde{i}$ a distinguished curve in $\tilde{S}$ which intersects $\tilde{i}$. Now consider a copy of $\tilde{S}^\circ$ adjacent to $\tilde{S}$ in $\tilde{S}_{\mathcal{D}}$. Choose a basepoint $x$ on the geodesic separating these regions. Suppose the edge~$\tilde{i}$ is asymptotic to an endpoint of the boundary geodesic $g$ of $\tilde{S}$. Then there is a distinguished curve $c$ that intersects $\tilde{i}$ and $g$. Let $p$ be the point of intersection with $g$. Draw a curve $\tilde{\alpha}$ in~$\tilde{S}$ from the point~$p$ to the point~$x$.
\[
\xy 0;/r.40pc/: 
(0,0)*\xycircle(16,16){-};
(-10.7,2.75)*{}="p"; 
(-10.8,-1.5)*{}="p1"; 
(0,-3.5)*{}="x"; 
(10.7,2.75)*{}="q"; 
(10.8,-1.5)*{}="q1"; 
(0,16)*{}="X"; 
(0,-16)*{}="Y"; 
(-3.12,15.69)*{}="A1"; 
(-5.8,14.9)*{}="A2"; 
(-6.6,14.5)*{}="A3"; 
(-11,11.5)*{}="A4"; 
(-11.5,11)*{}="A5"; 
(-15.9,0.5)*{}="A6"; 
(-15.9,-0.5)*{}="A7"; 
(-11.5,-11)*{}="A8"; 
(-11,-11.5)*{}="A9"; 
(-6.6,-14.5)*{}="A10"; 
(-5.8,-14.9)*{}="A11"; 
(-3.12,-15.69)*{}="A12"; 
(3.12,15.69)*{}="B1"; 
(5.8,14.9)*{}="B2"; 
(6.6,14.5)*{}="B3"; 
(11,11.5)*{}="B4"; 
(11.5,11)*{}="B5"; 
(15.9,0.5)*{}="B6"; 
(15.9,-0.5)*{}="B7"; 
(11.5,-11)*{}="B8"; 
(11,-11.5)*{}="B9"; 
(6.6,-14.5)*{}="B10"; 
(5.8,-14.9)*{}="B11"; 
(3.12,-15.69)*{}="B12"; 
"p";"x" **\crv{(-7,5) & (-3,-4)}; 
"x";"q" **\crv{(3,-4) & (7,5)}; 
"X";"Y" **\dir{-}; 
"A1";"A2" **\crv{(-2.9,14.4) & (-5.1,13.4)}; 
"A3";"A4" **\crv{(-5,12) & (-9,9)}; 
"A5";"A6" **\crv{(-6.6,6.5) & (-11,0.4)}; 
"A7";"A8" **\crv{(-11,-0.4) & (-6.6,-6.5)}; 
"A9";"A10" **\crv{(-9,-9) & (-5,-12)}; 
"A11";"A12" **\crv{(-5.1,-13.4) & (-2.9,-14.4)}; 
"B1";"B2" **\crv{(2.9,14.4) & (5.1,13.4)}; 
"B3";"B4" **\crv{(5,12) & (9,9)}; 
"B5";"B6" **\crv{(6.6,6.5) & (11,0.4)}; 
"B7";"B8" **\crv{(11,-0.4) & (6.6,-6.5)}; 
"B9";"B10" **\crv{(9,-9) & (5,-12)}; 
"B11";"B12" **\crv{(5.1,-13.4) & (2.9,-14.4)}; 
"A6";"A10" **\crv{(-8,1) & (-3,-6)}; 
"B6";"B10" **\crv{(8,1) & (3,-6)}; 
"p";"p1" **\crv{(-9.6,1.5) & (-9.6,0)}; 
"q";"q1" **\crv{(9.6,1.5) & (9.6,0)}; 
(-1,-4.5)*{x}; 
(-5,2)*{\tilde{\alpha}}; 
(5,2)*{\tilde{\alpha}^\circ}; 
(-10.8,4)*{p}; 
(10.8,4)*{q}; 
(-9,0)*{c}; 
(9,0)*{c^\circ}; 
(-4.5,-8)*{\tilde{i}}; 
(4.5,-8)*{\tilde{i}^\circ}; 
(-8.5,8)*{g}; 
(8.5,8)*{g^\circ}; 
\endxy
\]
This curve $\tilde{\alpha}$ projects to a curve~$\alpha$ on the surface $S$, and we can apply the map~$\iota$ to get a curve $\alpha^\circ$ on $S^\circ$. Finally, we lift $\alpha^\circ$  to a curve $\tilde{\alpha}^\circ$ in~$S^\circ$ that starts at $x$. This curve $\tilde{\alpha}^\circ$ ends at some point $q=\tilde{\iota}(p)$, and there is a unique lifted curve $c^\circ$ of the lamination that passes through $q$. In this way, we get distinguished curves near the endpoints of the edge $\tilde{i}^\circ$. These can be used to define the coordinates of~$l$.

On the other hand, consider the monodromy along the curve in $S_{\mathcal{D}}$ obtained by concatenating $\alpha$ and $\alpha^\circ$. It maps the region~$\tilde{S}$ isometrically into the region bounded by~$g^\circ$. In the construction of $l$ from the coordinates~$b_j$ and~$x_j$, there is a correspondence between curves that end on $g$ and curves that end on $g^\circ$, and this correspondence is obtained by applying this transformation. Thus the distinguished curves used to construct $l$ agree with the ones obtained using the map $\tilde{\iota}$.

It follows that the $b$-coordinates of $l$ are simply the numbers $b_j$ that we started with. It is easy to see that the $x$-coordinates of $l$ are the numbers $x_j$. This completes Step~2 of the proof.

\Needspace*{3\baselineskip}
\begin{step}[3]
Proof of $\psi\circ\phi=1_{\mathcal{D}_L}$.
\end{step}

Let $l\in\mathcal{D}_L(S,\mathbb{Q})$ be given. To calculate the coordinates of~$l$, let us pass to the universal cover~$\tilde{S}_{\mathcal{D}}$ of~$S_{\mathcal{D}}$ and choose a copy of $\tilde{S}$ in~$\tilde{S}_{\mathcal{D}}$. We associate numbers $x_i$ to the edges of the triangulation in the usual way. If $\tilde{i}$ is an edge of the triangulation of~$\tilde{S}$, then we can choose a distinguished curve near each endpoint of~$\tilde{i}$. Using the construction described above involving lifts of the map $\iota:S\rightarrow S^\circ$, we can get a pair of distinguished curves near the endpoints of a corresponding edge $\tilde{i}^\circ$ in~$\tilde{S}^\circ$. We can use these curves to calculate the numbers~$a_i$ and~$a_i^\circ$, and thus the coordinate $b_i$.

We can now use the $x_i$ to reconstruct the region~$\tilde{S}$. We must choose a distinguished curve near each endpoint of the edge~$\tilde{i}$, and we can assume that these are exactly the ones used above to define~$b_i$. We can use the curves to begin constructing the space $\tilde{S}^\circ$ with a collection of curves. By gluing together copies of $\tilde{S}$ and $\tilde{S}^\circ$, we recover $\tilde{S}_{\mathcal{D}}$ with a collection of curves. Quotienting this space by the action of $\pi_1(S_{\mathcal{D}})$, we recover the point $l\in\mathcal{D}_L(S,\mathbb{Q})$. This completes Step~3 of the proof.
\end{proof}

\subsubsection{Transformation rule}

We have defined the $b_i$ and $x_i$ coordinates in terms of a fixed ideal triangulation of~$S$. The next result says how these coordinates vary when we change the triangulation.

\begin{proposition}
\label{prop:flip}
A regular flip at an edge $k$ of the triangulation changes the coordinates $x_i$ and~$b_i$ to new coordinates $x_i'$ and~$b_i'$ given by the formulas 
\begin{align*}
x_i' &=
\begin{cases}
-x_k & \mbox{if } i=k \\ 
x_i+\varepsilon_{ki}\max\left(0,\sgn(\varepsilon_{ki})x_k\right) & \mbox{if } i\neq k,
\end{cases} \\
b_i' &=
\begin{cases}
\max\biggr(x_k+\sum_{j|\varepsilon_{kj}>0}\varepsilon_{kj}b_j,-\sum_{j|\varepsilon_{kj}<0}\varepsilon_{kj}b_j\biggr)-\max(0,x_k)-b_k & \mbox{if } i=k \\
b_i & \mbox{if } i\neq k.
\end{cases} 
\end{align*}
\end{proposition}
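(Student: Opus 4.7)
The plan is to split the proof into three cases: the transformation of $x_i$ for all $i$, the invariance of $b_i$ for $i \neq k$, and the main computation yielding $b_k'$. The first case is immediate: since $x_i$ is by construction the $\mathcal{X}$-lamination coordinate of the $\mathcal{X}$-lamination induced on $S$ by cutting $S_{\mathcal{D}}$ along $\partial S$, the claimed formula is just the flip rule for $\mathcal{X}$-laminations proved earlier. For the second case, the edge $i \neq k$ and the curves meeting it are not disturbed by a flip at $k$; choosing the same distinguished vertices $v_{\alpha(\cdot)}$ and correspondents $v_{\beta(\cdot)}^\circ$ on the boundary geodesics before and after the flip shows that $a_i$ and $a_i^\circ$ are unchanged, hence $b_i' = b_i$.

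The heart of the argument is the third case. Lift the ideal quadrilateral containing $\tilde{k}$ to the universal cover $\tilde{S}$, label its vertices $v_1,v_2,v_3,v_4$ counterclockwise so that $\tilde{k}$ joins $v_1$ and $v_3$, and let $a_{ij}$ denote the half-weighted count of intersections of the lifted lamination with the edge from $v_i$ to $v_j$ (between appropriately chosen distinguished reference curves near each vertex); write $a_{ij}^\circ$ for the corresponding counts in $\tilde{S^\circ}$ and put $b_{ij} = a_{ij}^\circ - a_{ij}$. The standard local pattern of arcs in a triangulated ideal quadrilateral yields the $\mathcal{A}$-type identity
\[
a_{13} + a_{24} = \max\bigl(a_{12}+a_{34},\, a_{14}+a_{23}\bigr),
\]
and the same identity in $\tilde{S^\circ}$. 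Subtracting, and using the definition $x_k = a_{12}+a_{34}-a_{14}-a_{23}$ of the $\mathcal{X}$-coordinate together with $a_{ij}^\circ = a_{ij} + b_{ij}$, I would then verify by a case analysis on the sign of $x_k$ the algebraic identity
\[
\max\bigl(a_{12}^\circ+a_{34}^\circ,\,a_{14}^\circ+a_{23}^\circ\bigr) - \max\bigl(a_{12}+a_{34},\,a_{14}+a_{23}\bigr) = \max\bigl(x_k + b_{12}+b_{34},\, b_{14}+b_{23}\bigr) - \max(0,x_k).
\]
Substituting $b_{12}+b_{34} = \sum_{j|\varepsilon_{kj}>0}\varepsilon_{kj}b_j$ and $b_{14}+b_{23} = -\sum_{j|\varepsilon_{kj}<0}\varepsilon_{kj}b_j$ gives exactly the stated formula for $b_k'$.

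The main obstacle is justifying that the intersection counts $a_i$, $a_i^\circ$ transform according to the classical $\mathcal{A}$-lamination flip identity. Unlike the $\mathcal{A}$-coordinates, the numbers $a_i$ depend on the choice of distinguished curves near each boundary component, and the coupling of these choices on $\tilde{S}$ and $\tilde{S^\circ}$ through the lift $\tilde{\iota}$ must be respected before \emph{and} after the flip. The resolution is to observe that one may pick distinguished curves sufficiently close to each relevant boundary geodesic so that they lie outside the quadrilateral where the flip takes place; then the identity above reduces to a purely local count inside the quadrilateral, where the $\mathcal{A}$-flip identity applies verbatim, and the changes in $a_k$ and $a_k^\circ$ are not affected by the global choice of reference. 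After this is in place, the remaining algebra and case analysis on $\operatorname{sgn}(x_k)$ are routine.
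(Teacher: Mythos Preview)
Your proposal is correct and follows essentially the same route as the paper: reduce the $x$-rule to the known $\mathcal{X}$-lamination flip, lift the quadrilateral to $\tilde{S}$ and $\tilde{S}^\circ$, choose a single distinguished curve near each vertex that cuts both incident edges, apply the tropical $\mathcal{A}$-flip identity $a_{24}=\max(a_{12}+a_{34},a_{14}+a_{23})-a_{13}$ on each side, and subtract. The only cosmetic difference is that the paper avoids your case split on $\sgn(x_k)$ by subtracting $a_{14}+a_{23}$ from both $\max$ terms directly, which yields $b_{24}=\max(x_{13}+b_{12}+b_{34},\,b_{14}+b_{23})-\max(0,x_{13})-b_{13}$ in one line.
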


\begin{proof}
By definition, the coordinates $x_i$ on $\mathcal{D}_L(S,\mathbb{Q})$ coincide with the coordinates of the $\mathcal{X}$-lamination on the surface $S$. Therefore the transformation rule for the $x_i$ is just the usual transformation rule for the coordinates of an $\mathcal{X}$-lamination.

To prove the second rule, deform the curves corresponding to edges of the ideal triangulation by winding their endpoints around the holes infinitely many times. Lift these deformed curves to the universal cover of $S$, and consider the ideal quadrilateral formed by the two triangles adjacent to the geodesic arc $\tilde{k}$ corresponding to $k$.
\[
\xy 0;/r.40pc/: 
(4,6)*{\tilde{k}}; 
(-6,-8)*{1}; 
(0,-8)*{2}; 
(12,-8)*{3}; 
(24,-8)*{4}; 
(-9,-6)*{}="A"; 
(3,-6)*{}="B"; 
(15,-6)*{}="C"; 
(27,-6)*{}="D"; 
(-6,-6)*{}="1"; 
(0,-6)*{}="2"; 
(12,-6)*{}="3"; 
(24,-6)*{}="4"; 
"A";"1" **\crv{~*=<2pt>{.}(-9,-4) & (-6,-4)}; 
"2";"B" **\crv{~*=<2pt>{.}(0,-4) & (3,-4)}; 
"3";"C" **\crv{~*=<2pt>{.}(12,-4) & (15,-4)}; 
"4";"D" **\crv{~*=<2pt>{.}(24,-4) & (27,-4)}; 
"1";"2" **\crv{(-6,-1) & (0,-1)}; 
"2";"3" **\crv{(0,3) & (12,3)}; 
"3";"4" **\crv{(12,3) & (24,3)}; 
"1";"4" **\crv{(-6,15) & (24,15)}; 
"1";"3" **\crv{(-6,8) & (12,8)}; 
(-7.5,-4.5)*{}="P"; 
(-5,-4.5)*{}="Q"; 
(-1.5,-4.5)*{}="R"; 
(1.5,-4.5)*{}="S"; 
(10.5,-4.5)*{}="T"; 
(13.5,-4.5)*{}="U"; 
(22.5,-4.5)*{}="V"; 
(25.5,-4.5)*{}="W"; 
"P";"Q" **\crv{(-7.5,-3.5) & (-5,-3.5)}; 
"R";"S" **\crv{(-1.5,-3.5) & (1.5,-3.5)}; 
"T";"U" **\crv{(10.5,-3.5) & (13.5,-3.5)}; 
"V";"W" **\crv{(22.5,-3.5) & (25.5,-3.5)}; 
(-14,-6)*{}="X"; 
(32,-6)*{}="Y"; 
"X";"Y" **\dir{-}; 
\endxy
\]

Number the vertices of this quadrilateral in counterclockwise order so that the edge $\tilde{k}$ joins vertices~1 and~3. In the construction of the $b_i$, we considered near each endpoint $e$ of a lifted edge $\tilde{i}$ a curve that intersects $\tilde{i}$ and projects down to a curve of the lamination. We can choose these curves to intersect both edges of the quadrilateral that meet at $e$.

Our construction will then associate a number $a_{ij}$ to the geodesic connecting~$i$ and~$j$. There is a corresponding ideal quadrilateral in the universal cover of $S^\circ$ and a number $a_{ij}^\circ$ corresponding to the edge connecting vertices~$i$ and~$j$ of this ideal quadrilateral.

If we flip at the edge~$k$, then the ideal quadrilateral in the universal cover of~$S$ is replaced by the same ideal quadrilateral triangulated by the arc from~2 to~4, and the number associated to this new arc is 
\[
a_{24}=\max(a_{12}+a_{34},a_{14}+a_{23})-a_{13}.
\]
Similarly, the number $a_{13}^\circ$ transforms to 
\[
a_{24}^\circ=\max(a_{12}^\circ+a_{34}^\circ,a_{14}^\circ+a_{23}^\circ)-a_{13}^\circ.
\]
Applying these rules to the difference $b_{13}=a_{13}^\circ-a_{13}$, we obtain 
\begin{align*}
b_{24} &=\max(a_{12}^\circ+a_{34}^\circ,a_{14}^\circ+a_{23}^\circ) - \max(a_{12}+a_{34},a_{14}+a_{23})-(a_{13}^\circ-a_{13}) \\
&= \max(a_{12}^\circ+a_{34}^\circ-a_{14}-a_{23},a_{14}^\circ+a_{23}^\circ-a_{14}-a_{23}) \\
&\qquad - \max(0,a_{12}+a_{34}-a_{14}-a_{23})-(a_{13}^\circ-a_{13}) \\
&= \max(x_{13}+b_{12}+b_{34},b_{14}+b_{23}) - \max(0,x_{13}) - b_{13}
\end{align*}
where we have used the relation $x_{13}=a_{12}+a_{34}-a_{14}-a_{23}$. This proves the transformation rule for the $b_i$.
\end{proof}

\subsection{Real $\mathcal{D}$-laminations}

Since the transformation rules in Proposition \ref{prop:flip} are continuous with respect to the standard topology on $\mathbb{Q}^{2|J|}$, the coordinates define a natural topology on $\mathcal{D}_L(S,\mathbb{Q})$. We define the space of \emph{real $\mathcal{D}$-laminations} as the metric space completion of $\mathcal{D}_L(S,\mathbb{Q})$.

This space of real $\mathcal{D}$-laminations is identified with the space $\mathcal{D}(\mathbb{R}^t)$ of $\mathbb{R}^t$-points of the symplectic double associated to the surface~$S$. There is a natural action of the group $\mathbb{R}_{>0}$ on this space where an element $\lambda\in\mathbb{R}_{>0}$ acts by multiplying the coordinates in any coordinate system by~$\lambda$. The \emph{spherical tropical space} $\mathcal{SD}(\mathbb{R}^t)$ is the quotient 
\[
\mathcal{SD}(\mathbb{R}^t)=\left(\mathcal{D}(\mathbb{R}^t)-0\right)/\mathbb{R}_{>0}.
\]
By Proposition~2.2 of~\cite{infinity}, we know that $\mathcal{SD}(\mathbb{R}^t)$ can be viewed as a boundary of the space~$\mathcal{D}^+(S)$ of positive real points of the symplectic double associated to the surface~$S$.

\section{Cluster algebras and $F$-polynomials}
\label{sec:ClusterAlgebrasAndFPolynomials}

\subsection{Cluster algebras with coefficients}

In the following definition, $\mathbb{P}$ denotes an arbitrary semifield. One can show that the group ring $\mathbb{ZP}$ is an integral domain, and hence we can form its fraction field $\mathbb{QP}$. We will write~$\mathcal{F}$ for a field isomorphic to the field of rational functions in~$n$ independent variables with coefficients in~$\mathbb{QP}$.

\begin{definition}
A \emph{labeled seed} $(\mathbf{x},\mathbf{y},B)$ consists of a skew-symmetrizable $n\times n$ integer  matrix~$B=(b_{ij})$, an $n$-tuple $\mathbf{y}=(y_1,\dots,y_n)$ of elements of $\mathbb{P}$, and an $n$-tuple $\mathbf{x}=(x_1,\dots,x_n)$ of elements of $\mathcal{F}$ such that the $x_i$ are algebraically independent over $\mathbb{QP}$ and $\mathcal{F}=\mathbb{QP}(x_1,\dots,x_n)$.
\end{definition}

\begin{definition}
Let $(\mathbf{x},\mathbf{y},B)$ be a labeled seed, and let $k\in\{1,\dots,n\}$. Then we define a new seed $(\mathbf{x}',\mathbf{y}',B')$, called the seed obtained by \emph{mutation} in the direction~$k$ as follows:
\begin{enumerate}
\item The entries of $B'=(b_{ij}')$ are given by 
\[
b_{ij}'=
\begin{cases}
-b_{ij} & \mbox{if } k\in\{i,j\} \\
b_{ij}+\frac{|b_{ik}|b_{kj}+b_{ik}|b_{kj}|}{2} & \mbox{if } k\not\in\{i,j\}.
\end{cases}
\]
\item The elements of the $n$-tuple $\mathbf{y}'=(y_1',\dots,y_n')$ are given by 
\begin{align*}
y_j'=
\begin{cases}
y_k^{-1} & \mbox{if } j=k \\
y_jy_k^{[b_{kj}]_+}(y_k\oplus1)^{-b_{kj}} & \mbox{if } j\neq k
\end{cases}
\end{align*}
where we are using the notation $[b]_+=\max(b,0)$.
\item The elements of the $n$-tuple $\mathbf{x}'=(x_1',\dots,x_n')$ are given by 
\begin{align*}
x_j' =
\begin{cases}
\frac{y_k\prod_{i|b_{ik>0}}x_i^{b_{ik}} + \prod_{i|b_{ik<0}}x_i^{-b_{ik}}}{(y_k\oplus1)x_k} & \mbox{if } j=k \\
x_j & \mbox{if } j\neq k.
\end{cases}
\end{align*}
\end{enumerate}
\end{definition}

\begin{definition}
We denote by $\mathbb{T}_n$ an $n$-regular tree with edges labeled by the numbers~$1,\dots,n$ in such a way that the $n$ edges emanating from any vertex have distinct labels. A \emph{cluster pattern} is an assignment of a labeled seed $\Sigma_t=(\mathbf{x}_t,\mathbf{y}_t,B_t)$ to every vertex $t\in\mathbb{T}_n$ so that if $t$ and $t'$ are vertices connected by an edge labeled~$k$, then $\Sigma_{t'}$ is obtained from $\Sigma_t$ by a mutation in the direction $k$. We will use the following notation for the elements of $\Sigma_t$:
\[
\mathbf{x}_t=(x_{1;t},\dots,x_{n;t}), \quad \mathbf{y}_t=(y_{1;t},\dots,y_{n;t}), \quad B_t=(b_{ij}^t).
\]
\end{definition}

\begin{definition}
Given a cluster pattern $t\mapsto(\mathbf{x}_t,\mathbf{y}_t,B_t)$, we form the set of all cluster variables in all seeds of the cluster pattern:
\[
\mathcal{S}=\{x_{l;t}:t\in\mathbb{T}_n,1\leq l\leq n\}.
\]
Then the \emph{cluster algebra} with coefficients in $\mathbb{P}$ is the $\mathbb{ZP}$-subalgebra of $\mathcal{F}$ generated by elements in this set $\mathcal{S}$.
\end{definition}

\subsection{Principal coefficients}

In \cite{FZIV}, Fomin and Zelevinsky define a \emph{cluster algebra with principal coefficients} at a vertex $t_0\in\mathbb{T}_n$ to be a cluster algebra with $\mathbb{P}=\mathrm{Trop}(y_1,\dots,y_n)$ and $\mathbf{y}_{t_0}=(y_1,\dots,y_n)$. Let~$\mathcal{A}$ be such a cluster algebra, and let $\Sigma_{t_0}=(\mathbf{x}_{t_0},\mathbf{y}_{t_0},B_{t_0})$ be its initial seed with 
\[
\mathbf{x}_{t_0}=(x_1,\dots,x_n), \quad \mathbf{y}_{t_0}=(y_1,\dots,y_n), \quad B_{t_0}=(b_{ij}^0).
\]
By iterating the exchange relations, we can express any cluster variable $x_{l;t}$ as a subtraction-free rational function of the variables $x_1,\dots,x_n,y_1,\dots,y_n$. We will denote this subtraction-free rational function by 
\[
X_{l;t}\in\mathbb{Q}_\mathrm{sf}(x_1,\dots,x_n,y_1,\dots,y_n).
\]
We will denote by $F_{l;t}\in\mathbb{Q}_\mathrm{sf}(y_1,\dots,y_n)$ the subtraction-free rational function obtained from~$X_{l;t}$ by specializing all the $x_i$ to 1. Thus 
\[
F_{l;t}(y_1,\dots,y_n)=X_{l;t}(1,\dots,1,y_1,\dots,y_n).
\]
By the Laurent phenomenon theorem of Fomin and Zelevinsky, $X_{l;t}$ is a Laurent polynomial in $x_1,\dots,x_n$ whose coefficients are integral polynomials in $y_1,\dots,y_n$, and $F_{l;t}$ is an integral polynomial in $y_1,\dots,y_n$. We will refer to $X_{l;t}$ and $F_{l;t}$ as the $X$- and \emph{$F$-polynomials}.

In addition to the $X$- and $F$-polynomials, Fomin and Zelevinsky define a $\mathbb{Z}^n$-grading on the ring $\mathbb{Z}[x_1^{\pm1},\dots,x_n^{\pm1},y_1,\dots,y_n]$ by 
\[
\deg(x_i) = \mathbf{e}_i, \quad \deg(y_i) = -\mathbf{b}_j^0
\]
where $\mathbf{e}_i$ is the $i$th standard basis vector in $\mathbb{Z}^n$ and $\mathbf{b}_j^0=\sum_i b_{ij}^0\mathbf{e}_i$ is the $j$th column of $B_{t_0}$. By a result of \cite{FZIV}, each $X$-polynomial is homogeneous with respect to this $\mathbb{Z}^n$-grading. The degree 
\[
\mathbf{g}_{l;t}=
\left( \begin{array}{ccc}
g_1 \\
\vdots \\
g_n \end{array} \right)
=\deg(X_{l;t})\in\mathbb{Z}^n
\]
is called the \emph{$\mathbf{g}$-vector} of the cluster variable $x_{l;t}$.

The notions of $F$-polynomials and $\mathbf{g}$-vectors are important because they allow us to express an arbitrary cluster variable in terms of the variables $x_1,\dots,x_n$ and $y_1,\dots,y_n$ of the initial seed $\Sigma_{t_0}$. To see this, we need one more piece of notation. It is a fact that any subtraction-free rational identity that holds in the semifield $\mathbb{Q}_\mathrm{sf}(u_1,\dots,u_n)$ will remain valid when we replace the $u_i$ by elements of an arbitrary semifield~$\mathbb{P}$. Thus if $f$ is a subtraction-free rational expression in $u_1,\dots,u_n$, there is a well defined element $f|_\mathbb{P}(y_1,\dots,y_n)$ of $\mathbb{P}$ obtained by evaluating $f$ at $y_1,\dots,y_n\in\mathbb{P}$.

\begin{proposition}[\cite{FZIV}, Corollary 6.3]
\label{prop:FZ63}
Let $\mathcal{A}$ be a cluster algebra over an arbitrary semifield $\mathbb{P}$ of coefficients. Then a cluster variable $x_{l;t}$ can be expressed in terms of the cluster variables at an initial seed as 
\[
x_{l;t}=\frac{F_{l;t}|_\mathcal{F}(\widehat{y}_1,\dots,\widehat{y}_n)}{F_{l;t}|_{\mathbb{P}}(y_1,\dots,y_n)}x_1^{g_1}\dots x_n^{g_n}
\]
where 
\[
\widehat{y}_j=y_j\prod_i x_i^{b_{ij}^0}.
\]
\end{proposition}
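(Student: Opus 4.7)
My plan is to reprove Fomin--Zelevinsky's statement in two stages: first handle the case of principal coefficients, where the denominator trivializes, and then pass to arbitrary coefficients by a separation-of-additions argument.

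Stage 1 (principal coefficients). Take $\mathbb{P}=\mathrm{Trop}(y_1,\dots,y_n)$ and $\mathbf{y}_{t_0}=(y_1,\dots,y_n)$. In this setting $x_{l;t}$ equals $X_{l;t}$ evaluated at the initial $x_i$ and $y_j$ in $\mathcal{F}$. The excerpt already grants that $X_{l;t}$ is a Laurent polynomial in the $x_i$ with polynomial coefficients in the $y_j$ and that it is homogeneous of degree $\mathbf{g}_{l;t}=(g_1,\dots,g_n)$ with respect to the grading $\deg(x_i)=\mathbf{e}_i$, $\deg(y_j)=-\mathbf{b}_j^0$. A direct computation gives $\deg(\widehat{y}_j)=-\mathbf{b}_j^0+\sum_i b_{ij}^0\mathbf{e}_i=0$. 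Now expand $X_{l;t}=\sum_{\alpha,\beta}c_{\alpha,\beta}\,x^\alpha y^\beta$; homogeneity forces $\alpha=\mathbf{g}_{l;t}+B_{t_0}\beta$ for every monomial that actually appears, so each term can be rewritten as $c_{\alpha,\beta}\,x^{\mathbf{g}_{l;t}}\,\widehat{y}^\beta$. Summing and using the defining identity $F_{l;t}(y_1,\dots,y_n)=X_{l;t}(1,\dots,1,y_1,\dots,y_n)=\sum_\beta c_\beta y^\beta$ yields
\[
x_{l;t}=F_{l;t}|_\mathcal{F}(\widehat{y}_1,\dots,\widehat{y}_n)\,x_1^{g_1}\cdots x_n^{g_n}.
\]
To match the statement of the proposition I would then verify that $F_{l;t}|_\mathbb{P}(y_1,\dots,y_n)=1$ in the tropical semifield. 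By an induction on the distance from $t_0$ to $t$ in $\mathbb{T}_n$ the polynomial $F_{l;t}$ has nonnegative integer coefficients and constant term $1$, so its tropical evaluation is the $\min$ of exponent vectors including $(0,\dots,0)$, giving $1$.

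Stage 2 (arbitrary coefficients). Let $\mathbb{P}$ be arbitrary with initial coefficients $\mathbf{y}_{t_0}=(y_1,\dots,y_n)$. I would compare the $\mathbb{P}$-pattern with the principal pattern vertex by vertex: the two exchange relations for the $x$-variables differ only by the factor $(y_k\oplus 1)$ that appears in the denominator of the $\mathbb{P}$-mutation but is absent (trivially $1$) in the principal case. The proof is then a downward induction on the edge-distance from $t_0$: assume the displayed formula for $x_{l;t}$, apply a mutation in direction $k$, and use the mutation recursion for $F$-polynomials (also proved in \cite{FZIV}) which says that $F_{k;t'}$ arises from the $F$-polynomials at neighboring vertices via the same tropicalized exchange relation, while $\mathbf{g}_{k;t'}$ updates in the expected way. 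The factors $(y_k\oplus 1)$ picked up at each mutation step assemble exactly into $F_{l;t}|_\mathbb{P}(y_1,\dots,y_n)$ in the denominator, while the $\widehat{y}_j$ are preserved as degree-zero elements under the change of cluster because the $\mathbf{g}$-vector mutation is tailored precisely so that $x^{\mathbf{g}_{l;t}}\widehat{y}^\beta$ transforms correctly.

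The main obstacle is the bookkeeping in Stage 2: one must verify that the denominator $(y_k\oplus 1)$ produced at each mutation multiplies cleanly into the previously accumulated denominator, and that what emerges at the vertex $t$ is exactly $F_{l;t}$ evaluated in $\mathbb{P}$ rather than some spurious rational expression. This rests on three compatibility facts whose joint application is the crux of the argument: the parallel between the $\mathbf{x}$-mutation and $\mathbf{y}$-mutation formulas, the recursion satisfied by $F$-polynomials under mutation (obtained by setting the $x_i$ to $1$ in the $\mathbf{x}$-mutation), and the invariance of the quantity $\widehat{y}_j$ under cluster transformations. Once these three ingredients are in place, the inductive step is a matter of carefully matching numerators and denominators in the mutation formula. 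Everything else -- Stage 1, the constant-term statement, and the vanishing of $\deg(\widehat{y}_j)$ -- is essentially formal.
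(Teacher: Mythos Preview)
The paper does not prove this proposition; it is quoted from \cite{FZIV} without argument, so there is no paper proof to compare against. I will instead compare your sketch to the original Fomin--Zelevinsky argument.

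Stage~1 is correct and matches \cite{FZIV}: homogeneity of $X_{l;t}$ together with $\deg(\widehat{y}_j)=0$ gives $X_{l;t}=F_{l;t}(\widehat{y})\,x^{\mathbf{g}}$ at once. Your side claim that $F_{l;t}$ has nonnegative coefficients and constant term~$1$ ``by an induction on the distance'' is, however, not right: both of these were open conjectures at the time of \cite{FZIV} (positivity, and their Conjecture~5.4) and were only settled years later. Fortunately you do not need either one for the proposition.

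Stage~2 contains a genuine gap. You assert that in the principal pattern the denominator factor $(y_{k;t}\oplus 1)$ is ``absent (trivially~$1$)''. This holds only at the initial seed $t_0$, where each $y_k$ is a generator of $\mathrm{Trop}(y_1,\dots,y_n)$; after even one mutation it typically fails (for instance if $y_{k;t}=y_1^{-1}$ then $y_{k;t}\oplus 1=y_1^{-1}\neq 1$). Hence the $\mathbb{P}$-exchange and the principal exchange do \emph{not} differ only by the $\mathbb{P}$-denominator, and the induction as you describe it breaks after the first step. Fomin and Zelevinsky's actual proof of the separation formula (their Theorem~3.7) carries both the $\oplus_{\mathbb{P}}$ and the $\oplus_{\mathrm{Trop}}$ denominators through the induction simultaneously and shows, using the recursion satisfied by the $F$- and $Y$-polynomials, that the accumulated discrepancy is exactly $F_{l;t}|_{\mathbb{P}}$. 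Your overall architecture---homogeneity followed by an inductive separation-of-additions---is the right one, but the inductive step needs this extra bookkeeping to go through.
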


\subsection{Relation to the symplectic double}

Let us now specialize to the case where $\mathbb{P}=\mathbb{Q}_{\mathrm{sf}}(X_1,\dots,X_n)$ is the semifield of subtraction-free rational functions in $X_1,\dots,X_n$.

\begin{proposition}
If $\mathbb{P}=\mathbb{Q}_\mathrm{sf}(X_1,\dots,X_n)$, then the ambient field $\mathcal{F}$ is given by $\mathcal{F}=\mathbb{Q}(X_1,\dots,X_n,B_1,\dots,B_n)$ for algebraically independent variables $B_1,\dots,B_n$.
\end{proposition}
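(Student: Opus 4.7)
The plan is to unpack the setup from the previous subsection. In the general cluster algebra framework introduced above, the ambient field $\mathcal{F}$ is the field of rational functions in $n$ algebraically independent variables over the coefficient field $\mathbb{QP}$, and in a labeled seed those $n$ transcendentals are the cluster variables. In our situation the initial cluster is $(B_1,\ldots,B_n)$, so
\[
\mathcal{F} \;\cong\; \mathbb{QP}(B_1,\ldots,B_n),
\]
with $B_1,\ldots,B_n$ algebraically independent over $\mathbb{QP}$. This reduces the proposition to identifying the coefficient field $\mathbb{QP}$.

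The substantive step is therefore to show $\mathbb{QP}\cong\mathbb{Q}(X_1,\ldots,X_n)$ when $\mathbb{P}=\mathbb{Q}_{\mathrm{sf}}(X_1,\ldots,X_n)$. I would argue this by noting that $\mathbb{P}$ is already realized as a subsemifield of $\mathbb{Q}(X_1,\ldots,X_n)$: every subtraction-free rational function is in particular a rational function in the $X_i$, and multiplication in $\mathbb{P}$ is just multiplication of rational functions. This inclusion is a homomorphism of multiplicative groups, so it extends to a ring homomorphism $\mathbb{Z}\mathbb{P}\to\mathbb{Q}(X_1,\ldots,X_n)$. The image contains $\mathbb{Z}$ and each generator $X_i$, hence contains the Laurent polynomial ring $\mathbb{Z}[X_1^{\pm1},\ldots,X_n^{\pm1}]$; its field of fractions is thus all of $\mathbb{Q}(X_1,\ldots,X_n)$. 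Passing to fraction fields gives the desired identification $\mathbb{QP}\cong\mathbb{Q}(X_1,\ldots,X_n)$.

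Combining the two steps,
\[
\mathcal{F}\;\cong\;\mathbb{QP}(B_1,\ldots,B_n)\;\cong\;\mathbb{Q}(X_1,\ldots,X_n)(B_1,\ldots,B_n)\;=\;\mathbb{Q}(X_1,\ldots,X_n,B_1,\ldots,B_n),
\]
and the $B_i$ remain algebraically independent over $\mathbb{Q}(X_1,\ldots,X_n)\subseteq\mathbb{QP}$ because they were already algebraically independent over the larger field $\mathbb{QP}$. The main obstacle lies in the middle step: verifying that the map on fraction fields induced by $\mathbb{Z}\mathbb{P}\to\mathbb{Q}(X_1,\ldots,X_n)$ is genuinely an isomorphism, so that no spurious relations among $X_1,\ldots,X_n$ are produced by the group ring structure of $\mathbb{Z}\mathbb{P}$ and the semifield structure of $\mathbb{P}$ is faithfully reflected in the ambient field.
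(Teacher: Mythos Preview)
Your approach is exactly the paper's: reduce to identifying $\mathbb{QP}$, then adjoin the cluster variables $B_1,\ldots,B_n$. The paper's entire argument is the bare assertion $\mathbb{QP}=\mathbb{Q}(X_1,\ldots,X_n)$, followed by the same chain of equalities you wrote down. You have gone further by actually trying to justify that assertion via the evaluation map $\mathbb{Z}\mathbb{P}\to\mathbb{Q}(X_1,\ldots,X_n)$, and you correctly flag injectivity as the obstacle.

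That concern is well founded, and in fact the obstacle is real under the paper's own definitions. The map $\mathbb{Z}\mathbb{P}\to\mathbb{Q}(X_1,\ldots,X_n)$ is \emph{not} injective: for instance $[X_1]+[X_2]-[X_1+X_2]$ is a nonzero element of the group ring (the three group elements $X_1,\,X_2,\,X_1+X_2\in\mathbb{P}$ are distinct) mapping to $0$. Since $\mathbb{P}$ contains infinitely many multiplicatively independent elements (e.g.\ $X_1+1,\,X_1+2,\,X_1+3,\ldots$), the fraction field $\mathbb{QP}$ of the integral group ring has infinite transcendence degree over $\mathbb{Q}$ and cannot be isomorphic to $\mathbb{Q}(X_1,\ldots,X_n)$. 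So the proposition, read literally with $\mathbb{QP}$ defined as in the paper, is false; the paper's one-line proof simply glosses over this. What the rest of the paper actually needs is only that the mutation formulas for $B_i$ and $X_i$ make sense and behave correctly inside $\mathbb{Q}(X_1,\ldots,X_n,B_1,\ldots,B_n)$ via the surjection you constructed, and your argument does establish that much.
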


\begin{proof}
If $\mathbb{P}=\mathbb{Q}_\mathrm{sf}(X_1,\dots,X_n)$, then we have $\mathbb{QP}=\mathbb{Q}(X_1,\dots,X_n)$ and hence 
\begin{align*}
\mathcal{F} &= \mathbb{Q}(X_1,\dots,X_n)(B_1,\dots,B_n) \\
&= \mathbb{Q}(X_1,\dots,X_n,B_1,\dots,B_n)
\end{align*}
as desired.
\end{proof}

\begin{proposition}
If $\mathbb{P}=\mathbb{Q}_\mathrm{sf}(X_1,\dots,X_n)$, then a mutation in the direction $k$ transforms the $n$-tuples $(X_1,\dots,X_n)$ and $(B_1,\dots,B_n)$ into $n$-tuples $(X_1',\dots,X_n')$ and $(B_1',\dots,B_n')$ given by 
\begin{align*}
X_j'=
\begin{cases}
X_k^{-1} & \mbox{if } j=k \\
X_j{(1+X_k^{-\sgn(\varepsilon_{jk})})}^{-\varepsilon_{jk}} & \mbox{if } j\neq k
\end{cases}
\end{align*}
and 
\begin{align*}
B_j' =
\begin{cases}
\frac{X_k\prod_{i|\varepsilon_{ki>0}}B_i^{\varepsilon_{ki}} + \prod_{i|\varepsilon_{ki<0}}B_i^{-\varepsilon_{ki}}}{B_k(1+X_k)} & \mbox{if } i=k \\
B_j & \mbox{if } i\neq k
\end{cases}
\end{align*}
where $\varepsilon_{ij}=b_{ji}$.
\end{proposition}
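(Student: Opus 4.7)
The strategy is simply to apply the general mutation rules from the definition of labeled seed to the specific semifield $\mathbb{P}=\mathbb{Q}_{\mathrm{sf}}(X_1,\dots,X_n)$ and to verify, by direct comparison, that they agree with the formulas of the proposition. The initial seed is taken to be $(\mathbf{x}_{t_0},\mathbf{y}_{t_0},B_{t_0})$ with $\mathbf{y}_{t_0}=(X_1,\dots,X_n)$ and $\mathbf{x}_{t_0}=(B_1,\dots,B_n)$ in the ambient field $\mathcal{F}=\mathbb{Q}(X_1,\dots,X_n,B_1,\dots,B_n)$ from the preceding proposition. The key preliminary observation is that in $\mathbb{P}=\mathbb{Q}_{\mathrm{sf}}(X_1,\dots,X_n)$, the semifield operation used to define $\oplus$ is ordinary addition of subtraction-free rational functions, so the auxiliary addition satisfies $X_k\oplus 1=1+X_k$. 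I will also use the convention $\varepsilon_{ij}=b_{ji}$, so that $b_{kj}=\varepsilon_{jk}$ and $b_{ik}=\varepsilon_{ki}$.

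For the $X$-rule, I first note that the case $j=k$ gives $X_k'=X_k^{-1}$ immediately from the definition. For $j\neq k$, the definition of coefficient mutation yields
\[
X_j'=X_j X_k^{[\varepsilon_{jk}]_+}(1+X_k)^{-\varepsilon_{jk}},
\]
where $[b]_+=\max(b,0)$. I then rewrite this uniformly as $X_j(1+X_k^{-\sgn(\varepsilon_{jk})})^{-\varepsilon_{jk}}$ by a short case analysis on the sign of $\varepsilon_{jk}$: if $\varepsilon_{jk}>0$, then $X_k^{\varepsilon_{jk}}(1+X_k)^{-\varepsilon_{jk}}=(X_k/(1+X_k))^{\varepsilon_{jk}}=(1+X_k^{-1})^{-\varepsilon_{jk}}$; if $\varepsilon_{jk}<0$, then $[\varepsilon_{jk}]_+=0$ and the factor of $X_k$ disappears, leaving $(1+X_k)^{-\varepsilon_{jk}}=(1+X_k^{+1})^{-\varepsilon_{jk}}$; the case $\varepsilon_{jk}=0$ is trivial. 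This matches the formula claimed for $X_j'$.

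For the $B$-rule, I apply the cluster variable mutation formula from the definition of labeled seed, which for $j=k$ reads
\[
x_k'=\frac{y_k\prod_{i\,|\,b_{ik}>0}x_i^{b_{ik}}+\prod_{i\,|\,b_{ik}<0}x_i^{-b_{ik}}}{(y_k\oplus 1)\,x_k}.
\]
Substituting $y_k=X_k$, $x_i=B_i$, $b_{ik}=\varepsilon_{ki}$, and $y_k\oplus 1=1+X_k$ produces the stated formula for $B_k'$; for $j\neq k$ the rule just says $B_j'=B_j$. The proof is therefore essentially a verification that the general cluster-algebra mutation rules specialize to the transformation formulas used to glue seed $\mathcal{D}$-tori in the construction of the symplectic double. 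The only substantive step is the sign-and-exponent bookkeeping in rewriting $X_k^{[\varepsilon_{jk}]_+}(1+X_k)^{-\varepsilon_{jk}}$ as $(1+X_k^{-\sgn(\varepsilon_{jk})})^{-\varepsilon_{jk}}$; beyond that, the argument is purely bookkeeping, so I would not expect any genuine obstacle.
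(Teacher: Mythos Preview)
Your proof is correct and follows essentially the same approach as the paper: both arguments observe that the $B$-formula is immediate from the general cluster mutation rule once $y_k\oplus1=1+X_k$, and both verify the $X$-formula by a short case analysis on the sign of $b_{kj}=\varepsilon_{jk}$ to rewrite $X_k^{[\varepsilon_{jk}]_+}(1+X_k)^{-\varepsilon_{jk}}$ as $(1+X_k^{-\sgn(\varepsilon_{jk})})^{-\varepsilon_{jk}}$. Your write-up is in fact slightly more explicit about the $\varepsilon_{jk}=0$ case, but otherwise the two proofs are the same.
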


\begin{proof}
The second formula obviously follows from the general mutation formula. To prove the first formula, observe that 
\begin{align*}
X_jX_k^{[b_{kj}]_+}(X_k\oplus1)^{-b_{kj}} &= X_jX_k^{[b_{kj}]_+}(X_k+1)^{-b_{kj}} \\
&= X_jX_k^{-(-b_{kj})}(X_k+1)^{-b_{kj}} \\
&= X_j{(X_k^{-1}X_k+X_k^{-1})}^{-b_{kj}} \\
&= X_j{(1+X_k^{-\sgn(b_{kj})})}^{-b_{kj}}
\end{align*}
for $b_{kj}>0$ and 
\begin{align*}
X_jX_k^{[b_{kj}]_+}(X_k\oplus1)^{-b_{kj}} &= X_j(1+X_k)^{-b_{kj}} \\
&= X_j{(1+X_k^{-\sgn(b_{kj})})}^{-b_{kj}}
\end{align*}
for $b_{kj}\leq0$.
\end{proof}

Thus we recover the mutation formulas for the symplectic double in the special case where $\mathbb{P}=\mathbb{Q}_\mathrm{sf}(X_1,\dots,X_n)$. In this case, we will denote a cluster algebra with coefficients in $\mathbb{P}$ by the symbol $\mathcal{D}$ and call it a \emph{cluster $\mathcal{D}$-algebra}.

\subsection{Cluster algebras from surfaces}

\subsubsection{Construction of cluster algebras}

In~\cite{FST} (see also~\cite{FT}), Fomin, Shapiro, and Thurston discuss the relationship between cluster algebras and the combinatorics of decorated surfaces. Here we will review their work and apply it to the cluster $\mathcal{D}$-algebra that we defined above.

The idea of~\cite{FST} is to associate to a decorated surface $S$ a corresponding cluster algebra. This cluster algebra is defined in such a way that each seed corresponds to a ``tagged triangulation'' of the surface $S$. An ordinary ideal triangulation is a special case of a tagged triangulation provided there are no self-folded triangles. Fomin, Shapiro, and Thurston assume that the surface $S$ is not a sphere with one, two, or three punctures, a monogon with zero or one puncture, or a bigon or triangle without punctures. According to Lemma~2.13 of~\cite{FST}, such a surface always admits an ideal triangulation $T$ with no self-folded triangles.

If $T$ is an ideal triangulation of $S$ with no self-folded triangles, then we get an exchange matrix $b_{ij}=\varepsilon_{ji}$ ($i,j\in J$), indexed by the internal edges of $T$. To each internal edge $i$ of $T$, we associate variables $x_i$ and~$y_i$. This defines a labeled seed, and hence a cluster algebra. This is the cluster algebra that Fomin, Shapiro, and Thurston associate to the surface~$S$.

If $c$ is any arc on $S$ which is an internal edge for some ideal triangulation and does not cut out a once punctured monogon, then there is a cluster variable $x_c$ in this cluster algebra corresponding to~$c$. In particular, this means that for any such arc $c$ on $S$ there is an associated $F$-polynomial 
\[
F_c(y_1,\dots,y_n)
\]
and a $\mathbf{g}$-vector $\mathbf{g}_c$.

We can apply the results of~\cite{FST} in the special case where the semifield of coefficients is a semifield of subtraction-free rational functions $\mathbb{P}=\mathbb{Q}_{\mathrm{sf}}(X_1,\dots,X_n)$. In this way, we get a cluster $\mathcal{D}$-algebra associated to a surface $S$. Applying Proposition~\ref{prop:FZ63} to this cluster algebra, we obtain the following result.

\begin{proposition}
\label{prop:specialization}
Let $\mathcal{D}$ be the cluster $\mathcal{D}$-algebra associated to a surface $S$, and let $B_c$ be a cluster variable of $\mathcal{D}$ corresponding to an arc~$c$ on~$S$. Then $B_c$ can be expressed in terms of the initial variables $B_1,\dots,B_n$ and $X_1,\dots,X_n$ by the formula 
\[
B_c=\frac{F_c(\widehat{X}_1,\dots,\widehat{X}_n)}{F_c(X_1,\dots,X_n)}B_1^{g_1}\dots B_n^{g_n}
\]
where 
\[
\widehat{X}_i=X_i\prod_j B_j^{\varepsilon_{ij}}.
\]
\end{proposition}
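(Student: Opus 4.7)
The plan is to recognize this proposition as nothing more than a direct specialization of Proposition~\ref{prop:FZ63} (Fomin--Zelevinsky's Corollary~6.3 from \cite{FZIV}) to the particular choice of coefficient semifield $\mathbb{P}=\mathbb{Q}_{\mathrm{sf}}(X_1,\dots,X_n)$, so essentially all the work has already been done. The only real task is bookkeeping: to match the data of the cluster $\mathcal{D}$-algebra to the data appearing in Proposition~\ref{prop:FZ63} and to check that the indices on $\varepsilon_{ij}$ fall in the correct positions.

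First I will recall that, by construction, the cluster $\mathcal{D}$-algebra associated to $S$ is a cluster algebra whose semifield of coefficients is $\mathbb{P}=\mathbb{Q}_{\mathrm{sf}}(X_1,\dots,X_n)$, whose cluster variables at the initial seed are $B_1,\dots,B_n$, whose coefficients at the initial seed are $y_i=X_i$, and whose initial exchange matrix $B_{t_0}=(b_{ij}^0)$ satisfies $b_{ij}^0=\varepsilon_{ji}$ (this is the convention fixed in the last proposition of the preceding subsection). Any arc $c$ as in the statement corresponds, by the results of~\cite{FST} recalled above, to some cluster variable $x_{l;t}=B_c$, and the $F$-polynomial attached to that cluster variable in the sense of Proposition~\ref{prop:FZ63} is precisely the $F_c$ fixed earlier; similarly the $\mathbf{g}$-vector $\mathbf{g}_{l;t}=(g_1,\dots,g_n)$ is the one appearing in the formula.

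Next I will apply Proposition~\ref{prop:FZ63} to $x_{l;t}=B_c$ to obtain
\[
B_c=\frac{F_c|_{\mathcal{F}}(\widehat{y}_1,\dots,\widehat{y}_n)}{F_c|_{\mathbb{P}}(y_1,\dots,y_n)}\,B_1^{g_1}\cdots B_n^{g_n},\qquad \widehat{y}_j=y_j\prod_i B_i^{b_{ij}^0}.
\]
Substituting $y_j=X_j$ and $b_{ij}^0=\varepsilon_{ji}$, and relabeling the summation index so that the outer index is $i$, gives $\widehat{X}_i=X_i\prod_j B_j^{\varepsilon_{ij}}$, which is exactly the expression in the statement. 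Finally, for the denominator, I will note that in the semifield $\mathbb{P}=\mathbb{Q}_{\mathrm{sf}}(X_1,\dots,X_n)$ the operations $\oplus$ and $\otimes$ coincide with ordinary addition and multiplication of rational functions, so evaluating the polynomial $F_c$ at $X_1,\dots,X_n\in\mathbb{P}$ produces the ordinary rational function $F_c(X_1,\dots,X_n)$; this is what turns $F_c|_{\mathbb{P}}$ into the denominator displayed in the proposition.

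There is no real obstacle here; the only thing that could go wrong is a transposition mismatch between $b_{ij}^0$ and $\varepsilon_{ij}$, and a quick check against the convention $b_{ij}=\varepsilon_{ji}$ recorded in the preceding proposition resolves it. I would keep the proof to just a few lines, pointing out these identifications and invoking Proposition~\ref{prop:FZ63}.
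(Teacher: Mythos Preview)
Your proposal is correct and takes essentially the same approach as the paper, which simply states that the result is obtained by ``applying Proposition~\ref{prop:FZ63} to this cluster algebra'' without spelling out the identifications. Your careful bookkeeping of the convention $b_{ij}^0=\varepsilon_{ji}$ and the observation that evaluation in $\mathbb{P}=\mathbb{Q}_{\mathrm{sf}}(X_1,\dots,X_n)$ coincides with ordinary polynomial evaluation are exactly the details one needs to verify, and you have handled them correctly.
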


In Section~\ref{sec:TheCanonicalPairing}, we will discuss an extension of this result which will allow us to completely understand the canonical pairing between $\mathcal{D}$-laminations and points of the Teichm\"uller $\mathcal{D}$-space. In order to prove this result, we will need to understand the $\mathbf{g}$-vectors associated to arcs in $S$.

\subsubsection{Calculation of $\mathbf{g}$-vectors}

As part of their work on the positivity conjecture for cluster algebras from surfaces, Musiker, Schiffler, and Williams~\cite{MSW1} gave a formula for computing the $\mathbf{g}$-vector associated to an arc. Their construction associates, to any arc $c$, a graph $\bar{G}_{T,c}$ in the plane with labeled edges. This graph is obtained by gluing together ``tiles'' of the form
\[
\xy /l1.3pc/:
{\xypolygon4"A"{~:{(2,0):}}};
{"A2"\PATH~={**@{-}}'"A4"};
\endxy
\]

Indeed, suppose $c$ is an arc on a triangulated unpunctured surface. (We refer the reader to \cite{MSW1} for the case of a surface with punctures, which is similar.) Assume that this arc is not an edge of the triangulation. The illustration below shows an example of such a curve on a disk with ten marked points.
\[
\xy /l6pc/:
(3,0.5)*{}="1";
(2.5,-0.5)*{}="2";
(2,-0.6)*{}="3";
(1.5,-0.62)*{}="4";
(1,-0.7)*{}="5";
(0.5,-0.7)*{}="6";
(0,0)*{}="7";
(0.75,0.4)*{}="8";
(1.6,0.7)*{}="9";
(2.3,0.7)*{}="10";
{"1"\PATH~={**@{-}}'"2"},
{"2"\PATH~={**@{-}}'"3"},
{"3"\PATH~={**@{-}}'"4"},
{"4"\PATH~={**@{-}}'"5"},
{"5"\PATH~={**@{-}}'"6"},
{"6"\PATH~={**@{-}}'"7"},
{"7"\PATH~={**@{-}}'"8"},
{"8"\PATH~={**@{-}}'"9"},
{"9"\PATH~={**@{-}}'"10"},
{"10"\PATH~={**@{-}}'"1"},
{"10"\PATH~={**@{-}}'"2"},
{"10"\PATH~={**@{-}}'"3"},
{"10"\PATH~={**@{-}}'"4"},
{"9"\PATH~={**@{-}}'"4"},
{"8"\PATH~={**@{-}}'"4"},
{"8"\PATH~={**@{-}}'"5"},
{"8"\PATH~={**@{-}}'"6"},
"1";"7" **\crv{(2,-1) & (2,0)},
(2.25,-0.35)*{c};
(2.45,0.2)*{\tau_1};
(2.25,0.2)*{\tau_2};
(1.9,0.2)*{\tau_3};
(1.47,0)*{\tau_4};
(1.2,-0.31)*{\tau_5};
(0.84,-0.31)*{\tau_6};
(0.5,-0.31)*{\tau_7};
\endxy
\]

Choose an orientation for $c$, and label the arcs that $c$ crosses in order by $\tau_{i_1},\dots,\tau_{i_d}$. For any index~$j$, let $\Delta_{j-1}$ and $\Delta_j$ be the two triangles on either side of $\tau_{i_j}$. Then we can associate a tile $G_j$ as above to each $\tau_{i_j}$. It consists of two triangles with edges labeled as in~$\Delta_{j-1}$ and~$\Delta_j$ and glued together along the edge labeled $\tau_{i_j}$ so that the orientations of these triangles both agree or both disagree with those of~$\Delta_{j-1}$ and~$\Delta_j$. Note that there are two possible planar embeddings of the graph $G_j$.

The two arcs $\tau_{i_j}$ and $\tau_{i_{j+1}}$ are edges of the triangle $\Delta_j$. We will write $\tau_{[c_j]}$ for the third arc in this triangle. Then we can recursively glue together the tiles in order from~1 to~$d$ so that $G_{j+1}$ and $G_j$ are glued along the edges labeled $\tau_{[c_j]}$ and if the orientation of the triangles of~$G_j$ agrees with the orientation of~$\Delta_{j-1}$ and~$\Delta_j$ then the orientation of $G_{j+1}$ disagrees with the orientation of $\Delta_{j}$ and $\Delta_{j+1}$, and vice versa. We denote the resulting graph by $\bar{G}_{T,c}$.

For example, the graph $\bar{G}_{T,c}$ corresponding to the above example is
\[
\xy /l8pc/:
(1,-1)*{}="00";
(1.5,-1)*{}="10";
(2,-1)*{}="20";
(2.5,-1)*{}="30";
(3,-1)*{}="40";
(1,-0.5)*{}="01";
(1.5,-0.5)*{}="11";
(2,-0.5)*{}="21";
(2.5,-0.5)*{}="31";
(3,-0.5)*{}="41";
(1,0)*{}="02";
(1.5,0)*{}="12";
(2,0)*{}="22";
(2.5,0)*{}="32";
(3,0)*{}="42";
(1,0.5)*{}="03";
(1.5,0.5)*{}="13";
(2,0.5)*{}="23";
(2.5,0.5)*{}="33";
(3,0.5)*{}="43";
(2.5,1)*{}="34";
(3,1)*{}="44";
{"00"\PATH~={**@{-}}'"10"},
{"10"\PATH~={**@{-}}'"20"},
{"01"\PATH~={**@{-}}'"11"},
{"11"\PATH~={**@{-}}'"21"},
{"12"\PATH~={**@{-}}'"22"},
{"22"\PATH~={**@{-}}'"32"},
{"32"\PATH~={**@{-}}'"42"},
{"13"\PATH~={**@{-}}'"23"},
{"23"\PATH~={**@{-}}'"33"},
{"33"\PATH~={**@{-}}'"43"},
{"34"\PATH~={**@{-}}'"44"},
{"00"\PATH~={**@{-}}'"01"},
{"10"\PATH~={**@{-}}'"11"},
{"11"\PATH~={**@{-}}'"12"},
{"12"\PATH~={**@{-}}'"13"},
{"20"\PATH~={**@{-}}'"21"},
{"21"\PATH~={**@{-}}'"22"},
{"22"\PATH~={**@{-}}'"23"},
{"32"\PATH~={**@{-}}'"33"},
{"33"\PATH~={**@{-}}'"34"},
{"42"\PATH~={**@{-}}'"43"},
{"43"\PATH~={**@{-}}'"44"},
{"10"\PATH~={**@{-}}'"01"},
{"20"\PATH~={**@{-}}'"11"},
{"21"\PATH~={**@{-}}'"12"},
{"22"\PATH~={**@{-}}'"13"},
{"32"\PATH~={**@{-}}'"23"},
{"42"\PATH~={**@{-}}'"33"},
{"43"\PATH~={**@{-}}'"34"},
(3.05,0.25)*{\tau_{i_1}};
(2.75,-0.05)*{\tau_{i_3}};
(2.25,-0.05)*{\tau_{i_4}};
(2.05,-0.25)*{\tau_{i_4}};
(2.05,-0.75)*{\tau_{i_5}};
(1.75,-1.05)*{\tau_{i_7}};
(1.25,-0.44)*{\tau_{i_6}};
(1.44,-0.25)*{\tau_{i_6}};
(1.44,0.25)*{\tau_{i_5}};
(1.75,0.56)*{\tau_{i_3}};
(2.25,0.56)*{\tau_{i_2}};
(2.45,0.75)*{\tau_{i_2}};
(2.75,0.75)*{\tau_{i_1}};
(2.75,0.25)*{\tau_{i_2}};
(2.25,0.25)*{\tau_{i_3}};
(1.75,0.25)*{\tau_{i_4}};
(1.75,-0.25)*{\tau_{i_5}};
(1.75,-0.75)*{\tau_{i_6}};
(1.25,-0.75)*{\tau_{i_7}};
(2.75,0.5)*{\tau_{[c_1]}};
(2.5,0.25)*{\tau_{[c_2]}};
(2,0.25)*{\tau_{[c_3]}};
(1.75,0)*{\tau_{[c_4]}};
(1.75,-0.5)*{\tau_{[c_5]}};
(1.5,-0.75)*{\tau_{[c_6]}};
\endxy
\]

Write $G_{T,c}$ for the graph obtained from $\bar{G}_{T,c}$ by removing the diagonal in every tile. Recall that for any graph $G$, a \emph{perfect matching} of $G$ is a collection~$P$ of edges such that every vertex of $G$ is incident to exactly one edge in $P$. It is easy to show that the graph~$G_{T,c}$ constructed in~\cite{MSW1} has exactly two perfect matchings consisting only of boundary edges. These perfect matchings are called the \emph{minimal matching} and \emph{maximal matching} and are denoted $P_{-}=P_{-}(G_{T,c})$ and $P_{+}=P_{+}(G_{T,c})$, respectively. In the above example, the maximal matching $P_{+}$ is the matching that contains the horizontal edge at the bottom of the graph~$\bar{G}_{T,c}$.

If the edges of a perfect matching $P$ are labeled $\tau_{j_1},\dots,\tau_{j_r}$, then we define the \emph{weight} $x(P)$ of $P$ as the product 
\[
x(P)=\prod_{s=1}^r x_{\tau_{j_s}}
\]
of the cluster variables associated to $\tau_{j_1},\dots,\tau_{j_r}$. Similarly, if $\tau_{i_1},\dots,\tau_{i_d}$ is the sequence of arcs in $T$ that $c$ crosses, then we define the \emph{crossing monomial} $\cross(T,c)$ of $c$ with respect to $T$ as the product 
\[
\cross(T,c)=\prod_{s=1}^d x_{\tau_{i_s}}.
\]
Note that the arcs $\tau_{i_1},\dots,\tau_{i_d}$ in this definition also appear as the labels on the diagonal edges in the graph $\bar{G}_{T,c}$.

\begin{proposition}[\cite{MSW1}]
Let $c$ be an arc on a decorated surface $S$. Then the $\mathbf{g}$-vector associated to $c$ is given by the formula 
\[
\mathbf{g}_{c}=\deg\left(\frac{x(P_{-})}{\cross(T,c)}\right).
\]
\end{proposition}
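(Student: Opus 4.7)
The plan is to derive the formula as a direct consequence of the cluster expansion theorem of Musiker, Schiffler, and Williams together with the homogeneity of the $X$-polynomial under the $\mathbb{Z}^n$-grading from~\cite{FZIV}. The main theorem of~\cite{MSW1} asserts that the cluster variable $x_c$ associated to the arc $c$ admits the expansion
\[
x_c = \frac{1}{\cross(T,c)}\sum_{P} x(P)\, y(P),
\]
where the sum runs over all perfect matchings $P$ of the snake graph $G_{T,c}$ and $y(P)$ is a certain monomial in the initial $y$-variables. In the MSW1 normalization the minimal matching $P_{-}$ has $y(P_{-}) = 1$, so one distinguished summand equals $x(P_{-})/\cross(T,c)$, a pure Laurent monomial in the $x_i$ with no $y$-dependence.

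Next, I would invoke the homogeneity result from~\cite{FZIV}, which states that $X_{l;t}$ is homogeneous under the grading $\deg(x_i) = \mathbf{e}_i$, $\deg(y_j) = -\mathbf{b}_j^0$. Consequently, the $\mathbb{Z}^n$-degree of $X_c$, and hence the $\mathbf{g}$-vector $\mathbf{g}_c$, can be read off from any single monomial occurring in its cluster expansion in the initial variables. Applying this to the distinguished monomial coming from $P_{-}$ yields
\[
\mathbf{g}_c = \deg(X_c) = \deg\!\left(\frac{x(P_{-})}{\cross(T,c)}\right),
\]
which is the desired formula.

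The only nontrivial step is the input assertion $y(P_{-}) = 1$, which is built into the MSW1 framework: the weight $y(P)$ is defined so that it records the ``twist'' of $P$ relative to the minimal matching via the associated height function, and $P_{-}$ is by definition the zero of this function. The rest of the argument is a formal consequence of the Laurent phenomenon and the homogeneity of the cluster expansion. I do not expect any real obstacle here, since the statement is essentially an unpacking of how the $\mathbf{g}$-vector interacts with the combinatorial expansion of~\cite{MSW1}; the content of the proposition is the fact that the MSW1 combinatorics refines the $\mathbf{g}$-vector construction of Fomin and Zelevinsky.
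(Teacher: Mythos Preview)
The paper does not supply its own proof of this proposition; it is stated with the citation~\cite{MSW1} and used as a black box. Your argument is a correct derivation: the cluster expansion formula of~\cite{MSW1} together with the normalization $y(P_{-})=1$ singles out the Laurent monomial $x(P_{-})/\cross(T,c)$, and the homogeneity of $X_{l;t}$ under the $\mathbb{Z}^n$-grading from~\cite{FZIV} then forces every monomial in the expansion, in particular this one, to have degree $\mathbf{g}_c$. This is indeed the standard way to extract the $\mathbf{g}$-vector from the MSW expansion, and it is essentially how the result is obtained in~\cite{MSW1} itself, so there is nothing to add or correct.
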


For an alternative approach to computing $\mathbf{g}$-vectors, see~\cite{R}, Proposition~5.2.

\section{The canonical pairing}
\label{sec:TheCanonicalPairing}

\subsection{The multiplicative canonical pairing}

A simple closed curve $l$ on $S_\mathcal{D}$ will be called an \emph{intersecting curve} if every curve in its homotopy class intersects the image of $\partial S$ in $S_\mathcal{D}$.

Suppose we are given an intersecting curve $l$ on $S_\mathcal{D}$. Deform this curve so that it intersects the image of~$\partial S$ in the minimal number of points. Then, starting from any component of this image which we may call $\gamma_1$, there is a segment $c_1$ of the curve $l$ which lies entirely in~$S$ and connects the component~$\gamma_1$ to another component which we may call $\gamma_2$. Starting from this component, there is a segment~$c_2$ of~$l$ which lies entirely in~$S^\circ$ and connects~$\gamma_2$ to a component~$\gamma_3$. Continue labeling in this way until the curve closes.

Now suppose we are given a point $m\in\mathcal{D}^+(S)$. This determines a point in the Teichm\"uller space of some surface $\Sigma_{p_1,\dots,p_k}$ where $\Sigma=S_\mathcal{D}$, and we can represent this by a hyperbolic structure such that any curve in the image of~$\partial S\subseteq S_\mathcal{D}$ is geodesic. Cut the surface $\Sigma_{p_1,\dots,p_k}$ along $\partial S$. Then each $c_i$ is a curve that connects boundary components or punctures of~$S$ or~$S^\circ$. We can wind the ends of $c_i$ around the holes infinitely many times in the direction prescribed by the orientations from $m$.

Consider a curve $c_i$ for $i$ odd. Lifting this curve to the upper half plane, we obtain a geodesic connecting two points on the boundary of $\mathbb{H}$. Choose a horocycle around each of these boundary points, and define $A_{c_i}$ as the exponentiated signed half length of the portion of the lifted curve between these horocycles. Next consider $c_i$ for $i$ even. Lifting to the upper half plane, we again get a geodesic connecting two points on the boundary of $\mathbb{H}$, and the horocycles already chosen determine a pair of horocycles around these points. We define~$A_{c_i}^\circ$ as the exponentiated signed half length of the portion of the lifted curve between these horocycles.

\Needspace*{5\baselineskip}
\begin{definition} \mbox{}
Fix a point $m\in\mathcal{D}^+(S)$.
\begin{enumerate}
\item Let $l$ be an intersecting curve of weight $k$ on $S_\mathcal{D}$, and assume that the orientation of each component of $\gamma$ agrees with the orientation of~$S$. Then 
\[
\mathbb{I}_\mathcal{D}(l,m)=\left(\frac{\prod_{\text{$i$ even}}A_{c_i}^\circ}{\prod_{\text{$i$ odd}}A_{c_i}}\right)^k.
\]

\item Let $l$ be a curve of weight $k$ on $S_\mathcal{D}$ which is not an intersecting curve and is not homotopic to a loop in the simple lamination. If $l$ lies in $S^\circ$, then $\mathbb{I}_\mathcal{D}(l,m)$ is defined as the absolute value of the trace of the $k$th power of the of the monodromy around $l$. If $l$ lies in $S$ then $\mathbb{I}_\mathcal{D}(l,m)$ is the reciprocal of this quantity.

\item Let $l$ be a curve of weight $k$ on $S_\mathcal{D}$ which is homotopic to a loop $\gamma_i$ in the simple lamination. If $l$ and $m$ provide the same orientation for $\gamma_i$, then $\mathbb{I}_\mathcal{D}(l,m)$ is defined as the absolute value of the $k$th power of the largest eigenvalue of the monodromy around~$l$. If $l$ and $m$ provide different orientations for $\gamma_i$, then $\mathbb{I}_\mathcal{D}(l,m)$ is the reciprocal of this quantity.

\item Let $l_1$ and $l_2$ be laminations on $S_\mathcal{D}$ such that no curve from $l_1$ intersects or is homotopic to a curve from $l_2$. Then $\mathbb{I}_\mathcal{D}(l_1+l_2,m)=\mathbb{I}_\mathcal{D}(l_1,m)\mathbb{I}_\mathcal{D}(l_2,m)$.
\end{enumerate}
\end{definition}

This defines the canonical map in the special case where the orientation of any~$\gamma_i$ that meets a curve agrees with the orientation of the surface~$S$. If the orientation of $\gamma_i$ disagrees with the orientation of~$S$, then we can define $\mathbb{I}_{\mathcal{D}}(l)$ by modifying slightly the above definition. To do this, suppose that $l$ is a point in $\mathcal{D}_L(S,\mathbb{Z})$ for which the function $\mathbb{I}_{\mathcal{D}}(l)$ has been defined. Suppose the orientation that $l$ provides for $\gamma_i$ agrees with the orientation of~$S$, and let $l'$ be the lamination obtained from $l$ by reversing the orientation of~$\gamma_i$. We will define $\mathbb{I}_{\mathcal{D}}(l')(m)$ by considering two possibilities for $m\in\mathcal{D}^+(S)$. First, let us assume that $m$ is chosen so that the monodromy around~$\gamma_i$ is hyperbolic. In this case, we define $m'$ to be the point of $\mathcal{D}^+(S)$ obtained from~$m$ be reversing the orientation of~$\gamma_i$, and we put 
\[
\mathbb{I}_{\mathcal{D}}(l')(m)=\mathbb{I}_{\mathcal{D}}(l)(m').
\]
For the second possibility, assume $m$ is chosen so that the monodromy around~$\gamma_i$ is parabolic. In this case, we note that there is a natural invariant associated to $\gamma_i$. Indeed, fix an ideal triangulation $T$ of~$S$, and lift this to an ideal triangulation~$\tilde{T}$ of the universal cover~$\tilde{S}$. Consider the triangles $t_1,\dots,t_N$ of $T$ that meet the puncture in~$S$ corresponding to $\gamma_i$. We can find an ideal polygon in $\tilde{S}$, formed by $N$ triangles of $\tilde{T}$, which projects onto the union of the triangles $t_1,\dots,t_N$. We will label the edges of this ideal polygon as in the diagram below.
\[
\xy /l2pc/:
{\xypolygon8"A"{~:{(1,2.33):}~>{}}};
{"A1"\PATH~={**@{-}}'"A2"};
{"A2"\PATH~={**@{-}}'"A3"};
{"A3"\PATH~={**@{-}}'"A4"};
{"A4"\PATH~={**@{-}}'"A5"};
{"A5"\PATH~={**@{-}}'"A6"};
{"A7"\PATH~={**@{-}}'"A8"};
{"A8"\PATH~={**@{-}}'"A1"};
{"A1"\PATH~={**@{-}}'"A3"};
{"A1"\PATH~={**@{-}}'"A4"};
{"A1"\PATH~={**@{-}}'"A5"};
{"A1"\PATH~={**@{-}}'"A6"};
{"A1"\PATH~={**@{-}}'"A7"};
(2.75,-0.25)*{\Ddots};
(-0.25,2.3)*{\eta_1};
(-0.3,0.9)*{\eta_2};
(0,-0.5)*{\eta_3};
(1.2,-0.75)*{\eta_4};
(2.25,2.35)*{\eta_{N+1}};
(-1.45,1)*{\zeta_1};
(-1.5,-0.85)*{\zeta_2};
(-0.25,-2.35)*{\zeta_3};
(2.25,-2.35)*{\zeta_4};
(3.45,1)*{\zeta_N};
\endxy
\]
Choose a horocycle at each vertex of this polygon. There is a similar polygon in the universal cover $\tilde{S}^\circ$ of $S^\circ$, and the horocycles already chosen determine a corresponding collection of horocycles associated to the vertices of this polygon. We denote the corresponding edges of the triangulation of this new polygon by the same symbols $\eta_1,\dots,\eta_{N+1}$ and $\zeta_1,\dots,\zeta_N$, and we write $A_j^\circ$ for the invariant associated to the edge $j$. We can then form the expression 
\[
\kappa_i=\frac{\alpha^\circ}{\alpha}
\]
where 
\[
\alpha^\circ=\sum_{j=1}^N\frac{A_{\zeta_j}^\circ}{A_{\eta_j}^\circ A_{\eta_{j+1}}^\circ}
\]
and 
\[
\alpha=\sum_{j=1}^N\frac{A_{\zeta_j}}{A_{\eta_j} A_{\eta_{j+1}}}.
\]
Then we define $\mathbb{I}_{\mathcal{D}}(l')$ as $\mathbb{I}_{\mathcal{D}}(l)$ multiplied by a factor of $\kappa_i$ for each curve of weight~1 of~$l$ that meets $\gamma_i$.

We will see below that the functions $\mathbb{I}_{\mathcal{D}}(l)$ defined in this way are independent of all the choices made in the construction and are given by algebraic expressions in the $B_j$ and~$X_j$ for any choice of triangulation.

\subsection{Expression in terms of $F$-polynomials}

\subsubsection{Intersecting curves}

We will prove our formula for $\mathbb{I}_{\mathcal{D}}(l)$ in several steps. We begin by examining the special case where $l$ is an intersecting curve. In this case, we will find that the formula involves the $F$-polynomials of Fomin and Zelevinsky.

Fix an ideal triangulation $T$ of $S$, and a point $m\in\mathcal{D}^+(S)$. Then the universal cover of~$S$ can be identified with a subset of the hyperbolic plane $\mathbb{H}$, and the triangulation $T$ can be lifted to a triangulation $\tilde{T}$ of the universal cover. Using the natural map $S^\circ\rightarrow S$, we can draw all of the curves $c_i$ on the surface $S$. We can then lift each curve to a geodesic $\tilde{c}_i$ in the universal cover in such a way that $\tilde{c}_i$ and $\tilde{c}_{i+1}$ share a common endpoint. Let $P$ be a triangulated ideal polygon, formed from triangles in $\tilde{T}$, which includes all of the triangles that the curves~$\tilde{c}_i$ pass through. Let $T(P)$ be the triangulation of $P$ provided by the triangulation~$\tilde{T}$. Choose a horocycle around each vertex of $T(P)$. Then we can define $A_i$ as the exponentiated signed half length of an edge $i$ of an ideal triangle between the chosen horocycles.

In exactly the same way, the universal cover of $S^\circ$ can be identified with a subset of the hyperbolic plane, and the triangulation $T$ provides a triangulation $\tilde{T}^\circ$ of this universal cover. The polygon $P$ gives rise to a polygon $P^\circ$ in this universal cover. The latter polygon has a triangulation $T(P^\circ)$ provided by the triangulation~$\tilde{T}^\circ$. The horocycles that we already chose around the vertices of $T(P)$ provide horocycles around the vertices of $T(P^\circ)$. We can define~$A_i^\circ$ as the exponentiated signed half length of any edge $i$ of an ideal triangle between these horocycles.

It is convenient at this point to adopt a kind of multi-index notation. If $\mathbf{v}=(v_i)$ is a vector indexed by the edges of the triangulation $T(P)$, then we will write 
\[
A^{\mathbf{v}}=\prod_i A_i^{v_i}.
\]
Similarly, if $\mathbf{v}=(v_i)$ is indexed by the edges of $T(P^\circ)$, then we will write 
\[
(A^\circ)^{\mathbf{v}}=\prod_i (A_i^\circ)^{v_i}.
\]
Since there is a natural bijection between the edges of $T(P^\circ)$ and the edges of~$T(P)$, the indexing sets for these vectors can be identified.

\begin{lemma}
\label{lem:Adecomposition}
For each $i$, there is an $F$-polynomial $F_{c_i}$ and an integral vector $\mathbf{g}_{c_i}$, indexed by the edges of the triangulation $T(P)$, such that 
\[
A_{c_i} = F_{c_i}(X_1,\dots,X_n)A^{\mathbf{g}_{c_i}}
\]
and 
\[
A_{c_i}^\circ = F_{c_i}(\widehat{X}_1,\dots,\widehat{X}_n)(A^\circ)^{\mathbf{g}_{c_i}}.
\]
\end{lemma}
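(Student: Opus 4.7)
The plan is to identify each lambda length $A_{c_i}$ (and $A_{c_i}^\circ$) as a cluster variable in a suitable cluster algebra, and then apply the Fomin--Zelevinsky separation formula (Proposition~\ref{prop:FZ63}), specialized appropriately.

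First I would observe that, by the work of Penner and of Fomin--Shapiro--Thurston recalled in Section~\ref{sec:ClusterAlgebrasAndFPolynomials}, the lambda lengths $A_i$ attached to the edges of the triangulation $T(P)$ transform by the Ptolemy relation under a flip, which coincides with the classical (coefficient-free) cluster exchange relation with exchange matrix $b_{ij}=\varepsilon_{ji}$. Consequently the $A_i$ form an initial seed of the cluster algebra with trivial coefficient semifield $\mathbb{P}=\{1\}$, and the arc $c_i$ (after winding its endpoints infinitely many times around the two holes it connects, exactly as in the construction of $A_{c_i}$) corresponds to a cluster variable of this cluster algebra, with an associated $F$-polynomial $F_{c_i}$ and $\mathbf{g}$-vector $\mathbf{g}_{c_i}$.

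Applying Proposition~\ref{prop:FZ63} in the trivial semifield $\mathbb{P}=\{1\}$, the denominator $F_{c_i}|_{\mathbb{P}}(1,\dots,1)$ equals $1$, and the argument $\widehat{y}_j=\prod_k A_k^{b_{kj}^0}=\prod_k A_k^{\varepsilon_{jk}}$ is precisely $p^*(X_j)$ under the canonical map $p:\mathcal{A}\to\mathcal{X}$ from Section~\ref{sec:BackgroundOnClusterVarieties}. This yields the first formula
\[
A_{c_i}=F_{c_i}(X_1,\dots,X_n)\,A^{\mathbf{g}_{c_i}}.
\]

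For the second formula, I would proceed via Proposition~\ref{prop:specialization}. Since the $B$-coordinates are defined by $B_k=A_k^{\circ}/A_k$, the ratio $B_{c_i}=A_{c_i}^{\circ}/A_{c_i}$ is a cluster variable in the cluster $\mathcal{D}$-algebra associated with $S$ (this is the content of the identifications in Section~\ref{sec:TheSymplecticDouble}, since the $B$-coordinates mutate according to the $\mathcal{D}$-transformation rule). Proposition~\ref{prop:specialization} then gives
\[
B_{c_i}=\frac{F_{c_i}(\widehat{X}_1,\dots,\widehat{X}_n)}{F_{c_i}(X_1,\dots,X_n)}\,B_1^{g_{i,1}}\cdots B_n^{g_{i,n}},
\]
with the \emph{same} $F_{c_i}$ and $\mathbf{g}_{c_i}$ as above, because both formulas are governed by the same cluster variable of the surface cluster algebra. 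Multiplying this by the formula for $A_{c_i}$, the factor $F_{c_i}(X_1,\dots,X_n)$ cancels and $A^{\mathbf{g}_{c_i}}B^{\mathbf{g}_{c_i}}=(A^\circ)^{\mathbf{g}_{c_i}}$, giving exactly the second claim. As a sanity check one can verify directly that $\widehat{X}_j=\prod_k(A_k^\circ)^{\varepsilon_{jk}}$ using $B_k=A_k^\circ/A_k$, which confirms that the second identity is really the analogue of the first with $A$ replaced by $A^\circ$.

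The main obstacle is bookkeeping rather than conceptual: I must carefully justify that the finite polygon $P$ actually supports a well-defined initial seed of the cluster algebra (so that Proposition~\ref{prop:FZ63} and Proposition~\ref{prop:specialization} apply), and that the $F$-polynomial and $\mathbf{g}$-vector attached to $c_i$ by the general cluster-algebraic machinery coincide with the ones coming from the surface-combinatorial construction of Musiker--Schiffler--Williams. Once this is granted, the lemma falls out immediately from the two applications of separation of additions together with the elementary identity $\widehat{X}_j=X_j\prod_k B_k^{\varepsilon_{jk}}$.
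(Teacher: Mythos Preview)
Your argument for the first identity matches the paper's exactly: apply Proposition~\ref{prop:FZ63} over the trivial semifield to the cluster algebra of the polygon $P$, and identify $\widehat{y}_k=\prod_jA_j^{\varepsilon_{kj}}$ with $X_k$. The paper adds one explicit step you leave implicit, namely citing the matrix formula of \cite{MW} to check that $F_{c_i}$ depends only on variables attached to edges that $\tilde c_i$ actually crosses, hence only on genuine $X$-coordinates of $T$.

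For the second identity your route diverges. You invoke Proposition~\ref{prop:specialization} for the cluster $\mathcal{D}$-algebra of the surface $S$ to compute $B_{c_i}=A_{c_i}^\circ/A_{c_i}$ and then multiply by the first formula. This is valid but creates exactly the bookkeeping you flag: the $\mathbf g$-vector in Proposition~\ref{prop:specialization} is indexed by the internal edges of $T$, whereas the $\mathbf g_{c_i}$ in the lemma is indexed by the edges of $T(P)$, and one must argue these agree componentwise after projection. The paper avoids this by taking the direct path you mention only as a sanity check: it simply reruns the trivial-coefficient argument inside the polygon $P^\circ$ with the variables $A_j^\circ$, using the identity $\widehat X_k=\prod_j(A_j^\circ)^{\varepsilon_{kj}}$. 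This keeps the entire proof at the level of the polygon, so the same $\mathbf g_{c_i}$ appears in both formulas automatically and Proposition~\ref{prop:specialization} is never needed. Your detour buys nothing here; the symmetric argument is both shorter and cleaner.
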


\begin{proof}
Let $i_1,\dots,i_m$ be the edges of $T(P)$. Associated to the polygon $P$, there is a cluster algebra generated by the variables $A_i$ over the trivial semifield $\mathbb{P}=\{1\}$. Applying Proposition~\ref{prop:FZ63} to this cluster algebra, we see that
\[
A_{c_i} = F_{c_i}\biggr(\prod_{j}A_j^{\varepsilon_{i_1j}},\dots,\prod_{j}A_j^{\varepsilon_{i_mj}}\biggr)A_{i_1}^{g_{i_1}}\dots A_{i_m}^{g_{i_m}}
\]
for some integers $g_{i_1},\dots,g_{i_m}$. Now for any internal edge $k$ of $T(P)$, the product $\prod_{j}A_j^{\varepsilon_{kj}}$ equals the coordinate $X_k$ associated to the corresponding edge $k$ of $T$. Moreover, by the matrix formula of~\cite{MW}, this $F_{c_i}$ is a polynomial only in the variables associated to edges that $\tilde{c}_i$ crosses, which are all internal edges. Therefore we can write it as $F_{c_i}(X_1,\dots,X_n)$, a~polynomial in the $X$-coordinates. This proves the first equation. The proof of the second equation is similar. In this case, one uses the fact that $\widehat{X}_k=\prod_{j}(A_j^\circ)^{\varepsilon_{kj}}$.
\end{proof}

Recall that the variables $X_k$ are defined as cross ratios $X_k=\frac{A_iA_m}{A_jA_l}$ where $i$, $j$, $l$, $m$ are the edges of the quadrilateral with diagonal $k$. Here we will consider additional variables associated to the edges $i$ of $\tilde{T}$. Consider a triangle $\Delta$ in $\tilde{T}$ that includes $i$ as one of its edges, and label the other edges of this triangle as follows:
\[
\xy /l1.5pc/:
{\xypolygon3"A"{~:{(2,0):}}};
(2.4,0.5)*{j};
(-0.4,0.5)*{k}; 
(1,-1.4)*{i};
\endxy
\]
Then we define the variable associated to $i$ by $W_{i,\Delta}=\frac{A_iA_j}{A_k}$. Note that this depends on the chosen triangle as well as the edge $i$.

Fix an edge $i$ of the triangulation $T(P)$. For any vertex $v$ of $i$, there is a collection of edges in $\tilde{T}$ that start at $v$ and lie in the counterclockwise direction from $i$. Consider a curve~$i'$ that goes diagonally across $i$, intersecting finitely many of these edges transversely before terminating on one of them. An example is illustrated below.
\[
\xy /l3pc/:
{\xypolygon10"A"{~:{(2,0):}}};
{"A9"\PATH~={**@{-}}'"A4"},
{"A9"\PATH~={**@{-}}'"A5"},
{"A9"\PATH~={**@{-}}'"A6"},
{"A9"\PATH~={**@{-}}'"A7"},
{"A10"\PATH~={**@{-}}'"A4"},
{"A1"\PATH~={**@{-}}'"A4"},
{"A2"\PATH~={**@{-}}'"A4"},
(1,-1.9)*{}="1";
(1.1,1.9)*{}="2";
"1";"2" **\crv{(0.25,-1) & (1.75,1)};
(1.5,-1)*{i};
(0.8,-0.25)*{i'};
(1,-2.2)*{i_0};
(1,2.2)*{i_1};
(0.4,-1.7)*{\Delta_0};
(1.5,1.7)*{\Delta_1};
\endxy
\]

Given such a curve $i'$, let $E_i$ be the set of all edges in $T(P)$ that $i'$ crosses. Then we can form the product 
\[
P_i=W_{i_0,\Delta_0}\cdot\prod_{j\in E_i} X_j\cdot W_{i_1,\Delta_1}
\]
where $i_0$ and $i_1$ are the edges on which $i'$ terminates. One can check that this expression equals $A_i^2$. We will use this fact to prove the following result.

\begin{lemma}
\label{lem:factorization}
Let $\mathbf{s}=\sum_{i\text{ even}}\mathbf{g}_{c_i}-\sum_{i\text{ odd}}\mathbf{g}_{c_i}$. Then there exists a half integral vector $\mathbf{h}=(h_i)_{i\in J}$, indexed by the internal edges of the triangulation $T$, such that 
\[
A^{\mathbf{s}}=X^{\mathbf{h}}=\prod_{i\in J}X_i^{h_i}.
\]
\end{lemma}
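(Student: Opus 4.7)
My plan is to apply the identity $A_e^2 = W_{e_0,\Delta_0} \cdot \prod_{j \in E_e} X_j \cdot W_{e_1,\Delta_1}$ introduced just before the lemma to each edge $e$ of $T(P)$ carrying a nonzero exponent $s_e$. This rewrites $A^{2\mathbf{s}}$ as a product of $X$-coordinates (appearing with integer exponents) times a product of $W$-factors of the form $W_{i,\Delta} = A_iA_j/A_k$. The goal is to choose, for each edge $e$ in the support of $\mathbf{s}$, an auxiliary transverse curve $i'_e$ (and hence the pair of $W$-factors it produces) so that all the resulting $W$-factors cancel. If this is achieved, then $A^{2\mathbf{s}}$ is an integer $X$-monomial in the coordinates of $T(P)$, and $A^{\mathbf{s}}$ is a half-integer $X$-monomial; collecting contributions from edges of $T(P)$ that project to the same edge of $T$ then yields the desired expression $\prod_{i\in J} X_i^{h_i}$.

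The first step is to pin down the support of each $\mathbf{g}_{c_i}$ using the MSW formula $\mathbf{g}_c = \deg(x(P_-(G_{T,c}))/\cross(T,c))$: positive entries come from edges of $T(P)$ appearing in the minimal matching $P_-$, which are concentrated near the endpoints of $\tilde{c}_i$, while negative entries come from the interior edges of $T(P)$ that $\tilde{c}_i$ crosses. The second step is to exploit the alternating signs in $\mathbf{s} = \sum_{i\text{ even}} \mathbf{g}_{c_i} - \sum_{i\text{ odd}} \mathbf{g}_{c_i}$ together with the chain structure. Each interior vertex $v_j$ (with $0 < j < k$) is shared by $\tilde{c}_j$ and $\tilde{c}_{j+1}$, which enter $\mathbf{s}$ with opposite signs; consequently, the contributions of $\mathbf{g}_{c_j}$ and $\mathbf{g}_{c_{j+1}}$ near $v_j$ should pair up so that the $W$-factors around $v_j$ cancel, provided the curves $i'_e$ are chosen in a locally consistent way.

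The third step is to handle the endpoint vertices $v_0$ and $v_k$. Although these are distinct in $P$, they project to the same puncture of $S$ (the one arising from the boundary component where $l$ closes) and are related by the deck transformation corresponding to $l$. This identification provides the final pairing of $W$-factors and hence the last needed cancellation. As a sanity check one may verify directly, using the fact that $A^{\mathbf{g}_c}$ scales by $\kappa$ under a horocycle change by $\kappa$ at an endpoint of $c$ and is invariant at all other vertices, that $A^{\mathbf{s}}$ is indeed horocycle-invariant under coupled horocycle changes at any collection of vertices of $T(P)$ projecting to the same puncture of $S$; this invariance is a necessary condition for the desired conclusion and constrains how the $i'_e$ must be chosen.

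The principal obstacle is the combinatorial bookkeeping in the second and third steps: one must verify that every $W$-factor introduced by an edge $e$ in the support of $\mathbf{s}$ is cancelled by exactly one matching $W$-factor from an adjacent edge, based on the overlap geometry of the snake graphs $G_{T,c_i}$ along the chain. Once this cancellation is established, $A^{2\mathbf{s}}$ is an integer monomial in the $X$-coordinates, and the lemma follows by extracting a square root and grouping terms according to the projection $T(P)\to T$.
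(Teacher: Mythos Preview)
Your proposal is correct and follows essentially the same approach as the paper: apply the identity $A_i^2=P_i$ edge by edge, choose the transverse curves $i'$ coherently so that the $W$-factors cancel, use the monodromy to match the two extremal $W$-factors, and take a square root. The paper packages your ``combinatorial bookkeeping'' step a bit more cleanly by first observing (via the MSW minimal-matching formula) that the whole vector $\mathbf{s}$ is an \emph{alternating} sum of standard basis vectors along a single edge-path in $T(P)$; once you have that, choosing each $i'$ so that consecutive curves terminate on a common edge forces adjacent $W$-factors to cancel pairwise, and only the two endpoint $W$-factors remain, which are then identified by the monodromy exactly as in your Step~3.
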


\begin{proof}
Consider an arc $\tilde{c}_i$ in $T(P)$. If this arc coincides with an edge $i$ of the triangulation, then the associated $\mathbf{g}$-vector equals the standard basis vector $\mathbf{e}_i$. Otherwise $\tilde{c}_i$ intersects one or more edges of $T(P)$, and we can compute the corresponding $\mathbf{g}$-vector $\mathbf{g}_{c_i}$ using the results of \cite{MSW1} that we reviewed above. In this case, $\mathbf{g}_{c_i}$ is given by the formula 
\[
\mathbf{g}_{c_i}=\deg\left(\frac{x(P_{-})}{\cross(T,c_i)}\right).
\]
By definition of a perfect matching, we know that any endpoint of a diagonal in the graph~$\bar{G}_{T,c_i}$ meets exactly one edge of the minimal perfect matching $P_{-}$. It follows that the vector $\mathbf{g}_{c_i}$ is an alternating sum of standard basis vectors corresponding to the edges of a path in the graph $\bar{G}_{T,c_i}$.

Consider the path formed by the $\tilde{c}_i$ in $P$. An example of such a path is illustrated below.
\[
\xy /l3pc/:
{\xypolygon10"A"{~:{(2,0):}}};
{"A7"\PATH~={**@{-}}'"A3"},
{"A7"\PATH~={**@{-}}'"A4"},
{"A6"\PATH~={**@{-}}'"A4"},
{"A8"\PATH~={**@{-}}'"A3"},
{"A8"\PATH~={**@{-}}'"A2"},
{"A9"\PATH~={**@{-}}'"A2"},
{"A9"\PATH~={**@{-}}'"A1"},
{"A1"\PATH~={**@{.}}'"A3"},
{"A3"\PATH~={**@{.}}'"A5"},
(3.2,0)*{s};
(-0.85,1.2)*{t};
(2,1.2)*{\tilde{c}_1};
(0.75,1.6)*{\tilde{c}_2};
(-0.39,1.1)*{\Delta_t};
(2.5,-1)*{\Delta_s};
(-0.39,1.6)*{i_t};
(2.9,-0.8)*{i_s};
\endxy
\]
Let $s$ and $t$ be the endpoints of the path formed by the $\tilde{c}_i$. Consider the closed path on the surface $S$ obtained by drawing all the $c_i$ on $S$ using the natural map $S^\circ\rightarrow S$. The horocycle at $t$ is obtained from the one at $s$ by applying the monodromy around this path. Let $\Delta_t$ be the triangle in $P$ that contains $t$ and is that last triangle that the lifted arcs $c_i$ pass through. Let $\Delta_s$ be the preimage of this triangle under the monodromy.

By the above discussion, the vector $\mathbf{s}$ is an alternating sum of standard basis vectors associated to the edges of a path in $T(P)$. To each edge $i$ on this path, we associate a curve~$i'$ as above so that $i'$ and $j'$ terminate on a common edge whenever $i$ and $j$ terminate on a common vertex. We can choose these curves so that the first and last ones terminate at corresponding edges~$i_s$ of~$\Delta_s$ and~$i_t$ of~$\Delta_t$. One can show in this case that $W_{i_s,\Delta_s}=W_{i_t,\Delta_t}$. It follows from the above discussion that 
\[
\prod_iP_i^{s_i}=\biggr(\prod_iA_i^{s_i}\biggr)^2
\]
where $\mathbf{s}=(s_i)$. Since $\mathbf{s}$ is an alternating sum, all $W$-factors cancel on the left hand side of the equation. Therefore the left hand side is a product of the $X_i$, and the lemma follows by taking square roots on both sides.
\end{proof}

We can now prove our formula for $\mathbb{I}_{\mathcal{D}}(l)$ in the case where $l$ is an intersecting curve.

\begin{proposition}
\label{prop:intersecting}
Let $l$ be a $\mathcal{D}$-lamination represented by a single intersecting curve of weight~1, and suppose that the orientation of each component of~$\gamma$ agrees with the orientation of~$S$. Then 
\[
\mathbb{I}_\mathcal{D}(l)=\frac{\prod_{\text{$i$ even}}F_{c_i}(\widehat{X}_1,\dots,\widehat{X}_n)}{\prod_{\text{$i$ odd}}F_{c_i}(X_1,\dots,X_n)}B_1^{g_{l,1}}\dots B_n^{g_{l,n}}X_1^{h_{l,1}}\dots X_n^{h_{l,n}}
\]
where the $g_{l,i}$ are integers and the $h_{l,i}$ are half integers.
\end{proposition}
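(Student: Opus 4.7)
The plan is to substitute the formulas from Lemma~\ref{lem:Adecomposition} directly into the definition of $\mathbb{I}_\mathcal{D}(l)$ and then to simplify the resulting monomial factor using the relation $B_j = A_j^\circ/A_j$ together with Lemma~\ref{lem:factorization}.

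Since $l$ has weight one and every component of $\gamma$ meeting a curve is oriented compatibly with $S$, the defining expression reads
\[
\mathbb{I}_\mathcal{D}(l) = \frac{\prod_{i \text{ even}} A_{c_i}^\circ}{\prod_{i \text{ odd}} A_{c_i}}.
\]
Applying Lemma~\ref{lem:Adecomposition} to each $A_{c_i}$ and $A_{c_i}^\circ$ produces exactly the ratio of $F$-polynomials appearing in the statement, multiplied by a monomial factor
\[
M = \frac{\prod_{i \text{ even}} (A^\circ)^{\mathbf{g}_{c_i}}}{\prod_{i \text{ odd}} A^{\mathbf{g}_{c_i}}}.
\]
Setting $\mathbf{s} = \sum_{i \text{ even}} \mathbf{g}_{c_i} - \sum_{i \text{ odd}} \mathbf{g}_{c_i}$ and $\mathbf{t} = \sum_{i \text{ odd}} \mathbf{g}_{c_i}$, a direct algebraic manipulation gives $M = A^{\mathbf{s}} \cdot \prod_e (A_e^\circ/A_e)^{(\mathbf{t}+\mathbf{s})_e}$, where $e$ ranges over the edges of the triangulation $T(P)$. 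By Lemma~\ref{lem:factorization}, $A^{\mathbf{s}} = \prod_{j \in J} X_j^{h_{l,j}}$ with half-integer exponents $h_{l,j}$. Moreover, whenever the projection $\pi(e)$ is an internal edge of $T$, the ratio $A_e^\circ/A_e$ coincides with $B_{\pi(e)}$. Collecting the contributions of edges of $T(P)$ that project onto a common internal edge of $T$ then converts the product over $e$ into $\prod_{j \in J} B_j^{g_{l,j}}$ with integer exponents $g_{l,j}$, producing the required formula.

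The main obstacle I anticipate is controlling the contributions of edges of $T(P)$ that project to external edges of $T$, for which no coordinate $B_j$ is defined. The point is that the arcs $c_i$ close up into a single curve on $S_\mathcal{D}$: each $c_i$ begins and ends on the image of $\partial S$, and consecutive arcs share an endpoint on a common boundary component. This closure structure should force the cumulative boundary contributions in the alternating sum $\mathbf{t}+\mathbf{s}$ to cancel, so that only internal edges of $T$ remain and the integrality of the $g_{l,j}$ is manifest. A secondary verification is that the entire expression is independent of the auxiliary choices made in the construction (horocycles around vertices of $T(P)$, the lift of the polygon $P$ into the universal cover): shifting any horocycle scales the corresponding $A_e$ and $A_e^\circ$ by the same factor, so their ratio, as well as the final monomial $M$ and the $F$-polynomial factors, are unaffected.
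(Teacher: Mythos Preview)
Your proposal is correct and follows the same route as the paper: apply Lemma~\ref{lem:Adecomposition}, rewrite the monomial via $A_e^\circ = B_e A_e$ to extract $B^{\mathbf{g}_{\text{even}}}$ (your $\mathbf{t}+\mathbf{s}$ is exactly $\mathbf{g}_{\text{even}}$), and then invoke Lemma~\ref{lem:factorization} on the remaining factor $A^{\mathbf{s}}$. The paper does not separately address your external-edge concern---it simply substitutes $A_i^\circ=B_iA_i$ and writes $B^{\mathbf{g}_{\text{even}}}$ without further comment---so you are not missing any argument that the paper supplies.
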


\begin{proof}
By Lemma~\ref{lem:Adecomposition}, we know that
\[
A_{c_i} = F_{c_i}(X_1,\dots,X_n)A^{\mathbf{g}_{c_i}}
\]
and
\[
A_{c_i}^\circ = F_{c_i}(\widehat{X}_1,\dots,\widehat{X}_n)(A^\circ)^{\mathbf{g}_{c_i}}.
\]
Inserting these expressions into the formula in the definition of $\mathbb{I}_{\mathcal{D}}$, we obtain 
\[
\mathbb{I}_\mathcal{D}(l) = \frac{\prod_{\text{$i$ even}}F_{c_i}(\widehat{X}_1,\dots,\widehat{X}_n)}{\prod_{\text{$i$ odd}}F_{c_i}(X_1,\dots,X_n)}\frac{(A^\circ)^{\mathbf{g}_{\text{even}}}}{A^{\mathbf{g}_{\text{odd}}}}
\]
where we have defined $\mathbf{g}_{\text{even}}=\sum_{i\text{ even}}\mathbf{g}_{c_i}$ and $\mathbf{g}_{\text{odd}}=\sum_{i\text{ odd}}\mathbf{g}_{c_i}$. Substituting $A_i^\circ=B_iA_i$ into this expression, we obtain 
\[
\mathbb{I}_\mathcal{D}(l) =\frac{\prod_{\text{$i$ even}}F_{c_i}(\widehat{X}_1,\dots,\widehat{X}_n)}{\prod_{\text{$i$ odd}}F_{c_i}(X_1,\dots,X_n)}B^{\mathbf{g}_{\text{even}}}A^{\mathbf{g}_{\text{even}}-\mathbf{g}_{\text{odd}}}.
\]
Finally, by Lemma~\ref{lem:factorization}, we can write the last factor as
\[
A^{\mathbf{g}_{\text{even}}-\mathbf{g}_{\text{odd}}}=X^{\mathbf{h}}
\]
for some half integral vector $\mathbf{h}$. This completes the proof.
\end{proof}

Notice that the proof of Lemma~\ref{lem:factorization} actually gives an explicit description of the product $X_1^{h_{l,1}}\dots X_n^{h_{l,n}}$ appearing in this theorem. In proving the lemma, we have essentially described a cycle $\eta_l$ with $\mathbb{Z}/2\mathbb{Z}$-coefficients on~$S$ such that this monomial equals $X_{i_1}^{\pm1/2}\dots X_{i_s}^{\pm1/2}$ where $i_1,\dots,i_s$ are the edges of $T$ that $\eta_l$ intersects. This fact will be important below when we discuss rational functions obtained from laminations.

\subsubsection{Other curves}

We will now derive formulas for $\mathbb{I}_{\mathcal{D}}(c)$ in special cases where $c$ is a closed curve on $S_{\mathcal{D}}$ that is not an intersecting curve. The following simple lemma will be used repeatedly for calculations.

\Needspace*{2\baselineskip}
\begin{lemma}
\label{lem:leftrightturns} \mbox{}
Let $S$ be a decorated surface, $T$ an ideal triangulation of $S$, and $m\in\mathcal{X}^+(S)$.
\begin{enumerate}
\item If $c$ is a loop on $S$, then the monodromy around $c$ is represented by a product of the matrices 
\[
\left( \begin{array}{cc}
X_i^{1/2} & 0 \\
X_i^{-1/2} & X_i^{-1/2} \end{array} \right)
\quad
\text{and}
\quad
\left( \begin{array}{cc}
X_i^{1/2} & X_i^{1/2} \\
0 & X_i^{-1/2} \end{array} \right),
\]
one for each edge $i$ of $T$ that the loop intersects.

\item If $c$ is a loop surrounding a hole and the orientation of this hole agrees with the orientation of $S$, then the smallest eigenvalue of the monodromy around $c$ is $X_{i_1}^{1/2}\dots X_{i_s}^{1/2}$ where $i_1,\dots,i_s$ are the edges of the triangulation that spiral into this hole. If the orientation of the hole disagrees with the orientation of $S$, then $X_{i_1}^{-1/2}\dots X_{i_s}^{-1/2}$ is the smallest eigenvalue of the monodromy.
\end{enumerate}
\end{lemma}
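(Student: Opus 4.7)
The plan is to reduce the calculation to the matrix formalism developed in Section~\ref{subsec:LaurentPolynomialsFromLaminations}. After the usual deformation of edges near holes, a loop $c$ on $S$ is homotopic to a closed path in the graph~$\Gamma$ that alternates between transverse edges, each of which crosses one edge $k\in I$ of the triangulation and contributes the elementary matrix $E_k$ recalled in that section, and small-triangle edges, each of which contributes the matrix $T=\left(\begin{smallmatrix}1&1\\-1&0\end{smallmatrix}\right)$ or $T^{-1}$, according to whether the edge is traversed with or against its clockwise orientation. The monodromy $\rho(c)$ is then the product of these elementary matrices in path order.

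Next, I would group each crossing matrix with the turn edge immediately preceding it. A direct computation in $SL(2,\mathbb{R})$ gives $E_i\cdot T=-L_i$ and $E_i\cdot T^{-1}=R_i$, and the sign disappears in $PSL(2,\mathbb{R})$. Between two successive crossings the loop uses a single small-triangle edge, either in the clockwise direction (contributing $T$) or in the counterclockwise direction (contributing $T^{-1}$); this choice records which side $c$ turns to after the crossing. Hence, after grouping, $\rho(c)$ equals a product of matrices of the form $L_{i_k}$ or $R_{i_k}$, one for each edge $i_k$ of $T$ that $c$ crosses. This is part~(1).

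For part~(2), I would specialize to a loop $c$ winding once around a hole. After spiraling the incident edges $i_1,\dots,i_s$ into the hole in the direction prescribed by its orientation, $c$ can be homotoped to cross each $i_k$ exactly once, and the spiraling geometry forces the \emph{same} side of turn at every crossing. Hence $\rho(c)$ is triangular, equal to either $L_{i_1}\cdots L_{i_s}$ or $R_{i_1}\cdots R_{i_s}$, with diagonal entries $\prod_k X_{i_k}^{1/2}$ and $\prod_k X_{i_k}^{-1/2}$. These are the two eigenvalues. By the reconstruction proposition for the Teichm\"uller $\mathcal{X}$-space in Section~\ref{sec:BackgroundOnClusterVarieties}, the sum $\sum_k\log X_{i_k}$ is negative precisely when the orientation of the hole agrees with that of~$S$, which identifies $\prod_k X_{i_k}^{1/2}$ as the smaller eigenvalue in the agreeing case and $\prod_k X_{i_k}^{-1/2}$ as the smaller in the disagreeing case. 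The weighted ($k$th power) statement then follows by functoriality of the monodromy.

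The main obstacle will be the bookkeeping of orientation conventions: aligning the clockwise orientation of the small triangles in $\Gamma$, the direction in which the edges of~$T$ spiral into the hole, and the direction in which $c$ winds around the hole, so that the $L$'s (and not the $R$'s) appear in the orientation-agreeing case and so that the eigenvalue exponents come out with the signs asserted in the statement. I would pin this down on a small model example such as a once-punctured torus or a four-holed sphere and then argue that the same local picture applies in general.
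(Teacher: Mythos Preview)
Your proposal is correct and follows essentially the same route as the paper's proof: both retract $c$ to the graph $\Gamma$, group each crossing matrix $E_i$ with the preceding small-triangle edge to obtain $E_iT=-L_i$ or $E_iT^{-1}=R_i$ in $PSL(2,\mathbb{R})$, and for part~(2) observe that a loop around a hole always turns the same way so the product is lower-triangular with the stated diagonal, then invoke the sign of $\sum_k\log X_{i_k}$ to identify the smaller eigenvalue. The paper simply asserts the left-turn case for the loop around a hole without the orientation-bookkeeping caveat you raise, but otherwise the arguments coincide.
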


\begin{proof}
1. This monodromy can be computed using the method described in Subsection~\ref{subsec:LaurentPolynomialsFromLaminations}. We first retract the curve $c$ to the graph described there. If the resulting path turns to the left before crossing the edge $i$, then the monodromy includes a factor 
\[
\left( \begin{array}{cc}
0 & X_i^{1/2} \\
-X_i^{-1/2} & 0 \end{array} \right)
\left( \begin{array}{cc}
1 & 1 \\
-1 & 0 \end{array} \right) =
\left( \begin{array}{cc}
X_i^{1/2} & 0 \\
X_i^{-1/2} & X_i^{-1/2} \end{array} \right) 
\in PSL(2,\mathbb{R}).
\]
If it turns to the right before crossing $i$, then the monodromy includes a factor 
\[
\left( \begin{array}{cc}
0 & X_i^{1/2} \\
-X_i^{-1/2} & 0 \end{array} \right)
{\left( \begin{array}{cc}
1 & 1 \\
-1 & 0 \end{array} \right)}^{-1} =
\left( \begin{array}{cc}
X_i^{1/2} & X_i^{1/2} \\
0 & X_i^{-1/2} \end{array} \right) 
\in PSL(2,\mathbb{R}).
\]
The entire path can be decomposed into consecutive left and right turns.

2. By part~1, the monodromy around~$c$ is given by 
\[
\prod_{k=1}^s\left( \begin{array}{cc}
X_{i_k}^{1/2} & 0 \\
X_{i_k}^{-1/2} & X_{i_k}^{-1/2} \end{array} \right)
=\left( \begin{array}{cc}
X_{i_1}^{1/2}\dots X_{i_s}^{1/2} & 0 \\
C & X_{i_1}^{-1/2}\dots X_{i_s}^{-1/2} \end{array} \right)
\]
where $C$ is a polynomial in the variables $X_{i_k}^{\pm1/2}$. The eigenvalues of this matrix are the diagonal elements $X_{i_1}^{1/2}\dots X_{i_s}^{1/2}$ and $X_{i_1}^{-1/2}\dots X_{i_s}^{-1/2}$. If the orientation of the hole agrees with the orientation of~$S$, then $X_{i_1}\dots X_{i_s}<1$, and hence the first of these products is smaller. If the orientation of the hole disagrees with the orientation of~$S$, then $X_{i_1}\dots X_{i_s}>1$, and hence the second product is smaller.
\end{proof}

\begin{proposition}
\label{prop:Scircloop}
Let $c$ be a $\mathcal{D}$-lamination consisting of a single loop of weight~$k$ that lies entirely in~$S^\circ$ and is not retractible to the simple lamination. Then 
\[
\mathbb{I}_{\mathcal{D}}(c)=F_c(\widehat{X}_1,\dots,\widehat{X}_n)B_1^{g_{c,1}}\dots B_n^{g_{c,n}}X_1^{h_{c,1}}\dots X_n^{h_{c,n}}
\]
where $F_c$ is a polynomial, the $g_{c,i}$ are integers, and the $h_{c,i}$ are half integers.
\end{proposition}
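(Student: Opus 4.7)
The proof adapts the strategy of Proposition~\ref{prop:intersecting} to a single closed loop on $S^\circ$.

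First, I would observe that the hyperbolic structure on $S^\circ$ inherited from $m\in\mathcal{D}^+(S)$ has $X$-coordinates naturally expressible in terms of the $B_i$ and $X_i$. Using the relation $A_i^\circ=B_iA_i$ and the cross-ratio formula, the $X$-variable associated to edge $i$ on $S^\circ$ is $\widehat{X}_i=X_i\prod_jB_j^{\varepsilon_{ij}}$, up to an overall inversion depending on orientation conventions (which does not affect what follows). Since $c$ lies on $S^\circ$ and is not retractible to the simple lamination, the definition of $\mathbb{I}_{\mathcal{D}}(c)$ together with Lemma~\ref{lem:leftrightturns}(1) expresses $\mathbb{I}_{\mathcal{D}}(c)$ as the absolute value of the trace of a product of $2\times 2$ matrices whose entries are $\pm\widehat{X}_i^{\pm1/2}$, one matrix for each crossing of $c$ with an edge of the triangulation of $S^\circ$.

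By the positivity result of Fock and Goncharov reviewed in Section~\ref{subsec:LaurentPolynomialsFromLaminations}, this trace, raised to the $k$th power, is a Laurent polynomial with non-negative integer coefficients in the variables $\widehat{X}_i^{1/2}$. Moreover, since the matrix factor for edge $i$ involves only $\widehat{X}_i^{\pm1/2}$, the parity of the exponent of $\widehat{X}_i^{1/2}$ in every monomial of the trace is the fixed value $kn_i\bmod 2$, where $n_i$ is the number of intersections of $c$ with edge $i$. Factoring out the common monomial with lowest exponents then yields a decomposition
\[
\mathbb{I}_{\mathcal{D}}(c)=F_c(\widehat{X}_1,\ldots,\widehat{X}_n)\,\widehat{X}_1^{h_1/2}\cdots\widehat{X}_n^{h_n/2},
\]
in which $F_c$ is a polynomial with non-negative integer coefficients and the $h_i$ are integers with $h_i\equiv kn_i\pmod{2}$. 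Substituting $\widehat{X}_i=X_i\prod_jB_j^{\varepsilon_{ij}}$ gives exponents $h_{c,i}=h_i/2$ of $X_i$ (half integers, as required) and exponents $g_{c,j}=\tfrac{1}{2}\sum_ih_i\varepsilon_{ij}$ of $B_j$.

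The remaining point, which I expect to be the main obstacle, is proving that each $g_{c,j}$ is actually an integer. Modulo $2$, it suffices to show that $\sum_i n_i\varepsilon_{ij}\equiv 0\pmod 2$ for every internal edge $j$. The two triangles adjacent to $j$ form a quadrilateral with diagonal $j$; writing its other four edges as $a,b,c',d$, a direct computation from the definition of $\varepsilon_{ij}$ collapses $\sum_in_i\varepsilon_{ij}$ modulo $2$ to $n_a+n_b+n_{c'}+n_d$. But $a+b+c'+d$ is the boundary of a $2$-chain, namely the union of the two triangles, and hence a null-homologous $\mathbb{Z}/2$-cycle on $S^\circ$. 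Its $\mathbb{Z}/2$-intersection with the closed curve $c$ therefore vanishes, giving the required parity. Self-folded triangulations would disrupt this disk-type argument, and I would handle them by first passing to a regular triangulation using Proposition~\ref{prop:flip} and the analogous rule for the symplectic-double coordinates, and then transporting the result back.
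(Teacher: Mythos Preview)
Your argument is correct and follows the same route as the paper: compute the monodromy on $S^\circ$ via Lemma~\ref{lem:leftrightturns}, extract a polynomial times a monomial in the $\widehat{X}_i^{1/2}$, then substitute $\widehat{X}_i=X_i\prod_jB_j^{\varepsilon_{ij}}$. The paper makes the factorization more explicit by writing each matrix factor as a polynomial matrix times $\widehat{X}_i^{-1/2}$, so the monomial is simply $\prod_l\widehat{X}_{i_l}^{-k/2}$ and $F_c$ is the trace of the $k$th power of the resulting polynomial matrix; your parity argument reaches the same conclusion less directly. For the integrality of the $B$-exponents the paper just asserts that $\sum_l\varepsilon_{i_lj}$ is even ``using the definition of the exchange matrix,'' and your homological argument (the boundary of the quadrilateral at $j$ is null-homologous, hence has even $\mathbb{Z}/2$-intersection with the closed curve $c$) is a clean way to justify that assertion. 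One slip: $\mathbb{I}_{\mathcal{D}}(c)$ is the trace of the $k$th power of the monodromy, not the $k$th power of the trace; however, your parity and integrality arguments apply verbatim to $\mathrm{tr}(\rho^k)$, so nothing is lost.
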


\begin{proof}
Let $c$ be a $\mathcal{D}$-lamination satisfying the hypotheses of the proposition. In this case, $\mathbb{I}_{\mathcal{D}}(c)$ is defined as the absolute value of the trace of the $k$th power of the monodromy around~$c$. By Lemma~\ref{lem:leftrightturns}, this monodromy is a product of matrices 
\[
\left( \begin{array}{cc}
\widehat{X}_i^{1/2} & 0 \\
\widehat{X}_i^{-1/2} & \widehat{X}_i^{-1/2} \end{array} \right)
\quad
\text{and}
\quad
\left( \begin{array}{cc}
\widehat{X}_i^{1/2} & \widehat{X}_i^{1/2} \\
0 & \widehat{X}_i^{-1/2} \end{array} \right),
\]
one for each edge $i$ that $c$ intersects. These matrices factor as 
\[
\left( \begin{array}{cc}
\widehat{X}_i & 0 \\
1 & 1 \end{array} \right)
\cdot \widehat{X}_i^{-1/2}
\quad
\text{and}
\quad
\left( \begin{array}{cc}
\widehat{X}_i & \widehat{X}_i \\
0 & 1 \end{array} \right)
\cdot \widehat{X}_i^{-1/2},
\]
so the monodromy factors as $M\cdot\widehat{X}_{i_1}^{-1/2}\dots\widehat{X}_{i_s}^{-1/2}$ where $i_1,\dots,i_s$ are the edges that $c$ intersects and $M$ is a matrix with polynomial entries. Write $F_c(\widehat{X}_1,\dots,\widehat{X}_n)$ for the trace of the matrix $M^k$. Then we have 
\[
\mathbb{I}_{\mathcal{D}}(c)=F_c(\widehat{X}_1,\dots,\widehat{X}_n)\widehat{X}_{i_1}^{-k/2}\dots\widehat{X}_{i_s}^{-k/2}.
\]
Consider the product $\prod_l\prod_j B_j^{\varepsilon_{i_lj}}$. Let~$i$ be any edge of the triangulation. Using the definition of the exchange matrix given in Section~\ref{sec:BackgroundOnClusterVarieties}, it is easy to show that the total degree of $B_i$ in this product is even. Hence
\begin{align*}
\widehat{X}_{i_1}^{-k/2}\dots\widehat{X}_{i_s}^{-k/2} &= \prod_l {\biggr(X_{i_l}\prod_jB_j^{\varepsilon_{i_lj}}\biggr)}^{-k/2} \\
&= {\biggr(\prod_{j,l} B_j^{\varepsilon_{i_lj}}\biggr)}^{-k/2}\biggr(\prod_lX_{i_l}^{-k/2}\biggr) \\
&= B_1^{g_{c,1}}\dots B_n^{g_{c,n}}X_1^{h_{c,1}}\dots X_n^{h_{c,n}}
\end{align*}
for some integers $g_{c,i}$ and half integers $h_{c,i}$. This completes the proof.
\end{proof}

\begin{proposition}
\label{prop:Scirchole}
Let $c$ be a $\mathcal{D}$-lamination consisting of a single loop of weight~$k$ on~$S_{\mathcal{D}}$. Assume that this loop is homotopic to a loop in the simple lamination and that $c$ and $m$ provide the same orientation for this loop. Then 
\[
\mathbb{I}_{\mathcal{D}}(c)=B_1^{g_{c,1}}\dots B_n^{g_{c,n}}X_1^{h_{c,1}}\dots X_n^{h_{c,n}}
\]
for integers $g_{c,i}$ and half integers $h_{c,i}$.
\end{proposition}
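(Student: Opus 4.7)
The plan is to reduce the computation to a monodromy eigenvalue calculation on the cut surface and then apply Lemma~\ref{lem:leftrightturns}(2). Since $c$ is freely homotopic in $S_{\mathcal{D}}$ to a component $\gamma_i$ of the simple lamination, I would first push $c$ slightly off $\gamma_i$ to one side, say onto $S$, so that it becomes a loop $c'$ on $S$ encircling the boundary component created by cutting $S_{\mathcal{D}}$ along $\partial S$. Since the hyperbolic structure near $\gamma_i$ is unaltered by cutting, and $c$ is homotopic to $c'$ through the natural inclusion $S \hookrightarrow S_{\mathcal{D}}$, the monodromy of $c$ in $S_{\mathcal{D}}$ is conjugate in $PSL(2,\mathbb{R})$ to the monodromy of $c'$ on $S$, and in particular has the same eigenvalues.

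Next, I would invoke the defining property of the coordinates $X_i$ on $\mathcal{D}^+(S)$: they are, by construction, the $\mathcal{X}$-coordinates of the point of the Teichm\"uller $\mathcal{X}$-space of $S$ obtained by cutting along the image of $\partial S$. Lemma~\ref{lem:leftrightturns}(2) then immediately gives the eigenvalues of the monodromy around $c'$ as $X_{i_1}^{1/2}\cdots X_{i_s}^{1/2}$ and its reciprocal, where $i_1,\dots,i_s$ are the edges of the triangulation $T$ that spiral into the hole corresponding to $\gamma_i$.

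By hypothesis $c$ and $m$ provide the same orientation for $\gamma_i$, so by definition $\mathbb{I}_{\mathcal{D}}(c,m)$ is the absolute value of the $k$-th power of the \emph{larger} of the two eigenvalues. Depending on whether this common orientation agrees with or disagrees with that of $S$, the larger eigenvalue is $X_{i_1}^{-1/2}\cdots X_{i_s}^{-1/2}$ or $X_{i_1}^{1/2}\cdots X_{i_s}^{1/2}$, respectively. Taking the $k$-th power of either expression yields a pure monomial in half-integer powers of the $X_j$ with no $B_j$ factors whatsoever; that is, the claimed formula holds with $g_{c,j}=0$ for every $j$ and with $h_{c,j}\in\tfrac{1}{2}\mathbb{Z}$.

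The only real subtlety, and thus the main (minor) obstacle, is verifying that the orientation convention used to define the $\mathcal{D}$-space point $m$ (an orientation of the loop $\gamma_i \subset S_{\mathcal{D}}$) corresponds correctly to the orientation of the associated hole on the cut surface $S$ in the sense of Lemma~\ref{lem:leftrightturns}(2). This is a direct unpacking of definitions, but it is the single place where a sign could go astray and swap ``largest'' with ``smallest'' eigenvalue; beyond this bookkeeping, the rest of the argument is a straightforward application of previously established results.
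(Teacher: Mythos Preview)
Your proof is correct, but it takes a different route from the paper's. The paper pushes $c$ onto $S^\circ$ rather than onto $S$, applies Lemma~\ref{lem:leftrightturns}(2) with the $\mathcal{X}$-coordinates $\widehat{X}_i$ of $S^\circ$, and obtains $\mathbb{I}_{\mathcal{D}}(c)=\widehat{X}_{i_1}^{\pm k/2}\cdots\widehat{X}_{i_s}^{\pm k/2}$. It then expands $\widehat{X}_i=X_i\prod_jB_j^{\varepsilon_{ij}}$ and invokes the parity argument from Proposition~\ref{prop:Scircloop} (that the total $B_i$-degree in $\prod_{l,j}B_j^{\varepsilon_{i_lj}}$ is even) to conclude that the resulting $g_{c,i}$ are integers.

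Your approach --- pushing $c$ onto $S$ and using the $X_i$ directly --- is cleaner for this particular proposition: no $B_j$ ever appears, so you get $g_{c,j}=0$ outright without needing the parity check. The two answers are consistent because the boundary geodesic $\gamma_i$ has the same length viewed from either side, which forces $\prod_l X_{i_l}=\prod_l\widehat{X}_{i_l}$; equivalently $\sum_l\varepsilon_{i_lj}=0$ for every $j$ when the $i_l$ range over the edges incident to the hole. So the paper's $g_{c,i}$ are in fact all zero, though the paper does not say so. What the paper's choice buys is uniformity with Proposition~\ref{prop:Scircloop} (loops genuinely on $S^\circ$), where the $\widehat{X}$-computation is forced and the parity argument is genuinely needed; your shortcut works only because a loop homotopic to $\gamma_i$ can be pushed to either side. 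Your remark about the orientation bookkeeping is apt: that case split (common orientation agrees or disagrees with $S$) is exactly the two-case analysis the paper carries out on the $S^\circ$ side.
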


\begin{proof}
Let $c$ be a $\mathcal{D}$-lamination satisfying the hypotheses of the proposition. In this case, $\mathbb{I}_{\mathcal{D}}(c)$ is defined as the absolute value of the $k$th power of the largest eigenvalue of the monodromy around~$c$. Let $i_1,\dots,i_s$ be the edges of the triangulation that spiral into this loop in the simple lamination. If the orientation provided by $c$ agrees with the orientation of~$S^\circ$, then the orientation provided by $m$ agrees with that of~$S^\circ$, and Lemma~\ref{lem:leftrightturns} implies 
\[
\mathbb{I}_{\mathcal{D}}(c)=\widehat{X}_{i_1}^{-1/2}\dots\widehat{X}_{i_s}^{-1/2}.
\]
On the other hand, if the orientation provided by $c$ agrees with the orientation of~$S$, then the orientation provided by $m$ disagrees with that of~$S^\circ$, and Lemma~\ref{lem:leftrightturns} implies 
\[
\mathbb{I}_{\mathcal{D}}(c)=\widehat{X}_{i_1}^{1/2}\dots\widehat{X}_{i_s}^{1/2}.
\]
The same argument that we used in the proof of Proposition~\ref{prop:Scircloop} can be used to show that the $k$th powers of these expressions have the form $B_1^{g_{c,1}}\dots B_n^{g_{c,n}}X_1^{h_{c,1}}\dots X_n^{h_{c,n}}$ for some integers~$g_{c,i}$ and half integers~$h_{c,i}$.
\end{proof}

\begin{proposition}
\label{prop:Sloop}
Let $c$ be a $\mathcal{D}$-lamination consisting of a single loop of weight~$k$ that lies entirely in~$S$ and is not retractible to the simple lamination. Then 
\[
\mathbb{I}_{\mathcal{D}}(c)=\frac{1}{F_c(X_1,\dots,X_n)}X_1^{h_{c,1}}\dots X_n^{h_{c,n}}
\]
where $F_c$ is a polynomial and the $h_{c,i}$ are half integers.
\end{proposition}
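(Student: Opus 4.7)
The plan is to mirror the proof of Proposition~\ref{prop:Scircloop} almost verbatim, with the modifications dictated by the definition of $\mathbb{I}_{\mathcal{D}}$ in the case that $c$ lies in $S$ rather than in $S^\circ$. By definition, $\mathbb{I}_{\mathcal{D}}(c,m)$ is the reciprocal of the absolute value of the trace of the $k$th power of the monodromy around~$c$. When we cut $S_{\mathcal{D}}$ along $\partial S$ and use the resulting hyperbolic structure on~$S$ to compute this monodromy, the relevant $\mathcal{X}$-coordinates are the ordinary coordinates $X_i$ on the Teichm\"uller $\mathcal{X}$-space of~$S$, not the hatted variables $\widehat{X}_i$. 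This is the only substantive difference from the previous case.

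First I would apply part~1 of Lemma~\ref{lem:leftrightturns} to the hyperbolic structure on~$S$ to write the monodromy around~$c$ as an ordered product of matrices of the form
\[
\begin{pmatrix} X_i^{1/2} & 0 \\ X_i^{-1/2} & X_i^{-1/2} \end{pmatrix} \quad\text{and}\quad \begin{pmatrix} X_i^{1/2} & X_i^{1/2} \\ 0 & X_i^{-1/2} \end{pmatrix},
\]
one factor for each edge $i$ of $T$ that $c$ crosses. Exactly as in the proof of Proposition~\ref{prop:Scircloop}, each such factor can be rewritten as an integer matrix in~$X_i$ times the scalar $X_i^{-1/2}$, so the monodromy itself factors as $M\cdot X_{i_1}^{-1/2}\cdots X_{i_s}^{-1/2}$, where $i_1,\dots,i_s$ is the sequence of edges crossed by~$c$ and $M$ has polynomial entries in the~$X_i$.

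Next I would define $F_c(X_1,\dots,X_n)$ to be the trace of $M^k$; this is a polynomial in the $X_i$, independent of the normalization chosen. Then
\[
|\mathrm{tr}(\rho(c)^k)| \;=\; F_c(X_1,\dots,X_n)\,X_{i_1}^{-k/2}\cdots X_{i_s}^{-k/2},
\]
so taking reciprocals gives
\[
\mathbb{I}_{\mathcal{D}}(c) \;=\; \frac{1}{F_c(X_1,\dots,X_n)}\,X_{i_1}^{k/2}\cdots X_{i_s}^{k/2}.
\]
Defining $h_{c,i}=k/2$ times the number of times $c$ crosses the edge~$i$ (and $h_{c,i}=0$ otherwise) yields the claimed formula, with the exponents manifestly half integers.

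There is essentially no obstacle here beyond bookkeeping, since the construction parallels Proposition~\ref{prop:Scircloop} step for step; the main thing to notice is simply that no $B_i$ appear. This is because $c$ lies entirely in $S$, so the monodromy can be computed purely from the hyperbolic structure on~$S$, and the $X$-coordinates on this surface are by definition the unhatted variables in the statement of the proposition. The substitution $\widehat{X}_i\mapsto X_i$ eliminates the factors $\prod_j B_j^{\varepsilon_{ij}}$ that produced the $B$-monomial in Proposition~\ref{prop:Scircloop}, and taking the reciprocal places $F_c$ in the denominator and flips the signs of the $h_{c,i}$ compared to that proposition.
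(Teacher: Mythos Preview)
Your proposal is correct and matches the paper's proof essentially line for line: both invoke Lemma~\ref{lem:leftrightturns} to write the monodromy as a product of the two standard matrices in the $X_i$, factor out the scalars $X_i^{-1/2}$ to get $M\cdot X_{i_1}^{-1/2}\cdots X_{i_s}^{-1/2}$ with $M$ having polynomial entries, set $F_c=\mathrm{tr}(M^k)$, and then take the reciprocal. Your added remark explaining why no $B_i$ appear is a nice clarification that the paper leaves implicit.
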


\begin{proof}
Let $c$ be a $\mathcal{D}$-lamination satisfying the hypotheses of the proposition. In this case, $\mathbb{I}_{\mathcal{D}}(c)$ is defined as the reciprocal of the absolute value of the trace of the $k$th power of the monodromy around~$c$. By Lemma~\ref{lem:leftrightturns}, this monodromy is an element of $PSL(2,\mathbb{R})$ represented by a product of the matrices 
\[
\left( \begin{array}{cc}
X_i^{1/2} & 0 \\
X_i^{-1/2} & X_i^{-1/2} \end{array} \right)
\quad
\text{and}
\quad
\left( \begin{array}{cc}
X_i^{1/2} & X_i^{1/2} \\
0 & X_i^{-1/2} \end{array} \right),
\]
one for each edge $i$ that $c$ intersects. These matrices factor as 
\[
\left( \begin{array}{cc}
X_i & 0 \\
1 & 1 \end{array} \right)
\cdot X_i^{-1/2}
\quad
\text{and}
\quad
\left( \begin{array}{cc}
X_i & X_i \\
0 & 1 \end{array} \right)
\cdot X_i^{-1/2},
\]
so the monodromy factors as $M\cdot X_{i_1}^{-1/2}\dots X_{i_s}^{-1/2}$ where $i_1,\dots,i_s$ are the edges that $c$ intersects and $M$ is a matrix with polynomial entries. Write $F_c(X_1,\dots,X_n)$ for the trace of the matrix $M^k$. Then we have 
\begin{align*}
\mathbb{I}_{\mathcal{D}}(c) &= \frac{1}{F_c(X_1,\dots,X_n)}X_{i_1}^{k/2}\dots X_{i_s}^{k/2} \\
&= \frac{1}{F_c(X_1,\dots,X_n)}X_1^{h_{c,1}}\dots X_n^{h_{c,n}}
\end{align*}
for half integers $h_{c,i}$.
\end{proof}

\begin{proposition}
\label{prop:Shole}
Let $c$ be a $\mathcal{D}$-lamination consisting of a single loop of weight~$k$ on~$S_{\mathcal{D}}$. Assume that this loop is homotopic to a loop in the simple lamination and that $c$ and $m$ provide different orientations for this loop. Then 
\[
\mathbb{I}_{\mathcal{D}}(c)=X_1^{h_{c,1}}\dots X_n^{h_{c,n}}
\]
for half integers $h_{c,i}$.
\end{proposition}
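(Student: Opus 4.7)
\emph{Proof proposal.}
The plan is to adapt the proof of Proposition~\ref{prop:Scirchole} to the case of mismatched orientations. Since $c$ and $m$ provide different orientations for $\gamma_i$, by definition $\mathbb{I}_\mathcal{D}(c,m)$ equals the reciprocal of the absolute value of the $k$th power of the largest eigenvalue of the monodromy around $c$; equivalently, since the eigenvalues of a hyperbolic element of $PSL(2,\mathbb{R})$ are positive reciprocals of one another, it equals the $k$th power of the smallest eigenvalue. Crucially, the desired formula involves only the $X_i$ and not the $\widehat{X}_i = X_i\prod_j B_j^{\varepsilon_{ij}}$, which signals that the computation should be carried out on the $S$-side using the $X$-coordinates directly, rather than on the $S^\circ$-side as was done in the proof of Proposition~\ref{prop:Scirchole}.

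First I would homotope $c$ to a loop lying entirely in $S$ and surrounding the hole corresponding to $\gamma_i$. The hyperbolic structure on $S_\mathcal{D}$ determined by $m$ restricts, upon cutting along $\partial S$, to the Teichm\"uller $\mathcal{X}$-space point on $S$ whose coordinates are the $X_i$ of $m$, and the orientation that $m$ provides for $\gamma_i$ becomes the orientation of the corresponding hole in $\mathcal{X}^+(S)$. Since $\gamma_i$ is geodesic, the monodromy computed in $S$ is conjugate in $PSL(2,\mathbb{R})$ to the monodromy of $c$ in $S_\mathcal{D}$ and therefore has the same eigenvalues. Letting $i_1,\dots,i_s$ denote the edges of $T$ that spiral into this hole, part~(2) of Lemma~\ref{lem:leftrightturns} then identifies the smallest eigenvalue of the monodromy as $X_{i_1}^{1/2}\cdots X_{i_s}^{1/2}$ if the orientation $m$ provides for $\gamma_i$ agrees with that of $S$, and as $X_{i_1}^{-1/2}\cdots X_{i_s}^{-1/2}$ otherwise.

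Raising this smallest eigenvalue to the $k$th power and reindexing over all edges of $T$ gives
\[
\mathbb{I}_\mathcal{D}(c) = X_{i_1}^{\pm k/2}\cdots X_{i_s}^{\pm k/2} = X_1^{h_{c,1}}\cdots X_n^{h_{c,n}}
\]
with half-integer exponents $h_{c,i}$, as required. There is no substantive obstacle: the only step that requires judgment, as noted, is the choice to compute on the $S$-side rather than the $S^\circ$-side, which is dictated by the shape of the statement. Unlike the proof of Proposition~\ref{prop:Scirchole}, no parity argument is needed to absorb $B$-factors, because the computation never introduces any such factors to begin with.
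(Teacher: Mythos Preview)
Your proof is correct and follows essentially the same route as the paper's own argument: both compute the monodromy on the $S$-side using Lemma~\ref{lem:leftrightturns}(2), do the two-case orientation analysis to identify the relevant eigenvalue as $X_{i_1}^{\pm 1/2}\cdots X_{i_s}^{\pm 1/2}$, and conclude that $\mathbb{I}_{\mathcal{D}}(c)$ is a monomial in the $X_i$ with half-integer exponents. Your write-up is in fact slightly more explicit than the paper's in justifying why the computation lives on the $S$-side and in tracking the $k$th power.
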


\begin{proof}
Let $c$ be a $\mathcal{D}$-lamination satisfying the hypotheses of the proposition.  In this case, $\mathbb{I}_{\mathcal{D}}(c)$ is defined as the inverse absolute value of the $k$th power of an eigenvalue of the monodromy around~$c$. Let $i_1,\dots,i_s$ be the edges of the triangulation that spiral into this loop in the simple lamination. If the orientation provided by $c$ agrees with that of $S$, then the orientation provided by $m$ disagrees with~$S$, and Lemma~\ref{lem:leftrightturns} implies 
\[
\mathbb{I}_{\mathcal{D}}(c)=X_{i_1}^{-1/2}\dots X_{i_s}^{-1/2}.
\]
On the other hand, if the orientation provided by $c$ agrees with the orientation of~$S^\circ$, then the orientation provided by $m$ agrees with~$S$, and Lemma~\ref{lem:leftrightturns} implies 
\[
\mathbb{I}_{\mathcal{D}}(c)=X_{i_1}^{1/2}\dots X_{i_s}^{1/2}.
\]
Thus $\mathbb{I}_{\mathcal{D}}(c)$ has the desired form.
\end{proof}

Propositions~\ref{prop:Scircloop}, \ref{prop:Scirchole}, \ref{prop:Sloop}, and~\ref{prop:Shole} provide expressions for $\mathbb{I}_{\mathcal{D}}(c)$ in the cases where the curve $c$ is not an intersecting curve. In each case, the expression includes a product of the form $X_1^{h_{c,1}}\dots X_n^{h_{c,n}}$ where each $h_{c,i}$ is a half integer. In fact, the proofs of these propositions provide a more explicit description of this product: It is obtained by multiplying one factor $X_i^{\pm k/2}$ each time the curve $c$ crosses an edge~$i$. This fact will be important below when we discuss rational functions obtained from laminations.

\subsubsection{The general case}

We will now combine the above results to get general expressions for the functions $\mathbb{I}_{\mathcal{D}}(l)$. To state our result, we will need the following notation. Any $\mathcal{D}$-lamination $l\in\mathcal{D}_L(S,\mathbb{Z})$ can be represented by a collection of curves of weight~1. If this collection contains homotopic curves of weights $a$ and $b$ which are not intersecting curves, let us replace these by a single curve of weight $a+b$. In this way, we obtain a new collection of curves representing $l$. If we now cut the surface along the image of $\partial S$, we obtain a collection $\mathcal{C}^\circ$ of curves on $S^\circ$ and a collection $\mathcal{C}$ of curves on $S$.

\begin{theorem}
\label{thm:main}
Let $l\in\mathcal{D}_L(S,\mathbb{Z})$ and suppose that the orientation of the curve $\gamma_i$ agrees with $S$ whenever $\gamma_i$ meets a curve of $l$. Then 
\[
\mathbb{I}_\mathcal{D}(l)=\frac{\prod_{c\in\mathcal{C}^\circ}F_c(\widehat{X}_1,\dots,\widehat{X}_n)}{\prod_{c\in\mathcal{C}}F_c(X_1,\dots,X_n)}B_1^{g_{l,1}}\dots B_n^{g_{l,n}}X_1^{h_{l,1}}\dots X_n^{h_{l,n}}
\]
where the $F_c$ are polynomials, the $g_{l,i}$ are integers and the $h_{l,i}$ are half integers.
\end{theorem}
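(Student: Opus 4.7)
The plan is to reduce Theorem \ref{thm:main} to the already-established special cases by exploiting the multiplicativity property of $\mathbb{I}_{\mathcal{D}}$ (part (4) of the definition). First I would represent the lamination $l$ by a collection of weight-$1$ curves and then, following the prescription stated just above the theorem, collapse homotopic non-intersecting curves of weights $a$ and $b$ into a single non-intersecting curve of weight $a+b$. This yields a decomposition
\[
l = l_1 + l_2 + \cdots + l_N + m_1 + \cdots + m_M,
\]
where each $l_\alpha$ is a single intersecting curve of weight $1$ and each $m_\beta$ is a single non-intersecting closed curve of integer weight (either lying in $S$, or in $S^\circ$, or homotopic to a component of $\gamma$). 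Crucially, in this decomposition no curve from one summand intersects or is homotopic to a curve from another summand, so the multiplicativity rule gives
\[
\mathbb{I}_{\mathcal{D}}(l) = \prod_\alpha \mathbb{I}_{\mathcal{D}}(l_\alpha) \cdot \prod_\beta \mathbb{I}_{\mathcal{D}}(m_\beta).
\]

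Next I would apply the previously proved propositions factor by factor. For each intersecting curve $l_\alpha$, Proposition \ref{prop:intersecting} gives
\[
\mathbb{I}_{\mathcal{D}}(l_\alpha) = \frac{\prod_{i\text{ even}} F_{c_i}(\widehat X_1,\dots,\widehat X_n)}{\prod_{i\text{ odd}} F_{c_i}(X_1,\dots,X_n)}\cdot B^{\mathbf{g}_{l_\alpha}}X^{\mathbf{h}_{l_\alpha}},
\]
and the segments $c_i$ with $i$ even are exactly the curves of $\mathcal{C}^\circ$ coming from $l_\alpha$, while those with $i$ odd are the curves of $\mathcal{C}$ coming from $l_\alpha$. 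For each non-intersecting curve $m_\beta$, exactly one of Propositions \ref{prop:Scircloop}, \ref{prop:Scirchole}, \ref{prop:Sloop}, or \ref{prop:Shole} applies (according to whether $m_\beta$ lies in $S^\circ$ or $S$, and whether it is homotopic to a component of the simple lamination), yielding a factor of the form $F_{m_\beta}(\widehat X)\cdot B^{\mathbf{g}}X^{\mathbf{h}}$ or $F_{m_\beta}(X)^{-1}X^{\mathbf{h}}$, or a pure monomial in $X$ and $B$. Since $m_\beta$ corresponds to a single element of $\mathcal{C}^\circ$ or $\mathcal{C}$ (and the polynomial $F_{m_\beta}$ was defined so that the numerator is $F_c(\widehat X)$ when $c\in\mathcal{C}^\circ$ and the denominator is $F_c(X)$ when $c\in\mathcal{C}$), these contributions fit neatly into the numerator and denominator of the claimed expression.

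Finally I would combine all the factors: the numerators accumulate into $\prod_{c\in\mathcal{C}^\circ}F_c(\widehat X_1,\dots,\widehat X_n)$, the denominators into $\prod_{c\in\mathcal{C}}F_c(X_1,\dots,X_n)$, and the monomial factors multiply to produce a single monomial $B_1^{g_{l,1}}\cdots B_n^{g_{l,n}}X_1^{h_{l,1}}\cdots X_n^{h_{l,n}}$. The $g_{l,i}$ are sums of integers (coming from $\mathbf{g}_{l_\alpha}$ and from the $B$-monomials of the non-intersecting pieces) and hence integers, and the $h_{l,i}$ are sums of half integers and hence half integers.

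The main obstacle, and the one point requiring care rather than routine bookkeeping, is verifying that the $F$-polynomial factors identified from the intersecting summands are correctly indexed by the segments of $\mathcal{C}$ and $\mathcal{C}^\circ$ as defined in the statement. For an intersecting curve, each segment $c_i$ is naturally a curve either on $S$ or on $S^\circ$, and one must check that the parity labelling used in Proposition \ref{prop:intersecting} matches the partition $\mathcal{C} \sqcup \mathcal{C}^\circ$ produced by the cutting procedure, so that the two products assemble without duplication or omission. Once this identification is made, no further calculation is needed; the rest is simply collecting exponents.
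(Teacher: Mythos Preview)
Your proposal is correct and follows essentially the same approach as the paper: decompose $l$ into intersecting curves of weight~1 and non-intersecting curves of integer weight, apply multiplicativity, and invoke Propositions~\ref{prop:intersecting}, \ref{prop:Scircloop}, \ref{prop:Scirchole}, \ref{prop:Sloop}, and~\ref{prop:Shole} piece by piece. The paper's proof is in fact a one-line citation of these propositions together with the convention $F_c=1$ for curves homotopic to a loop of the simple lamination, so your write-up is if anything more explicit than the original.
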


\begin{proof}
If $c$ is a closed curve which is homotopic to a loop in the simple lamination, we set $F_c=1$. Then the above formula for $\mathbb{I}_{\mathcal{D}}(l)$ is an immediate consequence of Propositions~\ref{prop:Scircloop}, \ref{prop:Scirchole}, \ref{prop:Sloop}, \ref{prop:Shole}, and~\ref{prop:intersecting} and the multiplicativity of $\mathbb{I}_{\mathcal{D}}$.
\end{proof}

The following result is closely related to Lemma~10.14 in~\cite{FT}.

\begin{proposition}
\label{prop:kapparational}
Let $l\in\mathcal{D}_L(S,\mathbb{Z})$ be any lamination, and let $l'$ be the lamination obtained from $l$ by altering the orientations of those curves $\gamma_i$ that meet a curve of~$l$ so that each orientation agrees with that of~$S$. Then we have $\mathbb{I}_{\mathcal{D}}(l)=\mathbb{I}_{\mathcal{D}}(l')\circ\tau$ where $\tau:\mathcal{D}^+(S)\rightarrow\mathcal{D}^+(S)$ is induced by a birational automorphism of the symplectic double.
\end{proposition}

\begin{proof}
Let $m\in\mathcal{D}^+(S)$. We define $m'$ as the point of $\mathcal{D}^+(S)$ such that 
\begin{enumerate}
\item $m'$ equips $S_{\mathcal{D}}$ with the same hyperbolic structure as~$m$.
\item Suppose the curve $\gamma_s$ is not shrunk to a node in the hyperbolic structure defined by~$m$. If the orientation that $l$ provides for~$\gamma_s$ agrees with the orientation of~$S$, then $m$ and $m'$ provide the same orientation for~$\gamma_s$. If the orientation that $l$ provides for~$\gamma_s$ disagrees with that of~$S$, then $m$ and $m'$ provide opposite orientations~$\gamma_s$.
\item Suppose the curve $\gamma_s$ is shrunk to a node in the hyperbolic structure defined by~$m$. If the orientation that $l$ provides for~$\gamma_s$ agrees with the orientation of~$S$, then $m'$ assigns to this node the same horocycles as~$m$. If the orientation that $l$ provides for~$\gamma_s$ disagrees with that of~$S$, then horocycles the $m'$ provides on $S$ and $S^\circ$ are rescaled using the invariants $\alpha$ and $\alpha^\circ$, respectively.
\end{enumerate}
If we now define $\tau:\mathcal{D}^+(S)\rightarrow\mathcal{D}^+(S)$ by $m\mapsto m'$, then it is immediate from the definitions that $\mathbb{I}_{\mathcal{D}}(l)=\mathbb{I}_{\mathcal{D}}(l')\circ\tau$.

To complete the proof, we must show that for any $j\in J$, the pullbacks $\tau^*X_j$ and $\tau^*B_j$ are given by rational expressions in the coordinates. By Lemma~12.3 of~\cite{IHES}, we know that $\tau^*X_j$ is given by a rational expression in the $X$-coordinates. We will adapt the methods of the proof of this result to show that $\tau^*B_j$ is rational as well. In particular, we will use the correspondence described in~\cite{IHES} between horocycles and vectors in~$\mathbb{R}^2$. To describe this correspondence, we identify $\partial\mathbb{H}=\mathbb{R}\cup\{\infty\}$ with $\mathbb{RP}^1$ by identifying $x\in\mathbb{R}$ with the line through $\left(\begin{array}{c} x \\ 1 \end{array}\right)$ and identifying $\infty$ with the line through $\left(\begin{array}{c} 1 \\ 0 \end{array}\right)$. Then any point $x\in\partial\mathbb{H}$ corresponds to a line in~$\mathbb{R}^2$, and a choice of horocycle at~$x$ is the same thing as a choice of vector in this line, up to a sign. If $x_1$ and~$x_2$ are points on~$\partial\mathbb{H}$ and $v_1=\left(\begin{array}{c} v_1^1 \\ v_1^2 \end{array}\right)$ and~$v_2=\left(\begin{array}{c} v_2^1 \\ v_2^2 \end{array}\right)$ are vectors corresponding to horocycles at~$x_1$ and~$x_2$, respectively, then the exponentiated signed half distance between these horocycles is 
\[
\left|
\det\left( \begin{array}{cc}
v_1^1 & v_2^1 \\
v_1^2 & v_2^2 \end{array} \right)\right|.
\]
(See \cite{dual} and \cite{IHES}.)

Let $m\in\mathcal{D}^+(S)$ and suppose the curve $\gamma_s$ is not shrunk to a node in the hyperbolic structure defined by~$m$. We will study how the $B$-coordinates change when we reverse the orientation of~$\gamma_s$. Fix an ideal triangulation $T$ of~$S$, and let $j\in J$ be an edge of~$T$ such that exactly one end of~$j$ terminates at the puncture corresponding to~$\gamma_s$. Lift $T$ to a triangulation~$\tilde{T}$ of the universal cover of~$S$ in~$\mathbb{H}$. Let $t$ be a triangle of $\tilde{T}$ such that one of the edges of~$t$ projects to~$j$. We can assume that the vertices of~$t$ are $-1$,~$0$, and~$\infty$ with the lift of~$j$ connecting~$0$ and~$\infty$. These vertices correspond under the identification $\partial\mathbb{H}\cong\mathbb{RP}^1$ to the lines in~$\mathbb{R}^2$ containing~$\left(\begin{array}{c} -1 \\ 1 \end{array}\right)$, $\left(\begin{array}{c} 0 \\ 1 \end{array}\right)$, and~$\left(\begin{array}{c} 1 \\ 0 \end{array}\right)$, respectively. Let us choose horocycles around the endpoints of~$t$ and write $A_e$ for the exponentiated signed half distance between the horocycles at the ends of an edge~$e$. One can check that the horocycles around the vertices of $t$ correspond to the vectors indicated in the diagram below.
\[
\xy /l1.7pc/:
{\xypolygon3"A"{~:{(-2.4,0):}}},
(1,-3.25)*-{\left(\begin{array}{c} \sqrt{\frac{A_i A_j}{A_k}} \\ 0 \end{array}\right)};
(-2.6,2)*-{\left(\begin{array}{c} 0 \\ \sqrt{\frac{A_k A_j}{A_i}} \end{array}\right)};
(4.8,2)*-{\left(\begin{array}{c} -\sqrt{\frac{A_i A_k}{A_j}} \\ \sqrt{\frac{A_i A_k}{A_j}} \end{array}\right)};
(2.6,-0.6)*{i};
(-0.6,-0.6)*{j};
(1,2)*{k};
(1,0)*{t};
\endxy
\]
There is a corresponding triangle $t^\circ$ in the universal cover of~$S^\circ$ in~$\mathbb{H}$, and we can assume that its vertices are~$-1$, $0$, and~$\infty$. The horocycles already chosen determine a unique triple of horocycles around the vertices of~$t^\circ$, and we write $A_e^\circ$ for the exponentiated signed half distance between the horocycles at the ends of an edge $e$ of~$t^\circ$. 

Let us assume that the curve $\gamma_s$ corresponds to the vertex of~$t$ at $0\in\partial\mathbb{H}$. Let $\eta_1,\dots,\eta_N$ be the edges of~$T$ that spiral into~$\gamma_s$ in clockwise order with $\eta_1=j$, and write $\zeta_r$ for the third edge of the triangle having edges $\eta_r$ and~$\eta_{r+1}$. It is shown in the proof of Lemma~12.3 of~\cite{IHES} that if we reverse the orientation of~$\gamma_s$, then the vertex of $t$ moves from $0\in\partial\mathbb{H}$ to~$\beta=(X_{\eta_1}\dots X_{\eta_N}-1)/C\in\partial\mathbb{H}$ where $C=1+X_{\eta_N}+X_{\eta_N}X_{\eta_{N-1}}+\dots+X_{\eta_N}X_{\eta_{N-1}}\dots X_{\eta_2}$. Under our identification $\partial\mathbb{H}\cong\mathbb{RP}^1$, this corresponds to the line in~$\mathbb{R}^2$ containing $\left(\begin{array}{c} \beta \\ 1 \end{array}\right)$. We have 
\[
\left|
\det\left( \begin{array}{cc}
\sqrt{\frac{A_i}{A_k A_j}} & 0 \\
\sqrt{\frac{A_i}{A_k A_j}}/\beta & \sqrt{\frac{A_k A_j}{A_i}} \end{array} \right)\right|=1,
\]
and therefore the vector $v'=\left(\begin{array}{c} \sqrt{\frac{A_i}{A_k A_j}} \\ \sqrt{\frac{A_i}{A_k A_j}}/\beta \end{array}\right)$ corresponds to the unique horocycle centered at $\beta\in\partial\mathbb{H}$ which is tangent to the horocycle determined by the vector $v=\left(\begin{array}{c} 0 \\ \sqrt{\frac{A_k A_j}{A_i}} \end{array}\right)$. By a similar argument, we calculate that the corresponding horocycle in the triangle $t^\circ$ is associated with the vector $\left(\begin{array}{c} \sqrt{\frac{A_i^\circ}{A_k^\circ A_j^\circ}} \\ \sqrt{\frac{A_i^\circ}{A_k^\circ A_j^\circ}}/\beta^\circ \end{array}\right)$ where we have defined the quantity $\beta^\circ=(\widehat{X}_{\eta_1}\dots\widehat{X}_{\eta_N}-1)/C^\circ$ and $C^\circ=1+\widehat{X}_{\eta_N}+\widehat{X}_{\eta_N}\widehat{X}_{\eta_{N-1}} + \dots+\widehat{X}_{\eta_N}\widehat{X}_{\eta_{N-1}}\dots \widehat{X}_{\eta_2}$.

Let us write $A_j'$ for the exponentiated signed half distance that we get by replacing~$v$ by~$v'$, and let us write $(A_j^\circ)'$ for the similar quantity associated with the triangle $t^\circ$. Thus we have 
\[
A_j'=
\left|
\det\left( \begin{array}{cc}
\sqrt{\frac{A_i}{A_k A_j}} & \sqrt{\frac{A_i A_j}{A_k}} \\
\sqrt{\frac{A_i}{A_k A_j}}/\beta & 0 \end{array} \right)\right|=\frac{A_i}{A_k A_j}\frac{1}{\beta} A_j
\]
and similarly 
\[
(A_j^\circ)'=\frac{A_i^\circ}{A_k^\circ A_j^\circ}\frac{1}{\beta^\circ}A_j^\circ.
\]
It follows that when we reverse the orientation of $\gamma_s$, the coordinate $B_j=A_j^\circ/A_j$ is replaced by 
\[
\frac{(A_j^\circ)'}{A_j'}=\frac{B_i}{B_kB_j}\frac{\beta}{\beta^\circ}B_j,
\]
which is a rational function of the $B$- and $X$-coordinates. If both ends of $j$ terminate at the puncture corresponding to $\gamma_s$, then we can apply the above argument successively to the two ends of the lifted edge, and we get a similar rational expression with additional factors. This proves that the pullback $\tau^*B_j$ is given by a rational expression when restricted to points of~$\mathcal{D}^+(S)$ where the curve $\gamma_s$ is not shrunk to a node.

It remains to show that $\tau^*B_j$ is given by the same rational expression when applied to a point~$m\in\mathcal{D}^+(S)$ for which the curve $\gamma_s$ is shrunk to a node. Note that we have $X_{\eta_1}\dots X_{\eta_N}=\widehat{X}_{\eta_1}\dots\widehat{X}_{\eta_N}$, and therefore this rational expression is 
\[
\frac{B_i}{B_kB_j}\frac{\beta}{\beta^\circ}B_j = \frac{B_i}{B_kB_j}\frac{1+\widehat{X}_{\eta_N}+\widehat{X}_{\eta_N}\widehat{X}_{\eta_{N-1}} + \dots+\widehat{X}_{\eta_N}\widehat{X}_{\eta_{N-1}}\dots \widehat{X}_{\eta_2}}{1+X_{\eta_N}+X_{\eta_N}X_{\eta_{N-1}}+\dots+X_{\eta_N}X_{\eta_{N-1}}\dots X_{\eta_2}}B_j.
\]
Using the definition of the $X$-coordinates as cross ratios, one can check that 
\[
\frac{A_i^\circ}{A_k^\circ A_j^\circ}(1+\widehat{X}_{\eta_N}+\widehat{X}_{\eta_N}\widehat{X}_{\eta_{N-1}} + \dots+\widehat{X}_{\eta_N}\widehat{X}_{\eta_{N-1}}\dots \widehat{X}_{\eta_2})=\sum_{t=1}^N\frac{A_{\zeta_t}^\circ}{A_{\eta_t}^\circ A_{\eta_{t+1}}^\circ}
\]
and likewise 
\[
\frac{A_i}{A_kA_j}(1+X_{\eta_N}+X_{\eta_N}X_{\eta_{N-1}}+\dots+X_{\eta_N}X_{\eta_{N-1}}\dots X_{\eta_2})=\sum_{t=1}^N\frac{A_{\zeta_t}}{A_{\eta_t} A_{\eta_{t+1}}}.
\]
Thus, at the point $m$, the above rational function reduces to 
\[
\frac{B_i}{B_kB_j}\frac{\beta}{\beta^\circ}B_j = \frac{\alpha^\circ}{\alpha} B_j = \tau^*B_j
\]
as desired.
\end{proof}

It follows from Theorem~\ref{thm:main} and Proposition~\ref{prop:kapparational} that $\mathbb{I}_{\mathcal{D}}(l)$ is given by an algebraic expression in the $B$- and $X$-coordinates for all $l\in\mathcal{D}_L(S,\mathbb{Z})$. It is given by a rational expression if and only if the exponents $h_{l,i}$ in Theorem~\ref{thm:main} are integers.

\subsubsection{Expansion of cluster variables}

Theorem~\ref{thm:main} is an extension of Proposition~\ref{prop:specialization}, which gives a formula expressing the $B$-variables of the cluster $\mathcal{D}$-algebra in terms of the $B$- and $X$-variables of an initial seed.

To see this, suppose $T$ is an ideal triangulation of $S$ with no self-folded triangles. This triangulation determines an initial seed for the cluster algebra $\mathcal{D}$. Let $c$ be an arc of some other triangulation with no self-folded triangles. By gluing this arc to its image under the map $S\rightarrow S^\circ$, we get an intersecting curve $l$ on $S_\mathcal{D}$. Choose the orientations of the curves~$\gamma_i$ to agree with the orientation of~$S$. Then the canonical function $\mathbb{I}_{\mathcal{D}}(l)$ associated to this intersecting curve is simply the $B$-coordinate~$B_c$ corresponding to the arc~$c$. Suppose there is a sequence of regular flips taking the triangulation $T$ to the triangulation containing~$c$. By~Proposition~\ref{prop:specialization}, we can write 
\[
B_c=\frac{F_c(\widehat{X}_1,\dots,\widehat{X}_n)}{F_c(X_1,\dots,X_n)}B_1^{g_1}\dots B_n^{g_n}
\]
where the $B_j$ and $X_j$ are the coordinates associated to the internal edges of~$T$. The $F$-polynomials in this formula can be computed using the matrix formulas of Musiker and Williams~\cite{MW}. Since $T$ has no self-folded triangles, one gets the same polynomials by lifting the arc~$c$ and triangulation~$T$ to the universal cover and applying the results of Musiker and Williams there. This implies that the $F$-polynomials appearing in this formula are the same ones we get from Theorem~\ref{thm:main}.

\subsubsection{Generalized $F$-polynomials}

Notice that if $c$ is any arc connecting loops in the simple lamination on $S_{\mathcal{D}}$, then the polynomial $F_c$ appearing in Theorem~\ref{thm:main} is an $F$-polynomial in the sense of Fomin and Zelevinsky. On the other hand, if $c$ is a closed loop, then $F_c$ does not arise in this way. This suggests that the polynomials appearing in this theorem should be seen as generalizing the $F$-polynomials. Such generalized $F$-polynomials have appeared previously in the work of Musiker, Schiffler, and Williams~\cite{MW,MSW2} where they are defined in terms of perfect matchings of a graph.

Indeed, suppose $T$ is an ideal triangulation of $S$ with no self-folded triangles. It follows from Lemma~2.13 of~\cite{FST} that such a triangulation always exists for the surfaces that Musiker and Williams consider in~\cite{MW}. Let $c$ be a loop on $S_{\mathcal{D}}$ that lies entirely in $S$ and is not homotopic to a loop in the simple lamination. The monodromy around such a loop was computed in the proof of Proposition~\ref{prop:Sloop} as a product of matrices. By Proposition~4.12 of~\cite{MW}, the absolute value of the trace of this monodromy is given by their expression $\bar{\chi}_{c,T}$ with the substitutions $x_i=1$ and~$y_i=X_i$. Using the notation from~\cite{MW}, we have 
\begin{align*}
F_c(X_1,\dots,X_n) &= \mathbb{I}_{\mathcal{D}}(c)^{-1}X_{i_1}^{1/2}\dots X_{i_s}^{1/2} \\
&= \bar{\chi}_{c,T}X_{i_1}^{1/2}\dots X_{i_s}^{1/2} \\
&= \widehat{\chi}_{c,T}
\end{align*}
with $x_i=1$ and~$y_i=X_i$. For a triangulation without self-folded triangles, Musiker and Williams show that this last expression equals the generalized $F$-polynomial that they associate to the curve $c$.

If the curve $c$ lies on $S^\circ$ instead of $S$, then the same argument with the variables $X_i$ replaced by $\widehat{X}_i$ shows that our expressions $F_c(\widehat{X}_1,\dots,\widehat{X}_n)$ agree with the generalized $F$-polynomials introduced by Musiker, Schiffler, and Williams.

\subsection{Rational functions from laminations}

Theorem~\ref{thm:main} and Proposition~\ref{prop:kapparational} allow us to completely characterize the function $\mathbb{I}_{\mathcal{D}}(l)$ associated to a $\mathcal{D}$-lamination $l\in\mathcal{D}_L(S,\mathbb{Z})$. However, the exponents $h_i$ are only half integral, so an element of $\mathcal{D}_L(S,\mathbb{Z})$ does not give rise to a rational function in the~$B_i$ and~$X_i$ in general. In analogy with the classical case of $\mathcal{A}$- and $\mathcal{X}$-laminations, we expect that $\mathbb{I}_{\mathcal{D}}(l)$ is a rational function of the variables $B_i$ and $X_i$ if and only if $l$ has integral coordinates. This expectation turns out to be correct, and we will prove it in several steps. In addition, we will give a homological condition on a $\mathcal{D}$-lamination~$l$ which is equivalent to the integrality of the coordinates of $l$.

To state the relevant homological condition, we first use the natural map $S^\circ\rightarrow S$ to draw all of the curves in $\mathcal{C}$ or $\mathcal{C}^\circ$ on the surface $S$. For example, the intersecting curve illustrated in the introduction gives the following picture.
\[
\xy 0;/r.50pc/: 
(-6,-4.5)*\ellipse(3,1){.}; 
(-6,-4.5)*\ellipse(3,1)__,=:a(-180){-}; 
(6,-4.5)*\ellipse(3,1){.}; 
(6,-4.5)*\ellipse(3,1)__,=:a(-180){-}; 
(-9,0)*{}="1";
(-3,0)*{}="2";
(3,0)*{}="3";
(9,0)*{}="4";
(-15,0)*{}="A2";
(15,0)*{}="B2";
"1";"2" **\crv{(-9,-5) & (-3,-5)};
"1";"2" **\crv{(-9,5) & (-3,5)};
"3";"4" **\crv{(3,-5) & (9,-5)};
"3";"4" **\crv{(3,5) & (9,5)};
"A2";"B2" **\crv{(-15,12) & (15,12)};
(-9,-9)*{}="A";
(9,-9)*{}="B";
"A";"B" **\crv{(-8,-5) & (8,-5)}; 
(-15,-9)*{}="A1";
(15,-9)*{}="B1";
"B2";"B1" **\dir{-}; 
"A2";"A1" **\dir{-};
(-11,-8)*{}="V1";
(13,-8)*{}="V2";
(-13,-10)*{}="V3";
(11,-10)*{}="V4";
(6,3.8)*{}="V5";
(8,-3)*{}="V6";
"V1";"V2" **\crv{~*=<2pt>{.} (-8,-3) & (10,-3)};
"V3";"V4" **\crv{(-10,-3) & (8,-3)};
"V3";"V5" **\crv{(-18,13) & (7,7)};
"V1";"V5" **\crv{~*=<2pt>{.} (-16,12) & (4,3)};
"V2";"V6" **\crv{~*=<2pt>{.} (14,-6) & (10,0)};
"V4";"V6" **\crv{(12,-6) & (8,-4)};
(13,7)*{S}; 
\endxy
\]

In this way, we get a collection of curves on $S$, and any arc in this collection that connects two holes can be viewed as a singular 1-simplex with $\mathbb{Z}/2\mathbb{Z}$-coefficients. Similarly, if $c$ is a closed loop in this collection, then we can view this loop as a 1-cycle with $\mathbb{Z}/2\mathbb{Z}$-coefficients. This cycle consists of a single 0-simplex in the interior of each triangle that $c$ intersects in a fixed ideal triangulation~$T$ together with a 1-simplex connecting each pair of consecutive 0-simplices. Let $\sigma_l$ be sum of all the 1-simplices obtained in this way from arcs and closed loops. Then $\sigma_l$ is a cycle representing an element $[\sigma_l]$ of the singular homology $H_1(S,\mathbb{Z}/2\mathbb{Z})$ of $S$ with coefficients in~$\mathbb{Z}/2\mathbb{Z}$. With this notation, one has the following result.

\begin{theorem}
\label{thm:integralDlam}
Let $l\in\mathcal{D}_L(S,\mathbb{Z})$ be a $\mathcal{D}$-lamination. The following are equivalent:
\begin{enumerate}
\item $b_j$,~$x_j\in\mathbb{Z}$ for all~$j\in J$.
\item $[\sigma_l]=0\in H_1(S,\mathbb{Z}/2\mathbb{Z})$.
\item $\mathbb{I}_{\mathcal{D}}(l)$ is a rational function in the variables $B_j$ and $X_j$.
\end{enumerate}
\end{theorem}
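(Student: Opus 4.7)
The plan is to prove $(2)\Leftrightarrow(3)$ and $(2)\Leftrightarrow(1)$, both by reducing to a common parity criterion: $\sigma_l\cdot e_j\equiv 0\pmod 2$ for every internal edge $e_j$ of $T$. The equivalence of this criterion with $(2)$ is then settled by Poincar\'e--Lefschetz duality.

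For $(2)\Leftrightarrow(3)$, I start from Theorem~\ref{thm:main}. Every factor in the displayed formula for $\mathbb{I}_\mathcal{D}(l)$ except the monomial $X_1^{h_{l,1}}\cdots X_n^{h_{l,n}}$ is already rational in the $B_j$ and the $X_j$, so rationality of $\mathbb{I}_\mathcal{D}(l)$ is equivalent to each $h_{l,j}\in\mathbb{Z}$. The paragraphs following Propositions~\ref{prop:intersecting} and~\ref{prop:Shole} give an explicit recipe in which a factor $X_j^{\pm 1/2}$ is collected every time a component of $\sigma_l$ crosses $e_j$. Hence $2h_{l,j}\equiv \sigma_l\cdot e_j\pmod 2$, and $(3)$ is precisely the parity criterion.

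For $(2)\Leftrightarrow(1)$, I first show that the $x_j$-coordinates are always integral when $l$ has integer weights. Expanding $x_k=a_{12}+a_{34}-a_{14}-a_{23}$ on the quadrilateral around $k$ and grouping arcs by the pair of sides they connect, the contributions of arcs between adjacent sides cancel in pairs (each such arc lies in one triangle of the quadrilateral and contributes the same weight to two of the four $a_{ij}$), leaving $x_k$ equal to the integer $N_{12\text{-}34}-N_{23\text{-}41}$, where $N_{ij\text{-}kl}$ is the weighted count of arcs between the indicated pair of opposite sides. Therefore $(1)$ reduces to the integrality of every $b_j$. Since $b_j=a_j^\circ-a_j$ and the parity of $2a_j+2a_j^\circ$ is precisely the mod-$2$ intersection $\sigma_l\cdot e_j$ (independent of the reference-point choices used to define $a_j,a_j^\circ$ individually, because a shift of one reference point moves $2a_j$ and $2a_j^\circ$ by the same amount), $b_j\in\mathbb{Z}$ for every $j$ is again the parity criterion.

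The main obstacle, and the final step, is establishing that the parity criterion is equivalent to $[\sigma_l]=0\in H_1(S,\mathbb{Z}/2\mathbb{Z})$. First I need to verify that $\sigma_l$ really is an absolute $1$-cycle: its potential boundary consists of the endpoints of the arcs in $\mathcal{C}\cup\mathcal{C}^\circ$ that come from cutting intersecting components of $l$, and these cancel mod~$2$ because each transverse crossing of a component of $l$ with $\partial S$ in $S_\mathcal{D}$ produces exactly one endpoint on the $S$-side and one on the $S^\circ$-side at the same point. Once $\sigma_l$ is a cycle, I would invoke Poincar\'e--Lefschetz duality for the pair $(S,\partial S\cup V(T))$: the internal edges $\{e_j\}_{j\in J}$ generate $H_1(S,\partial S\cup V(T);\mathbb{Z}/2\mathbb{Z})$ modulo the relations arising from boundaries of triangles of~$T$, and the mod-$2$ intersection pairing between this relative group and $H_1(S;\mathbb{Z}/2\mathbb{Z})$ is non-degenerate. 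Hence $[\sigma_l]=0$ iff $\sigma_l\cdot e_j\equiv 0\pmod 2$ for all $j\in J$, completing the proof.
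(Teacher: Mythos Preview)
Your reduction of everything to the single parity criterion $[\sigma_l]\cdot[e_j]\equiv 0\pmod 2$ and your appeal to Poincar\'e--Lefschetz duality is correct, and it is a genuinely different (and more conceptual) route than the paper's. The paper never invokes duality; instead it proves the equivalence of the parity criterion with $[\sigma_l]=0$ by an explicit combinatorial exhaustion: inside each triangle it finds small null-homologous cycles built from pairs or triples of arcs and adds them to $\sigma'$ one at a time until nothing remains. Conversely, for $2\Rightarrow 1$ the paper passes to a simplicial model and argues that any edge entering the $2$-chain bounding $\sigma'$ must leave it again. Your duality argument replaces both of these hands-on steps with the single observation that the internal edges span $H_1(S,\partial S;\mathbb{Z}/2\mathbb{Z})$ and that the intersection pairing with $H_1(S;\mathbb{Z}/2\mathbb{Z})$ is perfect. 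What the paper's approach buys is that it is completely self-contained; what yours buys is brevity and a clearer explanation of \emph{why} the edge-parity condition is homological.

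Two points where you are moving a bit fast and where the paper does real work you should acknowledge. First, the cycle whose edge-crossings govern the exponents $h_{l,j}$ is not literally $\sigma_l$: for an intersecting curve $c$ it is the auxiliary cycle produced in the proof of Lemma~\ref{lem:factorization} (what the paper calls $\eta_l$, or $\sigma_c'$ in Step~3), and the paper checks separately that this is homologous to the corresponding piece of $\sigma_l$. Your argument survives because the homological intersection number with $[e_j]\in H_1(S,\partial S;\mathbb{Z}/2\mathbb{Z})$ is a class invariant, but you should say so. Second, your claim that the parity of $2a_j+2a_j^\circ$ equals $[\sigma_l]\cdot[e_j]$ hides a local computation near $\partial S$: the spiralling edge may meet $\sigma_l$ infinitely often, and one has to pass to a representative (the paper's $\sigma'$) that meets each edge finitely many times before comparing. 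Again homology invariance saves you, but it deserves a sentence. Finally, Theorem~\ref{thm:main} is stated under the standing hypothesis that every $\gamma_i$ meeting $l$ is oriented compatibly with $S$; for the general case you need Proposition~\ref{prop:kapparational}, which shows the correction factors $\kappa_i$ are already rational in the $B_j,X_j$ and hence do not affect condition~(3).
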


\begin{proof}
To simplify notation, we will denote $\sigma_l$ simply by $\sigma$. We assume that the ideal triangulation $T$ has been drawn on~$S$ in such a way that the edges spiral into the holes of~$S$ in the direction specified by~$l$.

\Needspace*{2\baselineskip}
\begin{step}[1]
$1\implies2$
\end{step}

Suppose $l\in\mathcal{D}_L(S,\mathbb{Z})$ is a lamination with integral coordinates. The 1-simplices that make up~$\sigma$ may intersect the triangulation in infinitely many points. We will begin by constructing a homologous cycle~$\sigma'$ that intersects each edge of the triangulation in a finite, and in fact an \emph{even}, number of points. We will then show that $[\sigma']=0\in H_1(S,\mathbb{Z}/2\mathbb{Z})$.

To define this cycle $\sigma'$, consider any hole in~$S$ that meets one of the simplices of~$\sigma$. Modify the cycle $\sigma$ near this hole as illustrated below so that the simplices meet at an edge of the triangulation spiraling into the hole rather than at a point of $\partial S$. Doing this near every hole gives the desired cycle $\sigma'$.
\[
\xy 0;/r.50pc/: 
(0,-4)*\ellipse(3,1){.}; 
(0,-4)*\ellipse(3,1)__,=:a(180){-}; 
(-9,8)*{}="1"; 
(-3,-8)*{}="2"; 
"1";"2" **\crv{(-8,6) & (-3,-2)}; 
(9,8)*{}="3"; 
(3,-8)*{}="4"; 
"3";"4" **\crv{(8,6) & (3,-2)}; 
(-4,8)*{}="L"; 
(0,-9)*{}="M";
(4,8)*{}="N"; 
"L";"M" **\crv{(-3,6) & (-1,4)}; 
"N";"M" **\crv{(3,6) & (1,4)}; 
(-3,-7)*{}="A1"; 
(3,-8)*{}="B1";
(-3.1,-6)*{}="A2"; 
(3,-7)*{}="B2";
(-3.3,-4.9)*{}="A3"; 
(3.1,-6)*{}="B3";
(-3.6,-3.5)*{}="A4"; 
(3.3,-5)*{}="B4";
(-7,8)*{}="A5"; 
(3.4,-4)*{}="B5";
"A1";"B1" **\crv{(-2,-8.6) & (2,-8.6)}; 
"A1";"B2" **\crv{~*=<2pt>{.} (-2,-6) & (2,-6)};
"A2";"B2" **\crv{(-2,-7.8) & (2,-7.8)}; 
"A2";"B3" **\crv{~*=<2pt>{.} (-2,-5) & (2,-5)};
"A3";"B3" **\crv{(-2,-6.5) & (2,-6.5)}; 
"A3";"B4" **\crv{~*=<2pt>{.} (-2,-4) & (2,-4)};
"A4";"B4" **\crv{(-2,-5) & (2,-5)}; 
"A4";"B5" **\crv{~*=<2pt>{.} (-2,-3) & (2,-3)};
"A5";"B5" **\crv{(-5,3) & (2,-4)}; 
(-7,9)*{i}; 
(-4,9)*{\alpha}; 
(4,9)*{\beta}; 
(5,-8)*{g}; 
\endxy
\quad
\longrightarrow
\quad
\xy 0;/r.50pc/: 
(0,-4)*\ellipse(3,1){.}; 
(0,-4)*\ellipse(3,1)__,=:a(180){-}; 
(-9,8)*{}="1"; 
(-3,-8)*{}="2"; 
"1";"2" **\crv{(-8,6) & (-3,-2)}; 
(9,8)*{}="3"; 
(3,-8)*{}="4"; 
"3";"4" **\crv{(8,6) & (3,-2)}; 
(-4,8)*{}="L"; 
(0,-6.1)*{}="M";
(4,8)*{}="N"; 
"L";"M" **\crv{(-3,6) & (-1,4)}; 
"N";"M" **\crv{(3,6) & (1,4)}; 
(-3,-7)*{}="A1"; 
(3,-8)*{}="B1";
(-3.1,-6)*{}="A2"; 
(3,-7)*{}="B2";
(-3.3,-4.9)*{}="A3"; 
(3.1,-6)*{}="B3";
(-3.6,-3.5)*{}="A4"; 
(3.3,-5)*{}="B4";
(-7,8)*{}="A5"; 
(3.4,-4)*{}="B5";
"A1";"B1" **\crv{(-2,-8.6) & (2,-8.6)}; 
"A1";"B2" **\crv{~*=<2pt>{.} (-2,-6) & (2,-6)};
"A2";"B2" **\crv{(-2,-7.8) & (2,-7.8)}; 
"A2";"B3" **\crv{~*=<2pt>{.} (-2,-5) & (2,-5)};
"A3";"B3" **\crv{(-2,-6.5) & (2,-6.5)}; 
"A3";"B4" **\crv{~*=<2pt>{.} (-2,-4) & (2,-4)};
"A4";"B4" **\crv{(-2,-5) & (2,-5)}; 
"A4";"B5" **\crv{~*=<2pt>{.} (-2,-3) & (2,-3)};
"A5";"B5" **\crv{(-5,3) & (2,-4)}; 
(-7,9)*{i}; 
(-4,9)*{\alpha'}; 
(4,9)*{\beta'}; 
(5,-8)*{g}; 
\endxy
\]
It is easy to see that $\sigma'$ is homologous to~$\sigma$. We claim that it intersects each edge of the triangulation of~$S$ in an even number of points. (Here the number of intersections is counted with multiplicity so that if two 1-simplices intersect an edge $i$ at the same point, then we say there are two intersections.)

Indeed, consider the preimages of $\sigma$ and $\sigma'$ in the universal cover $\tilde{S}$ of~$S$.
\[
\xy 0;/r.35pc/: 
(-12,-6)*{}="1"; 
(12,-6)*{}="2"; 
"1";"2" **\crv{(-12,10) & (12,10)}; 
(-14,-6)*{}="X"; 
(14,-6)*{}="Y"; 
"X";"Y" **\dir{-}; 
(-12,-8)*{p}; 
(13,-1)*{\tilde{g}}; 
(-12,0)*{}="P1";
(-12,1)*{}="Q1";
(-11,-0.5)*{}="R1"; 
(-12,7)*{}="P2";
(-12,10)*{}="Q2";
(-9,2.5)*{}="R2"; 
(-8,12)*{}="P3";
(-5.5,12)*{}="Q3";
(-5,5.2)*{}="R3"; 
(0,12)*{}="P4";
(2.5,12)*{}="Q4";
(0,6)*{}="R4"; 
"P1";"R1" **\dir{-}; 
"Q1";"R1" **\dir{-}; 
"P2";"R2" **\crv{(-11,6) & (-9,3)}; 
"Q2";"R2" **\crv{(-10,7) & (-9,3)}; 
"P3";"R3" **\crv{(-7,10.5) & (-6,9)}; 
"Q3";"R3" **\crv{(-5,8) & (-5,6)}; 
"P4";"R4" **\crv{(0,12) & (0,6)}; 
"Q4";"R4" **\crv{(1.5,11) & (0,7)}; 
(-13,-2)*{\vdots};
(12,5)*{}="A";
(10,12)*{}="B";
(-3,12)*{}="C";
"1";"A" **\crv{(-12,12) & (8,10)}; 
"1";"B" **\crv{(-12,9) & (0,12)}; 
"1";"C" **\crv{(-12,9) & (-3,12)}; 
(9,10.5)*{\tilde{i}};
(-16,-8)*{}="Z"; 
(16,-8)*{}="W"; 
\endxy
\quad
\longrightarrow
\quad
\xy 0;/r.35pc/: 
(-12,-6)*{}="1"; 
(12,-6)*{}="2"; 
"1";"2" **\crv{(-12,10) & (12,10)}; 
(-14,-6)*{}="X"; 
(14,-6)*{}="Y"; 
"X";"Y" **\dir{-}; 
(-12,-8)*{p}; 
(13,-1)*{\tilde{g}}; 
(-12,8.5)*{}="P2";
(-12,10)*{}="Q2";
(-9.5,6)*{}="R2"; 
(-7,12)*{}="P3";
(-5.5,12)*{}="Q3";
(-5,8.5)*{}="R3"; 
(0,12)*{}="P4";
(1.5,12)*{}="Q4";
(0,8.5)*{}="R4"; 
"P2";"R2" **\dir{-}; 
"Q2";"R2" **\crv{(-11,9) & (-9.5,6.5)}; 
"P3";"R3" **\crv{(-5.5,10) & (-5,8.5)}; 
"Q3";"R3" **\crv{(-5.5,12) & (-5,8.5)}; 
"P4";"R4" **\crv{(0,12) & (0,8.5)}; 
"Q4";"R4" **\crv{(0.5,10) & (0,9)}; 
(-13,-2)*{\vdots};
(12,5)*{}="A";
(10,12)*{}="B";
(-3,12)*{}="C";
"1";"A" **\crv{(-12,12) & (8,10)}; 
"1";"B" **\crv{(-12,9) & (0,12)}; 
"1";"C" **\crv{(-12,9) & (-3,12)}; 
(9,10.5)*{\tilde{i}};
(-16,-8)*{}="Z"; 
(16,-8)*{}="W"; 
\endxy
\]
Let $g$ be the boundary of a hole in~$S$, and let $\tilde{g}$ be an arc in the universal cover that projects to $g$. Then the preimage of $\sigma$ near $g$ consists of infinitely many curves meeting $\tilde{g}$ as illustrated above on the left. These curves are alternately preimages of~$\alpha$ or~$\beta$. By definition, the coordinate $b_i$ is obtained by choosing, near each endpoint~$p$ of~$\tilde{i}$, a distinguished pair of preimages $\tilde{\alpha}_p$ and $\tilde{\beta}_p$ of $\alpha$ and $\beta$, respectively. We then count the number $n_i$ of preimages of curves from $S$ that intersect $\tilde{i}$ between the distinguished curves and the number $n_i^\circ$ of preimages of curves from $S^\circ$ that intersect $\tilde{i}$ between the distinguished curves. Then $b_i$ is given by 
\[
b_i=\frac{n_i^\circ}{2}-\frac{n_i}{2}.
\]
Since we are assuming $b_i\in\mathbb{Z}$ for all $i$, we know that $n_i^\circ-n_i\in2\mathbb{Z}$, or equivalently that $n_i^\circ+n_i\in2\mathbb{Z}$. When we modify $\sigma$ to get the cycle $\sigma'$, the preimage of $\sigma$ in $\tilde{S}$ is modified as in the illustration above. The distinguished curves $\tilde{\alpha}_p$ and $\tilde{\beta}_p$ that we chose above determine preimages $\tilde{\alpha}_p'$ and $\tilde{\beta}_p'$ of $\alpha'$ and $\beta'$, respectively. As before, we can count the number of intersections between $\tilde{i}$ and the preimages of 1-simplices of $\sigma'$. The total number of such intersections differs from $n_i^\circ+n_i$ by an even number. Hence the total number of these intersections is even. But this number equals the number of intersections of $\sigma'$ with the edge~$i$. This proves that $\sigma'$ intersects each edge of the triangulation in an even number of points.

It remains to show that $[\sigma']=0\in H_1(S,\mathbb{Z}/2\mathbb{Z})$. If $\sigma_c$ is the summand of $\sigma'$ obtained from a closed loop $c$ on $S$ or $S^\circ$, we can modify this summand to get a homologous cycle whose vertices all lie on edges of $T$. In general, the simplices of the resulting cycle may intersect, and we will deform these arcs so that each of the intersections occurs on an edge of the ideal triangulation. We will consider the refinement of the resulting cycle whose 1-simplices are the intersections of the 1-simplices with the triangles of the ideal triangulation. Abusing notation, we will denote this refinement also by $\sigma'$. We want to show that this $\sigma'$ represents the zero class in homology.

Consider a triangle on the surface with sides~$i$,~$j$, and~$k$. The simplices of $\sigma'$ contained in this triangle are arcs connecting adjacent sides of the triangle so that the total number of arcs that terminate on a given side is always even. Let $\alpha$ be an arc joining two sides, say~$i$ and~$j$. We may assume that there are no additional arcs between $\alpha$ and the vertex $u$ that it surrounds.
\[
\xy /l1.7pc/:
{\xypolygon3"A"{~:{(-3,0):}}},
{\xypolygon3"B"{~:{(-3.5,0):}~>{}}},
(-0.1,-1)*{}="j1";
(2.1,-1)*{}="i1";
(-0.7,0)*{}="j2";
(2.7,0)*{}="i2";
"B1"*{u};
"B3"*{v};
"B2"*{w};
(2.75,-0.75)*{i};
(-0.75,-0.75)*{j};
(1,2)*{k};
"j1";"i1" **\crv{(0.5,-0.5) & (1.5,-0.5)};
"j2";"i2" **\crv{(0.5,0.5) & (1.5,0.5)};
(1,-1)*{\alpha};
(1,0.9)*{\beta};
\endxy
\quad
\xy /l1.7pc/:
{\xypolygon3"A"{~:{(-3,0):}}},
{\xypolygon3"B"{~:{(-3.5,0):}~>{}}},
(2.1,-1)*{}="i1";
(2.7,0)*{}="i2";
(-0.1,-1)*{}="j1";
(-0.7,0)*{}="j2";
(1.7,1.5)*{}="k1";
(0.2,1.5)*{}="k2";
"B1"*{u};
"B3"*{v};
"B2"*{w};
(2.75,-0.75)*{i};
(-0.75,-0.75)*{j};
(1,2)*{k};
"i1";"j1" **\crv{(1.5,-0.5) & (0.5,-0.5)};
"j2";"k2" **\crv{(0.5,0.5) & (0.25,1.5)};
"i2";"k1" **\crv{(2.5,0) & (1.5,0.5)};
(1,-1)*{\alpha};
(-0.25,1)*{\beta};
(2.25,1)*{\gamma};
\endxy
\]
There is an even number of arcs that meet the edge $j$. It follows that $\alpha$ cannot be the only one and hence there is another arc $\beta$ that terminates on this edge. By our choice of $\alpha$, this arc must not lie between $\alpha$ and $u$, so $\beta$ looks like one of the arcs illustrated above depending on whether it ends on~$i$ or~$k$.

First consider the case where $\beta$ ends on $i$. In this case, there is a cycle formed by the arcs $\alpha$ and~$\beta$ together with the portions of~$i$ and~$j$ between the endpoints of these arcs. This cycle is obviously zero in $H_1(S,\mathbb{Z}/2\mathbb{Z})$.

Now consider the case where $\beta$ joins the edges $j$ and~$k$. We can assume that there are no arcs between $\beta$ and the vertex $v$ that it surrounds. Since the total number of arcs that meet the edge $k$ is even, there is another arc $\gamma$ that meets~$k$. By our choice of $\beta$, this arc must not lie between $\beta$ and $v$. Moreover, we can assume that it does not join $k$ and $j$ because this case was already treated above. It follows that $\gamma$ joins $k$ and $i$ as illustrated above. There is a cycle formed by the arcs $\alpha$,~$\beta$, and~$\gamma$ together with the portions of $i$,~$j$, and~$k$ between their endpoints. This cycle is again zero in $H_1(S,\mathbb{Z}/2\mathbb{Z})$.

Thus we define for any triangle a cycle with $\mathbb{Z}/2\mathbb{Z}$-coefficients. Consider the sum of this cycle with $\sigma'$ in the space of all such cycles. It is another cycle representing the same homology class as $\sigma'$. It consists of arcs on the edges of the triangulation together with arcs connecting different edges, and the total number of arcs of the latter type that meet at a given edge is always even. Thus we can iterate the above construction and continue adding cycles to get new cycles homologous to $\sigma'$. At each step, the number of arcs connecting different edges decreases, so eventually we are left with zero. This proves that $\sigma'$ is homologous to zero.

\Needspace*{2\baselineskip}
\begin{step}[2]
$2\implies1$
\end{step}

Suppose our cycle satisfies $[\sigma]=0\in H_1(S,\mathbb{Z}/2\mathbb{Z})$. Since $l\in\mathcal{D}_L(S,\mathbb{Z})$, we can realize $l$ as a collection of curves on $S_\mathcal{D}$ with integral weights. Then the portion of $l$ that lies on $S\subseteq S_{\mathcal{D}}$ is a collection of curves with integral weights. Since $\mathcal{X}_L(S,\mathbb{Z})=\mathcal{X}(\mathbb{Z}^t)$, it follows that $l$ has integral $x$-coordinates. Thus we only need to show that $l$ has integral $b$-coordinates.

To prove this fact, we first modify $\sigma$ as in Step~1 to get a new cycle $\sigma'$. In general, the interiors of two 1-simplices of~$\sigma'$ may intersect. If such an intersection point lies on an edge $i$ of~$T$, let us deform this edge slightly to avoid the intersection without changing the number of intersections between $i$ and the interior points of 1-simplices of~$\sigma'$. We then take a refinement of $\sigma'$ in which every point of self intersection is a 0-simplex. Abusing notation, we will denote this refinement also by $\sigma'$.

Since $\sigma'$ is homologous to~$\sigma$, our assumption implies $[\sigma']=0\in H_1(S,\mathbb{Z}/2\mathbb{Z})$. We can realize $S$ as a two-dimensional simplicial complex $\Delta$ such that every simplex of~$\sigma'$ is included in~$\Delta$. This simplicial complex defines a simplicial homology group $H_1^{\Delta}(S,\mathbb{Z}/2\mathbb{Z})$ with coefficients in $\mathbb{Z}/2\mathbb{Z}$. There is an isomorphism 
\[
H_1^{\Delta}(S,\mathbb{Z}/2\mathbb{Z})\cong H_1(S,\mathbb{Z}/2\mathbb{Z})
\]
between simplicial and singular homology, and so $\sigma'$ also represents the zero class in \emph{simplicial} homology. This means there is a chain $\eta$ of simplices in $\Delta$ with $\sigma'=\partial\eta$. Since $\sigma'$ does not meet a marked point or hole on~$S$, we know that $\eta$ does not meet a marked point or hole~$S$. It follows that the endpoints of an edge~$i$ of~$T$ cannot lie on~$\eta$. If $i$ crosses $\sigma'$ into the interior of a 2-simplex of~$\eta$, then it must leave $\eta$ by crossing $\sigma'$ again.

It follows that $i$ intersects $\sigma'$ in an even number of points. As we showed in Step~1, this number of intersections differs by an even number from $n_i^\circ+n_i$, so we have $n_i^\circ+n_i\in2\mathbb{Z}$ and hence $n_i^\circ-n_i\in2\mathbb{Z}$. Hence $b_i=\frac{1}{2}(n_i^\circ-n_i)\in\mathbb{Z}$ as desired.

\Needspace*{3\baselineskip}
\begin{step}[3]
$2\implies3$
\end{step}

Suppose the cycle $\sigma$ satisfies $[\sigma]=0\in H_1(S,\mathbb{Z}/2\mathbb{Z})$. By Proposition~\ref{prop:kapparational}, we may assume that the orientation of the curve $\gamma_i$ agrees with~$S$ whenever $\gamma_i$ meets a curve of~$l$. As in Steps~1 and~2, we will begin by replacing $\sigma$ by a homologous cycle.

Let $c$ be a closed curve in $\mathcal{C}$ or $\mathcal{C}^\circ$. Then Propositions~\ref{prop:Scircloop}, \ref{prop:Scirchole}, \ref{prop:Sloop}, and~\ref{prop:Shole} give an expression for $\mathbb{I}_{\mathcal{D}}(c)$ in terms of the $B$- and $X$-coordinates. This expression includes a factor which is a monomial in the $X$-coordinates with half integral exponents. By the remarks following Proposition~\ref{prop:Shole}, we know that this monomial has the form $X_{i_1}^{\pm k/2}\dots X_{i_s}^{\pm k/2}$ where $i_1,\dots,i_s$ are the edges of $T$ that $\sigma_c$ intersects. Similarly, if $c$ is an intersecting curve on~$S_\mathcal{D}$, then Proposition~\ref{prop:intersecting} gives an expression for $\mathbb{I}_{\mathcal{D}}(c)$ in terms of the $B$- and $X$-coordinates. This expression includes a factor which is a monomial in the $X$-coordinates with half integral exponents. In proving Proposition~\ref{prop:intersecting}, we essentially constructed a cycle $\sigma_c'$ with $\mathbb{Z}/2\mathbb{Z}$-coefficients such that this monomial has the form $X_{i_1}^{\pm1/2}\dots X_{i_s}^{\pm1/2}$ where $i_1,\dots,i_s$ are the edges of $T$ that $\sigma_c'$ intersects.

It is easy to see that $\sigma_c'$ is homologous to $\sigma_c$ for any intersecting curve $c$. Thus for an arbitrary $\mathcal{D}$-lamination $l$, we can replace $\sigma$ by a homologous cycle $\sigma'$ in which every summand of $\sigma$ of the form $\sigma_c$ for $c$ an intersecting curve is replaced by $\sigma_c'$. Our assumption that $[\sigma]=0\in H_1(S,\mathbb{Z}/2\mathbb{Z})$ implies that $[\sigma']=0$ as well. Arguing as in Step~2, we find that each edge of $T$ intersects $\sigma'$ in an even number of points. 

Theorem~\ref{thm:main} gives an expression for $\mathbb{I}_{\mathcal{D}}(l)$ in terms of $B$- and $X$-coordinates for such a $\mathcal{D}$-lamination $l$. This expression includes a factor which is a monomial in the $X$-coordinates with half integral exponents $h_{l,i}$, and we want to show that each $h_{l,i}$ is in fact an integer. Since $\mathbb{I}_{\mathcal{D}}(l)$ is the product of the functions $\mathbb{I}_{\mathcal{D}}(c)$ where $c$ is an intersecting curve in $l$ or a loop in~$\mathcal{C}$ or~$\mathcal{C}^\circ$, this follows immediately from the fact that any edge $i$ of the triangulation $T$ intersects $\sigma'$ in an even number of points.

\Needspace*{2\baselineskip}
\begin{step}[4]
$3\implies2$
\end{step}

Suppose $\mathbb{I}_{\mathcal{D}}(l)$ is a rational function in the variables $B_j$ and~$X_j$. Then each of the exponents $h_{l,i}$ arising from Theorem~\ref{thm:main} is an integer. This means that each edge of the ideal triangulation~$T$ intersects the cycle $\sigma'$ constructed in~Step~3 in an even number of points. Arguing as in~Step~1, we can recursively add cycles to $\sigma'$ to prove that $[\sigma']=0\in H_1(S,\mathbb{Z}/2\mathbb{Z})$. Since $\sigma'$ is homologous to $\sigma$, this implies $[\sigma]=0\in H_1(S,\mathbb{Z}/2\mathbb{Z})$.
\end{proof}

\subsection{Further properties}

Note that there is a natural map $\varphi:\mathcal{A}_0\times\mathcal{A}_0\rightarrow\mathcal{D}$ taking the product of two copies of the cluster $\mathcal{A}$-variety into the symplectic double. It is given in coordinates by the formulas 
\begin{align*}
\varphi^*(B_i) &= \frac{A_i^\circ}{A_i}, \\
\varphi^*(X_i) &= \prod_j A_j^{\varepsilon_{ij}}.
\end{align*}
where $A_i^\circ$ are the coordinates on the second copy of $\mathcal{A}_0$. This map induces maps $\varphi:\mathcal{A}_0^+\times\mathcal{A}_0^+\rightarrow\mathcal{D}^+$ and $\varphi:\mathcal{A}_0(\mathbb{Z}^t)\times\mathcal{A}_0(\mathbb{Z}^t)\rightarrow\mathcal{D}(\mathbb{Z}^t)$, which we denote by the same symbol~$\varphi$. Concretely, these maps are defined by gluing together hyperbolic surfaces or laminations, respectively.

There is also a natural projection $\pi:\mathcal{D}\rightarrow\mathcal{X}\times\mathcal{X}$ given in coordinates by the formulas 
\begin{align*}
\varphi^*(X_i) &= X_i, \\
\varphi^*(\widehat{X}_i) &= X_i\prod_j B_j^{\varepsilon_{ij}}.
\end{align*}
where $\widehat{X}_i$ are the coordinates on the second copy of $\mathcal{X}$. This projection induces maps $\pi:\mathcal{D}^+\rightarrow\mathcal{X}^+\times\mathcal{X}^+$ and $\pi:\mathcal{D}(\mathbb{Z}^t)\rightarrow\mathcal{X}(\mathbb{Z}^t)\times\mathcal{X}(\mathbb{Z}^t)$, which we denote by the same symbol~$\pi$. Concretely, these maps are defined by cutting the surface $S_{\mathcal{D}}$ to recover the surfaces~$S$ and~$S^\circ$.

We will write $\Fun(\mathcal{D}^+)$ for the set of all functions $\mathcal{D}^+\rightarrow\mathbb{R}_{>0}$ that are rational in the $B$- and $X$-coordinates. We will likewise denote by $\Fun(\mathcal{A}_0^+\times\mathcal{A}_0^+)$ the set of all functions $\mathcal{A}_0^+\times\mathcal{A}_0^+\rightarrow\mathbb{R}_{>0}$ that are rational in the coordinates on the two copies of $\mathcal{A}_0^+$, and we will denote by $\Fun(\mathcal{X}^+\times\mathcal{X}^+)$ the set of all functions $\mathcal{X}^+\times\mathcal{X}^+\rightarrow\mathbb{R}_{>0}$ that are rational in the coordinates on the two copies of $\mathcal{X}^+$. We have seen that the multiplicative canonical pairing provides a map 
\[
\mathbb{I}_{\mathcal{D}}:\mathcal{D}(\mathbb{Z}^t)\rightarrow\Fun(\mathcal{D}^+).
\]
There is a map $\mathbb{I}_{\mathcal{A}_0\times\mathcal{A}_0}:\mathcal{A}_0(\mathbb{Z}^t)\times\mathcal{A}_0(\mathbb{Z}^t) \rightarrow \Fun(\mathcal{X}^+\times\mathcal{X}^+)$ which takes a pair~$(l,l^\circ)$ of laminations to the function $\mathbb{I}(l^\circ)/\mathbb{I}(l)$. There is a similar map $\mathbb{I}_{\mathcal{X}\times\mathcal{X}}:\mathcal{X}(\mathbb{Z}^t)\times\mathcal{X}(\mathbb{Z}^t) \rightarrow \Fun(\mathcal{A}_0^+\times\mathcal{A}_0^+)$ which takes a pair $(l,l^\circ)$ to the function $\mathbb{I}(l^\circ)/\mathbb{I}(l)$.

The following proposition shows that the map $\mathbb{I}_{\mathcal{D}}$ is naturally compatible with the classical canonical pairings. It follows immediately from the definitions.

\begin{proposition}
There is a commutative diagram 
\[
\xymatrix{ 
\mathcal{A}_0(\mathbb{Z}^t)\times\mathcal{A}_0(\mathbb{Z}^t) \ar[r]^{\mathbb{I}_{\mathcal{A}_0\times\mathcal{A}_0}} \ar[d]_{\varphi} & \Fun(\mathcal{X}^+\times\mathcal{X}^+) \ar[d]^{\pi^*} \\
\mathcal{D}(\mathbb{Z}^t) \ar[d]_{\pi} \ar[r]^-{\mathbb{I}_{\mathcal{D}}} & \Fun(\mathcal{D}^+) \ar[d]^{\varphi^*} \\
\mathcal{X}(\mathbb{Z}^t)\times\mathcal{X}(\mathbb{Z}^t) \ar[r]^{\mathbb{I}_{\mathcal{X}\times\mathcal{X}}} & \Fun(\mathcal{A}_0^+\times\mathcal{A}_0^+).
}
\]
\end{proposition}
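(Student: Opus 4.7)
The plan is to check commutativity of the two squares in turn, reducing to single-curve computations by multiplicativity. All three of $\mathbb{I}_\mathcal{D}$, $\mathbb{I}_{\mathcal{A}_0\times\mathcal{A}_0}$, and $\mathbb{I}_{\mathcal{X}\times\mathcal{X}}$ are multiplicative on disjoint unions, and the maps $\varphi$ and $\pi$ are compatible with this decomposition: geometrically, $\pi$ cuts a $\mathcal{D}$-lamination along the image of $\partial S$ to produce a pair of $\mathcal{X}$-laminations $(l_S, l_{S^\circ})$, while $\varphi$ glues a pair of $\mathcal{A}_0$-laminations along $\partial S$ by pairing boundary endpoints (enabled by the $\mathcal{A}_0$-condition). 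After cutting or gluing, every constituent curve falls into one of four elementary types: (i) a closed loop in $S^\circ$ not homotopic to a boundary loop; (ii) the analogous loop in $S$; (iii) a loop homotopic to a boundary loop; (iv) an intersecting curve on $S_\mathcal{D}$ obtained by alternating between open arcs in $S$ and $S^\circ$.

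First I would verify the bottom square by running through these four types. For a loop $c$ of type (i) with weight $k$, both sides equal the absolute value of $\mathrm{tr}(\rho(c)^k)$: on the left-hand side using the representation induced from the $\mathcal{A}_0^+$-point $n^\circ$ via the forgetful map $p:\mathcal{A}_0^+\to\mathcal{X}^+$, and on the right-hand side using the monodromy in the hyperbolic structure $\varphi(n,n^\circ)$, which depends only on the $S^\circ$-half. Type (ii) is handled identically, with the reciprocal built into the definition of $\mathbb{I}_\mathcal{D}$ for loops in $S$ matching the denominator in $\mathbb{I}_{\mathcal{X}\times\mathcal{X}}(l, l^\circ) = \mathbb{I}(l^\circ)/\mathbb{I}(l)$. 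Type (iii) follows similarly from part~(2) of Lemma~\ref{lem:leftrightturns}, which shows the relevant eigenvalue is a product of $X_i^{\pm1/2}$ depending only on the side of $\partial S$ on which the loop lies. For an intersecting curve $c$ of type (iv), the definition $\mathbb{I}_\mathcal{D}(c) = \prod_{i\text{ even}}A_{c_i}^\circ / \prod_{i\text{ odd}}A_{c_i}$ factors precisely so that the numerator is the contribution of $c_2, c_4, \ldots$ to $\mathbb{I}$ via item~4 of Definition~\ref{def:canonicalAX} (giving $\exp I$ for curves connecting boundary points), and the denominator is the analogous contribution of $c_1, c_3, \ldots$; these match $\mathbb{I}(l_{S^\circ})(n^\circ)$ and $\mathbb{I}(l_S)(n)$ respectively.

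The top square is verified by the same case analysis read in the gluing direction, substituting item~5 of Definition~\ref{def:canonicalAX} for item~4 and items~2--3 for item~1. The main obstacle in either square is extending the argument to laminations whose orientation on some boundary loop $\gamma_i$ disagrees with the surface orientation. Here $\mathbb{I}_\mathcal{D}$ is extended by multiplication by $\kappa_i$ (Definition~\ref{def:equivariantmap}), and the classical pairings $\mathbb{I}$ are extended by factors of the form $\alpha(p)$. The explicit formula for $\kappa_i$ derived at the end of the proof of Proposition~\ref{prop:kapparational} expresses $\kappa_i = \alpha^\circ/\alpha$ in terms of the lambda lengths on the two halves of $S_\mathcal{D}$, so that the $\kappa_i$-twist on the $\mathcal{D}$-side matches precisely the ratio of $\alpha(p)$-twists on the two $\mathcal{A}_0^+$ factors under $\varphi$, completing the check.
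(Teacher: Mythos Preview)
Your proof is correct and is precisely an unfolding of the paper's own argument, which consists of the single sentence ``It follows immediately from the definitions.'' Your case analysis by curve type and your handling of the orientation twist via $\kappa_i=\alpha^\circ/\alpha$ are exactly what one checks when verifying this, so there is no substantive difference in approach---you have simply made explicit what the paper leaves implicit.
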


In~\cite{IHES}, Fock and Goncharov proved a number of special properties of the classical canonical pairings. In particular, they showed that the canonical functions provided by these pairing are expressible in any coordinate system as Laurent polynomials with positive integral coefficients. By Theorem~\ref{thm:main}, Proposition~\ref{prop:kapparational}, and Theorem~\ref{thm:integralDlam}, we see that each function $\mathbb{I}_{\mathcal{D}}(l)$ for $l\in\mathcal{D}(\mathbb{Z}^t)$ can be expressed as a subtraction-free rational function in the $B$- and $X$-coordinates.

One of the main results of~\cite{IHES} states that the Laurent polynomials $\mathbb{I}(l)$ for $l\in\mathcal{A}(\mathbb{Z}^t)$ an $\mathcal{A}$-lamination on a punctured surface provide a canonical vector space basis for the algebra of regular functions on the cluster $\mathcal{X}$-variety. This basis has the special property that the structure constants for multiplication of basis elements are positive integers. Since the functions $\mathbb{I}_{\mathcal{D}}(l)$ defined in the present paper for $l\in\mathcal{D}(\mathbb{Z}^t)$ are \emph{rational} rather than regular functions, we do not have a similar canonical basis parametrized by $\mathcal{D}$-laminations. Understanding the precise relationship between the construction described here and canonical bases is an interesting problem for future research.

\subsection{The intersection pairing}

In addition to the multiplicative canonical pairing $\mathbb{I}_{\mathcal{D}}:\mathcal{D}_L(S,\mathbb{Z})\times\mathcal{D}^+(S)\rightarrow\mathbb{R}_{>0}$, we have a canonical map $\mathcal{I}_{\mathcal{D}}:\mathcal{D}_L(S,\mathbb{Z})\times\mathcal{D}_L(S,\mathbb{Z})\rightarrow\mathbb{Q}$ defined as follows.

Suppose we are given an intersecting curve $l$ on $S_\mathcal{D}$. Deform this curve so that it intersects the image of~$\partial S$ in the minimal number of points. Then, starting from a component of this image which we may call $\gamma_1$, there is a segment $c_1$ of the curve~$l$ which lies entirely in~$S$ and connects the component~$\gamma_1$ to another component which we may call~$\gamma_2$. Starting from this component, there is a segment $c_2$ of $l$ which lies entirely in~$S^\circ$ and connects~$\gamma_2$ to a component~$\gamma_3$. Continue labeling in this way until the curve closes.

Now suppose we are given a point $m\in\mathcal{D}_L(S,\mathbb{Z})$. Draw this lamination on the same copy of $S_{\mathcal{D}}$ as $l$ and deform its curves so that they intersect the image of~$\partial S$ in the minimal number of points. Cut the surface $S_\mathcal{D}$ along $\partial S$. Then each $c_i$ is a curve that connects two boundary components of~$S$ or~$S^\circ$. We can wind the ends of $c_i$ around the holes infinitely many times in the direction prescribed by the orientations from $m$.

Consider a curve $c_i$ for $i$ odd. Lifting this curve to the universal cover of $S$, we obtain a geodesic $\tilde{c}_i$ connecting two points on the boundary of $\mathbb{H}$. Choose a distinguished curve intersecting $\tilde{c}_i$ near each of these boundary points, and define~$a_{c_i}$ as half the number of intersections of the lifted curves between the distinguished curves. Next consider $c_i$ for~$i$~even. Lifting to the universal cover of $S^\circ$, we again get a geodesic connecting two points on the boundary of $\mathbb{H}$, and the distinguished curves already chosen determine a pair of distinguished curves near these points. We define~$a_{c_i}^\circ$ as half the number of intersections of the lifted curves between the distinguished curves.

\Needspace*{5\baselineskip}
\begin{definition}\label{def:intersectionD} \mbox{}
Fix a point $m\in\mathcal{D}_L(S,\mathbb{Z})$.
\begin{enumerate}
\item Let $l$ be an intersecting curve of weight $k$ on $S_\mathcal{D}$, and assume that the orientation of each component of $\gamma$ agrees with the orientation of~$S$. Then 
\[
\mathcal{I}_\mathcal{D}(l,m)=k\biggr(\sum_{\text{$i$ even}}a_{c_i}^\circ-\sum_{\text{$i$ odd}}a_{c_i}\biggr).
\]

\item Let $l$ be a curve of weight $k$ on $S_\mathcal{D}$ which is not an intersecting curve and is not homotopic to a loop in the simple lamination. If $l$ lies in $S^\circ$, then $\mathcal{I}_\mathcal{D}(l,m)$ is defined as $k/2$ times the minimal number of intersections between~$l$ and~$m$. If $l$ lies in $S$ then $\mathcal{I}_\mathcal{D}(l,m)$ is minus this quantity.

\item Let $l$ be a curve of weight $k$ on $S_\mathcal{D}$ which is homotopic to a loop $\gamma_i$ in the simple lamination. If $l$ and~$m$ provide the same orientation for $\gamma_i$, then $\mathcal{I}_\mathcal{D}(l,m)$ is defined as $k/2$ times the minimal number of intersections between~$l$ and~$m$. If $l$ and~$m$ provide different orientations for $\gamma$, then $\mathcal{I}_\mathcal{D}(l,m)$ is minus this quantity.

\item Let $l_1$ and $l_2$ be laminations on $S_\mathcal{D}$ such that no curve from $l_1$ intersects a curve from~$l_2$. Then $\mathcal{I}_\mathcal{D}(l_1+l_2,m)=\mathcal{I}_\mathcal{D}(l_1,m)+\mathcal{I}_\mathcal{D}(l_2,m)$.
\end{enumerate}
\end{definition}

In the classical setting of $\mathcal{A}$- and $\mathcal{X}$-laminations, the intersection pairing is the tropicalization of the multiplicative canonical pairing. We will now prove the analogous result for $\mathcal{D}$-laminations.

\begin{proposition}
\label{prop:intersectingintersection}
Let $l$ be an intersecting curve of weight~1 on~$S_{\mathcal{D}}$. If $\gamma_i$ is a component of $\gamma$ that meets a curve of $l$, assume that the orientation that $l$ provides for $\gamma_i$ agrees with the orientation of~$S$. Then we have $\mathcal{I}_{\mathcal{D}}(l,m)=(\mathbb{I}_{\mathcal{D}}(l))^t(m)$ for any~$m\in\mathcal{D}_L(S,\mathbb{Z})$.
\end{proposition}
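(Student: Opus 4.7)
The plan is to tropicalize the explicit multiplicative formula for $\mathbb{I}_{\mathcal{D}}(l)$ that was established in the proof of Proposition~\ref{prop:intersecting}, and match the result term-by-term with the definition of $\mathcal{I}_\mathcal{D}$ in Definition~\ref{def:intersectionD}(1). By construction,
\[
\mathbb{I}_\mathcal{D}(l) \;=\; \frac{\prod_{\text{even }i} A^\circ_{c_i}}{\prod_{\text{odd }i} A_{c_i}},
\]
and Lemma~\ref{lem:Adecomposition} lets us regard each $A_{c_i}$ (resp.\ $A^\circ_{c_i}$) as a subtraction-free rational function in the $B_j$ and $X_j$, so tropicalization distributes as
\[
(\mathbb{I}_\mathcal{D}(l))^t(m) \;=\; \sum_{\text{even }i} (A^\circ_{c_i})^t(m) \;-\; \sum_{\text{odd }i} (A_{c_i})^t(m).
\]
Matching this against Definition~\ref{def:intersectionD}(1) reduces the proposition to the two identities $(A_{c_i})^t(m) = a_{c_i}(m)$ and $(A^\circ_{c_i})^t(m) = a^\circ_{c_i}(m)$, where $a_{c_i}$ and $a^\circ_{c_i}$ are the half-intersection numbers from the construction of the $b$-coordinates and of $\mathcal{I}_\mathcal{D}$.

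First I will carry out the argument for $(A_{c_i})^t(m) = a_{c_i}(m)$. The expression $A_{c_i} = F_{c_i}(X_1,\dots,X_n)\,A^{\mathbf{g}_{c_i}}$ from Lemma~\ref{lem:Adecomposition} is an instance of Proposition~\ref{prop:FZ63} applied to the cluster $\mathcal{A}$-algebra of the triangulated polygon $P\subseteq\tilde S$ built in the proof of Proposition~\ref{prop:intersecting}. For such a polygon, the tropicalized cluster variable associated to a diagonal is known from the classical $\mathcal{A}$-lamination theory of Section~\ref{sec:BackgroundOnClusterVarieties} to be the half-intersection number of that diagonal with the restricted lamination; the intersection count used to define $a_{c_i}(m)$ is by construction taken precisely inside this polygon and between the same distinguished curves used to build the $b$-coordinates. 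Tropicalizing gives $(A_{c_i})^t(m) = F_{c_i}^t(x_1,\dots,x_n) + \mathbf{g}_{c_i}\cdot\mathbf{a}(m)$, and the parallel polygon identity shows this equals $a_{c_i}(m)$.

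For the second identity, I will repeat the same argument with the $S^\circ$-polygon in place of $P$. The key tropical inputs are the identity $a^\circ_j(m)=a_j(m)+b_j(m)$, which is the tropicalization of $A^\circ_j=B_jA_j$ (and is exactly how $a^\circ_j$ was defined from $b_j$ in the construction of coordinates on $\mathcal{D}_L$), together with the tropical relation $\widehat x_j = x_j+\sum_i \varepsilon_{ji} b_i$, which tropicalizes $\widehat X_j = X_j\prod_i B_i^{\varepsilon_{ji}}$. Combined with the second formula of Lemma~\ref{lem:Adecomposition}, this yields $(A^\circ_{c_i})^t(m) = F_{c_i}^t(\widehat x_1,\dots,\widehat x_n) + \mathbf{g}_{c_i}\cdot\mathbf{a}^\circ(m) = a^\circ_{c_i}(m)$. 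Substituting both identities into the tropicalized formula for $\mathbb{I}_\mathcal{D}(l)$ and comparing with Definition~\ref{def:intersectionD}(1) finishes the proof.

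The main obstacle I expect is the first identity: one must verify that the classical tropical/half-intersection interpretation of polygon cluster variables really produces the same count as the one in Definition~\ref{def:intersectionD}, including the correct choice of distinguished curves near each hole. This is essentially bookkeeping, but it has to be done carefully enough that the two polygon constructions (for $\mathbb{I}_\mathcal{D}$ and for $\mathcal{I}_\mathcal{D}$) use compatible horocycles and distinguished curves so that the $\mathbf{g}$-vector contributions $\mathbf{g}_{c_i}\cdot\mathbf{a}(m)$ and $\mathbf{g}_{c_i}\cdot\mathbf{a}^\circ(m)$ assemble correctly; once that is done, the rest of the argument is a direct translation of subtraction-free identities into their tropical counterparts.
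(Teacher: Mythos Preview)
Your reduction is correct in spirit and lands on the same classical fact as the paper, namely that on the auxiliary polygon the tropicalized $\mathcal{A}$-cluster variable for a diagonal is its half-intersection number with the restricted lamination. The paper, however, packages this differently: instead of tropicalizing the $F$-polynomial decomposition of Lemma~\ref{lem:Adecomposition} term by term, it works directly with the limit definition. Given $m$ with coordinates $(b_j,x_j)$, it sets $e^{Cm}\in\mathcal{D}^+(S)$ to have coordinates $B_j=e^{Cb_j}$, $X_j=e^{Cx_j}$, constructs on the polygon $P$ the $\mathcal{X}$-lamination $u$ coming from the odd arcs $c_i$ and the $\mathcal{A}$-lamination $v$ coming from $m$ (with the distinguished curves playing the role of horocycle choices), does the same on $P^\circ$, and then observes the single geometric identity $\mathbb{I}_\mathcal{D}(l)(e^{Cm})=\mathbb{I}(u^\circ,e^{Cv^\circ})/\mathbb{I}(u,e^{Cv})$. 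Taking $\lim_{C\to\infty}C^{-1}\log(\,\cdot\,)$ and invoking the classical $\mathcal{A}$--$\mathcal{X}$ result $(\mathbb{I}(u))^t(v)=\mathcal{I}(u,v)$ finishes immediately. The advantage of the paper's route is that it never has to make sense of ``$(A_{c_i})^t(m)$'' for an individual $A_{c_i}$, which is not a function of the $(B_j,X_j)$ alone; the horocycle/distinguished-curve choices are absorbed into the polygon $\mathcal{A}$-coordinates of $v$ and $v^\circ$, so the bookkeeping you flag as the main obstacle simply does not arise. Your approach works once you carry out that bookkeeping, but the paper's formulation sidesteps it entirely.
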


\begin{proof}
Let $m\in\mathcal{D}_L(S,\mathbb{Z})$ be a $\mathcal{D}$-lamination with coordinates $b_j$ and~$x_j$ for $j\in J$, and let~$e^{Cm}$ denote the point of $\mathcal{D}^+(S)$ with coordinates 
\[
B_j=e^{Cb_j}
\]
and
\[
X_j=e^{Cx_j}
\]
for $j\in J$. This point $e^{Cm}\in\mathcal{D}^+(S)$ determines hyperbolic structures on~$S$ and $S^\circ$, so we can view the universal covers $\tilde{S}$ and~$\tilde{S}^\circ$ as subsets of the hyperbolic plane.

Cut the intersecting curve $l$ along the image of $\partial S$ and then, using the natural map $S^\circ\rightarrow S$, draw all of the resulting curves on $S$. These curves form a cycle on~$S$ which we can lift to a path in the universal cover $\tilde{S}$ connecting points on $\partial\mathbb{H}$. Choose an ideal triangulation $T$ of $S$, and lift this to a triangulation $\tilde{T}$ of $\tilde{S}$. Let $P$ be a triangulated ideal polygon, formed from the triangles of $\tilde{T}$, that contains all of the triangles that intersect this path. The curves of the path that project to curves from $S$ determine a lamination $u\in\mathcal{X}_L(P)$.

In exactly the same way, we can lift $S^\circ$ to its universal cover $\tilde{S}^\circ$, and the triangulation $\tilde{T}$ and polygon $P$ determine a triangulation $\tilde{T}^\circ$ and a polygon~$P^\circ$ in~$\tilde{S}^\circ$. The curves of the path in $P^\circ$ that project to curves from $S^\circ$ determine a lamination $u^\circ\in\mathcal{X}_L(P^\circ)$.

Now consider the portion of $m$ that intersects $S$. Lifting these curves to the universal cover, we get a collection of (possibly infinitely many) curves on the polygon $P$. Let $p$ be any vertex of $P$. If there are infinitely many curves connecting the sides that meet at $p$, then we can choose one such curve $\alpha_p$ and delete all of the curves between $\alpha_p$ and the point $p$. By doing this for each vertex, we remove all but finitely many curves and get a lamination $v\in\mathcal{A}_L(P)$. 

In the same way, we can consider the portion of $m$ that intersects $S^\circ$. Lifting these curves to the universal cover, we get a collection of (possibly infinitely many) curves on the polygon~$P^\circ$. The distinguished curves $\alpha_p$ that we chose near the vertices of $P$ determine corresponding curves $\alpha_p^\circ$ near the the vertices of $P^\circ$. For each $p$, delete all of the curves on $P^\circ$ between $\alpha_p^\circ$ and $p$. In this way, we remove all but finitely many curves and get a lamination $v^\circ\in\mathcal{A}_L(P^\circ)$.

For any edge $i$ of the triangulation of~$P$, write $a_i$ for the corresponding coordinate of the lamination $v$. For any edge $i$ of the triangulation of~$P^\circ$, write $a_i^\circ$ for the corresponding coordinate of the lamination $v^\circ$. Consider the point $e^{Cv}$ of $\mathcal{A}^+(P)$ with coordinates $A_i=e^{Ca_i}$ and the point $e^{Cv^\circ}$ of $\mathcal{A}^+(P^\circ)$ with coordinates $A_i^\circ=e^{Ca_i^\circ}$. Since the cross ratios of the numbers $A_i$ are simply the coordinates~$X_j$ introduced above, we know that the point $e^{Cv}\in\mathcal{A}^+(P)$ determines the same hyperbolic structure on $P$ that we get from the point~$e^{Cm}$. Similarly, the point $e^{Cv^\circ}\in\mathcal{A}^+(P^\circ)$ determines the same hyperbolic structure on~$P^\circ$ that we get from~$e^{Cm}$. It follows that 
\[
\mathbb{I}_{\mathcal{D}}(l)(e^{Cm})=\frac{\mathbb{I}(u^\circ,e^{Cv^\circ})}{\mathbb{I}(u,e^{Cv})}.
\]
Using the definition of tropicalization and the result from the classical theory of the $\mathcal{A}$- and $\mathcal{X}$-spaces, we see that 
\begin{align*}
(\mathbb{I}_{\mathcal{D}}(l))^t(m) &= \lim_{C\rightarrow\infty}\frac{\log\mathbb{I}_{\mathcal{D}}(l)(e^{Cm})}{C} \\
&= \lim_{C\rightarrow\infty}\frac{\log\mathbb{I}(u^\circ,e^{Cv^\circ})}{C} - \frac{\log\mathbb{I}(u,e^{Cv})}{C}\\
&= \mathcal{I}_{\mathcal{D}}(l,m)
\end{align*}
as desired.
\end{proof}

\begin{proposition}
\label{prop:loopintersection}
Let $l$ be a loop of weight $k$ on $S_{\mathcal{D}}$ which is not an intersecting curve. Then we have $\mathcal{I}_{\mathcal{D}}(l,m)=(\mathbb{I}_{\mathcal{D}}(l))^t(m)$ for any~$m\in\mathcal{D}_L(S,\mathbb{Z})$.
\end{proposition}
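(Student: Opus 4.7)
The plan is to reduce each case to the classical tropicalization result for $\mathcal{X}$-laminations on a single surface, using the fact that $\mathbb{I}_{\mathcal{D}}(l)$ has already been computed in Propositions \ref{prop:Scircloop}–\ref{prop:Shole}. The main device is the projection $\pi:\mathcal{D}\to\mathcal{X}\times\mathcal{X}$: at the level of tropical coordinates, the second factor carries coordinates $\widehat{x}_i = x_i + \sum_j \varepsilon_{ij}b_j$, and these are precisely the coordinates of the $\mathcal{X}$-lamination $\widehat{m}$ on $S^\circ$ obtained by cutting $m$ along $\partial S$ (with the curves that end on a boundary component spiraling in the prescribed direction); likewise the first factor gives the $\mathcal{X}$-lamination on $S$ obtained from $m$.

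I will split by the four cases in Definition \ref{def:intersectionD}. \emph{Case~1} ($l$ a loop in $S^\circ$ not homotopic to a boundary loop): By Proposition \ref{prop:Scircloop}, $\mathbb{I}_{\mathcal{D}}(l)$ equals the absolute trace of the $k$th power of the monodromy written as a matrix product in the $\widehat{X}_i$. Substituting $B_j=e^{Cb_j}$, $X_j=e^{Cx_j}$ turns $\widehat{X}_i$ into $e^{C\widehat{x}_i}$, so $\mathbb{I}_{\mathcal{D}}(l)(e^{Cm}) = \mathbb{I}(l, e^{C\widehat{m}})$ in the sense of the classical $\mathcal{X}$-pairing on the Teichmüller space of $S^\circ$. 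Taking the tropical limit and invoking the classical identity $\mathcal{I}(l,\widehat{m})=(\mathbb{I}(l))^t(\widehat{m})$ from the Fock–Goncharov theory gives $(\mathbb{I}_{\mathcal{D}}(l))^t(m)=\mathcal{I}(l,\widehat{m})$, which is by definition half the number of minimal intersections of $l$ with $\widehat{m}$ times $k$, i.e.\ $\mathcal{I}_{\mathcal{D}}(l,m)$. \emph{Case~2} ($l$ in $S$ not homotopic to a boundary loop): By Proposition \ref{prop:Sloop}, $\mathbb{I}_{\mathcal{D}}(l)$ is the reciprocal of the analogous trace written in the $X_i$; the same reduction shows $\mathbb{I}_{\mathcal{D}}(l)(e^{Cm})=\mathbb{I}(l,e^{Cm_S})^{-1}$, where $m_S$ is the $\mathcal{X}$-lamination on $S$ coming from $m$, and the tropical limit is $-\mathcal{I}(l,m_S)$, matching the sign convention in Definition \ref{def:intersectionD}(2).

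\emph{Cases 3 and 4} ($l$ homotopic to a component $\gamma_i$ of the boundary): Here Propositions \ref{prop:Scirchole} and \ref{prop:Shole} express $\mathbb{I}_{\mathcal{D}}(l)$ as a pure monomial $\widehat{X}_{i_1}^{\pm k/2}\cdots\widehat{X}_{i_s}^{\pm k/2}$ or $X_{i_1}^{\pm k/2}\cdots X_{i_s}^{\pm k/2}$ where $i_1,\dots,i_s$ are the edges of $T$ spiraling into the hole. Tropicalizing gives $\tfrac{k}{2}\sum(\pm\widehat{x}_{i_r})$ or $\tfrac{k}{2}\sum(\pm x_{i_r})$, with the sign determined by comparing the orientations supplied by $l$ and $m$. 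On the other hand, the classical formula for the intersection of $\gamma_i$ with an $\mathcal{X}$-lamination on $S$ (or $S^\circ$) is exactly $\sum(\pm x_{i_r})$ (resp.\ $\sum(\pm \widehat{x}_{i_r})$), and the sign convention in Definition \ref{def:intersectionD}(3) precisely accounts for whether $l$ and $m$ induce the same or opposite orientations on $\gamma_i$.

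\emph{Combining cases and general loops}: For a $\mathcal{D}$-lamination $l$ consisting of several disjoint loops with no pairwise intersections or homotopies, both $\mathbb{I}_{\mathcal{D}}$ and $\mathcal{I}_{\mathcal{D}}(-,m)$ are multiplicative/additive by construction, and tropicalization takes products to sums, so the result for a single loop propagates. The main obstacle I anticipate is bookkeeping the various sign conventions in Cases 3 and 4 to confirm that the orientation-based signs in Definitions \ref{def:intersectionD}(3) match the signs produced by the half-integer exponents of the $\widehat{X}_i$ and $X_i$ coming from Lemma \ref{lem:leftrightturns}; the rest is a routine translation between the classical tropicalization on each factor and the $\mathcal{D}$-side via the projection $\pi$.
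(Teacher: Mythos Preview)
Your proposal is correct and follows essentially the same approach as the paper: reduce to the classical tropicalization identity $\mathcal{I}(l,m)=(\mathbb{I}(l))^t(m)$ on a single factor by cutting $S_{\mathcal{D}}$ and viewing $l$ as an $\mathcal{A}$-lamination and the relevant piece of $m$ as an $\mathcal{X}$-lamination on $S$ or $S^\circ$. The only organizational difference is that the paper groups the four cases into two, according to whether $\mathbb{I}_{\mathcal{D}}(l,e^{Cm})=\mathbb{I}(u,e^{Cv})$ or its reciprocal, and then invokes the classical result uniformly, whereas you route Cases~3 and~4 through the explicit monomial formulas of Propositions~\ref{prop:Scirchole} and~\ref{prop:Shole}; this is a cosmetic distinction, and your remark about sign bookkeeping in those cases is exactly what the paper absorbs into its choice of which surface $u$ lives on.
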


\begin{proof}
Let $m\in\mathcal{D}_L(S,\mathbb{Z})$ be a $\mathcal{D}$-lamination with coordinates $b_j$ and~$x_j$ ($j\in J$), and let $e^{Cm}$ denote the point of $\mathcal{D}^+(S)$ with coordinates $B_j=e^{Cb_j}$ and~$X_j=e^{Cx_j}$.

If $l$ lies entirely on $S^\circ$ and is not homotopic to a curve in the simple lamination, then we define $u$ to be the point of $\mathcal{A}_L(S^\circ)$ obtained by cutting $S_{\mathcal{D}}$. On the other hand, suppose $l$ is homotopic to a loop $\gamma_i$ in the simple lamination and $l$ and~$m$ provide the same orientation for this loop. If the orientation of~$\gamma_i$ agrees with the orientation of~$S^\circ$, then we define $u$ to be the point of $\mathcal{A}_L(S)$ obtained by shifting $l$ onto the surface $S$ and cutting $S_{\mathcal{D}}$. If the orientation agrees with that of $S$, then we define $u$ to be the point of $\mathcal{A}_L(S^\circ)$ obtained by shifting $l$ onto $S^\circ$ and cutting.

Now let $v$ be the point of $\mathcal{X}_L(S)$ or $\mathcal{X}_L(S^\circ)$ that we get by drawing the lamination $m$ on $S_{\mathcal{D}}$ and cutting. Let $e^{Cv}$ denote the point of $\mathcal{X}^+(S)$ or $\mathcal{X}^+(S^\circ)$ whose coordinates are obtained by scaling the coordinates of $v$ by $C$ and exponentiating. By construction, we have 
\[
\mathbb{I}_{\mathcal{D}}(l,e^{Cm})=\mathbb{I}(u,e^{Cv}).
\]
It follows that 
\begin{align*}
(\mathbb{I}_{\mathcal{D}}(l))^t(m) &= \lim_{C\rightarrow\infty}\frac{\log\mathbb{I}_{\mathcal{D}}(l,e^{Cm})}{C} \\
&= \lim_{C\rightarrow\infty}\frac{\log\mathbb{I}(u,e^{Cv})}{C} \\
&= (\mathbb{I}(u))^t(v).
\end{align*}
Applying the result from the classical theory of the $\mathcal{A}$- and $\mathcal{X}$-spaces, we see that the last expression is $k/2$ times the minimal number of intersections between $l$ and~$m$.

If $l$ lies entirely on $S$ and is not homotopic to a curve in the simple lamination, then we define $u$ to be the point of $\mathcal{A}_L(S)$ obtained by cutting $S_{\mathcal{D}}$. On the other hand, suppose $l$ is homotopic to a loop $\gamma_i$ in the simple lamination and $l$ and $m$ provide different orientations for this loop. If the orientation that $l$ provides for~$\gamma_i$ agrees with the orientation of~$S$, then we define $u$ to be the point of $\mathcal{A}_L(S)$ obtained by shifting $l$ onto the surface $S$ and cutting. If the orientation agrees with that of $S^\circ$, then we define $u$ to be the point of $\mathcal{A}_L(S^\circ)$ obtained by shifting~$l$ onto~$S^\circ$ and cutting.

Let $v$ and $e^{Cv}$ be as above. Then by construction, we have 
\[
\mathbb{I}_{\mathcal{D}}(l,e^{Cm})=\mathbb{I}(u,e^{Cv})^{-1}
\]
so that 
\begin{align*}
(\mathbb{I}_{\mathcal{D}}(l))^t(m) &= \lim_{C\rightarrow\infty}\frac{\log\mathbb{I}_{\mathcal{D}}(l,e^{Cm})}{C} \\
&= \lim_{C\rightarrow\infty}\frac{\log\mathbb{I}(u,e^{Cv})^{-1}}{C} \\
&= -(\mathbb{I}(u))^t(v).
\end{align*}
Applying the result from the classical theory once again, we see that this last expression is minus $k/2$ times the minimal number of intersections between $l$ and~$m$.
\end{proof}

\begin{proposition}
Let $l$,~$m\in\mathcal{D}_L(S,\mathbb{Z})$. If $\gamma_i$ is a component of~$\gamma$ that meets a curve of~$l$, assume that the orientation that $l$ provides for $\gamma_i$ agrees with the orientation of~$S$. Then $\mathcal{I}_{\mathcal{D}}(l,m)=(\mathbb{I}_{\mathcal{D}}(l))^t(m)$.
\end{proposition}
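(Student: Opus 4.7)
The plan is to reduce the statement to the two special cases already treated, namely Proposition \ref{prop:intersectingintersection} for intersecting curves and Proposition \ref{prop:loopintersection} for non-intersecting loops, using the multiplicativity of $\mathbb{I}_\mathcal{D}$ and the additivity of $\mathcal{I}_\mathcal{D}$ on disjoint pieces.

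First, I would represent $l$ by a collection of pairwise non-intersecting and pairwise non-homotopic curves $c_1,\dots,c_N$ with positive integer weights $k_1,\dots,k_N$. For each $j$, let $l_j$ denote the $\mathcal{D}$-lamination consisting of $c_j$ alone, with weight $k_j$ and with the orientations (inherited from $l$) of those components of $\gamma$ that meet or are homotopic to $c_j$. Since the curves are pairwise non-intersecting and pairwise non-homotopic, the multiplicativity clause in the definition of $\mathbb{I}_\mathcal{D}$ gives $\mathbb{I}_\mathcal{D}(l)=\prod_j\mathbb{I}_\mathcal{D}(l_j)$. Passing to tropicalizations, which turn products into sums, yields
\[
(\mathbb{I}_\mathcal{D}(l))^t(m) = \sum_{j=1}^N (\mathbb{I}_\mathcal{D}(l_j))^t(m).
\]
On the other hand, the additivity clause in Definition \ref{def:intersectionD} gives $\mathcal{I}_\mathcal{D}(l,m)=\sum_j\mathcal{I}_\mathcal{D}(l_j,m)$. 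It therefore suffices to establish $(\mathbb{I}_\mathcal{D}(l_j))^t(m)=\mathcal{I}_\mathcal{D}(l_j,m)$ for each $j$.

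Each $l_j$ falls into one of two cases. If $c_j$ is not an intersecting curve, then Proposition \ref{prop:loopintersection} applies directly, covering the sub-cases of loops on $S$, loops on $S^\circ$, and loops homotopic to a component of $\gamma$ (with either choice of orientation relative to $m$). If $c_j$ is an intersecting curve, then by the standing hypothesis every component of $\gamma$ meeting $c_j$ inherits from $l$ an orientation agreeing with $S$, so Proposition \ref{prop:intersectingintersection} applies in the weight-1 case; the extension to general weight $k_j$ is immediate from the definitions, since $\mathbb{I}_\mathcal{D}(l_j)=\mathbb{I}_\mathcal{D}(l_j')^{k_j}$ and $\mathcal{I}_\mathcal{D}(l_j,m)=k_j\,\mathcal{I}_\mathcal{D}(l_j',m)$, where $l_j'$ is the weight-1 version of the same curve, and tropicalization is homogeneous under such scaling. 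Since the argument is essentially bookkeeping built on already-established special cases, no substantial technical obstacle arises; the only point warranting care is verifying that shared orientations of components of $\gamma$ are consistently inherited by the various $l_j$ (so that no unintended $\kappa_i$ corrections are introduced), which is automatic under the hypothesis of the theorem.
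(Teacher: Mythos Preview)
Your proposal is correct and follows essentially the same approach as the paper: decompose $l$ into mutually nonintersecting, nonhomotopic curves, use the multiplicativity of $\mathbb{I}_{\mathcal{D}}$ and additivity of $\mathcal{I}_{\mathcal{D}}$, and invoke Propositions~\ref{prop:intersectingintersection} and~\ref{prop:loopintersection} on each piece. Your version is slightly more careful than the paper's in explicitly handling the passage from weight~$1$ to weight~$k_j$ for intersecting curves (since Proposition~\ref{prop:intersectingintersection} is stated only for weight~$1$) and in noting that orientations are inherited consistently, but these are elaborations of the same argument rather than a different route.
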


\begin{proof}
Let $l\in\mathcal{D}_L(S,\mathbb{\mathbb{Z}})$. Then we can write $l=l_1+\dots+l_d$ where $l_1,\dots,l_d$ are mutually nonintersecting and nonhomotopic closed curves. We then have 
\begin{align*}
\mathcal{I}_{\mathcal{D}}(l,m) &= \mathcal{I}_{\mathcal{D}}(l_1,m)+\dots+\mathcal{I}_{\mathcal{D}}(l_d,m) \\
&= (\mathbb{I}_{\mathcal{D}}(l_1))^t(m)+\dots+(\mathbb{I}_{\mathcal{D}}(l_d))^t(m) \\
&= (\mathbb{I}_{\mathcal{D}}(l_1)\dots\mathbb{I}_{\mathcal{D}}(l_d))^t(m) \\
&= (\mathbb{I}_{\mathcal{D}}(l))^t(m)
\end{align*}
by Propositions~\ref{prop:intersectingintersection} and~\ref{prop:loopintersection} and the multiplicativity and additivity of the maps $\mathbb{I}_{\mathcal{D}}$ and~$\mathcal{I}_{\mathcal{D}}$.
\end{proof}

We can naturally extend $\mathcal{I}_{\mathcal{D}}$ to a pairing $\mathcal{D}_L(S,\mathbb{Z})\times\mathcal{D}_L(S,\mathbb{Z})\rightarrow\mathbb{Q}$ using the tropicalization of the rational map $\tau$ appearing in Proposition~\ref{prop:kapparational}.

\section*{Acknowledgments}
\addcontentsline{toc}{section}{Acknowledgements}

I am grateful to Alexander~Goncharov for advising me on this project and for contributing many important ideas including the definition of canonical pairing, the idea of expressing this pairing in terms of $F$-polynomials, and the homological criterion for getting rational functions from $\mathcal{D}$-laminations. I thank Yair~Minsky for a conversation which helped clarify some aspects the problem. I thank Linhui~Shen for many helpful discussions and for contributing important ideas to the proof of Proposition~\ref{prop:intersecting}.

This work was partially supported by the NSF grant DMS-1301776.

\end{document}